\let\origsection=\section \def\section{\@ifstar{\origsection*}{\mysection}} 
\def\mysection{\@startsection{section}{1}\z@{.7\linespacing\@plus\linespacing}{.5\linespacing}{\normalfont\scshape\centering\S}}
\renewcommand{\PrintDOI}[1]{\doi{#1}}
\numberwithin{equation}{section}
\numberwithin{figure}{section}
\let\polishlcross=\l
\def\l{\ifmmode\ell\else\polishlcross\fi}
\def\paragraph#1{%
  \noindent\textbf{#1.}\enspace}
\let\emptyset=\varnothing
\let\setminus=\smallsetminus
\def\moverlay{\mathpalette\mov@rlay}
\def\mov@rlay#1#2{\leavevmode\vtop{   \baselineskip\z@skip \lineskiplimit-\maxdimen
   \ialign{\hfil$\m@th#1##$\hfil\cr#2\crcr}}}
\newcommand{\charfusion}[3][\mathord]{
    #1{\ifx#1\mathop\vphantom{#2}\fi
        \mathpalette\mov@rlay{#2\cr#3}
      }
    \ifx#1\mathop\expandafter\displaylimits\fi}
\DeclareFontFamily{U}  {MnSymbolC}{}
\DeclareSymbolFont{MnSyC}         {U}  {MnSymbolC}{m}{n}
\DeclareFontShape{U}{MnSymbolC}{m}{n}{
    <-6>  MnSymbolC5
   <6-7>  MnSymbolC6
   <7-8>  MnSymbolC7
   <8-9>  MnSymbolC8
   <9-10> MnSymbolC9
  <10-12> MnSymbolC10
  <12->   MnSymbolC12}{}
\DeclareMathSymbol{\powerset}{\mathord}{MnSyC}{180}
\let\epsilon=\varepsilon
\let\rho=\varrho
\let\theta=\vartheta
\theoremstyle{plain}
\newtheorem{thm}{Theorem}[section]
\newtheorem{theorem}[thm]{Theorem}
\newtheorem{lemma}[thm]{Lemma}
\newtheorem{corollary}[thm]{Corollary}
\newtheorem{thm-intro}{Theorem}[]
\theoremstyle{definition}
\newtheorem{remark}[thm]{Remark}
\newtheorem{remarks}[thm]{Remarks}
\newtheorem{obs}[thm]{Observation}
\newtheorem{situation}[thm]{Situation}
\let\phi=\varphi
\DeclareMathOperator{\dom}{dom}
\DeclareMathOperator{\Dom}{Dom}
\DeclareMathOperator{\cf}{cf}
\newcommand{\mcm}[3]{\newcommand{#1}[#2]{{\ensuremath{#3}}}} 
\mcm{\abs}{1}{\left\lvert #1 \right\rvert}
\mcm{\lFK}{0}{\ell\text{--}FK_{k,\kappa}}
\mcm{\lK}{1}{\ell\text{--}K(k,#1)}
\mcm{\restricted}{0}{{\upharpoonright}}
\begin{document}

\author{J. Pascal Gollin}
\address[Gollin]{Discrete Mathematics Group, Institute for Basic Science (IBS), 55 Expo-ro, Yuseong-gu, Daejeon, Korea, 34126}
\email{\tt pascal.gollin@uni-hamburg.de}

\author{Karl Heuer}
\address[Heuer]{Institut f\"{u}r Softwaretechnik und Theoretische Informatik, Technische Universit\"{a}t Berlin, Ernst-Reuter-Platz 7, 10587 Berlin, Germany}
\email{\tt karl.heuer@tu-berlin.de}

\title{Characterising $k$-connected sets in infinite graphs}


\keywords{infinite graphs; connectivity; structural characterisation of families of graphs; $k$-connected sets; $k$-tree-width; duality theorem}

\subjclass[2010]{05C63, 05C40, 05C75, 05C83}

\begin{abstract}
    A \emph{$k$-connected set} in an infinite graph, where~${k > 0}$ is an integer, is a set of vertices such that any two of its subsets of the same size~${\ell \leq k}$ can be connected by~$\ell$ disjoint paths in the whole graph.
    
    We characterise the existence of $k$-connected sets of arbitrary but fixed infinite cardinality via the existence of certain minors and topological minors. 
    We also prove a duality theorem for the existence of such $k$-connected sets:
    if a graph contains no such $k$-connected set, then it has a tree-decomposition which, whenever it exists, 
    precludes the existence of such a $k$-connected set.
\end{abstract}

\maketitle

\setcounter{footnote}{1}

\section{Introduction}
\label{sec:intro}

A common aspect of structural graph theory is the study of the duality between connectivity and tree structure. 
Such type of duality theorems assert that if a graph contains no `highly connected part', then there is some kind of tree structure with certain properties, usually a tree-decomposition of the graph, that, if it exists, clearly precludes the existence of such a `highly connected part'. 
Some of the more well-known examples include the duality between brambles and tree-width, as well as tangles and branch-width. 

One of the notions of connectivity, which has been studied in finite graphs, is the one of a so called $k$-connected set. 
For~${k \in \mathbb{N}}$, a set~$X$ of at least~$k$ vertices of a graph~$G$ is called {\emph{$k$-connected in~$G$}}, 
if for all~${Z_1, Z_2 \subseteq X}$ with~${\abs{Z_1} = \abs{Z_2} \leq k}$ 
there are~$\abs{Z_1}$ many vertex disjoint paths from~$Z_1$ to~$Z_2$ in~$G$.
We often omit stating the graph in which~$X$ is $k$-connected if it is clear from the context. 

In finite graphs, $k$-connected sets have also been studied in connection to tree-width.
This connection was first observed by Robertson, Seymour and Thomas \cite{RST:planar}, and later improved by Diestel, Gorbunov, Jensen and Thomassen \cite{DJGT:k-con&grid}*{Prop.~3}, who showed 
that for any finite graph~$G$ and~${k \in \mathbb{N}}$, 
if~$G$ contains a $(k+1)$-connected set of size at least $3k$, 
then~$G$ has tree-width at least~$k$, 
and conversely if~$G$ has no $(k+1)$-connected set 
of size at least~$3k$, then~$G$ has tree-width less than~$4k$.

Recently, Geelen and Joeris~\cites{Joeris:phd-thesis, GJ:grid} studied the duality between $k$-connected sets and \emph{$k$-tree-width}, that is the analogue of tree-width when only considering tree-decompositions of adhesion less than~$k$: 
They showed that the maximum size of a $k$-connected set is bounded from below by the $k$-tree-width~$w$ and from above by~${\binom{w+1}{k-1} (k-1)}$ \cite{GJ:grid}*{Thm.~1.2}. 

As our first main theorem, we generalise their theorem to a duality theorem for infinite $k$-connected sets and $k$-tree-width, for which we say a graph~$G$ has \emph{$k$-tree-width~$\kappa$} if~$\kappa$ is the smallest cardinal such that there is a tree-decomposition of~$G$ of adhesion less than~$k$ for which each part has size less than~$\kappa$.

\begin{thm-intro}\label{main-thm-simple-1}
    Let~$G$ be an infinite graph, 
    let~${k \in \mathbb{N}}$ 
    and let~${\kappa \leq \abs{V(G)}}$ be an infinite cardinal.
    Then the following statements are equivalent.
    \begin{enumerate}[label=(\alph*)]
        \item\label{item:t1-set} ${V(G)}$ contains a subset of size~$\kappa$ that is $k$-connected in~$G$.
        \item\label{item:t1-treeset} The $k$-tree-width of~$G$ is greater than~$\kappa$.
    \end{enumerate}
\end{thm-intro}

\vspace{0.2cm}

Our second main result, Theorem~\ref{main-thm-simple-2}, describes how $k$-connected sets ``look like'' by characterising their existence with the existence of certain unavoidable (topological) minors\footnote{Since the non-existence of a large $k$-connected set in a graph is a property which is closed under both the minor and topological minor relation, the existence of $k$-connected sets in a graph is a property which is well-suited to be characterised via the existence of certain (topological) minors.}.

It is a well-known and easy-to-prove fact that every large connected finite graph contains a long path or a vertex of high degree. 
More precisely, for every~${m \in \mathbb{N}}$ there is an~${n \in \mathbb{N}}$ such that each connected graph with at least~$n$ vertices 
either contains a path~$P_m$ of length~$m$ or a star~$K_{1, m}$ with~$m$ leaves as a subgraph (cf.~\cite{Diestel:GT5}*{Prop.~9.4.1}).
In a sense which can be made precise \cite{Diestel:GT5}*{Thm.~9.4.5}, the existence of these `unavoidable' subgraphs characterises 
connectedness with respect to the subgraph relation in a minimal way:
in every infinite collection consisting of graphs with arbitrarily large connected subgraphs we find arbitrarily long paths or arbitrarily large stars as subgraphs; 
but with only paths or only stars this would not be true, 
since long paths and large stars do not contain each other.

For $2$-connected graphs there is an analogous result, which also is folklore: 
For every~${m \in \mathbb{N}}$ there is an~${n \in \mathbb{N}}$ such that every $2$-connected finite graph with at least~$n$ vertices either contains 
a subdivision of a cycle~$C_m$ of length~$m$
or a subdivision of a complete bipartite graph~$K_{2, m}$ 
\cite{Diestel:GT5}*{Prop.~9.4.2}.
As before, cycles and~$K_{2,m}$'s as unavoidable topological minors characterise $2$-connectedness with respect to the topological minor relation in a minimal way (cf. \cite{Diestel:GT5}*{Thm.~9.4.5}).

In 2016, 
Geelen and Joeris~\cites{Joeris:phd-thesis, GJ:grid} 
generalised these results to arbitrary~${k \in \mathbb{N}}$.
For this, they relaxed `$k$-connectedness' to containing a large $k$-connected set.
They introduced certain graphs called generalised wheels (depending on~$k$ and~$m$), which together with the complete bipartite graph~$K_{k, m}$ are the unavoidable minors: 
they contain large $k$-connected sets themselves and they characterise graphs that contain large $k$-connected sets with respect to the minor relation.

This result encompasses the characterisation for $k=2$ as mentioned above, 
as well as earlier results from Oporowski, Oxley and Thomas~\cite{OOT:3&4-conn}, 
who proved similar results for ${k = 3}$ and ${k = 4}$ (albeit with different notions of `$k$-connectedness'). 

\vspace{0.2cm}

Now let us consider infinite graphs.
Again there is a well-known and easy-to-prove fact that each infinite connected graph contains either a \emph{ray}, that is a one-way infinite path, or a vertex of infinite degree.
This can also be seen as a characterisation of infinite connected graphs via these two unavoidable subgraphs: a ray and the complete bipartite graph $K_{1,\aleph_0}$.
There is also a more localised version of this result: the Star-Comb Lemma
(cf.~Lemma~\ref{star-comb}).
In essence this lemma relates these subgraphs to a given vertex set.

For $2$-connected infinite graphs one can easily construct an analogous result. 
We say a vertex~$d$ \emph{dominates} a ray~$R$ if they cannot be separated by deleting a finite set of vertices not containing~$d$. 
Given a ray~$R$ and the complete graph on two vertices~$K_2$, the \emph{one-way infinite ladder} is the graph ${R \times K_2}$. 

Now it is a common exercise to prove that 
the unavoidable (topological) minors for $2$-connectivity are 
the one-way infinite ladder, the union of a ray~$R$ with a complete bipartite graph between a single vertex and $V(R)$\footnote{Note that with the advent of topological infinite graph theory, those results became an even more meaningful extension of the finite result as these unavoidable minors correspond to infinite cycles in locally finite or finitely separable graphs (cf.~\cite{Diestel:GT5}*{Section~8.6}) and~\cite{Diestel:topGTsurvey}*{Section~5}).} 
as well as the complete bipartite graph $K_{2, \aleph_0}$.

In 1978, Halin \cite{Halin:simplicial-decomb} studied such a problem for arbitrary~${k \in \mathbb{N}}$. 
He showed that every $k$-connected graph whose set of vertices has size at least~$\kappa$ for some uncountable regular cardinal~$\kappa$ contains a subdivision of~$K_{k, \kappa}$.
Hence for all those cardinals,~$K_{k, \kappa}$ is the unique unavoidable topological minor characterising graphs with a subdivision of a $k$-connected graph of size~$\kappa$.
In a way, this characterisation result is stronger than the results previously discussed, since it obtains a direct equality between the size of a $k$-connected set and the size of the minors which was not possible for finite graphs.
The unavoidable (topological) minors for graphs whose set of vertices has singular cardinality remained undiscovered.

Oporowski, Oxley and Thomas \cite{OOT:3&4-conn} also studied countably infinite graphs for arbitrary~${k \in \mathbb{N}}$, but again for a different notion of `$k$-connectedness'.
Together with the~$K_{k, \aleph_0}$, the unavoidable minors for countably infinite essentially $k$-connected\footnote{A graph is \emph{essentially $k$-connected} if there is a constant $c \in  \mathbb{N}$ such that for each separation $(A,B)$ of order less than $k$ one of $A$ or $B$ has size less than~$c$. 
As before, we will not use this notion in this paper.} 
graphs have the following structure.
For~${\ell, d \in \mathbb{N}}$ with~${\ell + d = k}$, they consist of a set of~$\ell$ disjoint rays,~$d$ vertices that dominate one of the rays (or equivalently all of those rays) 
and infinitely many edges connecting pairs of them in a tree-like way.

This leads to our second main result, Theorem~\ref{main-thm-simple-2}.
For~${k \in \mathbb{N}}$ and an infinite cardinal~$\kappa$ we will define certain graphs with a $k$-connected set of size~$\kappa$ in Section~\ref{sec:typical}, the so called \emph{$k$-typical graphs}.
These graphs will encompass complete bipartite graphs~$K_{k, \kappa}$ as well as the graphs described by Oporowski, Oxley and Thomas \cite{OOT:3&4-conn} for~${\kappa = \aleph_0}$.
We will moreover introduce such graphs even for singular cardinals~$\kappa$.
It will turn out that for fixed~$k$ and~$\kappa$ there are only finitely many $k$-typical graphs up to isomorphisms.
We shall characterise graphs with a $k$-connected set of size~$\kappa$ via the existence of a minor of such a $k$-typical graph with a $k$-connected set of size~$\kappa$. 
In contrast to the finite case, the minimality of the list of these graphs in the characterisation is implied by the fact that it really is a finite list of graphs (for which even if it were not minimal we could pick a minimal sublist), and not a finite list of `classes of graphs', like `paths' and `stars' in the finite case for~${k=1}$.

Moreover we will extend the definition of $k$-typical graphs to so called \emph{generalised $k$-typical graphs}. 
As before for fixed~$k$ and~$\kappa$ there are only finitely many generalised $k$-typical graphs up to isomorphisms, and we shall extend the characterisation from before to be with respect to the topological minor relation.

\begin{thm-intro}\label{main-thm-simple-2}
    Let~$G$ be an infinite graph, 
    let~${k \in \mathbb{N}}$ 
    and let~${\kappa \leq \abs{V(G)}}$ be an infinite cardinal.
    Then the following statements are equivalent.
    \begin{enumerate}[label=(\alph*)]
        \item\label{item:t2-set} ${V(G)}$ contains a subset of size~$\kappa$ that is $k$-connected in~$G$.
        \item\label{item:t2-minor} $G$ contains a $k$-typical graph of size~$\kappa$ as a minor with finite branch sets.
        \item\label{item:t2-subdivision} $G$ contains a subdivision of a generalised $k$-typical graph of size~$\kappa$.
    \end{enumerate}
\end{thm-intro}

In fact, we will prove a slightly stronger result which will require some more notation, Theorem~\ref{main-thm} in Subsection~\ref{subsec:main-thm}.
In the same vein as the Star-Comb Lemma, that result will relate the minors (or subdivisions) with a specific $k$-connected set in the graph. 

\vspace{0.2cm}

After fixing some notation and recalling some basic definitions and simple facts in Section~\ref{sec:prelims}, we will define the $k$-typical graphs and generalised $k$-typical graphs in Section~\ref{sec:typical}.
In Section~\ref{sec:k-connSets} we will collect some basic facts about $k$-connected sets and their behaviour with respect to minors or topological minors.
Section~\ref{sec:end-structure} deals with the structure of ends in graphs.
Subsection~\ref{subsec:def-seq} is dedicated to extend a well-known connection between minimal separators and the degree of an end from locally finite graphs to arbitrary graphs. Afterwards, Subsection~\ref{subsec:T-conn-rays} gives a construction of how to find disjoint rays in some end with additional structure between them.
Sections~\ref{sec:regular-minors} and~\ref{sec:singular-minors} are dedicated to prove 
Theorem~\ref{main-thm-simple-2}. 
The case of~$\kappa$ being a regular cardinal is covered in Section~\ref{sec:regular-minors}, and the case of~$\kappa$ being a singular cardinals is covered in Section~\ref{sec:singular-minors}.
In Section~\ref{sec:applications} we will talk about some applications of the characterisation via minors, and in Section~\ref{sec:duality} we shall prove Theorem~\ref{main-thm-simple-1}.

\section{Preliminaries}
\label{sec:prelims}

For general notation about graph theory that we do not specifically introduce here we refer the reader to~\cite{Diestel:GT5}.

In this paper we consider both finite and infinite cardinals.
As usual, for an infinite cardinal~$\kappa$ we define its \emph{cofinality}, denoted by~$\cf\kappa$, 
as the smallest infinite cardinal~$\lambda$ such that there is a 
set~${X \subseteq \{ Y \subseteq \kappa\ |\ \abs{Y} < \kappa\}}$ 
such that~${\abs{X} = \lambda}$ and~${\bigcup X = \kappa}$.
We distinguish infinite cardinals~$\kappa$ to 
\emph{regular cardinals}, i.e.~cardinals where ${\cf\kappa = \kappa}$, and 
\emph{singular cardinals}, i.e.~cardinals where ${\cf\kappa < \kappa}$.
Note that~$\cf\kappa$ is always a regular cardinal.
For more information on infinite cardinals and ordinals, we refer the reader to~\cite{Kunen:set-theory1980}.

Throughout this paper, let $G$ denote an arbitrary simple and undirected graph with vertex set~$V(G)$ and edge set~$E(G)$. 
We call~$G$ \emph{locally finite} if each vertex of~$G$ has finite degree.

\vspace{0.2cm}

Let~$G$ and~$H$ be two graphs.
The \emph{union}~${G \cup H}$ of~$G$ and~$H$ is the graph with vertex set ${V(G) \cup V(H)}$ and edge set ${E(G) \cup E(H)}$.
The \emph{Cartesian product}~${G \times H}$ 
of~$G$ and~$H$ is the graph with vertex set ${V(G) \times V(H)}$ 
such that two vertices ${(g_1, h_1), (g_2, h_2) \in V(G\times H)}$ are adjacent 
if and only if either ${h_1 = h_2}$ and ${g_1g_2 \in E(G)}$ or ${g_1 = g_2}$ and ${h_1h_2 \in E(H)}$ holds.

Given two sets $A$ and $B$, we denote by $K(A,B)$ the complete bipartite graph between the classes $A$ and $B$.
We also write $K_{\kappa, \lambda}$ for $K(A,B)$ if $\abs{A} = \kappa$ and $\abs{B} = \lambda$ for two cardinals~$\kappa$ and~$\lambda$.

\vspace{0.2cm}

Unless otherwise specified, a path in this paper is a finite graph.
The \emph{length} of a path is the size of its edge set.
A path is \emph{trivial}, if it contains only one vertex, which we will call its \emph{endvertex}.
Otherwise, the two vertices of degree~$1$ in the path are its \emph{endvertices}.
The other vertices are called the \emph{inner vertices} of the path.

Let~${A, B \subseteq V(G)}$ be two (not necessarily disjoint) vertex sets.
An \emph{$A$\,--\,$B$ path} is a path whose inner vertices are disjoint from ${A \cup B}$ such that one of its end vertices lies in~$A$ and the other lies in~$B$.
In particular, a trivial path whose endvertex is in ${A \cap B}$ is also an $A$\,--\,$B$ path.
An \emph{$A$\,--\,$B$ separator} is a set~$S$ of vertices such that~${A \setminus S}$ and~${B \setminus S}$ lie in different components of~${G - S}$.
We also say $S$ \emph{separates}~$A$ and~$B$.
For convenience, by a slight abuse of notation, if~${A = \{a\}}$ (or~${B = \{b\}}$) is a singleton we will replace~$A$ by~$a$ (or~$B$ by~$b$ respectively) for these terms.

We shall need the following version of Menger's Theorem for finite parameter~$k$ in infinite graphs, which is an easy corollary of Menger's Theorem for finite graphs.

\begin{theorem}\label{finite-parameter-menger}
    \cite{Diestel:GT5}*{Thm.~8.4.1}
    Let $k \in \mathbb{N}$ and let $A$, $B \subseteq V(G)$.
    If~$A$ and~$B$ cannot be separated by less than~$k$~vertices, 
    then~$G$ contains~$k$ disjoint $A$\,--\,$B$ paths.
\end{theorem}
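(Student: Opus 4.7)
The plan is to reduce to the classical finite Menger theorem by building a finite auxiliary graph that captures all relevant $A$--$B$ connectivity in $G$.

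I argue by contrapositive. Assume $G$ contains no $k$ pairwise disjoint $A$--$B$ paths, and choose a maximum family $\mathcal{P} = \{P_1, \dots, P_m\}$ of pairwise disjoint $A$--$B$ paths in $G$; by assumption $m < k$, so $U := V(P_1) \cup \dots \cup V(P_m)$ is finite. Maximality forces every $A$--$B$ path in $G$ to meet $U$, so $U$ is an $A$--$B$ separator, and in particular no component of $G - U$ contains vertices of both $A$ and $B$, since any such component would harbour an internal $A$--$B$ path and could be added to $\mathcal{P}$.

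Next, I form a finite auxiliary graph $H$ with vertex set $U \cup \{a^*, b^*\}$, where $a^*, b^*$ are two fresh vertices. Add an edge $uv$ for $u, v \in U$ whenever there is a $u$--$v$ path in $G$ whose internal vertices lie in $V(G) \setminus U$; add an edge $a^* u$ whenever $u \in A$ or there is an $A$--$u$ path in $G - (U \setminus \{u\})$; and symmetrically add edges $b^* u$. Then $H$ is finite, and any $A$--$B$ path $Q$ in $G$ gives an $a^*$--$b^*$ walk in $H$ by listing the vertices of $V(Q) \cap U$ in the order they appear on $Q$ (this set is nonempty since $U$ separates $A$ and $B$). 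Consequently, any $a^*$--$b^*$ separator $S \subseteq U$ in $H$ must itself separate $A$ from $B$ in $G$: if an $A$--$B$ path avoided $S$, the associated walk would avoid $S$ as well.

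It now suffices to show that $H$ admits no $k$ internally disjoint $a^*$--$b^*$ paths. Granted this, the classical finite Menger theorem applied to $H$ yields an $a^*$--$b^*$ separator $S \subseteq U$ of size less than $k$, which by the previous paragraph separates $A$ and $B$ in $G$, contradicting the hypothesis. Suppose for contradiction that $H$ has internally disjoint $a^*$--$b^*$ paths $Q_1, \dots, Q_k$, each entering $U$ at a distinct vertex $u_i$ and leaving at a distinct vertex $v_i$. Each edge of each $Q_i$ has a witnessing path in $G$ whose internal vertices lie in $V(G) \setminus U$. One then lifts the $Q_i$ to $k$ pairwise disjoint $A$--$B$ paths in $G$ by choosing these witnesses and resolving any conflicts in $G - U$ through an augmenting-path-style rerouting whose successful obstruction would produce $m + 1$ pairwise disjoint $A$--$B$ paths, contradicting the maximality of $\mathcal{P}$.

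The main obstacle is precisely this lifting step. The edges of $H$ record connectivity through the (possibly shared) components of $G - U$, so naively replacing each $H$-edge by a witnessing $G$-path need not yield vertex-disjoint $G$-paths. To carry out the rerouting cleanly, one performs an inner finite-Menger argument within the ``door structure'' between $U$ and the components of $G - U$, using the fact that each such component touches only one of $A$, $B$ and that $\mathcal{P}$ is maximum to rule out any improvement. This is the point at which the maximality hypothesis is essential, and it is the technically most delicate part of the argument.
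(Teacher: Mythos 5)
The paper does not prove this statement; it cites it from Diestel's textbook (Theorem~8.4.1) with the remark that it is an easy corollary of finite Menger, so there is no in-paper argument to compare your proposal against.

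Your attempt has a genuine gap exactly where you flag one. The pivotal claim that $H$ has no $k$ internally disjoint $a^*$--$b^*$ paths is asserted but never established, and the lifting you sketch does not go through. An $H$-edge $uv$ records only that \emph{some} $u$--$v$ path with internals outside $U$ exists; it carries no information about whether several such $H$-edges can be realised simultaneously by vertex-disjoint paths in $G$. A single cut-vertex $c$ in a component of $G-U$ adjacent to four vertices of $U$ creates a $K_4$ on those four vertices in $H$, whereas in $G$ only one route through $c$ is available at a time, so $H$ can be strictly more connected than $G$. Your proposed rescue, an ``inner finite-Menger argument within the door structure,'' has a further obstacle: a component $C$ of $G-U$ may be infinite, so routing finitely many disjoint paths between the (finite) door set $N(C)\cap U$ is not a finite-Menger question, and the natural tool to invoke there is precisely the finite-parameter infinite Menger you are trying to prove.

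A reduction to finite Menger that does work proceeds by compactness over finite induced subgraphs rather than via a single summary graph. With $\mathcal{P}$ a maximum family of disjoint $A$--$B$ paths, $m := \abs{\mathcal{P}} < k$, and $U := \bigcup_{P\in\mathcal{P}} V(P)$ finite, applying finite Menger to $G[X]$ for each finite $X \supseteq U$ yields an $(A\cap X)$--$(B\cap X)$ separator of size exactly $m$; since it must meet each of the $m$ disjoint paths in $\mathcal{P}$, it lies inside $U$. The family of $m$-element subsets of $U$ that separate $A\cap X$ from $B\cap X$ in $G[X]$ is nonempty, shrinks as $X$ grows, and lives inside the fixed finite collection of $m$-subsets of $U$; hence the intersection over all finite $X\supseteq U$ is nonempty, and any $S$ in it separates $A$ from $B$ in $G$, because every $A$--$B$ path is finite and thus lies inside some such $G[X]$. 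This sidesteps the capacity problem that your auxiliary graph $H$ cannot encode.
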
    

We shall also need a trivial cardinality version of Menger's theorem, which is easily obtained from Theorem~\ref{finite-parameter-menger} by noting that the union of less than~$\kappa$ many disjoint $A$\,--\,$B$ paths for an infinite cardinal~$\kappa$ has size less than~$\kappa$ (cf.~\cite{Diestel:GT5}*{Section~8.4}).

\begin{theorem}\label{cardinality-menger}
    Let $\kappa$ be a cardinal and let $A$, $B \subseteq V(G)$.
    If~$A$ and~$B$ cannot be separated by less than~$\kappa$~vertices, 
    then~$G$ contains~$\kappa$ disjoint $A$\,--\,$B$ paths. \qed
\end{theorem}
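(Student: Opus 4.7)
The plan is to reduce to Theorem~\ref{finite-parameter-menger} and handle the infinite case with a standard maximality argument, exactly as the paper's hint suggests. First I would split into cases according to whether $\kappa$ is finite or infinite. In the finite case, the hypothesis and conclusion are literally those of Theorem~\ref{finite-parameter-menger}, so nothing further is required.

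For the infinite case, I would apply Zorn's lemma to the poset of families of pairwise disjoint $A$\,--\,$B$ paths in $G$, ordered by inclusion; the disjointness condition is of finite character, so the union of any chain of such families is again such a family. Let $\cP$ be a maximal such family, and suppose for contradiction that ${\abs{\cP} < \kappa}$. Set ${S := \bigcup_{P \in \cP} V(P)}$. Since every $A$\,--\,$B$ path in this paper is a finite graph and $\kappa$ is infinite, a short cardinal arithmetic computation (splitting the sub-cases ``$\abs{\cP}$ finite'' and ``$\abs{\cP}$ infinite'') yields ${\abs{S} < \kappa}$, so it will suffice to show that $S$ is an $A$\,--\,$B$ separator, as this will contradict the hypothesis.

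To establish the separator property, suppose some component of ${G - S}$ contains both some ${a \in A \setminus S}$ and some ${b \in B \setminus S}$, and pick any $a$\,--\,$b$ path $Q$ inside that component. Let $a'$ be the last vertex of $A$ on $Q$ when traversed from $a$ to $b$, and let $b'$ be the first vertex of $B$ on $Q$ after $a'$; then the subpath of $Q$ from $a'$ to $b'$ has no inner vertex in ${A \cup B}$, so it is an $A$\,--\,$B$ path in the sense defined in Section~\ref{sec:prelims} and is disjoint from $S$ by construction. Adjoining it to $\cP$ gives a strictly larger family of disjoint $A$\,--\,$B$ paths, contradicting the maximality of $\cP$. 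The only real subtlety is that arbitrary paths in ${G - S}$ are not a priori $A$\,--\,$B$ paths in the narrow sense used here, so one must extract a minimal subpath; once that is done, the rest is routine.
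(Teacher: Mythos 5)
Your proof is correct and follows essentially the same route the paper intends: reduce the finite case to Theorem~\ref{finite-parameter-menger}, and for infinite~$\kappa$ take a maximal family of disjoint $A$\,--\,$B$ paths via Zorn, observe that if it has fewer than~$\kappa$ members then its (finite-path) union has fewer than~$\kappa$ vertices, and derive a contradiction with the hypothesis. Your extra care in extracting a genuine $A$\,--\,$B$ subpath (in the strict sense of Section~\ref{sec:prelims}) from an arbitrary connecting path in~$G - S$ is a detail the paper's one-line hint elides, and it is handled correctly.
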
    

\vspace{0.2cm}

Recall that a one-way infinite path~$R$ is called a \emph{ray} and a two-way infinite path~$D$ is called a \emph{double ray}.
The unique vertex of degree~$1$ of~$R$ is its \emph{start vertex}.
A subgraph of~$R$ (or~$D$) that is a ray itself is called a \emph{tail} of~$R$ (or~$D$ respectively).
Given~${v \in R}$, we write~$vR$ for the tail of~$R$ with start vertex~$v$.
A finite path~${P \subseteq R}$ (or~${P \subseteq D}$) is a \emph{segment} of~$R$ (or~$D$ respectively).
If~$v$ and~$w$ are the endvertices of~$P$, then we denote~$P$ also by~${vRw}$ (or~${vDw}$ respectively).
If~$v$ is the end vertex of~$vRw$ whose distance is closer to the start vertex of~$R$, then $v$ is called the \emph{bottom vertex of~$vRw$} and $w$ is called the \emph{top vertex of~$vRw$}.
If additionally~$v$ is the start vertex of~$R$, then we call~$vRw$ an \emph{initial segment} of~$R$ and denote it by~${Rw}$. 
As a similar notation, given a tree~$T$ and nodes~$v$ and~$w$ of~$T$, we write~${vTw}$ for the unique $v$\,--\,$w$ path in~$T$. 

An \emph{end} of~$G$ is an equivalence class of rays, where two rays are \emph{equivalent} if they cannot be separated by deleting finitely many vertices of~$G$.
We denote the set of ends of~$G$ by~$\Omega(G)$.
A ray being an element of an end ${\omega \in \Omega(G)}$ is called an \emph{$\omega$-ray}.
A double ray all whose tails are elements of $\omega$ is called an \emph{$\omega$-double ray}.

For an end~${\omega \in \Omega(G)}$ let~$\deg(\omega)$ 
denote the \emph{degree} of~$\omega$, that is the supremum of the set 
${\{ \abs{\mathcal{R}}\ |\ \mathcal{R}\text{ is a set of disjoint } \omega\text{-rays} \}}$.
Note for each end $\omega$ there is in fact a set $\mathcal{R}$ of vertex disjoint $\omega$-rays with $\abs{\mathcal{R}} = \deg(\omega)$ \cite{Halin:ends}*{Satz~1}.

Recall that a vertex~${d \in V(G)}$ \emph{dominates} a ray~$R$ if~$d$ and some tail of~$R$ lie in the same component of~${G-S}$ for every finite set ${S \subseteq V(G) \setminus \{d\}}$.
By Theorem~\ref{cardinality-menger} this is equivalent to the existence of infinitely many $d$\,--\,$R$ paths in~$G$ which are disjoint but for~$d$ itself.
Note that if~$d$ dominates an~$\omega$-ray, then it also dominates every other $\omega$-ray.
Hence we also write that~$d$ \emph{dominates} an end~${\omega \in \Omega(G)}$ if $d$ dominates some $\omega$-ray.
Let~$\Dom(\omega)$ denote the set of vertices dominating $\omega$ and let~${\dom(\omega) = \abs{\Dom(\omega)}}$.
If ${\dom(\omega) > 0}$, we call $\omega$ \emph{dominated}, and if ${\dom(\omega) = 0}$, we call $\omega$ \emph{undominated}.

For an end~${\omega \in \Omega(G)}$, let~$\Delta(\omega)$ denote~${\deg(\omega) + \dom(\omega)}$, which we call the \emph{combined degree} of~$\omega$.
Note that the sum of an infinite cardinal with some other cardinal is just the maximum of the two cardinals.

The following lemma due to K\"onig about the existence of a ray is a weak version of the compactness principle in combinatorics.
\begin{lemma}[K\"onig's Infinity Lemma]\label{koenig}\cite{Diestel:GT5}*{Lemma~8.1.2}
Let $(V_i)_{i \in \mathbb{N}}$ be a sequence of disjoint non-empty finite sets, and let $G$ be a graph on their union.
Assume that for every $n > 0$ each vertex in $V_n$ has a neighbour in $V_{n-1}$.
Then $G$ contains a ray $v_0v_1 \ldots$ with $v_n \in V_n$ for all $n \in \mathbb{N}$.
\end{lemma}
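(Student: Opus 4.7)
The plan is to build the ray one vertex at a time, using a forest structure obtained by choosing, for each vertex, one neighbour in the previous level, and then exploiting pigeonhole on the finite levels to guarantee that the construction never gets stuck.

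First I would fix, for each~${n \geq 1}$ and each~${v \in V_n}$, a neighbour~${p(v) \in V_{n-1}}$; such a neighbour exists by hypothesis. Iterating~$p$ gives every vertex~${v \in V_n}$ a descending chain ${v, p(v), p^2(v), \ldots, p^n(v)}$ ending in~$V_0$; each chain corresponds to a finite path in~$G$ from~$v$ to~$V_0$. Set ${V := \bigcup_{n \in \mathbb{N}} V_n}$; since every~$V_n$ is non-empty and pairwise disjoint,~$V$ is infinite.

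Next I would run a recursive pigeonhole argument. As~$V_0$ is finite and each of the infinitely many vertices in~$V$ sits at the top of a chain terminating in~$V_0$, some~${v_0 \in V_0}$ is the endpoint of infinitely many such chains; write~$A_0$ for the (infinite) set of vertices whose chain passes through~$v_0$. Assume inductively that vertices~${v_0, v_1, \ldots, v_n}$ with~${v_i \in V_i}$ and~${p(v_{i+1}) = v_i}$ have been chosen together with an infinite set~$A_n$ of vertices whose chain passes through each of~$v_0, \ldots, v_n$. Every vertex in~${A_n \setminus \{v_0, \ldots, v_n\}}$ has some vertex of~$V_{n+1}$ on its chain immediately above~$v_n$, and $V_{n+1}$ is finite, so some~${v_{n+1} \in V_{n+1}}$ (automatically satisfying~${p(v_{n+1}) = v_n}$, hence adjacent to~$v_n$ in~$G$) appears on infinitely many of these chains. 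Let~$A_{n+1}$ be that infinite set.

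The sequence~${v_0 v_1 v_2 \ldots}$ produced this way satisfies~${v_n \in V_n}$ and~${v_n v_{n+1} \in E(G)}$ for every~$n$, and all the~$v_n$ are distinct since they lie in pairwise disjoint levels; it is therefore the required ray. The only conceptual step is maintaining the invariant that~$A_n$ stays infinite, and this is immediate from the finiteness of~$V_{n+1}$ combined with the infiniteness of~$A_n$. No subtlety beyond pigeonhole is needed, which is precisely why this lemma is a weak form of compactness rather than requiring the full strength of (say) Tychonoff for compact Hausdorff spaces.
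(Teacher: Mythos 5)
Your proof is correct: fixing a predecessor function~$p$ and repeatedly applying pigeonhole on the finite levels to keep the set of chains through the partial ray infinite is exactly the standard argument for K\"onig's Infinity Lemma. The paper itself gives no proof here but defers to Diestel's textbook, whose proof is essentially the same pigeonhole-on-finite-levels construction, so there is nothing to reconcile.
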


In Section~\ref{sec:duality} we shall also use a stronger version of the compactness principle in combinatorics, the Generalised Infinity Lemma.
In order to state that lemma, we need the following definitions.

A partially ordered set $(P, \leq)$ is \emph{directed} if any two elements have a common upper bound, i.e.~for any $p,q \in P$ there is an $r \in P$ with $p \leq r$ and $q \leq r$.
A \emph{directed inverse system} consists of 
a directed poset $P$, 
a family of sets $( X_p \colon p \in P )$, and
for all $p, q \in P$ with $p < q$ a map $f_{q,p}: X_q \to X_p$
such that the maps are \emph{compatible}, i.e.~$f_{q, p} \circ f_{r, q} = f_{r, p}$ for all $p, q, r \in P$ with $p < q < r$.
The \emph{inverse limit} of such a directed inverse system is the set 
\[
    \lim \limits_{\longleftarrow}\, (X_p \colon p \in P) = \left\{ ( x_p \colon p \in P) \in \prod \limits_{p \in P} X_p \colon f_{q,p}(x_q) = x_p \right\}.
\]
\begin{lemma}[Generalised Infinity Lemma]
    \label{gen-inf-lemma}\cite{Diestel:GT5}*{Appendix~A}
    The inverse limit of any directed inverse system of non-empty finite sets is non-empty.
\end{lemma}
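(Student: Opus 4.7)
The plan is to apply topological compactness. Equip each finite set $X_p$ with the discrete topology, so that $X_p$ is a compact Hausdorff space, and set $Y := \prod_{p \in P} X_p$, which is compact by Tychonoff's theorem. For each pair $p < q$ in $P$, the constraint set
\[
    C_{q,p} := \{(x_r)_{r \in P} \in Y : f_{q,p}(x_q) = x_p\}
\]
depends only on the two coordinates $p$ and $q$, and is therefore clopen in $Y$ since both factors are discrete. The inverse limit is exactly $\bigcap_{p<q} C_{q,p}$, so by compactness of $Y$ it suffices to verify the finite intersection property for this family.

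For this, I would take finitely many constraints indexed by pairs $(p_1,q_1),\ldots,(p_n,q_n)$ with $p_i < q_i$, and use directedness of $P$ to choose an upper bound $r^*$ of $\{q_1,\ldots,q_n\}$. Picking any $y \in X_{r^*}$ (non-empty by hypothesis) and setting $x_p := f_{r^*,p}(y)$ for $p \leq r^*$, with $x_p$ chosen arbitrarily from $X_p$ for the remaining indices, yields a tuple that simultaneously satisfies all $n$ constraints by the compatibility identity $f_{q_i,p_i} \circ f_{r^*,q_i} = f_{r^*,p_i}$. This witnesses $\bigcap_{i=1}^n C_{q_i,p_i} \neq \emptyset$, and compactness then delivers a point of $\bigcap_{p<q} C_{q,p}$, which is the desired element of the inverse limit.

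The main issue is set-theoretic rather than combinatorial: Tychonoff's theorem for arbitrary products invokes the axiom of choice, as does the arbitrary coordinate choice above, and for uncountable posets $P$ the lemma itself is known to be equivalent to a fragment of AC. An alternative avoiding topology is a direct Zorn's lemma argument on the poset of consistent partial selections from $(X_p)_{p \in P}$, ordered by extension, where maximality is shown to force totality by the same directedness trick; but I find the topological proof cleaner and more transparent, and it makes explicit why the finiteness of each $X_p$ is the crucial hypothesis, namely it is what forces the constraint cylinders to be closed.
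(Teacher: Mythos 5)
The paper does not prove this lemma; it is cited from Diestel's textbook (Appendix A), where the standard proof is precisely the topological compactness argument you give. Your proof is correct: the constraint sets $C_{q,p}$ are clopen because they depend on only two coordinates of a product of finite discrete spaces, the inverse limit is their intersection, and the finite intersection property follows from directedness together with compatibility of the bonding maps, so Tychonoff compactness finishes the argument. (One tiny notational point: for $p = r^*$ you should set $x_{r^*} := y$ directly rather than invoke $f_{r^*,r^*}$, which is undefined in the paper's conventions; this does not affect the argument.)
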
 

A \emph{comb}~$C$ is the union of a ray $R$ together with infinitely many disjoint finite paths each of which has precisely one vertex in common with $R$, which has to be an endvertex of that path.
The ray $R$ is the \emph{spine} of $C$ and the end vertices of the finite paths that are not on $R$ together with the end vertices of the trivial paths are the \emph{teeth} of~$C$.
A comb whose spine is in $\omega$ is also called an $\omega$-comb.
A \emph{star} is the complete bipartite graph $K_{1, \kappa}$ for some cardinal $\kappa$, where the vertices of degree~$1$ are its \emph{leaves} and the vertex of degree $\kappa$ is its \emph{centre}.

Next we state a version of the Star-Comb lemma in a slightly stronger way than elsewhere in the literature (e.g.~\cite{Diestel:GT5}*{Lemma 8.2.2}).
We also give a proof for the sake of completeness.
\begin{lemma}[Star-Comb Lemma]\label{star-comb}
    Let ${U \subseteq V(G)}$ be infinite 
    and let $\kappa \leq \abs{U}$ be a regular cardinal.
    Then the following statements are equivalent.
    \begin{enumerate}[label=(\alph*)]
        \item \label{item:star-comb-1} 
            There is a subset ${U_1 \subseteq U}$ with ${\abs{U_1} = \kappa}$ such that~$U_1$ is $1$-connected in~$G$.
        \item \label{item:star-comb-2}
            There is a subset ${U_2 \subseteq U}$ with ${\abs{U_2} = \kappa}$ such that~$G$ either contains a subdivided star whose set of leaves is~$U_2$ or a comb whose set of teeth is~$U_2$.
        
        (Note that if~$\kappa$ is uncountable, only the former can exist.)
    \end{enumerate}
    Moreover, if these statements hold, we can choose~${U_1 = U_2}$.
\end{lemma}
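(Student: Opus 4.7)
The direction \ref{item:star-comb-2} $\Rightarrow$ \ref{item:star-comb-1}, together with the ``moreover'' clause, is immediate: in a subdivided star any two leaves are joined by concatenating their paths to the centre, and in a comb any two teeth are joined through a segment of the spine, so the set $U_2$ is itself $1$-connected in~$G$. Thus once we produce some $U_2 \subseteq U_1$ witnessing \ref{item:star-comb-2}, the same set also witnesses \ref{item:star-comb-1}, and we may take $U_1 = U_2$.

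For \ref{item:star-comb-1} $\Rightarrow$ \ref{item:star-comb-2}, fix a component~$G_0$ of~$G$ containing~$U_1$ and a spanning tree~$T$ of~$G_0$, and work inside~$T$. When $\kappa = \aleph_0$ I would simply invoke the classical Star-Comb Lemma (e.g.~\cite{Diestel:GT5}*{Lemma~8.2.2}) applied to $U_1$ in~$G_0$, which directly yields a countably infinite $U_2 \subseteq U_1$ and either a subdivided star with leaves~$U_2$ or a comb with teeth~$U_2$ inside $G_0 \subseteq G$.

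The substantive case is $\kappa$ uncountable regular, where the parenthetical in the statement tells us we must produce a subdivision of $K_{1,\kappa}$ with leaves in~$U_1$. Let $T' \subseteq T$ be the minimal subtree containing~$U_1$; then $|V(T')| \geq \kappa$, and by minimality, for every edge $vw \in E(T')$, the component of $T' - vw$ on the $w$-side contains a vertex of~$U_1$ (otherwise the other component would be a smaller subtree containing~$U_1$). The crux is the following auxiliary claim: any tree with at least~$\kappa$ vertices contains a vertex of degree at least~$\kappa$. I would prove it by contradiction: root the tree at any vertex and suppose every vertex has fewer than~$\kappa$ children; then regularity of~$\kappa$ and induction on the finite level~$n$ show that each level has cardinality~$< \kappa$, and summing over the (at most) $\aleph_0 < \kappa$ many levels, using regularity once more, gives $|V(T)| < \kappa$, a contradiction. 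Applied to~$T'$, this yields a vertex $v$ with at least~$\kappa$ neighbours~$w$ in~$T'$; for each such~$w$ pick $u_w \in U_1$ in the $w$-component $C_w$ of $T' - vw$. Since the $C_w$ are pairwise disjoint, concatenating the edge $vw$ with the unique $w$\,--\,$u_w$ path in~$C_w$ produces~$\kappa$ internally disjoint $v$\,--\,$U_1$ paths in~$T' \subseteq G$, assembling into the required subdivision of~$K_{1,\kappa}$ with leaf set $U_2 = \{u_w\} \subseteq U_1$ of size~$\kappa$.

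The main obstacle is precisely this uncountable regular case, specifically the upgrade from ``infinitely many leaves/teeth'' in the classical lemma to ``$\kappa$ many leaves''; the decisive input is the auxiliary claim on trees with at least~$\kappa$ vertices, whose proof relies essentially on regularity of~$\kappa$ together with the fact that every graph-theoretic tree has only countably many levels. Everything else amounts to routine invocation of the classical lemma and tree bookkeeping.
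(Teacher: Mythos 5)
Your proof is correct and follows essentially the same route as the paper: the decisive ingredient in both is that for uncountable regular~$\kappa$, a tree with at least~$\kappa$ vertices has a vertex of degree at least~$\kappa$ (argued by levels and regularity), which yields the subdivided star. The only difference is cosmetic: the paper works with a single tree~$T$ and splits on whether it has a vertex of degree~$\kappa$ (deducing in the negative case that~$\kappa = \aleph_0$, that~$T$ is locally finite, and that K\H{o}nig's Lemma gives a ray and hence a comb), whereas you relegate the case~$\kappa = \aleph_0$ to the classical Star--Comb Lemma and treat only the uncountable case from scratch.
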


\begin{proof}
    Note that a set of vertices is $1$-connected, if and only if it belongs to the same component of~$G$.
    Hence if~\ref{item:star-comb-2} holds, then~$U_2$ is $1$-connected and we can set ${U_1 := U_2}$ to satisfy~\ref{item:star-comb-1}.
    
    If~\ref{item:star-comb-1} holds, then we take a tree ${T \subseteq G}$ containing~$U_1$ such that each edge of~$T$ lies on a path between two vertices of~$U_1$. 
    Such a tree exists by Zorn's Lemma since~$U_1$ is $1$-connected in~$G$.
    We distinguish two cases.
    
    If~$T$ has a vertex~$c$ of degree~$\kappa$, then this yields a subdivided star with centre~$c$ and a set~${U_2 \subseteq U_1}$ of leaves with~${\abs{U_2} = \kappa}$ by extending each incident edge of~$c$ to a $c$\,--\,$U_1$ path.
    
    Hence we assume~$T$ does not contain a vertex of degree~$\kappa$.
    Given some vertex~${v_0 \in V}$ and~${n \in \mathbb{N}}$, let~$D_n$ denote the vertices of~$T$ of distance~$n$ to~$v_0$.
    Since~$T$ is connected, the union $\bigcup \{ D_n\ |\ n \in \mathbb{N} \}$ equals~$V(T)$.
    And because~$\kappa$ is regular, it follows that~${\kappa = \aleph_0}$, and therefore that~$T$ is locally finite.
    Hence each~$D_n$ is finite and, since~$T$ is still infinite, each~$D_n$ is non-empty.
    Thus~$T$ contains a ray~$R$ by Lemma~\ref{koenig}.
    If~$R$ does not already contain infinitely many vertices of~$U_1$, then by the property of~$T$ there are infinitely many edges of~$T$ between~$V(R)$ and~${V(T - R)}$.
    We can extend infinitely many of these edges to a set of disjoint $R$\,--\,$U_1$ paths, ending in an infinite subset~${U_2 \subseteq U_1}$, yielding the desired comb.
    
    In both cases, $U_2$ is still $1$-connected, and hence serves as a candidate for $U_1$ as well, yielding the ``moreover'' part of the claim.
\end{proof}

The following immediate remark helps to identify when we can obtain stars by an application of the Star-Comb lemma.

\begin{remark}\label{rem:star-from-comb}
    If there is an~$\omega$-comb with teeth~$U$ and if~$v$ dominates~$\omega$, then there is also a set ${U' \subseteq U}$ with~${\abs{U'} = \abs{U} = \aleph_0}$ such that~$G$ contains a subdivided star with leaves~$U'$ and centre~$v$. \qed
\end{remark}

We say that an end $\omega$ is in the \emph{closure} of a set $U \subseteq V(G)$, if there is an $\omega$-comb whose teeth are in $U$.
Note that this combinatorial definition of closure coincides with the topological closure when considering the topological setting of locally finite graphs mentioned in the introduction \citelist{\cite{Diestel:GT5}*{Section~8.6}\cite{Diestel:topGTsurvey}}. 

\vspace{0.2cm}

For an end~$\omega$ of~$G$ and an induced subgraph~$G'$ of~$G$ we write~${\omega{\upharpoonright}G'}$ for the set of rays~${R \in \omega}$ which are also rays of~$G'$.
The following remarks are immediate.

\begin{remarks}\label{rem:end_subgraph}
    Let~${G' = G - S}$ for some finite~${S \subseteq V(G)}$.
    \begin{enumerate}
        \item $\omega{\upharpoonright}G'$ is an end of~$G'$ for every end~${\omega \in \Omega(G)}$.
        \item For every end $\omega' \in \Omega(G')$ there is an end $\omega \in \Omega(G)$ such that $\omega{\upharpoonright}G' = \omega'$.
        \item The degree of ${\omega \in \omega(G)}$ in $G$ is equal to the degree of $\omega{\upharpoonright}G'$ in $G'$.
        \item $\Dom(\omega) = \Dom(\omega{\upharpoonright}G') \cup (\Dom(\omega) \cap S)$ for every end~${\omega \in \Omega(G)}$.
        \qed
    \end{enumerate}
\end{remarks}

Given an end~${\omega \in \Omega(G)}$, we say that an $\omega$-ray~$R$ is \emph{$\omega$-devouring} if no $\omega$-ray is disjoint from~$R$.
We need the following lemma about the existence of a single $\omega$-devouring ray for an end $\omega$ of at most countable degree, which is a special case of~\cite{GH:end-devouring}*{Thm.~1}.

\begin{lemma}
    \label{lem:end_devouring}
    If $\deg(\omega) \leq \aleph_0$ for $\omega \in \Omega(G)$, then $G$ contains an $\omega$-devouring ray.
\end{lemma}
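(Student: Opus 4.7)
The plan is to fix a maximum family $\mathcal{R} = \{R_n : n < N\}$ of pairwise vertex-disjoint $\omega$-rays, whose size is $N = \deg(\omega) \leq \aleph_0$ by hypothesis and by Halin's theorem cited in the preliminaries. The maximality ensures that every $\omega$-ray meets some $R_n \in \mathcal{R}$. The goal is to weave all of $\mathcal{R}$ into a single $\omega$-ray $R$ which absorbs enough of each $R_n$ to force every $\omega$-ray to meet $R$ itself, not merely some $R_n$.

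I would construct $R$ recursively as the union of an increasing sequence of finite paths $P^{(0)} \subsetneq P^{(1)} \subsetneq \cdots$. At stage $k$, the path $P^{(k)}$ ends at a vertex $u_k$ on some $R_{i_k}$. To extend, I choose the next ``target'' ray $R_{i_{k+1}}$ according to a schedule that visits each index $n < N$ cofinally. Because $R_{i_k}$ and $R_{i_{k+1}}$ are equivalent rays in $\omega$ and only finitely many vertices have been used so far, Theorem~\ref{cardinality-menger} (applied to suitable tails of the two rays after deleting the finitely many vertices of $P^{(k)}$) yields an $R_{i_k}$\,--\,$R_{i_{k+1}}$ path disjoint from $P^{(k)}$. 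Travelling first along a long enough tail of $R_{i_k}$ past all previously used vertices of $R_{i_k}$, then across the new bridge, then onto $R_{i_{k+1}}$, extends $P^{(k)}$ to $P^{(k+1)}$. Scheduling the tails so that $R$ meets every $R_n$ cofinally makes $R := \bigcup_k P^{(k)}$ into an $\omega$-ray.

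The main obstacle will be verifying that this $R$ is actually $\omega$-devouring. Suppose for contradiction there is an $\omega$-ray $R'$ disjoint from $R$. By the maximality of $\mathcal{R}$, $R'$ meets some $R_n$. The weaving was arranged so that the components of each $R_n \setminus R$ are all finite ``pockets'' bounded by vertices of $R$, together with one finite initial piece. From this I would argue by a rerouting argument that the rays in $\mathcal{R}$ can be replaced by new $\omega$-rays that are pairwise disjoint and all disjoint from $R'$ as well, yielding $N+1$ pairwise disjoint $\omega$-rays and contradicting the maximality of $\mathcal{R}$. Making this rerouting precise while keeping the new rays disjoint from one another (and ensuring $R'$ cannot indefinitely use long intersections with any single $R_n$) is the technical heart of the argument, and is exactly the content of \cite{GH:end-devouring}*{Thm.~1}, of which the present lemma is a special case.
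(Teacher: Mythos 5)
Your proposal is not a self-contained proof but a sketch that explicitly defers its crux to the very citation the paper uses; the paper itself does no more than cite \cite{GH:end-devouring}*{Thm.~1} (and remark that an alternative route runs through normal-ray constructions), so in that sense you and the paper land in the same place. You are honest about the gap, which is good. Still, the scaffolding you supply around it has two concrete flaws worth naming if you ever try to close the gap yourself.

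First, you take a \emph{maximum}-size family $\mathcal{R}$ of pairwise disjoint $\omega$-rays (of size $\deg(\omega)$, by Halin's theorem) and then assert that its ``maximality'' ensures every $\omega$-ray meets some $R_n$. When $\deg(\omega) = \aleph_0$ these two notions come apart: a family of $\aleph_0$ disjoint $\omega$-rays already has maximum possible cardinality, yet it need not be maximal, since one more disjoint ray could still be added without changing the cardinality. To get the property you actually use, that every $\omega$-ray meets $\bigcup\mathcal{R}$, you should pass to a \emph{maximal} such family via Zorn's Lemma; it still has at most $\deg(\omega) \leq \aleph_0$ members, so countability survives.

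Second, and more seriously, the closing contradiction does not close when $\deg(\omega) = \aleph_0$. You propose to reroute $\mathcal{R}$ around a putative disjoint ray $R'$ and then observe that you obtain ``$N+1$ pairwise disjoint $\omega$-rays, contradicting the maximality of $\mathcal{R}$.'' But $\aleph_0 + 1 = \aleph_0$, so there is no cardinality clash, and (Zorn-)maximality of $\mathcal{R}$ as a specific set is not threatened by exhibiting some \emph{other} family of pairwise disjoint rays that does not extend $\mathcal{R}$. In the infinite-degree regime the devouring verification has to exploit the structure of the woven ray $R$ directly rather than argue by counting; this is precisely the nontrivial content of \cite{GH:end-devouring}*{Thm.~1}. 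The normal-ray construction the paper alludes to sidesteps the rerouting difficulty altogether by giving $R$ a separation property ``for free'' via normality, which is why the paper offers it as an alternative route.
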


A different way to prove this lemma arises from the construction of normal spanning trees, cf.~\cite{Diestel:GT5}*{Prop.~8.2.4}.
Imitating this proof according to an enumeration of the vertices of a maximal set of disjoint $\omega$-rays yields that the normal ray constructed this way is $\omega$-devouring.

\vspace{0.2cm}

Let us fix some notations regarding minors. 
Let~$G$ and~$M$ be graphs. 
We say~$M$ is a \emph{minor} of~$G$ if~$G$ contains an \emph{inflated subgraph~${H \subseteq G}$} witnessing this, i.e.~for each $v \in V(M)$ 
\begin{itemize}
    \item there is a non-empty \emph{branch set~${\mathfrak{B}(v) \subseteq V(H)}$};
    \item ${H[\mathfrak{B}(v)]}$ is connected;
    \item ${\{ \mathfrak{B}(v)\ |\ v \in V(M) \}}$ is a partition of~${V(H)}$; and
    \item there is an edge between~${v, w \in V(M)}$ in~$M$ if and only if there is an edge between some vertex in~${\mathfrak{B}(v)}$ and a vertex in~${\mathfrak{B}(w)}$ in~$H$.
\end{itemize}
We call~$M$ a \emph{finite-branch-set minor} or \emph{fbs-minor} of~$G$ if each branch set is finite.

Without loss of generality we may assume that such an inflated subgraph~$H$ witnessing that~$M$ is a minor of~$G$ is minimal with respect to the subgraph relation.
Then~$H$ has the following properties for all ${v, w \in V(M)}$:
\begin{itemize}
    \item $H[\mathfrak{B}(v)]$ is a finite tree~$T_v$;
    \item for each ${v, w \in V(M)}$ there is a unique edge~$e_{vw}$ in~$E(H)$ between~${\mathfrak{B}(v)}$ and~${\mathfrak{B}(w)}$ if ${vw \in E(M)}$, and no such edge if ${vw \notin E(M)}$;
    \item each leaf of $T_v$ is an endvertex of such an edge between two branch sets.
\end{itemize}

Given a subset ${C \subseteq V(M)}$ and a subset ${A \subseteq V(G)}$, we say that \emph{$M$ is an fbs-minor of~$G$ with~$A$ along~$C$}, 
if~$M$ is an fbs-minor of~$G$ 
such that the map mapping each vertex of the inflated subgraph to the branch set it is contained in induces a bijection between~$A$ and the branch sets of~$C$.
As before, we assume without loss of generality that an inflated subgraph~$H$ witnessing that~$M$ is an fbs-minor of~$G$ is minimal with respect to the subgraph relation.
We obtain the properties as above, but a leaf of $T_v$ could be the unique vertex of~$A$ in~${\mathfrak{B}(v)}$ instead.

\vspace{0.2cm}

For~${\ell, k \in \mathbb{N}}$, we write~${[\ell, k]}$ for the \emph{closed integer interval} ${\{ i \in \mathbb{N}\ |\ \ell \leq i \leq k \}}$ 
as well as~${[k, \ell)}$ for the \emph{half open integer interval}~${\{ i \in \mathbb{N}\ |\ \ell \leq i < k \}}$.

Given some set~$I$, a \emph{family}~$\mathcal{F}$ indexed by~$I$ is a sequence of the form~${( F_i\ |\ i \in I)}$, where the \emph{members}~$F_i$ are some not necessarily different sets.
For convenience we sometimes use a family and the set of its members with a slight abuse of notation interchangeably, for example with common set operations like~${\bigcup \mathcal{F}}$. 
Given some~${J \subseteq I}$, we denote by~${\mathcal{F}\restricted{J}}$ the \emph{subfamily}~${( F_j\ |\ j \in J )}$.
A set $T$ is a \emph{transversal} of~$\mathcal{F}$, if ${\abs{T \cap F_i} = 1}$ for all ${i \in I}$.
For a family~${( F_i\ |\ i \in \mathbb{N})}$ with index set $\mathbb{N}$ we say some property holds for \emph{eventually all} members, if there is some $N \in \mathbb{N}$ such that the property holds for~$F_i$ for all $i \in \mathbb{N}$ with $i \geq N$.

The following lemma is a special case of the famous Delta-Systems Lemma, a common tool of infinite combinatorics.
\begin{lemma}\label{delta-system-lemma}
    \cite{Kunen:set-theory1980}*{Thm.~II.1.6}
    Let~$\kappa$ be a regular cardinal,~$U$ be a set and \linebreak 
    ${\mathcal{F} = (F_\alpha \subseteq U\ |\ \alpha \in \kappa)}$ a family of finite subsets of~$U$.
    Then there is a finite set ${D \subseteq U}$ and a set~${I \subseteq \kappa}$ with~${\abs{I} = \kappa}$ such that~${F_\alpha \cap F_\beta = D}$ for all~${\alpha, \beta \in I}$ with ${\alpha \neq \beta}$.
\end{lemma}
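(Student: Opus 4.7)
The plan is to reduce to a subfamily in which all members have a common finite cardinality, and then to induct on that cardinality using a case split on whether some element of $U$ appears in $\kappa$ many of the sets.

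For the reduction, partition $\kappa = \bigcup_{n \in \mathbb{N}} I_n$ with $I_n := \{\alpha \in \kappa : \abs{F_\alpha} = n\}$. When $\kappa$ is uncountable and regular, a countable union cannot cover $\kappa$ unless one $I_n$ has size $\kappa$. When $\kappa = \aleph_0$, pigeonhole gives an infinite $I_n$ if the sizes are bounded; if the sizes are unbounded, one can alternatively carry out the Case~B argument below directly to obtain a pairwise disjoint subfamily of arbitrary sizes. In either case, I may pass to a subfamily of size $\kappa$ and assume $\abs{F_\alpha} = n$ for a fixed $n \in \mathbb{N}$.

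I now induct on $n$. The base case $n = 0$ is trivial with $D = \emptyset$ and $I = \kappa$. For the inductive step I distinguish two cases. \emph{Case A: some $x \in U$ lies in $F_\alpha$ for $\kappa$ many indices $\alpha$.} Let $I' \subseteq \kappa$ collect these indices, apply the induction hypothesis to the family $(F_\alpha \setminus \{x\} : \alpha \in I')$ of $(n{-}1)$-element sets to get a root $D' \subseteq U$ and a set $I \subseteq I'$ of size $\kappa$ with pairwise intersection $D'$, and set $D := D' \cup \{x\}$. \emph{Case B: every $x \in U$ lies in fewer than $\kappa$ of the $F_\alpha$.} Build pairwise disjoint representatives by transfinite recursion: having chosen $(\alpha_\gamma : \gamma < \beta)$ for some $\beta < \kappa$, let $V_\beta := \bigcup_{\gamma < \beta} F_{\alpha_\gamma}$. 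Since $V_\beta$ is a union of fewer than $\kappa$ finite sets and $\kappa$ is regular, $\abs{V_\beta} < \kappa$; similarly, because each $x \in V_\beta$ lies in fewer than $\kappa$ of the $F_\alpha$, the index set $\{\alpha \in \kappa : F_\alpha \cap V_\beta \ne \emptyset\}$ has size less than $\kappa$. Choose $\alpha_\beta$ outside this set, so $F_{\alpha_\beta}$ is disjoint from $V_\beta$. The resulting $(F_{\alpha_\beta} : \beta < \kappa)$ is a $\Delta$-system with empty root.

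The main obstacle, such as it is, is the cardinal arithmetic in Case~B: both the bound $\abs{V_\beta} < \kappa$ and the bound on ``bad'' indices use that $\kappa$ is regular, and the $\kappa = \aleph_0$ branch of the reduction needs a separate word since a countable partition can have all pieces finite only if the sizes are unbounded, in which case Case~B applied directly to the unreduced family already produces an infinite pairwise disjoint subfamily. All other steps are straightforward bookkeeping, so once the regularity-based cardinal estimates are in place the induction closes immediately.
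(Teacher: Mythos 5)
The paper itself gives no proof of this lemma --- it only cites Kunen --- so there is no internal argument to compare against; your proof is the standard sunflower/Delta-system argument (reduce to a common cardinality, then split on whether some element is frequent). For $\kappa$ regular uncountable, and also for $\kappa = \aleph_0$ with uniformly bounded set sizes, your two-case induction is correct: in Case~A the root grows by one and the size drops by one, and in Case~B the regularity of $\kappa$ correctly bounds both~$\abs{V_\beta}$ and the set of forbidden indices.

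However, the clause that handles $\kappa = \aleph_0$ with unbounded sizes is a genuine gap. You claim that in this situation ``one can alternatively carry out the Case~B argument below directly to obtain a pairwise disjoint subfamily.'' But Case~B has a hypothesis --- that every $x \in U$ appears in fewer than $\kappa$ of the $F_\alpha$ --- and unbounded sizes do not supply it. For the family ${F_n = \{0,1,\dots,n\}}$ over $U = \mathbb{N}$ the sizes are unbounded, yet $0 \in F_n$ for every $n$, so the Case~B hypothesis fails; and applying Case~A does not close the induction, since after peeling off~$0$ the sizes of $F_n \setminus \{0\}$ are still unbounded, and repeating yields an infinite ``root.'' In fact no proof is possible here, because the statement as written is false for this family: for any infinite $I = \{m_0 < m_1 < \cdots\}$ we have ${F_{m_0} \cap F_{m_1} = F_{m_0}}$ while ${F_{m_1} \cap F_{m_2} = F_{m_1} \neq F_{m_0}}$, so the pairwise intersections cannot all equal a single~$D$.

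To be fair, this is really a small imprecision in the paper's own formulation: Kunen's Theorem II.1.6 is stated for a regular cardinal strictly larger than the cardinal bounding the set sizes, which forces it to be uncountable, and the countable version requires a uniform bound on $\abs{F_\alpha}$. The paper's applications (Lemmas~\ref{lem:cofinal-minors} and~\ref{lem:lFK}) always apply the lemma to families of sets of size at most~$k$, so the paper is not actually in danger. But your proof, which must establish the lemma as literally stated, cannot succeed in the $\kappa = \aleph_0$, unbounded-size branch, and you should either restrict the statement to bounded-size families (which suffices for every use in the paper) or assume $\kappa$ uncountable.
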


\vspace{0.2cm}

A pair~${(T,\mathcal{V})}$ is a \emph{tree-decomposition of~$G$} if~$T$ is a (possibly infinite) tree and~$\mathcal{V}$ is a family~${( V_t\ |\ t \in V(T))}$ of sets of vertices of~$G$ such that
\begin{enumerate}
    \item[(T1)] ${V(G) = \bigcup \mathcal{V}}$; 
    \item[(T2)] for every edge~${e \in E(G)}$ there is a~${t \in V(T)}$ such that~${e \in E(G[V_t])}$; and
    \item[(T3)] ${V_{t_1} \cap V_{t_3} \subseteq V_{t_2}}$ whenever $t_1, t_2, t_3 \in V(T)$ satisfy~$t_2 \in V(t_1 T t_3)$.
\end{enumerate}

We call the sets in~$\mathcal{V}$ the \emph{parts} of~$(T,\mathcal{V})$. 
We call the sets~$V_{t_1} \cap V_{t_2}$ for each edge~${t_1 t_2 \in E(T)}$ the \emph{adhesion sets} of~$(T,\mathcal{V})$. 
If every adhesion set has cardinality less than~$\kappa$, then we say~$(T,\mathcal{V})$ has \emph{adhesion} less than~$\kappa$. 

In Section~\ref{sec:duality} we will make use of the existence of $k$-lean tree-decompositions for finite graphs to prove our desired duality theorem. 
Given~${k \in \mathbb{N}}$, a tree-decomposition of adhesion less than~$k$ is called \emph{$k$-lean} 
if for any two (not necessarily distinct) parts~$V_{t_1}$, $V_{t_2}$ of the tree-decomposition and vertex sets ${Z_1 \subseteq V_{t_1}}$, ${Z_2 \subseteq V_{t_2}}$ with ${\abs{Z_1} = \abs{Z_2} = \ell \leq k}$ 
there are either $\ell$ disjoint $Z_1$\,--\,$Z_2$ paths in~$G$ or there is an edge~${t t'}$ on the $t_1$\,--\,$t_2$ path in the decomposition tree inducing a separation of order less than $\ell$.
In particular, given a $k$-lean tree-decomposition, each part~$V_t$ is~${\min\{k, \abs{V_t}\}}$-connected in~$G$.

In \cite{CDHH:k-blocks}, the authors noted that the proof given in~\cite{DB:lean-td} of a theorem of Thomas \cite[]{Thomas:lean-finite}*{Thm.~5} about the existence of lean tree-decompositions witnessing the tree-width of a finite graph can be adapted to prove the existence of a $k$-lean tree-decomposition of that graph.

\begin{theorem}\label{thm:fin-k-lean}
    \cite{CDHH:k-blocks}*{Thm.~2.3}
    Every finite graph has a $k$-lean tree-decomposition for any~${k \in \mathbb{N}}$.
\end{theorem}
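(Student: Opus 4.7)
The plan is to adapt Thomas's construction of lean tree-decompositions, as presented by Bellenbaum and Diestel~\cite{DB:lean-td}, to the setting in which adhesion must stay below~$k$. Among all tree-decompositions~${(T,\mathcal{V})}$ of~$G$ of adhesion less than~$k$ (this class is non-empty: the trivial one-node decomposition with sole part~${V(G)}$ works, since~$G$ is finite), I would choose one for which the tuple~${(\abs{V_t} \colon t \in V(T))}$, arranged in non-increasing order, is lexicographically minimal. Such a minimum exists because~$G$ is finite. The key claim is then that any such lex-minimal tree-decomposition is automatically $k$-lean.

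To verify the claim I would argue by contradiction. Suppose the tree-decomposition is not $k$-lean, and fix parts~${V_{t_1}, V_{t_2}}$ together with sets ${Z_1 \subseteq V_{t_1}}$, ${Z_2 \subseteq V_{t_2}}$ of common size~${\ell \leq k}$ witnessing the failure: no~$\ell$ disjoint $Z_1$\,--\,$Z_2$ paths exist in~$G$, yet every edge of the~${t_1 T t_2}$ path induces a separation of order at least~$\ell$. By Theorem~\ref{finite-parameter-menger} there is a~$Z_1$\,--\,$Z_2$ separator~${S \subseteq V(G)}$ with~${\abs{S} < \ell \leq k}$; write~${(A,B)}$ for the resulting separation of~$G$ with~${A \cap B = S}$ and~${Z_i \setminus S}$ on the appropriate side. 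The construction then splits into two cases. If~${t_1 = t_2}$, I would subdivide this single node by inserting a new edge with adhesion set~$S$, giving the two new bags ${(V_{t_1} \cap A) \cup S}$ and~${(V_{t_1} \cap B) \cup S}$ and reattaching each neighbour of~$t_1$ according to which side its adhesion set lies on. If~${t_1 \neq t_2}$, I would perform the Bellenbaum--Diestel \emph{shift} along the path~${t_1 T t_2}$: every bag~$V_t$ on the path is replaced by an appropriate combination of~${V_t \cap A}$, ${V_t \cap B}$, and~$S$, while bags off the path are left unchanged.

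The main obstacle is verifying that the resulting object~${(T',\mathcal{V}')}$ is a valid tree-decomposition of adhesion less than~$k$ that is strictly lex-smaller than~${(T,\mathcal{V})}$. Checking the tree-decomposition axioms (T1)--(T3) and the adhesion bound should be routine, because the only new adhesion sets are copies of~$S$, which have size~${\abs{S} < k}$. The delicate point is the strict decrease: one has to show that replacing the large adhesion sets (of size~${\geq \ell}$) along the path by the smaller separator~$S$ produces a sorted tuple that is strictly smaller in lex order. The argument is a Menger-style accounting: because each adhesion set along~${t_1 T t_2}$ has size~${\geq \ell > \abs{S}}$, the~$\ell$ disjoint paths it yields between the two sides of~$S$ must all pass through~$S$, so the bag at which the failure is witnessed shrinks while no bag above the original maximum is created. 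This adaptation within the class of tree-decompositions of adhesion less than~$k$ is exactly what is indicated in~\cite{CDHH:k-blocks}: since the replacement separator~$S$ is always strictly smaller than the adhesion sets it supplants, the Bellenbaum--Diestel argument transfers to this restricted class without change.
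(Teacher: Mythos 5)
Your overall strategy matches the source the paper cites: minimise the sorted tuple of bag sizes lexicographically over all tree-decompositions of adhesion~$< k$, and show that a minimiser is $k$-lean via the Bellenbaum--Diestel shift, which is exactly how \cite{CDHH:k-blocks} obtain this theorem by adapting~\cite{DB:lean-td}.

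However, the justification you give for the step you yourself flag as delicate would not let a reader verify it. The claim that ``the only new adhesion sets are copies of~$S$'' is not what the shift produces: after replacing each bag $V_t$ by $V_t^A$ (identity on~$A$, collapsing the $B$-portion of each path in a chosen Menger family to its unique $S$-vertex), the adhesion set of a tree edge $tt'$ becomes $V_t^A \cap V_{t'}^A$, which a priori could combine part of $V_t \cap V_{t'}$ with part of~$S$. Keeping this below~$k$ actually requires arguing that $V_t^A \cap V_{t'}^A$ is the image of $V_t \cap V_{t'}$ under the shift, using that each path crosses~$S$ exactly once and that the tree-decomposition axioms force any segment of a path between its visits to $V_t \setminus V_{t'}$ and $V_{t'} \setminus V_t$ to meet $V_t \cap V_{t'}$; only the single new edge joining the $A$- and $B$-copies has separator contained in~$S$. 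Your $t_1 = t_2$ case is also too coarse as written: reattaching a neighbour $s$ to ``whichever side its adhesion set lies on'' breaks down when $V_{t_1} \cap V_s$ meets both $A \setminus B$ and $B \setminus A$, so the full shift is needed even there. Finally, the ``Menger-style accounting'' is inverted --- adhesion sets of size $\geq \ell$ do not ``yield $\ell$ disjoint paths'' (the hypothesis is precisely that fewer than~$\ell$ disjoint $Z_1$\,--\,$Z_2$ paths exist); the strict lexicographic decrease comes instead from the shift being a non-injective map on some bag along the path. The plan and the conclusion are correct, but the reasons given in the proposal would not carry a reader through the verification that makes the adaptation work.
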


\vspace{0.2cm}

A \emph{separation} of~$G$ is a tuple~${(A,B)}$ of vertex sets such that ${A \cup B = V(G)}$ and such that there is no edge of~$G$ between~${A \setminus B}$ and~${B \setminus A}$.
The set~${A \cap B}$ is the \emph{separator of $(A,B)$} and the cardinality ${\abs{A \cap B}}$ is called the \emph{order} of~$(A,B)$.
Given~${k \in \mathbb{N}}$, let~${S_k(G)}$ denote the set of all separations of $G$ of order less than~$k$.

For two separations~${(A,B)}$ and~${(C,D)}$ of~$G$ we define a partial order~$\leq$ between them as~${(A,B) \leq (C,D)}$ if~${A \subseteq C}$ and~${D \subseteq B}$. 

Two separations~${(A,B)}$ and~${(C,D)}$ are \emph{nested} if 
they are comparable with respect to this partial order. 
A set~$N$ of separations of~$G$ is called a \emph{nested separation system of~$G$} if it is \emph{symmetric}, i.e.~${(B,A) \in N}$ for each ${(A,B) \in N}$ and \emph{nested}, i.e.~the separations in~$N$ are pairwise nested. 

A \emph{chain} of separations is a set of separations which is linearly ordered by~$\leq$. 
In this paper, we will consider chains of ordinal \emph{order type}, i.e.~order isomorphic to an ordinal. 

An \emph{orientation}~$O$ of a nested separation system~$N$ is a subset of~$N$ that contains precisely one of~${(A,B)}$ and~${(B,A)}$ for all~${(A,B) \in N}$.
An orientation~$O$ of~$N$ is \emph{consistent} if whenever~${(A,B) \in O}$ and~${(C,D) \in N}$ 
with~${(C,D) \leq (A,B)}$, then~${(C,D) \in O}$.
For each consistent orientation~$O$ of~$N$ we define a \emph{part~$V_O$} of~$N$ as the vertex set ${\bigcap \{ B\ |\ (A,B) \in O \}}$. 
We say a consistent orientation~$O_2$ is \emph{between} two consistent orientations~$O_1$ and~$O_3$ if~${O_1 \cap O_3 \subseteq O_2}$. 
In some sense, the separations in the symmetric difference of~$O_1$ and~$O_3$ represent `edges' of the unique `path' between~$O_1$ and~$O_3$ and the consistent orientations between them correspond to the `nodes' of that `path', see~\cites{Diestel:tree-sets,GK:infinite-tree-sets} for more details. 

Now it is easy to verify analogues of the axioms for tree-decompositions: 
the union of all parts cover the vertex set of~$G$, 
every edge of~$G$ is contained in~${G[V_O]}$ for some consistent orientation~$O$ of~$N$, 
and the intersection of two parts~$V(O_1)$ and~$V(O_3)$ is contained in any part corresponding to a consistent orientation~$O_2$ between~$O_1$ and~$O_3$. 

Note that we allow the empty set~$\emptyset$ as a nested separation system.
In this case, we say that~${V(G)}$ is a part of~$\emptyset$ (this can be viewed as the empty intersection of vertex sets of the empty set as an orientation of $\emptyset$).

A nested separation system~$N$ has \emph{adhesion less than~$k$} if all separations it contains have order less than~$k$, i.e.~${N \subseteq S_k(G)}$.

\vspace{0.2cm}

Each oriented edge of the tree of a tree-decomposition of~$G$ induces a separation ${(A,B)}$ where~$B$ is the union of the parts corresponding to the side where the edge is oriented towards, while~$A$ is the union of the parts on the other side.
It is easy to check that the set of separations induced by all those edges is a nested separation system.
Moreover, properties like adhesion and the size of parts are transferred by this process.

On the other hand, given a nested separation system~$N$ that contains no chains of order type~${\omega+1}$ and no separations of the form~${(A,V(G))}$ then it is possible to construct a tree that `captures' the tree-like structure of~$N$: 
the nodes of the tree will be the consistent orientations of~$N$ that contain a maximal element, and there is an edge between nodes precisely if the orientations orient a unique element of~$N$ oppositely. 
For more details on this construction, see~\cite{GK:infinite-tree-sets}*{Section~3}. 
Now it is easy to see that given two nodes of the tree, the set of parts between the parts corresponding to the two nodes defines us the unique path in the tree between those two nodes. 
Together with the observations above we conclude that this defines us a tree-decomposition of~$G$.

\begin{theorem}
    \cite{GK:infinite-tree-sets}*{Section~3}
    \label{thm:ts2td}
    A nested separation system of~$G$ which does not contain any separations of the form~${(A,V(G))}$ and which does not contain any chains of order type~${\omega+1}$ defines a tree-decomposition with the same parts and the same adhesion. 
\end{theorem}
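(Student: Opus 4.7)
The plan is to follow the construction sketched in the paragraph preceding the theorem: build a tree $T$ whose nodes are the consistent orientations of $N$ containing a maximal element of $N$, with an edge joining two such orientations whenever they differ by flipping a unique separation. For each node $O$ of $T$, define $V_O := \bigcap_{(A,B) \in O} B$ and set $\mathcal{V} := (V_O)_{O \in V(T)}$.

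The first task is to verify that $T$ is actually a tree. The key lemma is that for any two nodes $O_1, O_2$, the \emph{disagreement set} $D := \{s \in O_1 : s^{-1} \in O_2\}$ is finite; then a finite path from $O_1$ to $O_2$ in $T$ is obtained by flipping the elements of $D$ in a maximum-first order. The finiteness of $D$ uses all the hypotheses in concert: consistency forces $D$ to be upward closed in $O_1$ under $\leq$ (since $(A,B) \leq (C,D)$ gives $(D,C) \leq (B,A)$, hence $(D,C) \in O_2$ by consistency of $O_2$); the maximal element $m$ of $O_1$ then acts as an anchor, and a case analysis using nestedness shows that every $s \in D$ lies strictly below either $m$ or $m^{-1}$, where the simultaneous occurrence of both would force $B = V(G)$ for some separation, which is forbidden. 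An infinite ascending chain in $D$ would then lie strictly below $m$ or strictly below $m^{-1}$ throughout, yielding a chain of order type $\omega+1$ in $N$ and contradicting the chain hypothesis. Acyclicity of $T$ is immediate: traversing a cycle would require flipping a single separation both ways, which consistency forbids.

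Next, I would verify the tree-decomposition axioms. For (T1), to each $v \in V(G)$ associate the orientation $O_v$ that places $v$ on the ``$B$-side'' of every separation of $N$, with a globally consistent tie-breaking when $v$ lies in a separator; the chain hypothesis then forces $O_v$ to contain a maximal element of $N$, so $O_v \in V(T)$ and $v \in V_{O_v}$. For (T2), no edge of $G$ crosses the separator of any $(A,B) \in N$, so both endpoints of an edge lie on the same side of every separation and may be placed in a common part. For (T3), if $O_2$ lies between $O_1$ and $O_3$ on the path in $T$ — meaning $O_1 \cap O_3 \subseteq O_2$ — then $V_{O_1} \cap V_{O_3} \subseteq V_{O_2}$ follows directly from the definition of $V_O$.

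Finally, for parts and adhesion: any consistent orientation of $N$ without a maximal element has part either empty or coinciding with the part of some node of $T$ (since any $v$ in such a part forces a nodal orientation $O_v$ whose part contains the same intersection), so the parts of $(T, \mathcal{V})$ and of $N$ agree. Each edge $OO'$ of $T$ corresponds to flipping a single separation $(A,B) \in N$, and the adhesion set $V_O \cap V_{O'}$ equals $A \cap B$; hence the adhesion of $(T, \mathcal{V})$ is $\sup\{|A \cap B| : (A,B) \in N\}$, matching the adhesion of $N$. The main obstacle is establishing finiteness of the disagreement set $D$ between any two nodes: without it, $T$ would contain infinite ``pseudo-paths'' and fail to be a tree, and it is here that the $\omega+1$-chain hypothesis, the exclusion of separations of the form $(A, V(G))$, and the maximality structure of nodes must all come together.
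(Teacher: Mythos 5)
Your plan is the construction the paper cites from Gollin--Kneip, so the approach is right; the issue is that the crucial lemma --- finiteness of the disagreement set $D$ between two nodes --- is not actually established by the argument you give. You argue that $D$ is upward-closed in $O_1$ and then pass to ``an infinite ascending chain in $D$,'' but upward-closedness of a subset of a poset does not produce ascending chains: an upward-closed set can in principle be an infinite antichain. What saves the argument is stronger and you do not state it: $D$ is \emph{totally ordered}. For $s,t\in D$, nestedness gives one of $s\leq t$, $t\leq s$, $s\leq t^{-1}$, $s^{-1}\leq t$; the option $s^{-1}\leq t$ places $s^{-1}$ in $O_1$ by consistency of $O_1$ (contradicting $s\in O_1$), and the option $s\leq t^{-1}$, i.e.\ $t\leq s^{-1}$, places $t$ in $O_2$ by consistency of $O_2$ (contradicting $t^{-1}\in O_2$). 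Once $D$ is known to be a chain, an infinite $D$ contains a strictly increasing or a strictly decreasing $\omega$-sequence; the increasing case is the one you treat (capped by $m$ or by $m^{-1}$, with the no-$(A,V(G))$ assumption separating the two), but the decreasing case is silently omitted and needs the symmetric argument applied to $D^{-1}\subseteq O_2$ using a maximal element of $O_2$. Without the chain observation and the descending case your argument does not show $D$ is finite, and hence does not show $T$ is a tree.

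A secondary gap is in (T1). The claim that ``the chain hypothesis forces $O_v$ to contain a maximal element'' is false as stated: take a ray $v_0v_1\dots$, add a vertex $w$ adjacent to every $v_i$, and let $N$ consist of the separations with separators $\{v_i,w\}$ for $i\geq 1$ together with their inverses. Then $w$ lies in every separator, the ``forward'' consistent orientation has nonempty part $\{w\}$, yet it has no maximal element --- and $N$ contains no chain of order type $\omega+1$ and no separation $(A,V(G))$. What is actually needed is that the tie-breaking can be \emph{chosen} so as to land in a nodal orientation whose part contains $v$; the phrase ``globally consistent tie-breaking'' conceals exactly that nontrivial work. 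Relatedly, your parenthetical in the parts/adhesion paragraph only gives the containment $V_O\subseteq V_{O_v}$, which is weaker than the ``coinciding'' you assert.
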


This relationship between tree-decompositions and nested separation systems is well-known for finite graphs. 
For more information on nested separation systems and their connection to tree-decompositions we refer the interested reader to \cites{Diestel:tree-sets, GK:infinite-tree-sets}. 

The definition of $k$-lean tree-decomposition can easily be lifted to nested separation systems.
A nested separation system ${N \subseteq S_k(G)}$ is called \emph{$k$-lean} if given any two (not necessarily distinct) parts~$V_{O_1}$,~$V_{O_2}$ of~$N$ and vertex sets ${Z_1 \subseteq V_{O_1}}$, ${Z_2 \subseteq V_{O_2}}$ with ${\abs{Z_1} = \abs{Z_2} = \ell \leq k}$ 
there are either $\ell$ disjoint $Z_1$\,--\,$Z_2$ paths in~$G$ or there is a separation~$(A,B)$ in~$N$ with ${V_{O_1} \subseteq A}$ and ${V_{O_2} \subseteq B}$ of order less than~$\ell$.
Here, we specifically allow the empty set as a nested separation system to be $k$-lean if its part, the whole vertex set of~$G$, is~${\min\{k, \abs{V(G)}\}}$-connected.
Again, we obtain that each part~$V_O$ of a $k$-lean nested separation system is~${\min\{k, \abs{P}\}}$-connected in~$G$.
Moreover, note that the nested separation system that a $k$-lean tree-decomposition induces is $k$-lean as well. 

In Section~\ref{sec:duality} we will lift Theorem~\ref{thm:fin-k-lean} to nested separation systems. 
This then allows us to construct a $k$-lean tree-decomposition for any graph and any~${k \in \mathbb{N}}$.

\section{Typical graphs with \emph{k}-connected sets}
\label{sec:typical}

Throughout this section, let $k \in \mathbb{N}$ be fixed.
Let $\kappa$ denote an infinite cardinal.

In Subsection~\ref{subsec:k-typ} we shall describe 
an up to isomorphism finite class of graphs each of which contains a designated $k$-connected set of size~$\kappa$.
We call such a graph a \emph{$k$-typical graph} and the designated $k$-connected set its \emph{core}.
These graphs will appear as the minors of Theorem~\ref{main-thm-simple-2}\ref{item:t2-minor}.

In Subsection~\ref{subsec:gen-k-typ} we shall describe 
based on these $k$-typical graphs a more general but still finite class of graphs each of which again contains a designated $k$-connected set of size~$\kappa$.
We call such a graph a \emph{generalised $k$-typical graph} and the designated $k$-connected set its \emph{core}.
These graphs will appear as the topological minors of Theorem~\ref{main-thm-simple-2}\ref{item:t2-subdivision}.

\subsection{\emph{k}-typical graphs}
\label{subsec:k-typ}
\ \newline \indent
The most basic graph with a $k$-connected set of size~$\kappa$ is a complete bipartite graph ${K_{k, \kappa} = K([0,k), Z)}$ for any infinite cardinal~$\kappa$ and a set~$Z$ of size~$\kappa$ disjoint from~$[0,k)$.
Although in this graph the whole vertex set is $k$-connected, we only want to consider the infinite side~$Z$ as the \emph{core}~$C(K_{k,\kappa})$ of $K_{k, \kappa}$, cf.~Figure~\ref{fig:k4kappa}.
This is the first instance of a \emph{$k$-typical graph} with a core of size~$\kappa$.
For uncountable regular cardinals~$\kappa$, this is the only possibility for a $k$-typical graph with a core of size~$\kappa$.

\begin{figure}[htbp]
    \includegraphics[scale=1.5]{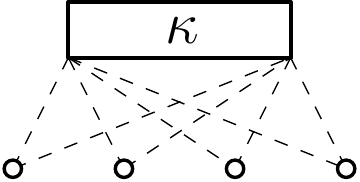}
    \caption{A stylised version of a $K_{4, \kappa}$, where the large box stands for the core of $\kappa$ many vertices and the dashed lines from a vertex to the corners of the box represent that this vertex is connected to all vertices in the box.}
    \label{fig:k4kappa}
\end{figure}

\vspace{0.2cm}

A \emph{$k$-blueprint}~$\mathcal{B}$ is a tuple~$(B,D)$ 
such that
\begin{itemize}
    \item $B$ is a tree of order $k$; and 
    \item $D$ is a set of leaves of $B$ with $\abs{D} < \abs{V(B)}$.
\end{itemize}

\noindent
Take the ray ${\mathfrak{N} := \big( \mathbb{N}, \{n (n+1)\ |\ n \in \mathbb{N} \} \big)}$ and the Cartesian product $B \times \mathfrak{N}$.
For a node $b \in V(B)$ and $n \in \mathbb{N}$ let 
\begin{itemize}
    \item $b_n$ denote the vertex $(b,n)$;
    \item $\mathfrak{N}_b$ denote the ray $(\{b\}, \emptyset) \times \mathfrak{N} \subseteq B \times \mathfrak{N}$; and
    \item $B_n$ denote the subgraph $B \times (\{n\}, \emptyset) \subseteq B \times \mathfrak{N}$.
\end{itemize}
Then let~${\mathfrak{N}(B/D) := (B \times \mathfrak{N}) / \{ \mathfrak{N}_d\ |\ d \in D \}}$ denote the contraction minor of~${B \times \mathfrak{N}}$ obtained by contracting each ray~$\mathfrak{N}_d$ for each~${d \in D}$ to a single vertex.
We denote the vertex of~$\mathfrak{N}(B/D)$ corresponding to the contracted ray $\mathfrak{N}_d$ by~$d$ for~${d \in D}$ and call such a vertex \emph{dominating}.
Using this abbreviated notation, we call the tree~${B_n - D}$ the \emph{$n$-th layer of~$\mathfrak{N}(B/D)$}.

A triple $\mathcal{B} = (B,D,c)$ is called a \emph{regular $k$-blueprint} if $(B,D)$ is a $k$-blueprint and ${c \in V(B) \setminus D}$.
We denote by~${T_k(\mathcal{B})}$ the graph~${\mathfrak{N}(B/D)}$ and by~${C(T_k(\mathcal{B}))}$ the vertex set~${V(\mathfrak{N}_c)}$, which we call the 
\emph{core} of~$T_k(\mathcal{B})$, 
see Figure~\ref{fig:regBP-small} for an example.

\begin{figure}[htbp]
    \includegraphics[scale=1]{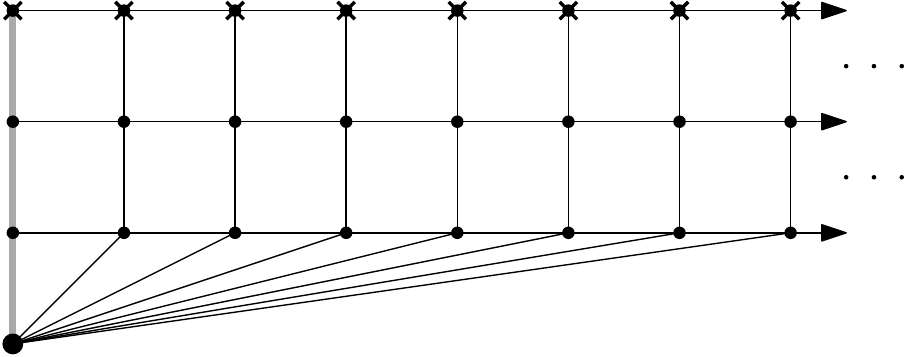}
    \caption{Image of $T_4(P,\{d\},c)$ where $P = c a b d$ is a path of length $3$ between nodes $c$ and $d$.
    $P_0$ is represented in gray.
    The crosses represent its core.}
    \label{fig:regBP-small}
\end{figure}

\begin{lemma}\label{lem:typ-countable}
    For a regular $k$-blueprint $\mathcal{B}$ the core of~$T_k(\mathcal{B})$ is $k$-connected in~$T_k(\mathcal{B})$.
\end{lemma}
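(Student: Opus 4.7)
Let $Z_1, Z_2 \subseteq C(T_k(\mathcal{B}))$ with $\abs{Z_1} = \abs{Z_2} = \ell \leq k$. My plan is to construct $\ell$ vertex-disjoint $Z_1$\,--\,$Z_2$ paths in $T_k(\mathcal{B})$ directly by routing each matched pair through a distinct ``strand'' of the product $B \times \mathfrak{N}$.

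First, I will fix a breadth-first enumeration $b_1, b_2, \ldots, b_k$ of $V(B)$ with $b_1 = c$ so that $\{b_1, \ldots, b_i\}$ induces a subtree of $B$ for every $i$, arranging moreover (using that the elements of $D$ are leaves of~$B$) that the vertices of $D$ come last. For each $i$ let $P_i$ denote the unique $c$\,--\,$b_i$ path in $B$; note that $V(P_i) \subseteq \{b_1, \ldots, b_i\}$. This partitions the vertex set of $T_k(\mathcal{B})$ into $k$ pairwise disjoint strands: the rays $\mathfrak{N}_{b_i}$ for $i \leq k - \abs{D}$, and the single contracted vertices $b_i$ for $i > k - \abs{D}$.

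Next, I view $Z_1, Z_2$ as subsets of $V(\mathfrak{N}_c)$ and choose a non-crossing perfect matching $\mu$ between them, matching each vertex of $Z_1 \cap Z_2$ to itself trivially. Since $\mu$ is non-crossing, its pairs form a forest under the nesting order, and so I pick a linear extension $\sigma \colon \mu \to \{1, 2, \ldots, \ell\}$ ordering inner pairs before outer pairs. For each non-trivial pair $p = \{c_r, c_s\}$ with $r < s$, the associated path first goes vertically in layer $B_r$ from $c_r$ along a copy of $P_{\sigma(p)}$ to $(b_{\sigma(p)})_r$ (or to the contracted vertex $b_{\sigma(p)}$ if $b_{\sigma(p)} \in D$), then horizontally along $\mathfrak{N}_{b_{\sigma(p)}}$ to $(b_{\sigma(p)})_s$, and finally vertically in layer $B_s$ along $P_{\sigma(p)}$ back to $c_s$; if $\sigma(p) = 1$, it is simply the direct $c_r$\,--\,$c_s$ segment of $\mathfrak{N}_c$.

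The main obstacle is the vertex-disjointness verification. Distinct $\sigma$-values place horizontal segments on distinct strands, so horizontal-horizontal conflicts and collisions at contracted $D$-vertices are automatic. Vertical segments sit at pairwise distinct endpoint layers (the vertices of $Z_1 \triangle Z_2$), so vertical-vertical conflicts cannot occur. The delicate case---that no vertical of some pair $q$ meets the horizontal of another pair $p$---would require $b_{\sigma(p)} \in V(P_{\sigma(q)})$, forcing $\sigma(p) \leq \sigma(q)$ by the BFS property, while simultaneously an endpoint of $q$ must lie in the interval of $p$, which by non-crossing forces $q$ to be nested inside $p$ and hence $\sigma(q) < \sigma(p)$ by the linear extension---a contradiction. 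Orchestrating the matching, the linear extension, and the BFS enumeration so that this triple compatibility yields the disjointness proof is the core of the argument; a minor additional check handles trivial pairs at $Z_1 \cap Z_2$, which in the nesting order lie strictly below every non-trivial pair that contains them and therefore never collide with the $c$-strand usage of the innermost pair.
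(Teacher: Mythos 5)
Your proposal constructs the $\ell$ disjoint paths explicitly, which is a genuinely different route from the paper's. The paper argues indirectly via Menger: it supposes a separator $S$ of size less than $k'$ exists, chooses layers $m,n$ with $c_m \in U\setminus S$, $c_n \in W\setminus S$ such that those two layers miss $S$, picks a strand $b$ (one of the $k \ge k'$ strands) avoiding $S$ by pigeonhole, and then builds a single $c_m$\,--\,$c_n$ path through $B_m$, $\mathfrak{N}_b$, $B_n$ avoiding $S$ — a contradiction — after which Theorem~\ref{finite-parameter-menger} delivers the $k'$ disjoint paths in one stroke. You instead trade the Menger invocation for explicit bookkeeping: a subtree-preserving enumeration with $b_1 = c$ and $D$ last, a non-crossing perfect matching between $Z_1$ and $Z_2$, and a linear extension of the nesting forest with inner pairs first. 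The tripartite compatibility you rely on does hold: the subtree property gives $\sigma(p) \le \sigma(q)$ whenever $b_{\sigma(p)} \in V(P_{\sigma(q)})$, while non-crossing together with inner-before-outer gives $\sigma(q) < \sigma(p)$ whenever an endpoint of $q$ lies strictly inside the interval of $p$; your check that verticals sit in pairwise distinct layers (the indices of $Z_1 \triangle Z_2$) and that $D$-contractions can only be hit by a path using that very $D$-strand (because elements of $D$ are leaves) closes the remaining cases. The paper's route is considerably shorter and cleaner because the separator argument only needs to produce one connecting path, leaving Menger to handle multiplicity; your route is elementary and constructive — no Menger, and it hands you the paths explicitly — at the cost of orchestrating the matching, the enumeration, and the linear extension, and you correctly flag but do not fully write out the bookkeeping around contracted $D$-vertices and trivial pairs in $Z_1 \cap Z_2$.
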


\begin{proof}
    Let $\mathcal{B} = (B, D, c)$ and let~$C = C(T_k(\mathcal{B}))$ denote the core of~$T_k(\mathcal{B})$. 
    Let~${U, W \subseteq C}$ with~${\abs{U} = \abs{W} = k' \leq k}$.
    Suppose for a contradiction there is a vertex set~$S$ of size less than~$k'$ separating~$U$ and~$W$.
    Then there are~${m, n \in \mathbb{N}}$ with~${c_m \in U \setminus S}$, ${c_n \in W \setminus S}$ such that the $n$-th and $m$-th layer are both disjoint from~$S$. 
    Moreover there is a~${b \in {B}}$ such that~$\mathfrak{N}_b$ (or~$\{b\}$ if~${b \in D}$) are disjoint from~$U$ and~$W$.
    Hence we can connect~$c_m$ and~$c_n$ with the path consisting of the concatenation of the unique $c_m$\,--$b_m$ path in~$B_m$, the unique $b_m$\,--\,$b_n$ path in~$\mathfrak{N}_b$ and the unique $b_n$\,--\,$c_n$ path in~$B_n$.
    This path avoids~$S$, contradicting that~$S$ is a separator.
    By Theorem~\ref{finite-parameter-menger} there are~$k'$ disjoint $U$\,--\,$W$ paths, and hence~$C$ is $k$-connected in~$T_k(\mathcal{B})$.
\end{proof}

For any regular $k$-blueprint~${\mathcal{B} = (B,D,c)}$ the graph~$T_k(\mathcal{B})$ is a \emph{$k$-typical graph} with a countable core.
Such graphs are besides the complete bipartite graph~${K_{k, \aleph_0}}$ the only other $k$-typical graphs with a core of size~$\aleph_0$.

Note that given two regular $k$-blueprints~${\mathcal{B}_1 = (B_1, D_1, c_1})$ and~${\mathcal{B}_2 = (B_2, D_2, c_2)}$ such that there is an isomorphism~$\varphi$ between~$B_1$ and~$B_2$ that maps~$D_1$ to~$D_2$, then~${T_k(\mathcal{B}_1)}$ and~${T_k(\mathcal{B}_2)}$ are isomorphic.
Moreover, if~$\varphi$ maps~$c_1$ to~$c_2$, then there is an isomorphism between~${T_k(\mathcal{B}_1)}$ and~$T_k(\mathcal{B}_2)$ that maps the core of~${T_k(\mathcal{B}_1)}$ to the core of~${T_k(\mathcal{B}_2)}$.
Hence up to isomorphism there are only finitely many $k$-typical graphs with a core of size~$\aleph_0$.

\vspace{0.2cm}

Given a singular cardinal~$\kappa$ we have more possibilities for typical graphs with $k$-connected sets of size $\kappa$.
We call a sequence ${\mathcal{K} = ( \kappa_\alpha < \kappa\ |\ \alpha \in \cf\kappa )}$ of infinite cardinals a \emph{good $\kappa$-sequence}, if 
\begin{itemize}
    \item it is cofinal, i.e.\ $\bigcup \mathcal{K} = \kappa$;
    \item it is strictly ascending, i.e.\ $\kappa_\alpha < \kappa_\beta$ for all $\alpha < \beta$ with $\alpha, \beta \in \cf\kappa$;
    \item $\cf\kappa < \kappa_\alpha < \kappa$ for all $\alpha \in \cf\kappa$; and
    \item $\kappa_\alpha$ is regular for all $\alpha \in \cf\kappa$.
\end{itemize}
Note that given any~${I \subseteq \cf\kappa}$ with~${\abs{I} = \cf\kappa}$ there is a unique order-preserving bijection between~$\cf\kappa$ and~$I$.
Hence we can relabel any cofinal subsequence~${\mathcal{K}\restricted I}$ of a good $\kappa$-sequence~$\mathcal{K}$ to a good $\kappa$-sequence ${\overline{\mathcal{K}\restricted I}}$.
Moreover, note that any cofinal sequence can be made into a good $\kappa$-sequence by looking at a strictly ascending subsequence starting above the cofinality of~$\kappa$, then replacing each element in the sequence by its successor cardinal and relabeling as above.
Here we use the fact that each successor cardinal is regular.
Hence for every singular cardinal~$\kappa$ there is a good $\kappa$-sequence.

\vspace{0.2cm}

Let ${\mathcal{K} = ( \kappa_\alpha < \kappa\ |\ \alpha \in \cf\kappa )}$ be a good $\kappa$-sequence and 
let~${\ell \leq k}$ be a non-negative integer.
As a generalisation of the graph~$K_{k,\kappa}$ we first consider the disjoint union of the complete bipartite graphs~$K_{k,\kappa_\alpha}$.
Then we identify $\ell$ sets of vertices each consisting of a vertex of the finite side of each graph, and connect the other~${k - \ell}$ vertices of each with disjoint stars~$K_{1, \cf\kappa}$.
More formally, 
let~${X = [\ell,k) \times \{0\}}$,  
and for each~${\alpha \in \cf\kappa}$ let~${Y^\alpha = \{\alpha\} \times [0,k) \times \{1\}}$ 
and let~${Z^\alpha = \{\alpha\} \times \kappa_\alpha \times \{2\}}$. 
We denote the family~${( Y^\alpha\ |\ \alpha \in \cf\kappa )}$ with~$\mathcal{Y}$ and the family~${( Z^\alpha\ |\ \alpha \in \cf\kappa )}$ with~$\mathcal{Z}$.
Then consider the union~${\bigcup \{ K(Y^\alpha, Z^\alpha)\ |\ \alpha \in \cf\kappa \}}$
of the complete bipartite graphs 
and let~${\lK{\mathcal{K}}}$ denote the graph where for each ${i \in [0,\ell)}$ we identify the set ${\cf\kappa \times \{i\} \times \{1\}}$ to one vertex in that union.
For this graph we fix some further notation. Let 
\begin{itemize}
    \item $x_i$ denote $(i, 0) \in X$ for ${i \in [\ell,k)}$;
    \item $y_i = y_i^\alpha$ for all~${\alpha \in \cf\kappa}$ denote the vertex corresponding to ${\cf\kappa \times \{i\} \times \{1\}}$ for $i \in [0,\ell)$; we call such a vertex a \emph{degenerate vertex of~$\lK{\mathcal{K}}$};
    \item $y_i^\alpha$ denote $(\alpha, i, 1)$ for~${i \in [\ell,k)}$; and 
    \item $\mathcal{Y}_i$ denote $( y_i^\alpha\ |\ \alpha \in I)$ for~${i \in [\ell,k)}$.
\end{itemize}

Note that while the definition of~$\lK{\mathcal{K}}$ formally depends on the choice of a good $\kappa$-sequence, the structure of the graph is independent of that choice.

\begin{remark}\label{rem:cof1}
    ${\lK{\mathcal{K}_0}}$ is isomorphic to a subgraph of~${\lK{\mathcal{K}_1}}$, and vice versa, for any two good $\kappa$-sequences $\mathcal{K}_0$, $\mathcal{K}_1$.
    \qed
\end{remark}

Given $\lK{\mathcal{K}}$ as above, 
let $S_i$ denote the star~${K(\{ x_i \}, \bigcup \mathcal{Y}_i )}$ for all~$i \in [\ell, k)$.
Consider the union of~${\lK{\mathcal{K}}}$ with~${\bigcup_{i \in [\ell, k)} S_i}$.
We call this graph~$\lFK(\mathcal{K})$, or an \emph{$\ell$-degenerate frayed $K_{k,\kappa}$} (with respect to~$\mathcal{K}$).
As before, any vertex~$y_i$ for~${i \in [0, \ell)}$ is called a \emph{degenerate vertex of~$\lFK(\mathcal{K})$}, and any~$x_i$ for~${i \in [\ell, k)}$ is called a \emph{frayed centre of~$\lFK(\mathcal{K})$}.
The \emph{core}~${C(\lFK(\mathcal{K}))}$ of~${\lFK(\mathcal{K})}$ is the vertex set~${\bigcup \mathcal{Z}}$. 
As with $K_{k,\kappa}$ it is easy to see that~$C(\lFK(\mathcal{K}))$ is $k$-connected in~$\lFK(\mathcal{K})$ and of size~$\kappa$.

Note that Remark~\ref{rem:cof1} naturally extends to~$\lFK(\mathcal{K})$.
Hence for each $\kappa$ we now fix a specific good $\kappa$-sequence and write just $\lFK$ when talking about an $\ell$-degenerate frayed $K_{k,\kappa}$ regarding that sequence, 
see Figure~\ref{fig:2FK4kappa} for an example.
Further note that $k$--$FK_{k,\kappa}$ is isomorphic to~$K_{k,\kappa}$.
We also call a $0$-degenerate frayed~$K_{k,\kappa}$ just a \emph{frayed~$K_{k,\kappa}$} or~$FK_{k,\kappa}$ for short.

For a singular cardinal $\kappa$ and for any~${\ell \in [0,k]}$ the graph~$\lFK$ is a \emph{$k$-typical graph} with a core of size~$\kappa$.
These are besides the complete bipartite graph~$K_{k, \kappa}$ the only other $k$-typical graphs with a core of size~$\kappa$ if $\kappa$ has uncountable cofinality. 

\begin{figure}[htbp]
    \includegraphics[scale=1]{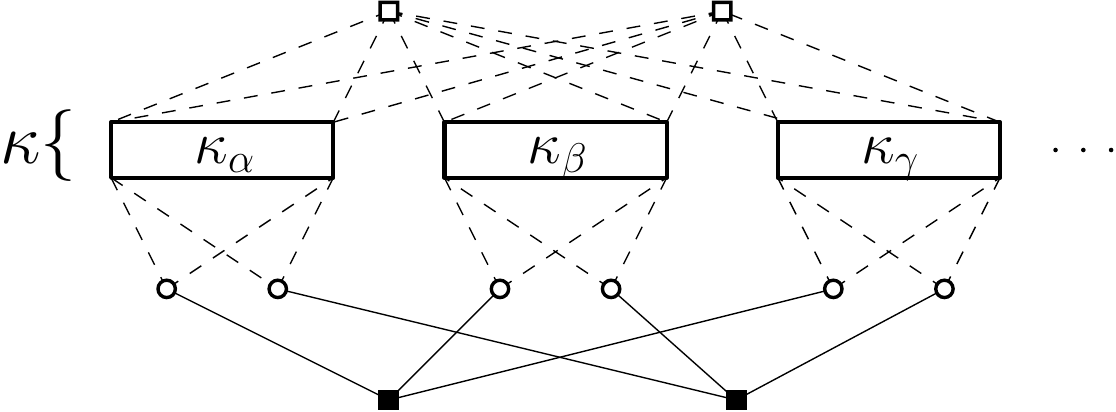}
    \caption{Image of $2$\,--\,$FK_{4,\kappa}$.
    The black squares represent the frayed centres and the white squares the degenerate vertices.
    Its core is represented by the union of the boxes (labelled according to the fixed good $\kappa$-sequence) and has size $\kappa$ as illustrated by the bracket.}
    \label{fig:2FK4kappa}
\end{figure}

\vspace{0.2cm}

Next we will describe the other possibilities of $k$-typical graphs for singular cardinals with countable cofinality.

A \emph{singular $k$-blueprint} $\mathcal{B}$ 
is a $5$-tuple $(\ell, f , B, D, \sigma)$ such that 
\begin{itemize}
    \item $0 \leq \ell + f < k$;
    \item $(B,D)$ is a~${(k - \ell - f)}$-blueprint with~${2\cdot\abs{D} \leq \abs{V(B)}}$; and 
    \item $\sigma: [\ell+f, k) \to V(B-D) \times \{0,1\}$ is an injective map.
\end{itemize}

Let~${\mathcal{B} = (\ell,f,B,D,\sigma)}$ be a singular $k$-blueprint and let~${\mathcal{K} = ( \kappa_\alpha < \kappa\ |\ \alpha \in \aleph_0 )}$ be a good $\kappa$-sequence.
We construct our desired graph~${T_k(\mathcal{B})(\mathcal{K})}$ as follows.
We start with $\ell$-$FK_{k, \kappa}(\mathcal{K})$
with the same notation as above.
We remove the set ${\{ x_i\ |\ i \in [\ell + f, k) \}}$ from the graph we constructed so far.
Moreover, we take the disjoint union with~${\mathfrak{N}(B/D)}$ as above.
We identify the vertices~${\{y_i^\alpha\ |\ i \in [\ell + f, k) \}}$ with distinct vertices of the~${(2 \alpha + \abs{V(B)})}$-th and~${(2 \alpha + 1 + \abs{V(B)})}$-th layer 
for every~${\alpha \in \aleph_0}$
as given by the map~$\sigma$, that is
\[
    y_{i}^{\alpha} \sim \pi_0(\sigma(i))_{2 \alpha + \pi_1(\sigma(i)) + \abs{V(B)}}
\]
where~$\pi_0$ and~$\pi_1$ denote the projection maps for the tuples in the image of~$\sigma$.
For convenience we denote a vertex originated via such an identification by any of its previous names.
The \emph{core} of $T_k(\mathcal{B})(\mathcal{K})$ is $C(T_k(\mathcal{B})(\mathcal{K})) := \bigcup \mathcal{Z}$.
For an example we refer to Figure~\ref{fig:singBP-small}.

\begin{figure}[htbp]
    \includegraphics[scale=1]{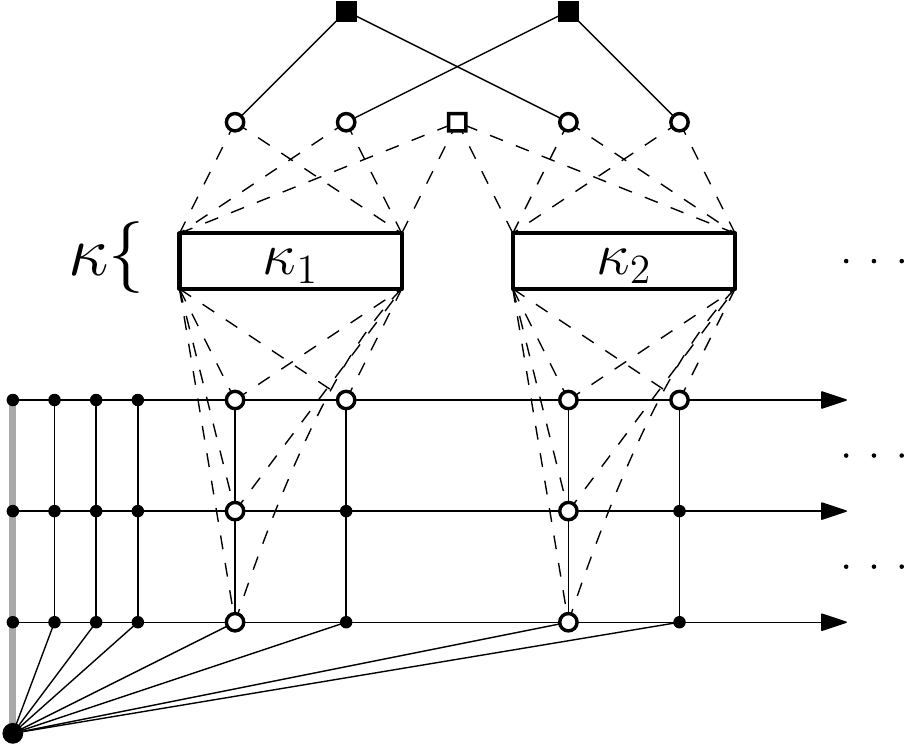}
    \caption{Image of $T_7(1,2,P,\{d\},\sigma)$ for $P$ and $\{d\}$ as in Figure~\ref{fig:regBP-small}.}
    \label{fig:singBP-small}
\end{figure}

As before, the information given by a specific good $\kappa$-sequence does not matter for the structure of the graph.
Similarly, we get with Remark~\ref{rem:cof1} that two graphs ${T_k(\mathcal{B})(\mathcal{K}_0)}$ and ${T_k(\mathcal{B})(\mathcal{K}_1)}$ obtained by different good $\kappa$-sequences $\mathcal{K}_0$, $\mathcal{K}_1$ are isomorphic to fbs-minors of each other.
Hence when we use the fixed good $\kappa$-sequence as before, we call the graph just~$T_k(\mathcal{B})$.

\begin{lemma}\label{lem:typ-singular}
    For a singular $k$-blueprint~$\mathcal{B}$, the core of~$T_k(\mathcal{B})$ is $k$-connected in~$T_k(\mathcal{B})$.
\end{lemma}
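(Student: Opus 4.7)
The plan is to mimic the approach of Lemma~\ref{lem:typ-countable}, adapting it to the more complex structure of~$T_k(\mathcal{B})$. By Theorem~\ref{finite-parameter-menger}, it suffices to show that no vertex set~$S$ with~$|S| < k'$ separates two subsets $U, W \subseteq C(T_k(\mathcal{B}))$ of common size $k' \leq k$. So, for contradiction, assume such an~$S$ exists; since $|U|, |W| > |S|$, I pick $u \in U \setminus S$ with $u \in Z^\alpha$ and $w \in W \setminus S$ with $w \in Z^\beta$, and attempt to produce a $u$-$w$ path in $T_k(\mathcal{B}) - S$.

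If $\alpha = \beta$, the fact $|Y^\alpha| = k > |S|$ yields some $y \in Y^\alpha \setminus S$ and the path $u - y - w$ in the complete bipartite graph $K(Y^\alpha, Z^\alpha)$ suffices. Otherwise, I would consider the following $k$ canonical $u$-$w$ routes, one per index $i \in [0, k)$: for $i < \ell$ the degenerate route $u - y_i - w$; for $i \in [\ell, \ell+f)$ the star route $u - y_i^\alpha - x_i - y_i^\beta - w$ through the frayed centre~$x_i$; and for $i \in [\ell+f, k)$ a tree route $u - y_i^\alpha - P_i - y_i^\beta - w$, where $P_i$ is a path in $\mathfrak{N}(B/D)$ joining the two tree vertices $y_i^\alpha$ and $y_i^\beta$, both of which lie in the column of $B - D$ determined by $\pi_0(\sigma(i))$.

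I expect the main obstacle to be the tree routes: when two indices $i, j \in [\ell+f, k)$ share their first $\sigma$-coordinate, the canonical choices of $P_i$ and $P_j$ traverse the same column of $B - D$ at heights differing only by one, so a single vertex of~$S$ lying in that column can block both routes simultaneously; a naive count therefore only yields $|S| \geq k/2$, which is insufficient. To overcome this, I would exploit the redundancy of $\mathfrak{N}(B/D)$: because $|S|$ is finite, all sufficiently high layers $B_n - D$ are intact copies of $B - D$, so any blocked column segment can be rerouted via such a high layer, or shortened via a dominating vertex $d \in D \setminus S$, which connects to every vertex of its adjacent column at once; the blueprint condition $2|D| \leq |V(B)|$ ensures that the non-dominating part of the tree is wide enough to realise these detours. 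A careful accounting then shows that each vertex of~$S$ can be blamed for the blockage of at most one of the $k$ canonical routes once detours are permitted, so at least $k - |S| \geq 1$ routes remain open and supply the desired $u$-$w$ path; Theorem~\ref{finite-parameter-menger} then delivers $k'$ disjoint $U$-$W$ paths, establishing that the core is $k$-connected in $T_k(\mathcal{B})$.
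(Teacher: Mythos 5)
Your overall strategy (reduce to the non-existence of a small separator $S$ via Menger, then exhibit a $u$--$w$ path avoiding $S$ by exploiting the layer structure, high layers, and dominating vertices) matches the paper's. You also correctly identify the shared-column problem and the role of the condition $2\abs{D} \leq \abs{V(B)}$.

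However, the decisive step of your argument --- ``a careful accounting then shows that each vertex of~$S$ can be blamed for the blockage of at most one of the~$k$ canonical routes once detours are permitted'' --- is precisely where the proof lives, and you do not carry it out. It is also not clear that the claim holds in the exact form you state it. You fix $k$ \emph{canonical} routes indexed by~$i$, each insisting on using both $y_i^\alpha$ and $y_i^\beta$ for the \emph{same} index~$i$; a tree-route~$i$ is then blocked (with detours) exactly when $y_i^\alpha$ and $y_i^\beta$ lie in different components of $\mathfrak{N}(B/D) - S$, and such a disconnection can be caused by a cluster of~$S$-vertices none of which is a designated vertex of route~$i$, so the injective ``blame'' map needs a genuine argument. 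The paper sidesteps this by not insisting on index-matched routes: it shows that $S$ must spend $\ell + f$ vertices outside $\mathfrak{N}(B/D)$, so fewer than $k - \ell - f$ vertices of~$S$ lie on $\mathfrak{N}(B/D)$; a column-wise pigeonhole (using that the vertices of $Y^n \cap \mathfrak{N}(B/D)$ occupy at least $(k-\ell-f)/2$ distinct columns, at most two per column) then produces \emph{some} $v^n \in Y^n \setminus S$ whose column has an $S$-free tail or initial segment, and likewise some $v^m \in Y^m$, possibly with a different index; finally an $S$-free column $\mathfrak{N}_b$ or an $S$-free dominating vertex~$d$ plus an $S$-free layer $B_N$ link $v^m$ to $v^n$ via a hybrid path. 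So the paper's path may enter $\mathfrak{N}(B/D)$ through $y_i^m$ and leave through $y_j^n$ with $i \neq j$, which is strictly more flexible than your canonical routes. You should replace the ``careful accounting'' sentence by an explicit counting argument of this type, or justify the injective blame assignment directly; as written there is a gap at the crux of the proof.
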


\begin{proof}
    Let~${\mathcal{B} = (\ell, f, B, D, \sigma)}$ and let~${C}$ denote the core of~${T_k(\mathcal{B})}$.
    Let ${U, W \subseteq C}$ with ${\abs{U} = \abs{W} = k' \leq k}$.
    Suppose for a contradiction there is a vertex set~$S$ of size less than~$k'$ separating~$U$ and~$W$.
    This separator needs to contain all degenerate vertices as well as block all paths via the frayed centres.
    Hence there are less than~${k' - \ell - f}$ many vertices of~$S$ on~${\mathfrak{N}(B/D)}$, and therefore there is either a $b \in V(B) \setminus D$ such that either $\mathfrak{N}_b$ does not contain a vertex of $S$ or a $d \in D \setminus S$.
    Moreover, there are~${m, n \in \mathbb{N}}$ such that~${u^m \in (U \cap Z^m) \setminus S}$ and~${w^n \in (W \cap Z^n) \setminus S}$.
    Now~${n \neq m}$ since~$S$ cannot separate two vertices of~${Z^n \setminus S}$ in~${K(Y^n,Z^n) \subseteq T_k(\mathcal{B})}$.
    Since the vertices of ${Y^n \cap \mathfrak{N}(B/D)}$ lie on at least ${(k - \ell - f) / 2}$ different rays of the form ${\mathfrak{N}_x}$ for ${x \in V(B)\setminus D}$,
    there is a vertex ${v^n \in (Y^n \cap \mathfrak{N}(B/D)) \setminus S}$ such that the ray~$\mathfrak{N}_{x}$ that contains $v^n$ either has no vertices of~$S$ on its tail starting at~$v^n$ or on its initial segment up to~$v^n$.
    Also, there is an~${N \in \mathbb{N}}$ with $N \geq n$ in the first case and $N \leq n$ in the second case (since $n \geq \abs{V(B)}$) such that~$B_N$ does not contain a vertex of~$S$.
    Hence we can find a path avoiding $S$ starting at $w^n$ and ending on the ray $\mathfrak{N}_b$ or the dominating vertex~$d$.
    Analogously, we get~${v^m \in (Y^m \cap \mathfrak{N}(B/D)) \!\setminus\! S}$,~$B_M$ and a respective path avoiding~$S$. 
    Hence we can connect~$u^m$ and~$w^n$ via a path avoiding~$S$, contradicting the assumption.
\end{proof}

For a singular cardinal~$\kappa$ with countable cofinality and for any singular $k$-blueprint~$\mathcal{B}$ the graph~${T_k(\mathcal{B})}$ is a \emph{$k$-typical graph} with core of size~$\kappa$.
These are the only remaining $k$-typical graphs.

Note that as before there are up to isomorphism only finitely many $k$-typical graphs with a core of size~$\kappa$.

\vspace{0.2cm}

In summary we get for each~${k \in \mathbb{N}}$ and each infinite cardinal~$\kappa$ a finite list of $k$-typical graphs with a core of size~$\kappa$:

\begin{center}
    \begin{tabular}[c]{ c | c | c}
        $\kappa$ & $k$-typical graph $T$ & core $C(T)$\\ \hline
        $\kappa = \cf\kappa > \aleph_0$ & $K_{k,\kappa}$ & $Z$\\
        $\kappa = \cf\kappa = \aleph_0$ & $K_{k,\kappa}$ & $Z$ \\
                                                           & $T_k(B,D,c)$ & $V(\mathfrak{N}_c)$\\
        $\kappa > \cf\kappa > \aleph_0$ & $K_{k,\kappa}$ &  $Z$\\
                                                           & \lFK & $\bigcup \mathcal{Z}$\\
        $\kappa > \cf\kappa = \aleph_0$ & $K_{k,\kappa}$ &  $Z$\\
                                                           & \lFK & $\bigcup \mathcal{Z}$\\
                                                           & $T_k(\ell,f,B,D,\sigma)$ & $\bigcup \mathcal{Z}$
    \end{tabular}
\end{center}

Note that for the finiteness of this list we need the fixed good $\kappa$-sequence for the singular cardinal~$\kappa$.

\begin{lemma}\label{lem:typical-k-conn}
    The core of a $k$-typical graph is $k$-connected in that graph. \qed
\end{lemma}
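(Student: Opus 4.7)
The plan is to proceed by case analysis on the type of $k$-typical graph, using the table just preceding the lemma. Three of the four cases are essentially already handled: when the typical graph is $K_{k,\kappa}$ the entire vertex set is $k$-connected, so its subset $Z$ certainly is; when it is $T_k(B,D,c)$ for a regular $k$-blueprint, this is Lemma~\ref{lem:typ-countable}; and when it is $T_k(\ell,f,B,D,\sigma)$ for a singular $k$-blueprint, this is Lemma~\ref{lem:typ-singular}. So only the case of an $\ell$-degenerate frayed~$K_{k,\kappa}$ actually needs a genuine argument here.

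For $T = \lFK$ with core $C = \bigcup \mathcal{Z}$, I would fix subsets $U, W \subseteq C$ with $\abs{U} = \abs{W} = k' \leq k$ and, invoking Theorem~\ref{finite-parameter-menger}, show that no vertex set $S \subseteq V(T)$ with $\abs{S} < k'$ separates $U$ from $W$. Given such an $S$, since $\abs{U}, \abs{W} > \abs{S}$ there exist $u \in U \setminus S$ and $w \in W \setminus S$, say with $u \in Z^\alpha$ and $w \in Z^\beta$. The argument then splits according to whether $\alpha = \beta$ or not.

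If $\alpha = \beta$, then $u$ and $w$ have $k$ common neighbours in $Y^\alpha$ (the $\ell$ degenerate $y_j$'s together with the $k-\ell$ vertices $y_j^\alpha$ for $j \in [\ell,k)$), and since $\abs{S} < k' \leq k$ at least one such $y$ lies outside $S$, giving a path $u\,y\,w$ in $T-S$. If $\alpha \neq \beta$, I distinguish whether some degenerate vertex $y_j$ (with $j < \ell$) avoids $S$ — in which case $u\,y_j\,w$ is a path in $T-S$ — or all $\ell$ degenerate vertices lie in $S$. In the latter case $S$ must additionally block, for each $j \in [\ell,k)$, the path $u\,y_j^\alpha\,x_j\,y_j^\beta\,w$, and since the triples $\{y_j^\alpha, x_j, y_j^\beta\}$ for different $j$ are pairwise disjoint this costs at least one further vertex per $j$; hence $\abs{S} \geq \ell + (k - \ell) = k \geq k'$, a contradiction.

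The only mild subtlety I expect is just being careful about which separators are permitted (allowing $S$ to contain vertices of $U \cup W$) and verifying that the triples $\{y_j^\alpha, x_j, y_j^\beta\}$ for $j \in [\ell,k)$ really are pairwise disjoint from each other and from the set of degenerate vertices, which is immediate from the construction of $\lFK$; no serious obstacle arises.
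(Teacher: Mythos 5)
Your proposal is correct and follows essentially the same route as the paper: the lemma is stated with an immediate \verb|\qed| because it is the conjunction of the four observations that precede it in Section~3 (the $K_{k,\kappa}$ case is noted explicitly, the regular-blueprint case is Lemma~\ref{lem:typ-countable}, the singular-blueprint case is Lemma~\ref{lem:typ-singular}, and the $\lFK$ case is remarked to be ``easy to see'' right after the definition). Your only deviation is that you actually spell out the $\lFK$ argument that the paper leaves implicit, and your Menger-style case split (same~$Z^\alpha$ versus different ones, degenerate vertex surviving versus all degenerate vertices in~$S$) is a correct way to do so.
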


\subsection{Generalised \emph{k}-typical graphs}
\label{subsec:gen-k-typ}
\ \newline \indent
The $k$-typical graphs cannot serve for a characterisation for the existence of $k$-connected sets as in Theorem~\ref{main-thm-simple-2}\ref{item:t2-subdivision} via subdivisions, as the following example illustrates.
Consider two disjoint copies of the $K_{2, \aleph_0}$ together with a matching between the infinite sides, see Figure~\ref{fig:genK_4,kappa-intro}.
Now the vertices of the infinite side from one of the copies is a $4$-connected set in that graph, but the graph does not contain any subdivision of a $4$-typical graph, since it neither contains a path of length greater than~$13$ (and hence no subdivision of a $T_4(\mathcal{B})$ for some regular $k$-blueprint~$\mathcal{B}$), nor a subdivision of a $K_{4, \aleph_0}$.

To solve this problem we introduce \emph{generalised $k$-typical graphs}, where we `blow up' some of the vertices of our $k$-typical graph to some finite tree, e.g.~an edge in the previous example.
This then will allow us to obtain the desired subdivisions for our characterisation.

\begin{figure}[htbp]
    \includegraphics[scale=0.75]{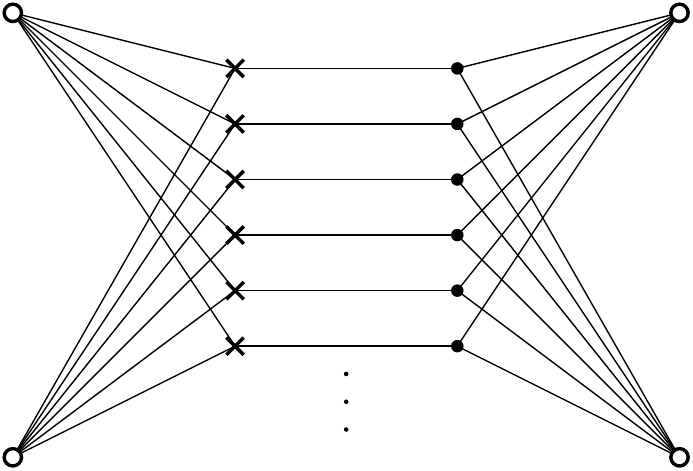}
    \caption{A graph with an infinite $4$-connected set (marked by the cross vertices) containing no subdivision of a $4$-typical graph.}
    \label{fig:genK_4,kappa-intro}
\end{figure}

Let~$G$ be a graph,~${v \in V(G)}$ be a vertex,~$T$ be a finite tree and~${\gamma:\ N(v) \to V(T)}$ be a map.
We define the \emph{$(v,T,\gamma)$-blow-up of~$v$ in~$G$} as the operation where we delete $v$, add a vertex set~${\{v\} \times V(T)}$ disjointly and for each~${w \in N(v)}$ add the edge between~$w$ and~${(v,\gamma(w))}$.
We call the resulting graph~$G(v,T,\gamma)$.

Given blow-ups~${(v, T_v, \gamma_v)}$ and ${(w,T_w,\gamma_w)}$ in~$G$, we can apply the blow-up of~$w$ in $G(v, T_v, \gamma_v)$ by replacing~$v$ in the preimage of~$\gamma_w$ by~${(v, \gamma_v(w))}$.
We call this graph ${G(v, T_v, \gamma_v)(w, T_w, \gamma_w)}$.
Note that no matter in which order we apply the blow-ups we obtain the same graph, that is ${G(v, T_v, \gamma_v)(w, T_w, \gamma_w) = G(w, T_w, \gamma_w)(v, T_v, \gamma_v)}$.
We analogously define for a set ${O = \{ (v, T_v,\gamma_v)\ |\ v \in W \}}$ of blow-ups for some~${W \subseteq V(G)}$ the graph~${G(O)}$ obtained by successively applying all the blow-ups in~$O$.
Note that if~$W$ is infinite, then~${G(O)}$ is still well-defined, since each edge gets each of its endvertices modified at most once.

\vspace{0.2cm}

A \emph{type-$1$ $k$-template}~$\mathcal{T}_1$ is a triple~${(T,\gamma,c)}$ 
consisting of a finite tree~$T$, a map \linebreak
${\gamma: [0,k) \to V(T)}$ and a node~${c \in V(T)}$ such that each node of degree~$1$ or~$2$ in~$T$ is either~$c$ or in the image of~$\gamma$.
Note that for each~$k$ there are only finitely many type-1 $k$-templates up to isomorphisms of the trees, since their trees have order at most~${2k+1}$.

Let~${\mathcal{T}_1 = (T,\gamma,c)}$ be a type-1 $k$-template and 
let~${O_1 := \{ (z, T, \gamma)\ |\ z \in C(K_{k,\kappa}) \}}$.
We call the graph~${K_{k,\kappa}(\mathcal{T}_1) := K_{k,\kappa}(O_1)}$ a \emph{generalised~$K_{k,\kappa}$}.
The \emph{core}~$C({K_{k,\kappa}(\mathcal{T}_1)})$ is the set ${C(K_{k,\kappa}) \times \{c\}}$, 
see Figure~\ref{fig:genK4kappa} for an example.
Note that Figure~\ref{fig:genK_4,kappa-intro} is also an example.

\begin{figure}[htbp]
    \includegraphics[scale=1.5]{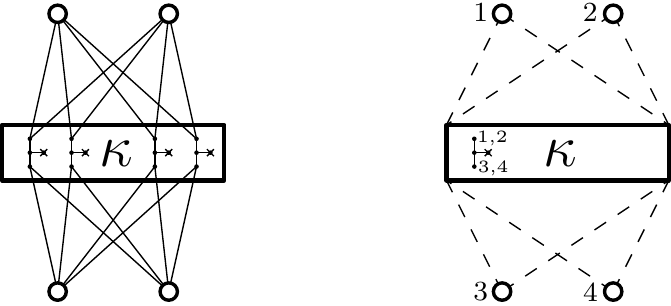}
    \caption{Image of a generalised $K_{4,\kappa}$ on the left. The crosses represent the core.
    On the right is how we represent the same graph in a simplified way 
    by labelling the vertices according to their adjacencies.}
    \label{fig:genK4kappa}
\end{figure}

Similarly, with~$\mathcal{T}_1$ as above, let~${O_1' := \{ (z, T, \gamma_\alpha)\ |\ \alpha \in \cf\kappa,  z \in Z^\alpha \}}$, where~$\gamma_\alpha$ denotes the map defined by~${y_i^\alpha \mapsto \gamma(i)}$.
The graph~${\lK{\mathcal{K}}(\mathcal{T}_1) := \lK{\mathcal{K}}(O_1')}$ is a \emph{generalised~$\lK{\mathcal{K}}$}.
We call the vertex set~${\bigcup \mathcal{Z} \times \{c\}}$ the \emph{precore} of that graph.

Analogously, we obtain a \emph{generalised~$\lFK(\mathcal{K})$} for any good $\kappa$-sequence~$\mathcal{K}$ as \linebreak ${\lFK(\mathcal{K})(\mathcal{T}_1) := \lFK(O_1')}$
with \emph{core}~${C({\lFK(\mathcal{K})(\mathcal{T}_1)}) := \bigcup \mathcal{Z} \times \{c\}}$.

\vspace{0.2cm}

A \emph{type-$2$ $k$-template}~$\mathcal{T}_2$ for a $k$-blueprint~${(B,D)}$ is a set 
${\{ (b, P_b, \gamma_b)\ |\ b \in V(B) \!\setminus\! D \}}$ 
of blow-ups in~$B$ such that for all $b \in V(B) \!\setminus\! D$
\begin{itemize}
    \item $P_b$ is a path of length at most $k+2$;
    \item the endnodes of $P_b$ are called $v_0^b$ and $v_1^b$;
    \item $P_b$ contains nodes $v_\bot^b$ and $v_\top^b$;
    \item the nodes $v_0^b, v_\bot^b, v_\top^b, v_1^b$ need not be distinct;
    \item if $v_0^n \neq v_\bot^b$, then $v_0^b v_\bot^b \in E(P_b)$ and  if $v_1^n \neq v_\top^b$, then $v_1^b v_\top^b \in E(P_b)$;
    \item $\gamma_b(N(b)) \subseteq v_\bot^b P_b v_\top^b$;
\end{itemize}
We say $\mathcal{T}_2$ is \emph{simple} if $v_0^b = v_\bot^b$ and $v_1^b = v_\top^b$. 
Note that for each~$k$ there are only finitely many type-2 $k$-templates, up to isomorphisms of the trees in the $k$-blueprints and the paths for the blow-ups.

Let ${\mathcal{T}_2 = \{ (b, T_b, \gamma_b)\ |\ b \in V(B) \!\setminus\! D \}}$ be a type-2 $k$-template for a $k$-blueprint $(B,D)$.
Then ${O_2 := \{ (b_n, T_b, \gamma_b^n)\ |\ n \in \mathbb{N}, b \in V(B) \!\setminus\! D \}}$ is a set of blow-ups in $\mathfrak{N}(B/D)$, where $\gamma_b^n$ is defined via
\[
    \gamma_b^n( v ) =
    \begin{cases}
    \begin{aligned}
        & \gamma_b(b')  && \text{ if } v = b'_n \text{ for } b' \in N(b); \\
        & v^b_\top && \text{ if } v = b_{n+1}; \\
        & v^b_\bot && \text{ if } n \geq 1 \text{ and } v = b_{n-1}.
    \end{aligned}
    \end{cases}
\]
Then $\mathfrak{N}(B/D)(\mathcal{T}_2) := \mathfrak{N}(B/D)(O_2)$ is a \emph{generalised~$\mathfrak{N}(B/D)$}.

Let~${\mathcal{B} = (B, D, c)}$ be a regular $k$-blueprint and let
${\mathcal{T}_2 = \{ (b, T_b, \gamma_b)\ |\ b \in V(B) {\setminus} D \}}$ be a type-2 $k$-template for~$(B,D)$.
We call $T_k(\mathcal{B})(\mathcal{T}_2) := T_k(\mathcal{B})(O_2)$ a \emph{generalised~$T_k(\mathcal{B})$} with \emph{core}~${C(T_k(\mathcal{B})(\mathcal{T}_2)) := V(\mathfrak{N}_c) \times \{v_1^c\}}$.
For an example that generalises the graph of Figure~\ref{fig:regBP-small}, see Figure~\ref{fig:genregBP-small}.

\begin{figure}[htbp]
    \includegraphics[scale=1]{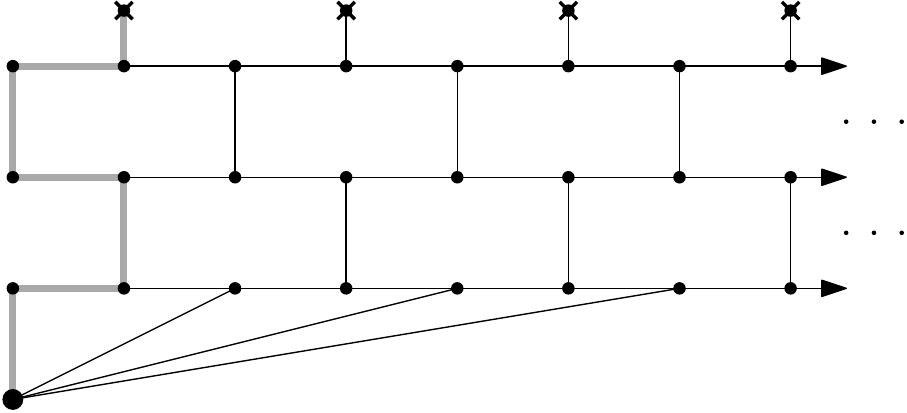}
    \caption{Image of a generalised $T_4(P,\{d\},c)$ for $P$, $\{d\}$, $c$ as in Figure~\ref{fig:regBP-small}.
    In grey we represent the blow-up of $P$ as given by some type-2 $k$-template.
    The crosses represent the core.}
    \label{fig:genregBP-small}
\end{figure}

\vspace{0.2cm}

A \emph{type-$3$ $k$-template} $\mathcal{T}_3$ for a singular $k$-blueprint ${\mathcal{B} = (\ell,f,B,D,\sigma)}$ is a tuple 
${(\mathcal{T}_1,\mathcal{T}_2)}$ consisting of 
a type-1 $(\ell+f)$-template~$\mathcal{T}_1$ and a type-2 
$(k-\ell-f)$-template~$\mathcal{T}_2$.
Note that for each~$k$ there are only finitely many type-3 $k$-templates up to isomorphisms as discussed above for~$\mathcal{T}_1$ and~$\mathcal{T}_2$.

Let $\mathcal{T}_3 = (\mathcal{T}_1, \mathcal{T}_2)$ be a type-3 $k$-template with ${\mathcal{T}_1 = (T,\gamma,c_1)}$ for a singular $k$-blueprint ${\mathcal{B} = (\ell,f,B,D,\sigma)}$.
Then for $(b_n, T_b, \gamma_b^n) \in O_2$ we extend $\gamma_b^n$ to $\hat{\gamma}_b^n$ via 
\[
    \hat{\gamma}_b^n( v ) =
    \begin{cases}
    \begin{aligned}
        & v_1^b && \text{ if } v \in \{ y_i^n\ |\ i \in [\ell+f,k) \} \text{ and } n \text{ even};\\
        & v_0^b && \text{ if } v \in \{ y_i^n\ |\ i \in [\ell+f,k) \} \text{ and } n \text{ odd}; \\
        &\gamma_b^n(v) && \text{ otherwise}. 
    \end{aligned}
    \end{cases}
\]
Let ${O_2' := \{ (b_n, T_b, \hat{\gamma}_b^n)\ |\ (b_n, T_b, \gamma_b^n) \in O_2 \}}$ denote the corresponding set of blow-ups in~$T_k(\mathcal{B})$ and let~$O_1'$ be for $\mathcal{T}_1$ as above.
The graph 
${T_k(\mathcal{B})(\mathcal{T}_3) := T_k(\mathcal{B})(O_1' \cup O_2')}$ is a \emph{generalised~$T_k(\mathcal{B})$} with \emph{core}~${C(T_k(\mathcal{B})(\mathcal{T}_3)) := \bigcup \mathcal{Z} \times \{c_1\}}$.
For an example that generalises the graph of Figure~\ref{fig:singBP-small} see Figure~\ref{fig:gensingBP-small}.

\begin{figure}[htbp]
    \includegraphics[scale=1.2]{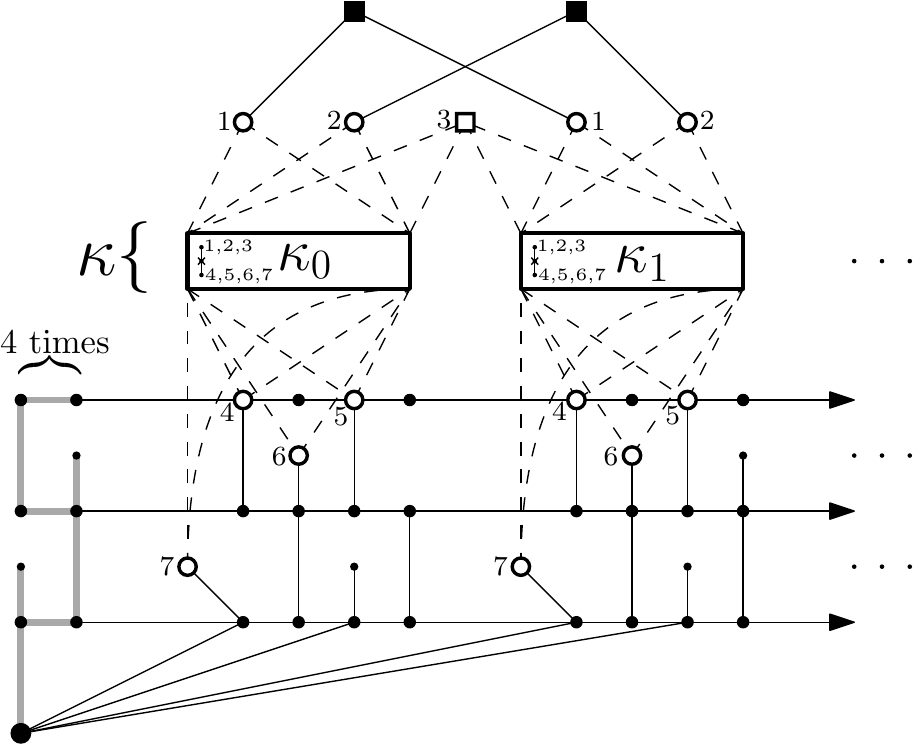}
    \caption{Image of a generalised $T_7(1,2,P,\{d\},\sigma)$ for $P$, $\{d\}$, $\sigma$ as in Figure~\ref{fig:singBP-small}.}.
    \label{fig:gensingBP-small}
\end{figure}

\vspace{0.2cm}

We call the graph from which a generalised graph is obtained via this process its~\emph{parent}.
As before, Remark~\ref{rem:cof1} and its extensions extend to generalised $k$-typical graphs as well.

\begin{remark}\label{rem:cof2}
    Every~${\lFK(\mathcal{T}_1)(\mathcal{K})}$ or~${T_k(\mathcal{B})(\mathcal{T}_3)(\mathcal{K})}$ for a singular $k$-blueprint~$\mathcal{B}$, a type-1 $k$-template~$\mathcal{T}_2$, a type-2 $k$-template~$\mathcal{T}_3$ and a good $\kappa$-sequence~$\mathcal{K}$, contains a subdivision of ${\lFK(\mathcal{T}_1)}$ or ${T_k(\mathcal{B})(\mathcal{T}_3)}$ respectively.
\end{remark}

A \emph{generalised $k$-typical graph} is either~${K_{k,\kappa}(\mathcal{T}_1)}$,~${\lFK(\mathcal{T}_1)}$,~${T_k(\mathcal{B})(\mathcal{T}_2)}$ or ${T_k(\mathcal{B}')(\mathcal{T}_3)}$ for any type-1 $k$-template~$\mathcal{T}_1$, any~${\ell \in [0,k)}$, any regular $k$-blueprint~$\mathcal{B}$, any type-2 $k$-template~$\mathcal{T}_2$ for~$\mathcal{B}$, any singular $k$-blueprint~$\mathcal{B}'$ and any type-3 $k$-template~$\mathcal{T}_3$ for~$\mathcal{B}'$.
As with the $k$-typical graphs we obtain that this list is finite.

\begin{corollary}\label{cor:gen-typ-k-conn}
    The core of a generalised $k$-typical graph is $k$-connected in that graph. \qed
\end{corollary}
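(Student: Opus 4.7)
The plan is to reduce the claim to Lemma~\ref{lem:typical-k-conn} via a projection argument. Every generalised $k$-typical graph $G'$ is obtained from its parent $k$-typical graph $G$ by a family $O$ of blow-ups. I would define a natural projection ${\pi \colon V(G') \to V(G)}$ by sending each blown-up vertex $(v,t)$ to $v$ and fixing every vertex that was not blown up. Inspecting the four cases in the definition of a generalised $k$-typical graph, the core $C'$ of $G'$ always has the form $C \times \{x\}$ for some fixed coordinate $x$, where $C$ is the core of the parent $G$. Hence $\pi$ restricts to a bijection between $C'$ and $C$.

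To establish that $C'$ is $k$-connected in $G'$, I would verify the Menger-style separation condition and invoke Theorem~\ref{finite-parameter-menger}. Suppose for contradiction that ${U', W' \subseteq C'}$ with ${\abs{U'} = \abs{W'} = k' \leq k}$ are separated by some ${S' \subseteq V(G')}$ with ${\abs{S'} < k'}$. Set ${U := \pi(U')}$, ${W := \pi(W')}$, and ${S := \pi(S')}$. Since $\pi$ restricts to a bijection on cores, ${\abs{U} = \abs{W} = k'}$, and clearly ${\abs{S} \leq \abs{S'} < k'}$. By Lemma~\ref{lem:typical-k-conn} applied to the parent $G$, the set $S$ cannot separate $U$ from $W$ in $G$, so there is a $U$--$W$ path ${P = u_0 u_1 \dots u_n}$ in $G$ disjoint from $S$.

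The central step is to lift $P$ to a $U'$--$W'$ path in $G'$ avoiding $S'$, giving the desired contradiction. For each ${u_i \in V(P)}$, since ${u_i \notin S = \pi(S')}$, no vertex of $S'$ lies in the blow-up tree $T_{u_i}$ of $u_i$ (or equals $u_i$ itself if $u_i$ is not blown up). Each edge $u_i u_{i+1}$ of $P$ corresponds, by the definition of the blow-up operation, to an edge in $G'$ joining vertices $(u_i, \gamma_{u_i}(u_{i+1}))$ and $(u_{i+1}, \gamma_{u_{i+1}}(u_i))$ of the respective blow-up trees. Since each $T_{u_i}$ is connected and entirely disjoint from $S'$, I can traverse it between consecutive entry and exit points, and I can likewise connect $(u_0, x) \in U'$ through $T_{u_0}$ to the first such edge and enter $T_{u_n}$ at $(u_n, x) \in W'$. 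Concatenating these segments produces the required $U'$--$W'$ path in $G'$ avoiding $S'$, contradicting that $S'$ is a separator. Applying Theorem~\ref{finite-parameter-menger} then yields the $k'$ disjoint paths witnessing $k$-connectedness.

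The main bookkeeping obstacle, which I expect to be routine but somewhat tedious, is to verify uniformly across all four parent types that the core really has the product form $C \times \{x\}$ and that edges across blown-up vertices lift via the template maps exactly as described; both facts follow directly from the definitions of the type-1, type-2, and type-3 templates, but require separate inspection of the four generalised graphs ${K_{k,\kappa}(\mathcal{T}_1)}$, ${\lFK(\mathcal{T}_1)}$, ${T_k(\mathcal{B})(\mathcal{T}_2)}$, and ${T_k(\mathcal{B}')(\mathcal{T}_3)}$.
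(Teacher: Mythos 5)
Your proof is correct and matches the approach the paper implicitly relies on: the parent $k$-typical graph is a minor of the generalised one (the blow-up trees being the branch sets, singletons for unmodified vertices), and $k$-connectedness of a set lifts from a minor to the host graph via exactly the projection-and-path-lifting argument you give; this is the content of the paper's Lemma~\ref{lem:k-con-minor}, and the paper treats the corollary as immediate for this reason. Your write-up simply spells out the details that the paper leaves as a triviality.
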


\subsection{Statement of the Main Theorem}
\label{subsec:main-thm}
\ \newline \indent
Now that we introduced all $k$-typical and generalised $k$-typical graphs, let us give the full statement of our main theorem.

\begin{thm-intro}\label{main-thm}
    Let~$G$ be an infinite graph, let~${k \in \mathbb{N}}$, 
    let~${A \subseteq V(G)}$ be infinite 
    and let~${\kappa \leq \abs{A}}$ be an infinite cardinal. 
    Then the following statements are equivalent.
    \begin{enumerate}[label=(\alph*)]
        \item\label{item:t3-set} There is a subset~${A_1 \subseteq A}$ with~${\abs{A_1} = \kappa}$ such that~$A_1$ is $k$-connected in~$G$.
        \item\label{item:t3-minor} There is a subset~${A_2 \subseteq A}$ with~${\abs{A_2} = \kappa}$ such that
        there is a $k$-typical graph 
        which is a minor of~$G$ 
        with finite branch sets and 
        with~$A_2$ along its core.
        \item\label{item:t3-subdivision} There is a subset~${A_3 \subseteq A}$ with~${\abs{A_3} = \kappa}$ 
        such that~$G$ contains a subdivided generalised $k$-typical graph 
        with~$A_3$ as its core.
        \item\label{item:t3-duality} There is no tree-decomposition of~$G$ of adhesion less than~$k$ such that every part can be separated from~$A$ by less than~$\kappa$ vertices. 
    \end{enumerate}
    Moreover, if these statements hold, we can choose~${A_1 = A_2 = A_3}$.
\end{thm-intro}

Note that for $A = V(G)$ we obtain the simple version  as in Theorems~\ref{main-thm-simple-1} and~\ref{main-thm-simple-2} by forgetting the extra information about the core.

\section{\emph{k}-connected sets, minors and topological minors}
\label{sec:k-connSets}

In this section we will collect a few basic remarks and lemmas on $k$-connected sets and how they interact with minors and topological minors for future references.
We omit some of the trivial proofs.

\begin{remark}\label{rem:k-connected-subset}
    If $A \subseteq V(G)$ is $k$-connected in $G$, then any $A' \subseteq A$ with $\abs{A'} \geq k$ is $k$-connected in $G$ as well. \qed
\end{remark}

\begin{lemma}\label{lem:k-con-minor}
    If~$M$ is a minor of~$G$ and~${A \subseteq V(M)}$ is $k$-connected in~$M$ for some~${k \in \mathbb{N}}$, 
    then any set~${A' \subseteq V(G)}$ with~${\abs{A'} \geq k}$ consisting of at most one vertex of each branch set for the vertices of~$A$ is $k$-connected in~$G$. \qed
\end{lemma}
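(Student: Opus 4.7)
The plan is to lift disjoint paths in $M$ to disjoint paths in $G$ via the inflated subgraph structure. Fix an inflated subgraph ${H \subseteq G}$ witnessing that~$M$ is a minor of~$G$, with branch sets~${\mathfrak{B}(v)}$ for~${v \in V(M)}$. The hypothesis on~$A'$ yields a well-defined injective map ${\pi \colon A' \to A}$ sending each ${v \in A'}$ to the unique ${\bar v \in A}$ with ${v \in \mathfrak{B}(\bar v)}$.

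To verify $k$-connectedness of~$A'$, I would take arbitrary ${Z_1', Z_2' \subseteq A'}$ with ${\abs{Z_1'} = \abs{Z_2'} = \ell \leq k}$, and set ${Z_i := \pi(Z_i')}$. Injectivity of~$\pi$ gives ${\abs{Z_1} = \abs{Z_2} = \ell \leq k}$, so the $k$-connectedness of~$A$ in~$M$ produces~$\ell$ pairwise disjoint $Z_1$\,--\,$Z_2$ paths ${P_1, \dots, P_\ell}$ in~$M$.

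Next I would lift each~$P_j = u_0 u_1 \dots u_m$ to a walk~$W_j$ in~$H$ as follows: start at the unique vertex ${v_0 \in \mathfrak{B}(u_0) \cap Z_1'}$, traverse a path inside the connected subgraph ${H[\mathfrak{B}(u_0)]}$ to the endpoint in ${\mathfrak{B}(u_0)}$ of some $H$-edge to~${\mathfrak{B}(u_1)}$, cross that edge, and continue inductively, ending at the unique ${v_m \in \mathfrak{B}(u_m) \cap Z_2'}$. Since the~$P_j$ are pairwise vertex disjoint in~$M$, the collections of branch sets used by the~$W_j$ are pairwise disjoint, hence the~$W_j$ themselves are pairwise disjoint subgraphs of~${H \subseteq G}$. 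Shortening each~$W_j$ to a path gives~$\ell$ pairwise disjoint $Z_1'$\,--\,$Z_2'$ paths in~$G$, as required.

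The only mild subtlety is ensuring the endpoints of the lifted paths land exactly on the prescribed vertices~$v_0, v_m$ of~$A'$ rather than on arbitrary vertices of their branch sets; this is handled by prepending and appending the unique paths inside ${H[\mathfrak{B}(u_0)]}$ and ${H[\mathfrak{B}(u_m)]}$ from~$v_0$ and~$v_m$ to the relevant edge endpoints, which stays inside the already-reserved branch sets and so does not break disjointness.
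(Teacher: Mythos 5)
The paper states this lemma with a~\qed and no proof (``We omit some of the trivial proofs''), so there is no written argument to compare against. Your proof is correct and is the standard lifting argument one would expect: pull the two sets back along the injective map into~$A$, use $k$-connectedness of~$A$ in~$M$ to get $\ell$ disjoint paths, and lift each path to~$H$ by routing through the (connected, pairwise disjoint) branch sets, with the observation that vertex-disjointness in~$M$ forces the lifted walks to use disjoint collections of branch sets and hence be themselves disjoint. Your handling of the endpoint alignment (prepending/appending paths inside the first and last branch set) is the right way to ensure the lifted paths land exactly on~$Z_1'$ and~$Z_2'$, and the injectivity of~$\pi$ also takes care of the degenerate case where~$Z_1'$ and~$Z_2'$ overlap. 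Nothing is missing.
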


\begin{lemma}\label{lem:gen-sub-fbs}
    For $k \in \mathbb{N}$, if $G$ contains the subdivision of a generalised $k$-typical graph $T$ with core~$A$, then the parent of $T$ is an fbs-minor with $A$ along its core.
    \qed
\end{lemma}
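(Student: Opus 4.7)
The plan is to observe that each generalised $k$-typical graph~$T$ arises from its parent~$T^\ast$ by a family of finite-tree blow-ups applied at certain vertices of~$T^\ast$ (as specified in Subsection~\ref{subsec:gen-k-typ}) and that, for each core vertex $v\in V(T^\ast)$, the blow-up tree at~$v$ contains exactly one core vertex of~$T$ (the distinguished node $c$, $v_1^c$ or $c_1$ of the relevant template, depending on the type). Contracting every blow-up tree back to a single vertex thus turns~$T$ into~$T^\ast$ and sends the core of~$T$ bijectively onto the core of~$T^\ast$. So, given a subdivision $\tilde T\subseteq G$ of~$T$ with core~$A$, the task is to translate this picture into an fbs-minor of~$G$ isomorphic to~$T^\ast$ with~$A$ along its core.

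For each $v\in V(T^\ast)$ I would first assemble an initial branch set $\mathfrak{B}_0(v)\subseteq V(\tilde T)$ from the branch vertices of~$\tilde T$ that correspond to vertices of the blow-up tree at~$v$ (just a single branch vertex if~$v$ is not blown up), together with all internal subdivision vertices of the paths of~$\tilde T$ realising the edges of that blow-up tree. Each~$\mathfrak{B}_0(v)$ is then finite and induces a connected subdivided tree in~$\tilde T$. For each edge $e=uv\in E(T^\ast)$, realised by a subdivided path $P_e\subseteq\tilde T$, I would then assign all internal vertices of~$P_e$ to a single one of $\mathfrak{B}_0(u)$, $\mathfrak{B}_0(v)$ by the following rule: if at least one of~$u$,~$v$ has finite degree in~$T^\ast$, push~$P_e$ into (one such) finite-degree endpoint; otherwise both have infinite degree, and a brief inspection of Subsection~\ref{subsec:k-typ} shows that the only such edges are the edges $x_iy_i^\alpha$ of the stars~$S_i$ of~$\lFK$ (and of $T_k(\ell,f,B,D,\sigma)$), which~$y_i^\alpha$ meets exactly once, and the edges $db_n$ of $\mathfrak{N}(B/D)$ inside $T_k(\ell,f,B,D,\sigma)$ joining a leaf~$d\in D$ to a layer vertex~$b_n$ identified by~$\sigma$ with some~$y_i^\alpha$, of which~$b_n$ meets at most $|V(B)|$; in each such case push~$P_e$ into the endpoint that is incident to only finitely many such edges.

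Setting $\mathfrak{B}(v)$ to be the resulting branch set for each $v\in V(T^\ast)$, one checks that $\mathfrak{B}(v)$ is finite (since $\mathfrak{B}_0(v)$ is finite and only finitely many paths~$P_e$ are absorbed into it by the rule above), that $\tilde T[\mathfrak{B}(v)]$ is connected (each absorbed~$P_e$ attaches to $\mathfrak{B}_0(v)$ at a branch vertex of the blow-up tree), that the family $\{\mathfrak{B}(v)\colon v\in V(T^\ast)\}$ partitions $V(\tilde T)$, and that distinct branch sets $\mathfrak{B}(u)$, $\mathfrak{B}(v)$ are joined by an edge of~$\tilde T$ precisely when $uv\in E(T^\ast)$. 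Hence~$T^\ast$ is an fbs-minor of~$G$, and the bijection between~$A$ and the core branch sets from the opening observation witnesses that~$A$ lies along the core. The main obstacle is the finiteness of the branch sets at infinite-degree vertices of~$T^\ast$: nothing a priori prevents such a branch set from absorbing the internal vertices of infinitely many incident subdivided paths, and the case analysis in the second paragraph is precisely what ensures that this never happens.
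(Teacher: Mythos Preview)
The paper offers no proof for this lemma (it is stated with a bare \qed), so your write-up already supplies far more detail than the original. Your approach---contract each blow-up tree to recover the parent vertex, then absorb the internal vertices of the subdivided $T^\ast$-edges into a carefully chosen endpoint---is the natural one and is essentially correct, including the verification that each core branch set meets~$A$ in exactly one vertex.

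There is, however, a small incompleteness in your case analysis of edges between two infinite-degree vertices of~$T^\ast$. In a singular $T_k(\ell,f,B,D,\sigma)$, the layer vertices $b_n$ that get identified with some $y_i^\alpha$ acquire infinite degree, and edges of $\mathfrak{N}(B/D)$ can join two such identified vertices: if both $(b,0)$ and $(b,1)$ lie in the image of~$\sigma$, then $b_{2\alpha+|V(B)|}$ and $b_{2\alpha+1+|V(B)|}$ are adjacent and both identified; similarly $b_n b'_n$ with $bb'\in E(B)$ can join two identified vertices when $(b,p)$ and $(b',p)$ both lie in the image of~$\sigma$. These edges fall under neither your $x_i y_i^\alpha$ case nor your $d b_n$ case. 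Fortunately this does not break the argument: any identified $b_n$ is incident to at most $\deg_B(b)+2$ edges of $\mathfrak{N}(B/D)$, hence to only finitely many edges whose other endpoint has infinite degree, so your rule ``push $P_e$ into an endpoint incident to only finitely many such edges'' still applies (either endpoint works here). With this amendment the proof goes through.
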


A helpful statement for the upcoming inductive constructions would be that for every vertex~$v$ of~$G$, every large $k$-connected set in~$G$ contains a large subset which is $(k-1)$-connected in~${G - v}$.
But while this is a true statement (cf.~Corollary~\ref{cor:(k-1)-connected}), an elementary proof of it seems to be elusive if $v$ is not itself contained in the original $k$-connected set.
The following lemma is a simplified version of that statement and has an elementary proof.

\begin{lemma}\label{lem:delete<k}
    Let ${k \in \mathbb{N}}$ and let~${A \subseteq V(G)}$ be infinite and $k$-connected in~$G$.
    Then for any finite set~${S \subseteq V(G)}$ with~${\abs{S} < k}$ there is a subset~${A' \subseteq A}$ with~${\abs{A'} = \abs{A}}$ such that~$A'$ is ${1}$-connected in~${G-S}$.
\end{lemma}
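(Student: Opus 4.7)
The plan is to analyse the partition of $A \setminus S$ determined by the components of $G - S$, show this partition has only finitely many classes, and then apply pigeonhole.

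First I would note that since $S$ is finite and $A$ is infinite, we have $\abs{A \setminus S} = \abs{A}$, so it suffices to find a subset of $A \setminus S$ of size $\abs{A}$ that lies within a single component of $G - S$. For each component $C$ of $G - S$ that meets $A$, set $A_C := A \cap C$; the family $(A_C)$ is then a partition of $A \setminus S$.

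The main step is to bound the number of classes. I would claim that fewer than $2k$ components of $G - S$ meet $A$. Suppose instead that $C_1, \dots, C_{2k}$ are pairwise distinct such components, and pick $a_i \in A \cap C_i$ for each $i \in [1,2k]$. The sets $Z_1 := \{a_1, \dots, a_k\}$ and $Z_2 := \{a_{k+1}, \dots, a_{2k}\}$ are subsets of $A$ of common size $k$, so by $k$-connectedness of $A$ in $G$ there are $k$ disjoint $Z_1$\,--\,$Z_2$ paths. Each such path has its endvertices in distinct components of $G - S$ and must therefore meet $S$; disjointness forces the chosen $S$-vertices to be distinct, so $\abs{S} \geq k$, contradicting $\abs{S} < k$.

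Hence the partition has at most $2k - 1$ classes. Since $\abs{A \setminus S} = \abs{A}$ is infinite, the pigeonhole principle on a finite partition of an infinite set yields a component $C^*$ of $G - S$ with $\abs{A \cap C^*} = \abs{A}$. Setting $A' := A \cap C^*$, any two vertices of $A'$ are joined by a path inside the component $C^*$ of $G - S$, so $A'$ is $1$-connected in $G - S$, as required. I do not anticipate any substantial obstacle: the argument reduces to an elementary pigeonhole once the separator-avoidance observation for the $k$ disjoint paths is made.
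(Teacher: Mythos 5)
Your proof is correct, and it takes a route that differs from the paper's in an interesting way. The paper's proof fixes a single $k$-element subset ${B_0 \subseteq A}$, chooses $\abs{A}$-many further pairwise disjoint $k$-subsets~$B_\alpha$ of~$A$, and for each~$\alpha$ uses $k$-connectedness together with ${\abs{S} < k}$ to find one $B_0$\,--\,$B_\alpha$ path avoiding~$S$; a pigeonhole on the $k$ elements of~$B_0$ then produces a vertex~$v$ from which $\abs{A}$-many of these paths start, and the endvertices of those paths form~$A'$. You instead establish the cleaner structural fact that at most ${2k - 1}$ components of ${G - S}$ meet~$A$: choosing one $A$-vertex from each of~$2k$ distinct components and splitting them into two disjoint $k$-sets~$Z_1$,~$Z_2$, the $k$ disjoint $Z_1$\,--\,$Z_2$ paths would each have to hit a distinct vertex of~$S$, forcing ${\abs{S} \geq k}$. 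Pigeonhole over this finite partition of ${A \setminus S}$ then yields a single component~$C^*$ with ${\abs{A \cap C^*} = \abs{A}}$. Both arguments rely on the same core mechanism --- disjoint paths guaranteed by $k$-connectedness cannot all pass through the small set~$S$ --- but you pigeonhole over the component partition rather than over the fixed set~$B_0$, and you obtain as a byproduct an explicit bound on the number of components of~${G-S}$ meeting~$A$, which the paper's more constructive version does not isolate.
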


\begin{proof}
    Without loss of generality we may assume that~$A$ and~$S$ are disjoint.
    Take a sequence ${(B_\alpha\ |\ \alpha \in \abs{A})}$ of disjoint subsets of~$A$ with~${\abs{B_\alpha} = k}$.
    For  every ${\alpha \in \abs{A} \setminus \{0\}}$ there is at least one path from~$B_0$ to~$B_\alpha$ disjoint from~$S$.
    By the pigeonhole principle there is some~${v \in B_0}$ such that~$\abs{A}$ many of these paths start in~$v$.
    Now let~$A'$ be the set of endvertices of these paths.
\end{proof}

\section{Structure within ends}
\label{sec:end-structure}

This section studies the structure within an end of a graph.

In Subsection~\ref{subsec:def-seq} we will extend to arbitrary infinite graphs a well-known result for locally finite graphs relating end degree with a certain sequence of minimal separators, making use of the combined end degree.

Subsection~\ref{subsec:T-conn-rays} is dedicated to the construction of a uniformly connecting structure between disjoint rays in a common end and vertices dominating that end.

\subsection{End defining sequences and combined end degree}
\label{subsec:def-seq}
\ \newline \indent
For an end~${\omega \in \Omega(G)}$ and a finite set~${S \subseteq V(G)}$ let~$C(S, \omega)$ denote the unique component of~${G - S}$ that contains $\omega$-rays.
A sequence ${(S_n\ |\ n \in \mathbb{N})}$ of finite vertex sets of~$G$ is called an \emph{$\omega$-defining sequence} if for all ${n, m \in \mathbb{N}}$ with ${n \neq m}$ the following hold:
\begin{itemize}
    \item ${C(S_{n+1},\omega) \subseteq C(S_n, \omega)}$; 
    \item $S_n \cap S_m \subseteq \Dom(\omega)$; and
    \item ${\bigcap \{ C(S_n,\omega)\ |\ n \in \mathbb{N} \} = \emptyset}$.
\end{itemize}
Note that for every $\omega$-defining sequence ${(S_n\ |\ n \in \mathbb{N})}$ and every finite set ${X \subseteq V(G)}$ we can find an ${N \in \mathbb{N}}$ such that ${X \subseteq G - C(S_{N}, \omega)}$.
Hence we shall also refer to the sets $S_n$ in such a sequence as \emph{separators}.
Given $n, m \in \mathbb{N}$ with $n < m$, let $G[S_n, S_m]$ denote ${G[ (S_n \cup C(S_n, \omega)) \!\setminus\! C(S_m, \omega) ]}$, the \emph{graph between the separators}.

For ends of locally finite graphs there is a characterisation of the end degree given by the existence of certain $\omega$-defining sequences.
The degree of an end $\omega$ is equal to~$k \in \mathbb{N}$, if and only if $k$ is the smallest integer such that there is an $\omega$-defining sequence of sets of size $k$, cf.\ \cite{Stein:ext-survey}*{Lemma~3.4.2}.
In this subsection we extend this characterisation to arbitrary graphs with respect to the combined degree.
Recall the definition of the combined degree,~${\Delta(\omega) := \deg(\omega) + \dom(\omega)}$.

In arbitrary graphs $\omega$-defining sequences need not necessarily exist, e.g.\ in $K_{\aleph_1}$.
We start by characterising the ends admitting such a sequence.

\begin{lemma}\label{lem:def-seq}
    Let $\omega \in \Omega(G)$ be an end.
    Then there is an $\omega$-defining sequence ${(S_n\ |\ n \in \mathbb{N})}$ if and only if ${\Delta(\omega) \leq \aleph_0}$.
\end{lemma}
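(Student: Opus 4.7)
The plan is to handle the two directions separately; the forward direction is a short counting argument, while the backward direction requires a careful inductive construction.

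For the forward direction, assume an $\omega$-defining sequence $(S_n)$ exists. Since $\bigcap_n C(S_n, \omega) = \emptyset$, every dominating vertex $d$ must fail to lie in $C(S_n, \omega)$ for some $n$; since $d$ cannot be separated from $\omega$ by any finite set avoiding $d$, this forces $d \in S_n$. Hence $\Dom(\omega) \subseteq \bigcup_n S_n$, a countable union of finite sets, so $\dom(\omega) \leq \aleph_0$. For $\deg(\omega)$, take any family $\mathcal{R}$ of pairwise disjoint $\omega$-rays. For each $R \in \mathcal{R}$, $\bigcap_n C(S_n, \omega) = \emptyset$ yields a minimal $N(R)$ with $V(R) \not\subseteq C(S_{N(R)}, \omega)$; since $R$ still has a tail inside this component, $R$ must cross $S_{N(R)}$, and the last vertex $x(R)$ of $R$ outside $C(S_{N(R)}, \omega)$ lies in $S_{N(R)}$. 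Disjointness of the rays makes $R \mapsto x(R)$ injective on $\{R : N(R) = n\}$, so $|\{R : N(R) = n\}| \leq |S_n|$; summing gives $|\mathcal{R}| \leq \aleph_0$.

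For the backward direction, suppose $\Delta(\omega) \leq \aleph_0$. Use Lemma~\ref{lem:end_devouring} to pick an $\omega$-devouring ray $R = v_0 v_1 \ldots$ and enumerate $\Dom(\omega) = \{d_1, d_2, \ldots\}$. Construct $(S_n)$ inductively. At stage $n$, with $T_n := \{v_0, \ldots, v_n, d_1, \ldots, d_n\} \cup \bigcup_{i < n} S_i$ finite, consider the subgraph $G' := G - \bigcup_{i<n}(S_i \setminus \Dom(\omega))$, a finite deletion of non-dominating vertices. By Remark~\ref{rem:end_subgraph}, $\omega$ extends to an end $\omega'$ of $G'$ with $\Dom(\omega') = \Dom(\omega)$. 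Apply Theorem~\ref{finite-parameter-menger} in $G'$ to pick a finite separator $S'_n \subseteq V(G')$ between $T_n \cap V(G')$ and $\omega'$ (finite because $|T_n|$ bounds the number of disjoint paths). Setting $S_n := S'_n \cup (\Dom(\omega) \cap T_n)$ yields a finite set with $S_n \cap S_m \subseteq \Dom(\omega)$ for $m < n$ (by definition of $G'$), and $C(S_n, \omega) \subsetneq C(S_{n-1}, \omega)$ follows since $S_n$ separates $S_{n-1} \subseteq T_n$ from $\omega$.

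The main obstacle is verifying $\bigcap_n C(S_n, \omega) = \emptyset$. The easy cases are $v \in V(R)$ and $v \in \Dom(\omega)$: both eventually enter $T_n$ and hence are separated from $\omega$ by $S_n$. The remaining case $v \notin V(R) \cup \Dom(\omega)$ is the delicate one: $v$ is non-dominating, so Theorem~\ref{cardinality-menger} gives a finite bound $k$ on the number of internally disjoint $v$\,--\,$R$ paths. If $v$ were in every $C(S_n, \omega)$, each $G - S_n$ would contain a $v$\,--\,$R$ path $P_n$ ending at some $v_{m_n}$ with $m_n \to \infty$ (since $\{v_0, \ldots, v_n\}$ is separated from $\omega$ by $S_n$). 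The strategy is to exploit both the pairwise $\Dom(\omega)$-disjointness of the $S_n$'s and the fact that $R$ being $\omega$-devouring forces every $\omega$-ray to meet $R$ infinitely often, in order to extract $k+1$ internally disjoint $v$\,--\,$R$ paths from $(P_n)$, contradicting the Menger bound.
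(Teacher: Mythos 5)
The forward direction is correct and follows essentially the same counting argument as the paper.

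The backward direction has genuine gaps. First, in the inductive construction: you compute $S'_n$ by separating $T_n \cap V(G')$ from $\omega'$ inside $G' := G - \bigcup_{i<n}(S_i \setminus \Dom(\omega))$, but the deleted vertices $T_n \setminus V(G') = \bigcup_{i<n}(S_i \setminus \Dom(\omega))$ --- which include $S_{n-1} \setminus \Dom(\omega)$ --- are not controlled by this. Back in $G - S_n$ such a deleted vertex $w$ may have a neighbour inside $C_{G'}(S'_n,\omega')$ and hence lie in $C(S_n,\omega)$, so your claim that ``$S_n$ separates $S_{n-1} \subseteq T_n$ from $\omega$'' does not follow, and neither does $C(S_n,\omega)\subseteq C(S_{n-1},\omega)$. (Adding $N_{G'}(T_n\setminus V(G'))$ into $T_n$ is not an easy repair, since a deleted non-dominating vertex can have infinite degree. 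The paper avoids this entirely by constructing $S_{n+1}$ out of objects lying inside $C(S_n,\omega)$ --- a maximal set of $N(S_n\setminus\Dom(\omega))$--$R$ paths plus the first ray vertex and first dominating vertex in $C(S_n,\omega)$ --- so disjointness from earlier separators outside $\Dom(\omega)$ is automatic.) Relatedly, the Menger step needs care: $T_n\cap V(G')$ contains $\{d_1,\dots,d_n\}$, which cannot be separated from $\omega'$ by any finite set not containing them, so the separator must be taken only for the non-dominating part of $T_n$; that part is recoverable, but should be said.

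Second, and more seriously, the case $v\notin V(R)\cup\Dom(\omega)$ --- which you yourself flag as the main obstacle --- is left as a strategy, not a proof. Extracting $k+1$ internally disjoint $v$--$R$ paths from the family $(P_n)$ is not routine: the $P_n$ are only known to avoid $S_n$, not to avoid each other's interiors, and nothing in the constraints you list rules out a fixed non-dominating bottleneck lying on all $P_n$. The paper takes a different route here: it observes that any $v$--$R$ path must meet infinitely many of the pairwise (outside $\Dom(\omega)$) disjoint sets $S_n$, hence must pass through $\Dom(\omega)$; it then collects $v$--$\Dom(\omega)$ paths and applies the Star-Comb Lemma, with the star case producing a new dominating vertex and the comb case producing an $\omega$-ray disjoint from the $\omega$-devouring ray $R$ --- either a contradiction. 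As written, your proposal does not establish $\bigcap_n C(S_n,\omega)=\emptyset$.
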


\begin{proof}
    Note that for all finite ${S \subseteq V(G)}$, no ${d \in \Dom(\omega)}$ can lie in a component ${C \neq C(S, \omega)}$ of~$G - S$.
    Hence for every $\omega$-defining sequence~${(S_n\ |\ n \in \mathbb{N})}$ 
    and every ${d \in \Dom(\omega)}$ 
    there is an~$N \in \mathbb{N}$ such that~${d \in S_m}$ for all~${m \geq N}$.
    Therefore, if ${\dom(\omega) > \aleph_0}$, no $\omega$-defining sequence can exist, since 
    the union of the separators is at most countable.
    Moreover, note that for every ${\omega\text{-defining}}$ sequence 
    every $\omega$-ray meets infinitely many distinct separators. 
    It follows that~$\deg(\omega)$ is at most countable as well if an $\omega$-defining sequence exist.
    
    For the converse, suppose ${\Delta(\omega) \leq \aleph_0}$.
    Let ${\{ d_n\ |\ n < \dom(\omega) \}}$ be an enumeration of~$\Dom(\omega)$. 
    Let $R = r_0 r_1 \hdots$ be an $\omega$-devouring ray, which exists by Lemma~\ref{lem:end_devouring}.
    We build our desired $\omega$-defining sequence~${(S_n\ |\ n \in \mathbb{N})}$ inductively.
    Set~$S_0 := \{ r_0 \}$.
    For~${n \in \mathbb{N}}$ suppose~$S_n$ is already constructed as desired.
    Take a maximal set $\mathcal{P}_n$ of pairwise disjoint $N(S_n \setminus \Dom(\omega))$\,--\,$R$ paths in~$C(S_n, \omega)$.
    Note that $\mathcal{P}_n$ is finite since otherwise by the pigeonhole principle we would get a vertex ${v \in S_n \setminus \Dom(\omega)}$ dominating~$\omega$.
    Furthermore, ~$\mathcal{P}_n$ is not empty as~$C(S_n, \omega)$ is connected.
    Define\
    \[
        \begin{aligned}
            S_{n+1} := &\quad\ 
            (S_n \cap \Dom(\omega)) 
            \cup \bigcup \mathcal{P}_n \\
            &\cup \big\{ r_m\ |\ m \text{ is minimal with } r_m \in C(S_n, \omega) \big\} \\
            &\cup \big\{ d_{m}\ |\ m \text{ is minimal with } d_m \in C(S_n, \omega) \big\}
            .
        \end{aligned}
    \]
    By construction, $S_{n+1} \cap S_i$ contains only vertices dominating $\omega$ for~${i \leq n}$.
    Let~$P$ be any $S_n$\,--\,$C(S_{n+1}, \omega)$ path.
    We can extend~$P$ in ${C(S_{n+1}, \omega)}$ to an $S_n$\,--\,$R$ path.
    And since ${C(S_{n+1}, \omega) \cap S_{n+1}}$ is empty, we obtain~${P \cap S_{n+1} \neq \emptyset}$ by construction of~$S_{n+1}$.
    Hence any $S_n$\,--\,$C(S_{n+1}, \omega)$ path meets~$S_{n+1}$.
    Since for any vertex ${v \in C(S_{n+1}, \omega) \setminus C(S_n, \omega)}$ there is a path to $C(S_{n}, \omega) \cap C(S_{n+1}, \omega)$ in $C(S_{n+1}, \omega)$, this path would meet a vertex~$w \in S_n$.
    This vertex would be a trivial $S_n$\,--\,$C(S_{n+1}, \omega)$ path avoiding $S_{n+1}$, and hence contradicting the existence of such~$v$.
    Hence~${C(S_{n+1}, \omega) \subseteq C(S_n, \omega)}$.
    
    Suppose there is a vertex ${v \in \bigcap \{ C(S_n, \omega)\ |\ n \in \mathbb{N} \}}$.
    By construction~$v$ is neither dominating~$\omega$ nor is a vertex on~$R$.
    Note that every~$v$\,--\,$R$ path has to contain vertices from infinitely many~$S_n$, hence it has to contain a vertex dominating~$\omega$.
    For each ${d \in \Dom(\omega)}$ let~$P_d$ be either the vertex set of a~$v$\,--\,$\Dom(\omega)$ path containing~$d$ if it exists, or ${P_d = \emptyset}$ otherwise.
    If ${X := \bigcup \{ P_d\ |\ d \in \Dom(\omega) \}}$ is finite, 
    we can find an $N \in \mathbb{N}$ such that ${v \in X \subseteq G - C(S_N, \omega)}$, a contradiction. 
    Otherwise apply Lemma~\ref{star-comb} to ${X \cap \Dom(\omega)}$ in~$G[X]$.
    Note that in~$G[X]$ all vertices of $X \cap \Dom(\omega)$ have degree~$1$ in~$G[X]$.
    Furthermore, we know that $V(R) \cap X \subseteq \Dom(\omega)$, since no $P_d$ contains a vertex of $R$ as an internal vertex.
    But then the centre of a star would be a vertex dominating~$\omega$ in~${X \setminus \Dom(\omega)}$ and the spine of a comb would contain an $\omega$-ray disjoint to~$R$ as a tail, again a contradiction.
\end{proof}

In the proof of the end-degree characterisation via $\omega$-defining sequences we shall need the following fact regarding the relationship of $\deg(\omega)$ and $\dom(\omega)$.

\begin{lemma}\label{lem:uncount-end-deg}
    If $\deg(\omega)$ is uncountable for $\omega \in \Omega(G)$, then $\dom(\omega)$ is infinite.
\end{lemma}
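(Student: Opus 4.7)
The plan is to argue by contradiction: assuming $\dom(\omega)$ is finite, I will produce a vertex in $V(G) \setminus \Dom(\omega)$ that nonetheless dominates $\omega$, giving the desired contradiction. The key idea is that an uncountable family of disjoint $\omega$-rays collectively has uncountably many vertices, so the Star-Comb Lemma (in its uncountable-star form) applied to this vertex set must produce a subdivided star whose centre will serve as the new dominating vertex.

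More concretely, set $D := \Dom(\omega)$ and $G' := G - D$, and let $\omega' := \omega\restricted{G'}$. By Remarks~\ref{rem:end_subgraph}, $\deg(\omega') = \deg(\omega) > \aleph_0$ and $\Dom(\omega') = \emptyset$. Invoking Halin's result (cited after the definition of $\deg(\omega)$), fix a family~$\mathcal{R}$ of pairwise disjoint $\omega'$-rays with $\abs{\mathcal{R}} = \deg(\omega')$, and set $U := \bigcup_{R \in \mathcal{R}} V(R)$. All rays in $\mathcal{R}$ lie in a common component of~$G'$, so $U$ is $1$-connected in~$G'$ and $\abs{U} \geq \aleph_1$. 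Apply Lemma~\ref{star-comb} to~$U$ in~$G'$ with $\kappa := \aleph_1$; since $\aleph_1$ is uncountable, the comb alternative is excluded and we obtain a subdivided star in~$G'$ with centre $c \in V(G')$ and an $\aleph_1$-sized set of leaves $U_2 \subseteq U$.

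Because the rays in~$\mathcal{R}$ are countable and pairwise disjoint, $U_2$ must meet $\aleph_1$ many distinct members of~$\mathcal{R}$; so we may choose a leaf $v_\alpha \in U_2$ on a distinct ray $R_\alpha \in \mathcal{R}$ for each $\alpha < \aleph_1$, and let $P_\alpha$ denote the corresponding $c$\,--\,$v_\alpha$ path of the subdivided star. The concatenations $Q_\alpha := P_\alpha \cup v_\alpha R_\alpha$ are $\omega'$-rays starting at~$c$. To see that $c$ dominates $\omega'$, consider any finite $S \subseteq V(G') \setminus \{c\}$: the internal disjointness of the $P_\alpha$ (only the centre~$c$ is shared) together with the pairwise disjointness of the $R_\alpha$ imply that $S$ can intersect at most $2\abs{S}$ of the~$Q_\alpha$. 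Hence uncountably many~$Q_\alpha$ avoid~$S$ entirely, placing~$c$ in the $\omega'$-component of~${G' - S}$. Therefore $c \in \Dom(\omega')$, contradicting $\Dom(\omega') = \emptyset$.

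The main obstacle is the last bookkeeping step: we need the extended rays~$Q_\alpha$ to be resilient enough that any finite separator can only damage boundedly many of them. This is exactly what the combination of internal disjointness from the star and the disjointness of the rays in~$\mathcal{R}$ delivers, so the real work is in setting up the two disjoint structures compatibly before invoking Star-Comb.
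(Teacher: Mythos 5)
Your argument is correct and follows essentially the same route as the paper: delete the finitely many dominating vertices, apply the Star--Comb Lemma in the uncountable regime to the vertex sets of uncountably many disjoint $\omega'$-rays, and show that the resulting star centre dominates $\omega'$, a contradiction. The paper streamlines your pigeonhole step by applying Star--Comb to a \emph{transversal} of $\mathcal{R}$ (one vertex per ray), so the uncountably many leaves automatically lie on distinct rays; it also leaves implicit the final counting argument (``any finite $S$ misses all but finitely many star paths and all but finitely many rays'') that you spell out. One small imprecision in your write-up: $Q_\alpha := P_\alpha \cup v_\alpha R_\alpha$ need not itself be a ray, since the star path $P_\alpha$ may revisit $R_\alpha$ --- but this is harmless, as all you actually use is that $Q_\alpha$ is a connected subgraph of~$G'$ containing both~$c$ and a tail of~$R_\alpha$.
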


\begin{proof}
    Suppose for a contradiction that~${\dom(\omega) < \aleph_0}$.
    For $G' := G - \Dom(\omega)$ let~$\mathcal{R}$ be a set of disjoint ${\omega\restricted G'}$-rays of size~$\aleph_1$, 
    which exist by Remark~\ref{rem:end_subgraph}.
    Let~$T$ be a transversal of ${\{ V(R)\ |\ R \in \mathcal{R} \}}$.
    Applying Lemma~\ref{star-comb} to $T$ yields a subdivided star with centre $d$ and uncountably many leaves in~$T$.
    Now $d \notin \Dom(\omega)$ dominates $\omega\restricted G'$ in $G'$ and hence $\omega$ in $G$ by Remark~\ref{rem:end_subgraph}, a contradiction.
\end{proof}

Let~${\omega \in \Omega(G)}$ be an end with~${\dom(\omega) = 0}$, 
$(S_n\ |\ n \in \mathbb{N})$ be an $\omega$-defining sequence  
and~$\mathcal{R}$ be a set of disjoint $\omega$-rays.
We call ${((S_n\ |\ n \in \mathbb{N}), \mathcal{R})}$ a \emph{degree witnessing pair} for~$\omega$, 
if for all $n \in \mathbb{N}$ and for each $s \in S_n$ there is a ray $R \in \mathcal{R}$ containing $s$ and every ray $R \in \mathcal{R}$ meets $S_n$ at most once for every $n \in \mathbb{N}$.
Note that this definition only makes sense for undominated ends, 
since a ray that contains a dominating vertex meets eventually all separators not only in that vertex.

\begin{lemma}\label{lem:def-seq-deg-wit}
    Let~${\omega \in \Omega(G)}$ be an end with~${\dom(\omega) = 0}$.
    Then there  is a degree witnessing pair~${((S_n\ |\ n \in \mathbb{N}), \mathcal{R})}$.
\end{lemma}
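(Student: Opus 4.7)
The plan is to first observe that $\dom(\omega) = 0$ together with Lemma~\ref{lem:uncount-end-deg} forces $\deg(\omega) \leq \aleph_0$, and hence $\Delta(\omega) \leq \aleph_0$; Lemma~\ref{lem:def-seq} then supplies an $\omega$-defining sequence $(T_n)_{n \in \mathbb{N}}$, whose members are automatically pairwise disjoint since $\dom(\omega) = \emptyset$. I would fix once and for all a maximum family $\mathcal{R} = \{R_i : i < \deg(\omega)\}$ of pairwise disjoint $\omega$-rays; this exists by the Halin result cited after the definition of $\deg(\omega)$, and by maximality every $\omega$-ray of $G$ meets $\bigcup_i V(R_i)$. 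This $\mathcal{R}$ is the second coordinate of the pair I am aiming to build.

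Next I would construct the separators $S_n$ inductively as transversals of finitely many rays of $\mathcal{R}$ (meeting each $R_i$ in at most one vertex). Given $S_0, \ldots, S_n$ satisfying the required conditions with $C_n := C(S_n, \omega)$, pick indices $m < m'$ large enough that $T_m \subseteq C_n$, $T_{m'} \subseteq C(T_m, \omega)$, and each relevant $R_i$ meets~$T_m$ (possible because $\bigcap_n C(T_n,\omega) = \emptyset$ forces the start of each $R_i$ eventually outside $C(T_m, \omega)$). Applying Menger's Theorem (Theorem~\ref{finite-parameter-menger}) inside the finite subgraph $G[T_m, T_{m'}]$ yields pairwise disjoint $T_m$\,--\,$T_{m'}$ paths; by the maximality of~$\mathcal{R}$ these may be assumed to be segments of the rays $R_i$, since any $T_m$\,--\,$T_{m'}$ path disjoint from $\bigcup_i V(R_i)$ could be prolonged inside $C(T_{m'}, \omega)$ to an $\omega$-ray disjoint from~$\mathcal{R}$, contradicting maximality. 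I would then take $S_{n+1}$ to consist of one vertex on each such ray-segment, chosen strictly past every previously chosen vertex on that $R_i$. By selecting $T_m$ deep enough that $C(T_m, \omega) \subseteq C(T_{n+1}, \omega)$, one obtains $C(S_{n+1}, \omega) \subseteq C(T_{n+1}, \omega)$, so $\bigcap_n C(S_n, \omega) \subseteq \bigcap_n C(T_n, \omega) = \emptyset$ as required.

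The main obstacle is verifying that the transversal $S_{n+1}$ is genuinely an $\omega$-defining separator of~$G$, and not merely a separator of the finite slice $G[T_m, T_{m'}]$; equivalently, one must show that $C(S_{n+1}, \omega)$ is a well-defined unique $\omega$-component of $G - S_{n+1}$ strictly inside~$C_n$. The key point is again the maximality of~$\mathcal{R}$: any hypothetical $\omega$-ray in $G - S_{n+1}$ starting on the shallow side of $S_{n+1}$ would, by maximality, have to meet some $R_i$, but $S_{n+1}$ cuts every such $R_i$ at the chosen vertex $v_{i, n+1}$, so a splicing argument would produce an $\omega$-ray disjoint from $\mathcal{R}$ after a finite initial modification, contradicting maximality. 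The at-most-once condition on $R_i \cap S_n$ is then immediate from the way we chose the vertices along each ray.
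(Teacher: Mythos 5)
Your approach inverts the order of construction used in the paper: you fix a maximum family $\mathcal{R}$ of disjoint $\omega$-rays first and then try to carve out separators that act as transversals of $\mathcal{R}$, whereas the paper first refines the $\omega$-defining sequence $(T_n)$ to a new sequence $(S_n)$ of \emph{minimum-order} separators (so that $\abs{S_n} \leq \abs{S_{n+1}}$), applies Menger between consecutive $S_n$ to obtain a full $S_n$\,--\,$S_{n+1}$ linkage, and only then defines $\mathcal{R}$ as the union of these linkages. This reversal is not merely cosmetic; it is where your argument breaks.

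The first gap is the claim that, by maximality of $\mathcal{R}$, the Menger paths in $G[T_m, T_{m'}]$ ``may be assumed to be segments of the rays $R_i$''. Maximality of $\mathcal{R}$ says only that no further pairwise disjoint $\omega$-ray can be added; it does not say that a finite $T_m$\,--\,$T_{m'}$ path disjoint from $\bigcup \mathcal{R}$ can be prolonged to an $\omega$-ray disjoint from $\mathcal{R}$ --- indeed, any such prolongation into $C(T_{m'},\omega)$ must eventually meet $\bigcup \mathcal{R}$ precisely because $\mathcal{R}$ is maximal, so no contradiction is obtained. Moreover, even a $T_m$\,--\,$T_{m'}$ path that \emph{does} meet $\bigcup \mathcal{R}$ need not be a segment of any single $R_i$: it can hop from one ray to another inside the slice. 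A fixed-in-advance maximum family $\mathcal{R}$ can cross a finite separator $T_m$ several times per ray, or can fail to provide any disjoint linkage through the slice at all; nothing constrains it to align with the Menger structure of $G[T_m, T_{m'}]$.

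The second gap is the step from the Menger linkage to $S_{n+1}$: taking ``one vertex on each ray-segment, chosen strictly past every previously chosen vertex'' does not yield a separator. A set of one vertex per path in a maximum disjoint-path system is a separator only if it is a \emph{minimum cut}, which is a very particular choice, not an arbitrary one. Your ``splicing argument'' to rescue this does not go through either: an $\omega$-ray $Q$ avoiding $S_{n+1}$ can meet several $R_i$ at vertices distinct from the chosen cut vertices, and replacing a tail of $Q$ by a tail of some $R_i$ simply produces another ray meeting $\mathcal{R}$, never one disjoint from it. The point that the paper's proof is engineered to handle --- and that your proposal does not --- is that one cannot expect an arbitrary maximum family of rays to have a compatible transversal sequence; instead, one must build the separators first (using the minimum-order trick to get monotone sizes, hence a saturating Menger linkage between consecutive separators) and read the rays off that linkage, so that by construction each ray meets each separator exactly once.
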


\begin{proof}
    By Lemma~\ref{lem:def-seq} and Lemma~\ref{lem:uncount-end-deg} there is an $\omega$-defining sequence ${(S'_n\ |\ n \in \mathbb{N})}$.
    Since $\omega$ is undominated, the separators are pairwise disjoint.
    
    We want to construct an $\omega$-defining sequence ${(S_n\ |\ n \in \mathbb{N})}$ with the property, that for all~${n \in \mathbb{N}}$ and for all~${m > n}$ there are~$\abs{S_n}$ many $S_n$\,--\,$S_{m}$ paths in~${G[S_n, S_{m}]}$.
    
    Let~$S_0$ be an $S_0'$\,--\,$S_{f(0)}'$ separator for some~${f(0) \in \mathbb{N}}$ which is of minimum order among all candidates separating~$S_0'$ from~$S_m'$ for any~${m \in \mathbb{N}}$.
    Suppose we already constructed the sequence up to~$S_n$.
    Let~$S_{n + 1}$ be an $S_{f(n)+1}'$\,--\,$S_{f(n+1)}'$ separator for some~${f(n+1) > f(n) + 1}$ which is of minimum order among all candidates separating~$S_{f(n) + 1}'$ and~$S_m'$ for \linebreak any~${m > f(n) + 1}$.
    
    Note that~$S_m$ and~$S_n$ are disjoint for all~${m,n \in \mathbb{N}}$ with~${n \neq m}$ and that \linebreak
    ${C(S_{n+1},\omega) \subseteq C(S_{f(n)+1}',\omega)}$ for all~${n \in \mathbb{N}}$. 
    Hence ${(S_n\ |\ n \in \mathbb{N})}$ is an $\omega$-defining sequence.
    Moreover, note that ${\abs{S_n} \leq \abs{S_{n+1}}}$ for all~${n \in \mathbb{N}}$, since $S_{n+1}$ would have been a candidate for~$S_n$ as well.
    In particular, there is no $S_n$\,--\,$S_{n+1}$ separator $S$ of order less than $\abs{S_n}$ for every $n \in \mathbb{N}$, since this would also have been a candidate for~$S_n$.
    Hence by Theorem~\ref{finite-parameter-menger} there is a set of~$\abs{S_n}$ many disjoint $S_n$\,--\,$S_{n+1}$ paths~$\mathcal{P}_n$ in~$G[S_n, S_{n+1}]$.
    
    Now the union~${\bigcup \{ \bigcup \mathcal{P}_n\ |\ n \in \mathbb{N} \}}$ is by construction a union of a set $\mathcal{R}$ of rays, 
    since the union of the paths in $\mathcal{P}_n$ intersect the union of the paths in $\mathcal{P}_m$ in precisely $S_{n+1}$ if $m = n+1$ and are disjoint if $m > n+1$.
    These rays are necessarily $\omega$-rays, meet every separator at most once and every $s \in S_n$ is contained in one of them,
    proving that ${((S_n\ |\ n \in \mathbb{N}), \mathcal{R})}$ is a degree witnessing pair for~$\omega$.
\end{proof}

\begin{corollary}\label{cor:end-deg}
    Let $k \in \mathbb{N}$ and let $\omega \in \Omega(G)$ with ${\dom(\omega) = 0}$.
    Then ${\deg(\omega) \geq k}$ if and only if for every $\omega$-defining sequence ${(S_n\ |\ n \in \mathbb{N})}$ the sets~$S_n$ eventually have size at least~$k$.
\end{corollary}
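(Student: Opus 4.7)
My plan is to prove the two implications separately; both reduce to earlier material, so the work is largely bookkeeping. Throughout I will use that $\dom(\omega) = 0$ forces the separators $S_n$ in any $\omega$-defining sequence to be pairwise disjoint, via the condition $S_n \cap S_m \subseteq \Dom(\omega)$.

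For the forward implication, I would fix $k$ pairwise disjoint $\omega$-rays $R_1, \ldots, R_k$ (which exist by the definition of $\deg(\omega)$ together with Halin's theorem cited just before Remark~\ref{rem:end_subgraph}), with start vertices $v_1, \ldots, v_k$. For an arbitrary $\omega$-defining sequence $(S_n \mid n \in \mathbb{N})$, I would invoke the observation recorded just after the definition of such a sequence: since $\{v_1, \ldots, v_k\}$ is finite, there is some $N \in \mathbb{N}$ with $\{v_1, \ldots, v_k\} \subseteq G - C(S_n, \omega)$ for all $n \geq N$. Since each $R_i$ is an $\omega$-ray it has a tail inside $C(S_n, \omega)$, so for $n \geq N$ it must contain a vertex of $S_n$. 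Pairwise disjointness of the rays then yields $k$ distinct vertices of $S_n$, giving $\abs{S_n} \geq k$ for all $n \geq N$.

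For the backward implication I would argue by contraposition. Assume $\deg(\omega) < k$, and invoke Lemma~\ref{lem:def-seq-deg-wit} (applicable since $\dom(\omega) = 0$) to produce a degree witnessing pair $((S_n \mid n \in \mathbb{N}), \mathcal{R})$. The defining properties of such a pair immediately supply an injection $S_n \hookrightarrow \mathcal{R}$, because each $s \in S_n$ lies on some ray of $\mathcal{R}$ and each ray of $\mathcal{R}$ meets $S_n$ at most once. Hence $\abs{S_n} \leq \abs{\mathcal{R}} \leq \deg(\omega) < k$ for every $n$, exhibiting an $\omega$-defining sequence whose separators never reach size $k$, which is the contrapositive of what is claimed.

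I do not expect any genuine obstacle: the forward direction is a direct application of the property $\bigcap_n C(S_n,\omega) = \emptyset$, and the backward direction is essentially a translation of Lemma~\ref{lem:def-seq-deg-wit}. The only small care point is in the forward direction, namely that a single threshold $N$ works simultaneously for all $k$ start vertices, which holds because the finite union $\{v_1, \ldots, v_k\}$ is itself finite, so the "eventually outside every $C(S_n,\omega)$" property applies to it as a whole.
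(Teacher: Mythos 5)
Your proposal is correct and takes essentially the same route as the paper: the forward direction uses the observation that each of $k$ disjoint $\omega$-rays must meet $S_n$ (disjointly) once $n$ is large enough for the start vertices to lie outside $C(S_n,\omega)$, and the backward direction invokes a degree witnessing pair from Lemma~\ref{lem:def-seq-deg-wit} and the injection from $S_n$ into $\mathcal{R}$. The only cosmetic difference is that the paper argues the backward direction directly (hypothesis implies $\abs{\mathcal{R}}\geq k$), whereas you phrase it as a contrapositive; this is a trivial repackaging of the same argument.
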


\begin{proof}
    Suppose ${\deg(\omega) \geq k}$.
    Let ${(S_n\ |\ n \in \mathbb{N})}$ be any $\omega$-defining sequence.
    Then each ray out of a set of~$k$ disjoint~$\omega$-rays has to go through eventually all $S_n$.
    For the other direction take a degree witnessing pair ${((S_n\ |\ n \in \mathbb{N}), \mathcal{R})}$.
    Now~${\abs{\mathcal{R}} \geq k}$, since eventually all~$S_n$ have size at least~$k$.
\end{proof}

\begin{corollary}\label{cor:end-deg1}
    Let ${k \in \mathbb{N}}$ and let ${\omega \in \Omega(G)}$ with ${\dom(\omega) = 0}$.
    Then ${\deg(\omega) = k}$ if and only if~$k$ is the smallest integer such that there is an $\omega$-defining sequence ${(S_n\ |\ n \in \mathbb{N})}$ with ${\abs{S_n} = k}$ for all ${n \in \mathbb{N}}$. \qed
\end{corollary}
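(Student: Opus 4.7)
The plan is to deduce Corollary~\ref{cor:end-deg1} from Corollary~\ref{cor:end-deg} together with the construction of a degree witnessing pair provided by Lemma~\ref{lem:def-seq-deg-wit}, using the trivial fact that any tail of an $\omega$-defining sequence is again an $\omega$-defining sequence.

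For the forward implication I would assume~${\deg(\omega) = k}$ and invoke Lemma~\ref{lem:def-seq-deg-wit} to obtain a degree witnessing pair~${((S_n\ |\ n \in \mathbb{N}), \mathcal{R})}$. Since every vertex of each~$S_n$ lies on some ray in~$\mathcal{R}$ and every ray of~$\mathcal{R}$ meets~$S_n$ at most once, this gives~${\abs{S_n} \leq \abs{\mathcal{R}} \leq \deg(\omega) = k}$, while Corollary~\ref{cor:end-deg} yields~${\abs{S_n} \geq k}$ for all sufficiently large~$n$. Hence~${\abs{S_n} = k}$ eventually, and passing to an appropriate tail produces an $\omega$-defining sequence with~${\abs{S_n} = k}$ for every~${n \in \mathbb{N}}$. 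The minimality of~$k$ then follows immediately from Corollary~\ref{cor:end-deg}: the existence of an $\omega$-defining sequence with all~${\abs{S_n} = k' < k}$ would witness that it is not the case that~${\abs{S_n} \geq k}$ eventually, contradicting~${\deg(\omega) \geq k}$.

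For the backward implication, starting from the assumption that~$k$ is the smallest such integer, I would first apply Corollary~\ref{cor:end-deg} to deduce~${\deg(\omega) \leq k}$: if~${\deg(\omega) \geq k+1}$ held, then the fixed $\omega$-defining sequence with all~${\abs{S_n} = k}$ would have to satisfy~${\abs{S_n} \geq k+1}$ eventually, which is absurd. In particular~$\deg(\omega)$ is finite, so the argument of the forward direction applies verbatim and (after truncating to a tail) produces an $\omega$-defining sequence with~${\abs{S_n} = \deg(\omega)}$ for every~${n \in \mathbb{N}}$. The minimality of~$k$ then forces~${k \leq \deg(\omega)}$, and combining both inequalities gives~${\deg(\omega) = k}$.

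There is essentially no obstacle to overcome: all the substantive work is already contained in Lemma~\ref{lem:def-seq-deg-wit} and Corollary~\ref{cor:end-deg}. The only routine observations to verify are that tails of $\omega$-defining sequences are $\omega$-defining sequences (immediate from the three defining conditions) and that in any degree witnessing pair one has~${\abs{\mathcal{R}} \leq \deg(\omega)}$ by the very definition of~$\deg(\omega)$ as a supremum over families of pairwise disjoint~$\omega$-rays.
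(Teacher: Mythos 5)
Your proof is correct and matches the paper's intended route: the paper marks this corollary with~$\qed$ precisely because it follows directly from Corollary~\ref{cor:end-deg} together with the degree witnessing pair from Lemma~\ref{lem:def-seq-deg-wit}, which is exactly the combination you use. The only slightly non-trivial observations you supply --- that tails of $\omega$-defining sequences remain $\omega$-defining (immediate since the $C(S_n,\omega)$ are nested so the empty intersection condition survives truncation), and that the injection of $S_n$ into $\mathcal{R}$ gives $\abs{S_n}\le\abs{\mathcal{R}}\le\deg(\omega)$ --- are sound, and you correctly first establish $\deg(\omega)\le k$ in the backward direction before invoking the witnessing pair argument, so that $\deg(\omega)$ is finite where it needs to be.
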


We can easily lift these results to ends dominated by finitely many vertices with the following observation based on Remark~\ref{rem:end_subgraph}.

\begin{remarks}\label{rem:end_deg}
    Suppose ${\dom(\omega) < \aleph_0}$.
    Let~$G'$ denote ${G - \Dom(\omega)}$.
    \begin{enumerate}[label=(\alph*)]
        \item \label{item:end-deg-a}
            For every $\omega{\upharpoonright}G'$-defining sequence ${(S'_n\ |\ n \in \mathbb{N})}$ of $G'$ there is an $\omega$-defining sequence ${(S_n\ |\ n \in \mathbb{N})}$ of $G$ with $S'_n = S_n \setminus \Dom(\omega)$ for all $n \in \mathbb{N}$.
        \item \label{item:end-deg-b} 
            For every $\omega$-defining sequence ${(S_n\ |\ n \in \mathbb{N})}$ of $G$ there is an $\omega{\upharpoonright}G'$-defining sequence ${(S'_n\ |\ n \in \mathbb{N})}$ of $G'$ with $S'_n = S_n \setminus \Dom(\omega)$ for all $n \in \mathbb{N}$.
        \qed
    \end{enumerate}
\end{remarks}

\begin{corollary}\label{cor:end-deg2}
    Let $k \in \mathbb{N}$ and let $\omega \in \Omega(G)$. 
    Then $\Delta(\omega) \geq k$ if and only if for every $\omega$-defining sequence ${(S_n\ |\ n \in \mathbb{N})}$ the sets $S_n$ eventually have size at least $k$.
\end{corollary}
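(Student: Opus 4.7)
The plan is to reduce Corollary~\ref{cor:end-deg2} to Corollary~\ref{cor:end-deg} by removing the (finitely many) dominating vertices of~$\omega$, while treating the cases in which $\Delta(\omega)$ is infinite separately. The key technical input --- already extracted in the proof of Lemma~\ref{lem:def-seq} --- is that every $d \in \Dom(\omega)$ lies in all but finitely many separators of any $\omega$-defining sequence, since the components $C(S_n,\omega)$ shrink to empty intersection and $d$ cannot lie in any component of $G - S$ other than $C(S,\omega)$.

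First I would dispose of the ``infinite'' cases. If $\Delta(\omega) > \aleph_0$, then by Lemma~\ref{lem:def-seq} no $\omega$-defining sequence exists, so both sides of the claim hold vacuously (and $\Delta(\omega) > \aleph_0 \geq k$). If $\dom(\omega) \geq \aleph_0$, then $\Delta(\omega) \geq \aleph_0 \geq k$, and given any $\omega$-defining sequence $(S_n\ |\ n \in \mathbb{N})$, picking $k$ distinct vertices of $\Dom(\omega)$ and applying the key fact above produces an $N \in \mathbb{N}$ such that $\abs{S_n} \geq k$ for all $n \geq N$; so both sides hold.

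In the remaining case $\dom(\omega) < \aleph_0$, set $d := \dom(\omega)$ and $G' := G - \Dom(\omega)$. Any vertex of~$G'$ dominating ${\omega{\upharpoonright}G'}$ in~$G'$ would also dominate~$\omega$ in~$G$, so ${\omega{\upharpoonright}G'}$ is undominated in~$G'$, and Remarks~\ref{rem:end_subgraph} give $\deg({\omega{\upharpoonright}G'}) = \deg(\omega)$. By Remarks~\ref{rem:end_deg}, the assignment $(S_n\ |\ n \in \mathbb{N}) \mapsto (S_n \setminus \Dom(\omega)\ |\ n \in \mathbb{N})$ is a bijection between the $\omega$-defining sequences of~$G$ and the ${\omega{\upharpoonright}G'}$-defining sequences of~$G'$, and by the key fact above we have $\abs{S_n \setminus \Dom(\omega)} = \abs{S_n} - d$ for all sufficiently large~$n$. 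Hence ``$\abs{S_n} \geq k$ eventually, for every $\omega$-defining sequence of~$G$'' is equivalent to ``$\abs{S'_n} \geq k - d$ eventually, for every ${\omega{\upharpoonright}G'}$-defining sequence of~$G'$''; by Corollary~\ref{cor:end-deg} this is in turn equivalent to $\deg({\omega{\upharpoonright}G'}) \geq k - d$, i.e.\ to $\Delta(\omega) = \deg(\omega) + d \geq k$.

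I do not foresee any genuine obstacle: the undominated case is already handled by Corollary~\ref{cor:end-deg}, the passage between $G$ and $G'$ by Remarks~\ref{rem:end_deg}, and the only care required is to track the constant shift by $d$ when comparing $\abs{S_n}$ with $\abs{S_n \setminus \Dom(\omega)}$, together with separating off the two infinite subcases in which $\omega$-defining sequences either fail to exist or automatically contain arbitrarily many dominating vertices.
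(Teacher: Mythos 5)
Your proof is correct and follows essentially the same route as the paper's: dispose of the cases where $\dom(\omega)$ is infinite (either no $\omega$-defining sequence exists, or one picks $k$ dominating vertices that eventually lie in every $S_n$), and then reduce the case $\dom(\omega) < \aleph_0$ to Corollary~\ref{cor:end-deg} on $G - \Dom(\omega)$ via Remarks~\ref{rem:end_deg}. The only cosmetic points worth noting are that Remarks~\ref{rem:end_deg} give well-definedness and surjectivity of $S_n \mapsto S_n \setminus \Dom(\omega)$ rather than a literal bijection (which is all the equivalence needs), and the trivial edge case $d > k$, where $k - d < 0$ falls outside the stated scope of Corollary~\ref{cor:end-deg} but both sides of the claim hold vacuously.
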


\begin{proof}
    As noted before, each vertex dominating $\omega$ has to be in eventually all sets of an $\omega$-defining sequence.
    
    Suppose $\Delta(\omega) \geq k$.
    If $\dom(\omega) \geq \aleph_1$, then there is no $\omega$-defining sequence and there is nothing to show.
    If $\dom(\omega) = \aleph_0$, then the sets of any $\omega$-defining sequence eventually have all size at least $k$.
    If $\dom(\omega) < \aleph_0$, we can delete $\Dom(\omega)$ and apply Corollary~\ref{cor:end-deg} to $G - \Dom(\omega)$ with $k' = \deg(\omega)$.
    With Remark~\ref{rem:end_deg}\ref{item:end-deg-b} the claim follows.
    
    If $\Delta(\omega) < k$, we can delete $\Dom(\omega)$ and apply Corollary~\ref{cor:end-deg} with ${k' = \deg(\omega)}$.
    With Remark~\ref{rem:end_deg}\ref{item:end-deg-a} the claim follows.
\end{proof}

\begin{corollary}\label{cor:end-deg3}
    Let $k \in \mathbb{N}$ and let $\omega \in \Omega(G)$. 
    Then $\Delta(\omega) = k$ if and only if $k$ is the smallest integer such that there is an $\omega$-defining sequence ${(S_n\ |\ n \in \mathbb{N})}$ with $\abs{S_n} = k$ for all $n \in \mathbb{N}$.
\end{corollary}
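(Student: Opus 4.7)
The plan is to deduce this corollary from Corollary~\ref{cor:end-deg2} together with a construction mirroring the one used to pass from Corollary~\ref{cor:end-deg} to Corollary~\ref{cor:end-deg1}, lifted from the undominated case via Remark~\ref{rem:end_deg}. Both directions turn out to be short given what has already been established.

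For the ``only if'' direction, I would assume $\Delta(\omega) = k$. Corollary~\ref{cor:end-deg2} immediately tells us that every $\omega$-defining sequence eventually has sets of size at least~$k$, which rules out any $\omega$-defining sequence of constant size strictly less than~$k$. It then suffices to exhibit an $\omega$-defining sequence of constant size exactly~$k$. First I would observe that $\dom(\omega)$ must be finite: otherwise either $\dom(\omega) = \aleph_0$, forcing $\Delta(\omega) \geq \aleph_0 > k$, or $\dom(\omega)$ is uncountable, in which case Lemma~\ref{lem:def-seq} precludes the existence of any $\omega$-defining sequence, contradicting Corollary~\ref{cor:end-deg2}. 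Writing $d := \dom(\omega)$ and $G' := G - \Dom(\omega)$, the end $\omega{\upharpoonright}G'$ is undominated of degree $k - d$ by Remark~\ref{rem:end_subgraph}. Applying Corollary~\ref{cor:end-deg1} in $G'$ yields an $\omega{\upharpoonright}G'$-defining sequence $(S_n')$ with $\abs{S_n'} = k - d$ for every $n$. Lifting this via Remark~\ref{rem:end_deg}\ref{item:end-deg-a} gives an $\omega$-defining sequence $(S_n)$ in $G$ with $S_n \setminus \Dom(\omega) = S_n'$. Since $\dom(\omega) = d$ is finite and every vertex of $\Dom(\omega)$ lies in $S_n$ for all sufficiently large $n$, we get $\abs{S_n} = k$ eventually; passing to a tail of the sequence (which is still $\omega$-defining) produces the required constant-size sequence.

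For the ``if'' direction, let $k$ be the smallest integer admitting an $\omega$-defining sequence of constant size~$k$. If $\Delta(\omega) \geq k + 1$, then Corollary~\ref{cor:end-deg2} forces every $\omega$-defining sequence to have sets of size at least $k + 1$ eventually, contradicting the given sequence of constant size~$k$; hence $\Delta(\omega) \leq k$. Conversely, if $\Delta(\omega) < k$, then in particular $\Delta(\omega)$ is a finite integer, and applying the ``only if'' direction already established to $\Delta(\omega)$ produces an $\omega$-defining sequence of constant size $\Delta(\omega) < k$, contradicting the minimality of~$k$. Thus $\Delta(\omega) = k$.

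There is no substantive obstacle: the only care needed is the verification that $\dom(\omega)$ is finite whenever $\Delta(\omega) = k$ is a finite integer, which follows directly from the definitions together with Lemma~\ref{lem:def-seq}. Everything else is a routine combination of Corollary~\ref{cor:end-deg1}, Corollary~\ref{cor:end-deg2}, and Remark~\ref{rem:end_deg}.
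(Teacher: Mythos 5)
Your proof is correct and follows the same route as the paper's own (extremely terse) argument: observe that a finite $\Delta(\omega)$ forces $\dom(\omega)$ to be finite, pass to $G' := G - \Dom(\omega)$ where $\omega\!\restriction\!G'$ is undominated of degree $k - \dom(\omega)$, apply Corollary~\ref{cor:end-deg1}, and lift back via Remark~\ref{rem:end_deg}. The only superfluous step is invoking Lemma~\ref{lem:def-seq} to rule out uncountable $\dom(\omega)$ — if $\dom(\omega)$ is uncountable then $\Delta(\omega) > k$ follows immediately from the definition of $\Delta$ — but this does not affect correctness.
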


\begin{proof}
    As before, we delete $\Dom(\omega)$ and apply Corollary~\ref{cor:end-deg1} with $k' = \deg(\omega)$ and Remark~\ref{rem:end_deg}.
\end{proof}

Finally, we state more remarks on the relationship between $\deg(\omega)$ and $\dom(\omega)$ similar to Lemma~\ref{lem:uncount-end-deg} without giving the proof.

\begin{remarks}\label{rem:comb-end-deg}
    Let $\kappa_1$, $\kappa_2$ be infinite cardinals and let $k_1$, $k_2 \in \mathbb{N}$.
    \begin{itemize}
        \item[(1)] If $\dom(\omega)$ is infinite, then so is $\deg(\omega)$ 
            for every $\omega \in \Omega(G)$.
        \item[(2)] If $\Delta(\omega)$ is uncountable, then both $\deg(\omega)$ and $\dom(\omega)$ are infinite 
            for every $\omega \in \Omega(G)$.
        \item[(3)] There is a graph with an end $\omega'$ such that $\deg(\omega') = \kappa_1$ and $\dom(\omega') = \kappa_2$.
        \item[(4)] There is a graph with an end $\omega'$ such that $\deg(\omega') = k_1$ and $\dom(\omega') = k_2$.
        \item[(5)] There is a graph with an end $\omega'$ such that $\deg(\omega') = \aleph_0$ and $\dom(\omega') = k_2$.
    \end{itemize}
\end{remarks}

\subsection{Constructing uniformly connected rays}
\label{subsec:T-conn-rays}
\ \newline \indent
Let~${\omega \in \Omega(G)}$ be an end of~$G$ and 
let~$I, J$ be disjoint finite sets 
with~${1 \leq \abs{I} \leq \deg(\omega)}$
and ${0 \leq \abs{J} \leq \dom(\omega)}$.
Let~${\mathcal{R} = (R_i\ |\ i \in I)}$ be a family of disjoint $\omega$-rays
and let \linebreak
${\mathcal{D} = (d_j \in \Dom(\omega)\ |\ j \in J)}$ be a family of distinct vertices disjoint from $\bigcup \mathcal{R}$.
Let~$T$ be a tree on~$I \cup J$ such that~$J$ is a set of leaves of~$T$.
Let~${W := \bigcup\mathcal{R} \cup \mathcal{D}}$ 
and~${k := \abs{I \cup J}}$.
We call a finite subgraph $\Gamma \subseteq G$ a \emph{$(T,\mathcal{T}_2)$-connection}, if 
if $\mathcal{T}_2$ is a simple type-2 $k$-template for~${(T, J)}$, and
there is a set~$\mathcal{P}$ of internally disjoint $W$\,--\,$W$ paths 
such that ${\Gamma \subseteq \bigcup \mathcal{R} \cup \bigcup \mathcal{P}}$ and~$\Gamma$ is isomorphic to a subdivision of $T(\mathcal{T}_2)$.
Moreover, the subdivision of ${v^i_\bot P_i v^i_\top}$ is the segment ${R_i \cap \Gamma}$ for all~${i \in I}$ such that~$v^i_\top$ corresponds to the top vertex of that segment. 
Then~${(\mathcal{R},\mathcal{D})}$ is called \emph{$(T,\mathcal{T}_2)$-connected} 
if for every finite~${X \subseteq V(G) \setminus \mathcal{D}}$ 
there is a $(T,\mathcal{T}_2)$-connection avoiding~$X$.

\begin{lemma}\label{lem:T-connected}
    Let~${\omega \in \Omega(G)}$, let ${\mathcal{R} = (R_i\ |\ i \in I)}$ be a finite family of disjoint $\omega$-rays with $\abs{I} \geq 1$ 
    and let $\mathcal{D} = (d_j \in \Dom(\omega)\ |\ j \in J)$ be a finite family of distinct vertices disjoint from~${\bigcup \mathcal{R}}$ 
    with~${I \cap J = \emptyset}$. 
    Then there is a tree~$T$ on $I \cup J$ and a simple type-2 $\abs{I \cup J}$-template $\mathcal{T}$ for $(T,J)$ such that~${(\mathcal{R},\mathcal{D})}$ is $(T,\mathcal{T})$-connected.
\end{lemma}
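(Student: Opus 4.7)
The plan is to exhibit a single pair $(T, \mathcal{T})$ that works for every finite $X \subseteq V(G) \setminus \mathcal{D}$, chosen independently of $X$. Let $k := \abs{I \cup J}$, fix some $i_0 \in I$, and take $T$ to be the star on $I \cup J$ with centre $i_0$; every element of $(I \cup J) \setminus \{i_0\}$, and in particular every element of $J$, is a leaf of $T$, so $(T, J)$ is a valid $k$-blueprint. I define the simple type-2 $k$-template $\mathcal{T}$ by letting $P_i$ be a single vertex, with $\gamma_i(i_0)$ that vertex, for every $i \in I \setminus \{i_0\}$, and letting $P_{i_0}$ be a path on $k - 1$ vertices $p_1, \ldots, p_{k-1}$ in order; I fix an enumeration $e_1, \ldots, e_{k-1}$ of $(I \cup J) \setminus \{i_0\}$ and set $\gamma_{i_0}(e_m) := p_m$. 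Note that $P_{i_0}$ has length $k - 2 \leq k + 2$, as required.

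Given a finite $X \subseteq V(G) \setminus \mathcal{D}$, I would construct a $(T, \mathcal{T})$-connection avoiding $X$ in $k - 1$ stages. Set $\Gamma_0$ to consist of a single vertex $w_0 \in V(R_{i_0}) \setminus X$. At stage $m \geq 1$, suppose $\Gamma_{m-1}$ is a partial connection whose segment on $R_{i_0}$ has top vertex $w_{m-1}$; define the finite set $F_m := (X \cup V(\Gamma_{m-1}) \cup \{d_j : j \in J, j \neq e_m\}) \setminus \{w_{m-1}\}$, which contains neither $w_{m-1}$ nor $d_{e_m}$ (in case $e_m \in J$). If $e_m = i \in I$, then $R_i$ and $R_{i_0}$ are $\omega$-rays of $G$ and hence cannot be separated by the finite set $F_m \cup \{w_{m-1}\}$, so there is an $R_i$--$R_{i_0}$ path in $G - (F_m \cup \{w_{m-1}\})$ whose endpoint on $R_{i_0}$ is a vertex $w_m$ strictly past $w_{m-1}$. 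If $e_m = d_j$ for $j \in J$, the equivalent formulation of domination yields infinitely many internally disjoint $d_j$--$R_{i_0}$ paths in $G$; since $F_m$ is finite, all but finitely many avoid $F_m$ internally, and since their endpoints on $R_{i_0}$ are pairwise distinct, all but finitely many also end strictly past $w_{m-1}$. In both cases I extend $\Gamma_{m-1}$ by adjoining the chosen path, declare $w_m$ the new top vertex, and (if $e_m \in I$) let $R_{e_m} \cap \Gamma_m$ be the single vertex at which the path meets $R_{e_m}$.

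The main obstacle I foresee is ensuring the attachment vertices $w_1, \ldots, w_{k-1}$ appear along $R_{i_0}$ in the same linear order as the enumeration $e_1, \ldots, e_{k-1}$, so that $\Gamma := \Gamma_{k-1}$ is genuinely isomorphic to the subdivision of $T(\mathcal{T})$ prescribed by $\gamma_{i_0}$. This ordering is precisely what the requirement ``$w_m$ strictly past $w_{m-1}$'' enforces, and the freedom to exclude the finite set $F_m \cup \{w_{m-1}\}$ when picking each new path makes it achievable in both the ray case (using the end property of $\omega$) and the dominator case (using infinitely many internally disjoint paths with pairwise distinct endpoints on $R_{i_0}$). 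Once this is verified, $\Gamma$ consists of a segment of $R_{i_0}$ with the $k - 1$ attachment vertices in the correct order, single-vertex segments of the remaining $R_i$'s, the dominators $d_j$ for $j \in J$, and one connecting path per element of $(I \cup J) \setminus \{i_0\}$, yielding the claimed $(T, \mathcal{T})$-connection avoiding $X$. Since $(T, \mathcal{T})$ was chosen independently of $X$, this proves $(\mathcal{R}, \mathcal{D})$ is $(T, \mathcal{T})$-connected.
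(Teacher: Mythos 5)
Your construction fixes the tree~$T$ to be a star with centre~$i_0 \in I$ and then attempts, for each finite~$X$, to build a $(T,\mathcal{T})$-connection by finding $R_{e_m}$\,--\,$R_{i_0}$ paths directly. This has a genuine gap that goes beyond the ordering issue you flag. Recall that in a $(T,\mathcal{T}_2)$-connection the paths of~$\mathcal{P}$ must be internally disjoint \emph{$W$\,--\,$W$ paths}, where~${W = \bigcup\mathcal{R} \cup \mathcal{D}}$, and in particular they must have no internal vertices on \emph{any} ray of~$\mathcal{R}$. The $R_{e_m}$\,--\,$R_{i_0}$ paths you obtain from the equivalence of $\omega$-rays only avoid~$F_m$; nothing prevents them from passing through other rays~$R_j$ with~${j \in I \setminus \{i_0, e_m\}}$, and in that case they are not $W$\,--\,$W$ paths. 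Indeed, there are graphs where \emph{no} star works at all: take~$G$ to be the three-row grid~${\mathbb{N} \times \{1,2,3\}}$ with rows~$R_1, R_2, R_3$ (all in the same end, so~${\abs{I}=3}$, ${J = \emptyset}$). Every $R_1$\,--\,$R_3$ path meets~$R_2$, and since every vertex of~$G$ lies on some~$R_j$, a $W$\,--\,$W$ path is a single edge between consecutive rows. Hence no $(T,\mathcal{T})$-connection exists when~$T$ is the star centred at~$1$ (or at~$3$), while the path~${1\,\text{--}\,2\,\text{--}\,3}$ does admit connections. The shape of~$T$ is therefore genuinely graph-dependent and cannot be fixed in advance.

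This is exactly what the paper's proof is designed to handle: for each~$X$ it first builds a tree-like system of internally disjoint $\bigcup\mathcal{R}$\,--\,$\bigcup\mathcal{R}$ paths (letting the resulting tree~$T_X$ and template~$\mathcal{T}_X$ depend on~$X$), produces a sequence of pairwise disjoint connections~${(\Gamma_n\ |\ n \in \mathbb{N})}$, and then applies the pigeonhole principle to the finitely many possible pairs~${(T, \mathcal{T})}$ to extract one that recurs infinitely often, which is then witnessed to be $X$-avoidable for every~$X$. Your argument omits this pigeonhole step precisely because it tries to pin down~$T$ before seeing the graph, and that is the step that cannot be skipped. (The ordering concern you raise is, by contrast, easily fixed by also adding the finite initial segment~$R_{i_0}w_{m-1}$ to~$F_m$; that is not where the proposal breaks.)
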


\begin{proof}
    Let~$X \subseteq V(G) \setminus \mathcal{D}$ be any finite set.
    We extend~$X$ to a finite superset~$X'$ such that~${R_i \cap X'}$ is an initial segment of~$R_i$ for each~${i \in I}$, and such that~${\mathcal{D} \subseteq X'}$.
    As all rays in~$\mathcal{R}$ are $\omega$-rays, we can find finitely many $\bigcup\mathcal{R}$\,--\,$\bigcup\mathcal{R}$ paths avoiding~$X'$ which are internally disjoint such that their union with~$\bigcup\mathcal{R}$ is a connected subgraph of~$G$.
    Moreover it is possible to do this with a set $\mathcal{P}$ of~${\abs{I} - 1}$ many such paths in a tree-like way, 
    i.e.~contracting a large enough finite segment avoiding $X'$ of each ray in~$\mathcal{R}$ and deleting the rest yields a subdivision~$\Gamma_X'$ of a tree on $I$ whose edges correspond to the paths in $\mathcal{P}$.
    For each vertex~$d_j$ we can moreover find a $d_j$\,--\,$\bigcup \mathcal{R}$ path avoiding~${V(\Gamma_X') \cup X' \setminus \{d_j\}}$ and all paths we fixed so far.
    This yields a tree~$T_X$ on~${I \cup J}$ and a simple type-2 $k$-template~$\mathcal{T}_X$ for~$(T_X,J)$ such that~$J$ is a set of leaves and a~${(T_X, \mathcal{T}_X)}$-connection~$\Gamma_{X}$ avoiding~$X$.
    
    Now we iteratively apply this construction to find a family ${(\Gamma_i\ |\ i \in \mathbb{N})}$ of $(T_i,\mathcal{T}_i)$-connections such that~${\Gamma_m - \mathcal{D}}$ and~${\Gamma_n - \mathcal{D}}$ are disjoint for all~${m, n \in \mathbb{N}}$ with~${m \neq n}$.
    By the pigeonhole principle we now find a tree~$T$ on~${I \cup J}$, a type-2 $\abs{I \cup J}$-template~$\mathcal{T}$ and an infinite subset ${N \subseteq \mathbb{N}}$ such that ${(T_n, \mathcal{T}_n) = (T, \mathcal{T})}$ for all~${n \in N}$.
    
    Now for each finite set~${X \subseteq V(G) \setminus \mathcal{D}}$ there is an~${n \in N}$ such that~$\Gamma_n$ and~$X$ are disjoint, hence~${(\mathcal{R},\mathcal{D})}$ is $(T,\mathcal{T})$-connected.
\end{proof}

\begin{corollary}\label{cor:BP-sub}
    Let~${\omega \in \Omega(G)}$, let~${\mathcal{R} = (R_i\ |\ i \in I)}$ be a finite family of disjoint $\omega$-rays with $\abs{I} \geq 1$ 
    and let~$\mathcal{D} = (d_j \in \Dom(\omega) \ |\ j \in J)$ be a finite family of distinct vertices disjoint from~${\bigcup \mathcal{R}}$ 
    with~${I \cap J = \emptyset}$.
    Then there is a tree~$T$ such that~$G$ contains a subdivision of a generalised $\mathfrak{N}(T/J)$.
\end{corollary}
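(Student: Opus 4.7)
The plan is to leverage Lemma~\ref{lem:T-connected} and then stack infinitely many pairwise (mostly) disjoint copies of the resulting connection along tails of the rays in~$\mathcal{R}$, interpreting each copy as a single layer of a generalised~$\mathfrak{N}(T/J)$.

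First I would apply Lemma~\ref{lem:T-connected} to the families~$\mathcal{R}$ and~$\mathcal{D}$ to obtain a tree~$T$ on~${I\cup J}$, a simple type-2 $\abs{I\cup J}$-template~$\mathcal{T}$ for~${(T,J)}$, and the $(T,\mathcal{T})$-connectedness of~${(\mathcal{R},\mathcal{D})}$. Because $\mathcal{T}$ is simple, for each $i\in I$ the ray-segment~${R_i\cap\Gamma}$ inside a $(T,\mathcal{T})$-connection~$\Gamma$ is precisely the subdivision of the whole blown-up path~$P_i$, with top vertex~$v_1^i$ and bottom vertex~$v_0^i$ corresponding to the endnodes of~$P_i$. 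This is exactly the feature needed to glue successive layers through initial/final tail segments of the rays.

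Next I would inductively construct a sequence~${(\Gamma_n\ |\ n\in\mathbb{N})}$ of $(T,\mathcal{T})$-connections with the following stacking property: for each~${n\in\mathbb{N}}$ and each~${i\in I}$, the segment~${R_i\cap\Gamma_n}$ is an initial segment of some tail of~$R_i$ that starts strictly after the top vertex of~${R_i\cap\Gamma_{n-1}}$; moreover the interiors of the finitely many non-ray paths of~$\Gamma_n$ avoid all previous~$\Gamma_m$ outside of~$\mathcal{D}$. To produce~$\Gamma_n$, let~$X_n$ be the (finite) union of all previous~$\Gamma_m$ together with the initial segments of each~$R_i$ up to and including the top vertex of~${R_i\cap\Gamma_{n-1}}$, minus~$\mathcal{D}$; by $(T,\mathcal{T})$-connectedness there is a $(T,\mathcal{T})$-connection~$\Gamma_n$ avoiding~$X_n$, and by the structure of a connection its ray-segments~${R_i\cap\Gamma_n}$ are automatically pushed to a later tail of~$R_i$.

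Finally I would identify the union ${\bigcup_{n\in\mathbb{N}}\Gamma_n\cup\bigcup_{i\in I}R_i\cup\mathcal{D}}$ as a subdivision of a generalised~$\mathfrak{N}(T/J)$. For the type-2 template of the generalised minor I would take the one induced by~$\mathcal{T}$ on each non-leaf node of~$T$. Each~$\Gamma_n$ plays the role of the blown-up $n$-th layer~$T_n$, with the vertices~$d_j$ ($j\in J$) serving as the dominating vertices obtained by contracting~$\mathfrak{N}_j$ (they sit in every~$\Gamma_n$, which matches that a contracted leaf is shared across all layers). The segment of each ray~$R_i$ strictly between the top vertex of~${R_i\cap\Gamma_n}$ and the bottom vertex of~${R_i\cap\Gamma_{n+1}}$ subdivides the edge of~${T\times\mathfrak{N}}$ joining~$i_n$ to~$i_{n+1}$ in the blow-up. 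The disjointness secured in the inductive step guarantees that the resulting map is a genuine subdivision, completing the construction.

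The main technical obstacle is coordinating the stacking so that the ray-segments of successive connections line up correctly: one has to be sure that the simple type-2 template, together with the freedom to avoid any finite~$X$, forces~${R_i\cap\Gamma_n}$ and~${R_i\cap\Gamma_{n+1}}$ to be consecutive segments of~$R_i$ (rather than being able to ``skip'' pieces of the ray that would then be unaccounted for in the subdivision). Simplicity of~$\mathcal{T}$ makes this automatic, since the segment~${R_i\cap\Gamma_n}$ runs between its bottom and top vertices with no gaps; once this is noted, the remainder of the verification is routine.
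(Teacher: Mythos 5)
Your proof is correct and takes essentially the same route as the paper's: apply Lemma~\ref{lem:T-connected} to obtain the tree~$T$ and simple type-2 template, then inductively pick a sequence of $(T,\mathcal{T})$-connections $\Gamma_n$ each avoiding the finite set consisting of all previous connections together with the initial ray segments up to their top vertices (the paper bundles this into the notation $\overline{\Gamma}_m$), and take the union with $\bigcup\mathcal{R}$. Your parenthetical worry about ``skipped'' ray pieces is indeed a non-issue for exactly the reason you give: those pieces are absorbed as the subdivision of the inter-layer edges.
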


\begin{proof}
    By Lemma~\ref{lem:T-connected} there is a tree~$T$ and a simple type-2 ${\abs{I \cup J}}$-template~$\mathcal{T}$ such that $(\mathcal{R},\mathcal{D})$ is $(T, \mathcal{T})$-connected. 
    
    For a~${(T, \mathcal{T})}$-connection~$\Gamma$ let~$r^i_\Gamma$ denote the top vertex of~${R_i \cap \Gamma}$. 
    Then by~$\overline{\Gamma}$ we denote the union of~$\Gamma$ with the initial segments~${R_i r^i_\Gamma}$ for all~${i \in I}$. 
    
    Let ${(\Gamma_i\ |\ i \in \mathbb{N})}$ be a family of $(T,\mathcal{T})$-connections such that for all~${m < n}$ the graphs~${\Gamma_n}$ and~${\overline{\Gamma}_m}$ are disjoint but for the vertices in~$\mathcal{D}$. 
    Then ${H = \bigcup \{ \Gamma_n\ |\ n \in \mathbb{N} \} \cup \bigcup \mathcal{R}}$ is the desired subdivision. 
\end{proof}

Finally, this result can be lifted to the minor setting by Lemma~\ref{lem:gen-sub-fbs}.

\begin{corollary}\label{cor:BP-minor}
    Let~${\omega \in \Omega(G)}$, let~${\mathcal{R} = (R_i\ |\ i \in I)}$ be a finite family of disjoint $\omega$-rays with $\abs{I} \geq 1$ 
    and let~$\mathcal{D} = (d_j \in \Dom(\omega) \ |\ j \in J)$ be a finite family of distinct vertices disjoint from~${\bigcup \mathcal{R}}$ 
    with~${I \cap J = \emptyset}$.
    Then there is a tree~$T$ such that~$G$ contains $\mathfrak{N}(T/J)$ as an fbs-minor.\qed
\end{corollary}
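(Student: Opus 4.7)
The plan is to derive this immediately from Corollary~\ref{cor:BP-sub} together with the general principle encapsulated in Lemma~\ref{lem:gen-sub-fbs}. First I would apply Corollary~\ref{cor:BP-sub} to the same data~${\omega, \mathcal{R}, \mathcal{D}}$ to obtain a tree~$T$ on~${I \cup J}$ (with~$J$ a set of leaves) and a subgraph~${H \subseteq G}$ which is a subdivision of a generalised~${\mathfrak{N}(T/J)}$, say with respect to some simple type-2 template~$\mathcal{T}$.

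Next I would invoke Lemma~\ref{lem:gen-sub-fbs} to conclude that the parent graph~${\mathfrak{N}(T/J)}$ appears as an fbs-minor of~$H$, and hence of~$G$. Concretely, the branch sets are read off from the subdivision: for every original vertex~$v$ of~${\mathfrak{N}(T/J)}$, the blow-up replaced~$v$ by a finite tree (a path of length at most~${k+2}$ in the type-2 template), and the subdivision then inserted finitely many subdivision vertices along each edge; contracting all of these back yields a finite, connected branch set for~$v$ that respects the adjacency structure of~${\mathfrak{N}(T/J)}$. Since every branch set is finite, this is an fbs-minor.

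A minor technical point to check is that Lemma~\ref{lem:gen-sub-fbs} is formally stated for \emph{generalised $k$-typical graphs with core}, while~${\mathfrak{N}(T/J)}$ is only an intermediate construction (it becomes a $k$-typical graph~${T_k(B,D,c)}$ after choosing a distinguished node~${c \in V(T) \setminus J}$). This is harmless: any choice of such~$c$ turns the subdivision supplied by Corollary~\ref{cor:BP-sub} into a subdivision of a bona fide generalised $k$-typical graph with parent~${\mathfrak{N}(T/J)}$, so the lemma applies verbatim. Alternatively, one observes directly that the blow-up/subdivision argument in the proof of Lemma~\ref{lem:gen-sub-fbs} only uses the blow-up structure and makes no use of a distinguished core vertex.

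I do not anticipate any real obstacle: the work has already been done in establishing Corollary~\ref{cor:BP-sub} and Lemma~\ref{lem:gen-sub-fbs}, and the present statement is just the minor-version reformulation of the former via the latter. The only thing worth spelling out is the identification between the parent of the generalised~${\mathfrak{N}(T/J)}$ and the graph~${\mathfrak{N}(T/J)}$ itself, which is immediate from the definition of the blow-up operation.
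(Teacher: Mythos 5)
Your proposal matches the paper's own argument: the sentence immediately preceding the corollary reads ``Finally, this result can be lifted to the minor setting by Lemma~\ref{lem:gen-sub-fbs},'' so the intended proof is exactly the composition of Corollary~\ref{cor:BP-sub} with Lemma~\ref{lem:gen-sub-fbs}. Your remark about choosing a distinguished node~$c$ so that~$\mathfrak{N}(T/J)$ becomes the $k$-typical graph~$T_k(T,J,c)$ (and the generalised version acquires a core) correctly fills in the minor bookkeeping the paper leaves implicit.
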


\section{Minors for regular cardinalities}
\label{sec:regular-minors}

This section is dedicated to prove the equivalence of~\ref{item:t3-set},~\ref{item:t3-minor} and~\ref{item:t3-subdivision} of Theorem~\ref{main-thm} for regular cardinals~$\kappa$.

\subsection{Complete bipartite minors}
\label{subsec:r-K_k,kappa}
\ \newline \indent
In this subsection we construct the complete bipartite graph~$K_{k, \kappa}$ as the desired minor (and a generalised version as the desired subdivision), if possible.
The ideas of this construction differ significantly from Halin's construction \cite{Halin:simplicial-decomb}*{Thm.~9.1} of a subdivision of~$K_{k,\kappa}$ in a $k$-connected graph of uncountable and regular order~$\kappa$.

\begin{lemma}\label{lem:K_k,kappa}
    Let~${k \in \mathbb{N}}$, let~${A \subseteq V(G)}$ be infinite and $k$-connected in~$G$
    and let ${\kappa \leq \abs{A}}$ be a regular cardinal.
    If 
    \begin{itemize}
        \item either~$\kappa$ is uncountable; 
        \item or there is no end in the closure of~$A$; 
        \item or  there is an end $\omega$ in the closure of~$A$ with $\dom(\omega) \geq k$;
    \end{itemize}
    then there is a subset~${A' \subseteq A}$ with~${\abs{A'} = \kappa}$ 
    such that~$K_{k, \kappa}$ is an fbs-minor of~$G$ with~$A'$ along its core.
    
    Moreover, the branch sets for the vertices of the finite side of $K_{k,\kappa}$ are singletons.
\end{lemma}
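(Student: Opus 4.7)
The plan is to find $k$ hub vertices $v_1,\ldots,v_k$ and a subset $A'\subseteq A$ of size~$\kappa$ together with finite, pairwise disjoint ``fans'' from each $a\in A'$ to $\{v_1,\ldots,v_k\}$; these will become the (singleton) branch sets of the hubs and the finite branch sets of the core of the desired $K_{k,\kappa}$ fbs-minor. The construction proceeds in two phases: first locate the hubs by iteratively producing subdivided stars, then extract $\kappa$ many pairwise disjoint fans via a delta-system argument.

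In Phase~1, I construct $v_1,\ldots,v_k$ and a nested chain $A\supseteq A_1\supseteq\cdots\supseteq A_k$, each of size~$\kappa$, together with subdivided stars $S_i\subseteq G-\{v_1,\ldots,v_{i-1}\}$ centred at $v_i$ with leaf set~$A_i$. At stage~$i$, the set $A_{i-1}$ is $k$-connected in~$G$ by Remark~\ref{rem:k-connected-subset}, so Lemma~\ref{lem:delete<k}, applied with $S=\{v_1,\ldots,v_{i-1}\}$ of size $i-1<k$, provides a subset of $A_{i-1}$ of size~$\kappa$ that is $1$-connected in $G-\{v_1,\ldots,v_{i-1}\}$. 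The Star-Comb Lemma (Lemma~\ref{star-comb}) applied to this subset delivers either a subdivided star or a comb with $\kappa$ leaves (respectively teeth) in $A_{i-1}$. In each of the three cases of the hypothesis I arrange to obtain a star: under~(i), combs are impossible because they have only countably many teeth while $\kappa$ is uncountable; under~(ii), any comb would, via Remark~\ref{rem:end_subgraph}, extend to an end of~$G$ in the closure of~$A$, contradicting the hypothesis; under~(iii), I instead work directly with the $\omega$-comb guaranteed by the hypothesis, arranging that each $A_i$ lies inside the teeth set of this fixed $\omega$-comb, restricting it to $G-\{v_1,\ldots,v_{i-1}\}$ and to teeth in $A_{i-1}$, and promoting it to a star via Remark~\ref{rem:star-from-comb} at a dominating vertex $v_i\in\Dom(\omega\restricted(G-\{v_1,\ldots,v_{i-1}\}))$; such a vertex exists because $\dom(\omega\restricted(G-\{v_1,\ldots,v_{i-1}\}))\geq k-(i-1)\geq 1$ by Remark~\ref{rem:end_subgraph}.

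In Phase~2, the stars are combined into the desired fbs-minor via the Delta-System Lemma. For each $a\in A_k$ and each $i\in[1,k]$ let $P_i(a)$ denote the (finite) $v_i$-to-$a$ path inside $S_i$, and set $F_a:=\bigcup_{i=1}^{k}V(P_i(a))$, a finite set containing $\{a,v_1,\ldots,v_k\}$. Lemma~\ref{delta-system-lemma}, applied to the $\kappa$-indexed family $(F_a:a\in A_k)$, produces $A'\subseteq A_k$ of size~$\kappa$ and a finite kernel~$D$ with $F_a\cap F_{a'}=D$ for all distinct $a,a'\in A'$. A pigeonhole argument then forces $D=\{v_1,\ldots,v_k\}$: any $x\in D$ must lie on some $P_i(a)$ for each $a\in A'$, so by the regularity of~$\kappa$ some index~$i$ occurs for $\kappa$-many~$a$; since distinct leaf paths in the subdivided star~$S_i$ are internally disjoint and meet only at~$v_i$, this forces $x=v_i$. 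Setting $\mathfrak{B}(v_i):=\{v_i\}$ and $\mathfrak{B}(a):=F_a\setminus\{v_1,\ldots,v_k\}$ yields pairwise disjoint finite branch sets; each $\mathfrak{B}(a)$ is connected because all paths $P_i(a)$ meet at~$a$, and is joined to each $v_i$ by the edge incident to $v_i$ in $P_i(a)$. This is precisely the required $K_{k,\kappa}$ fbs-minor with $A'$ along its core, with the promised singleton branch sets on the finite side.

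The main obstacle is case~(iii) of Phase~1: a comb produced by a naive Star-Comb application need not belong to the end~$\omega$ and could sit in an end of $G-\{v_1,\ldots,v_{i-1}\}$ with no dominating vertex, blocking the promotion to a star. The resolution is to commit from the outset to the specific $\omega$-comb provided by the hypothesis and to exploit the fact that $|\Dom(\omega)|\geq k$, so that dominating vertices remain available at each stage by Remark~\ref{rem:end_subgraph}.
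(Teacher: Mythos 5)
Your two-phase plan --- first locate the hub vertices by iterated Star-Comb applications, then extract disjoint fans via the Delta-System Lemma --- matches the paper's first step closely, but the second step genuinely differs. The paper never invokes Lemma~\ref{delta-system-lemma}: it maintains a full $K_{i,\kappa}$ fbs-minor $H_i$ at every stage, absorbing the segment of each star-path from $v_i$ up to the \emph{first} branch set of $H_i$ it meets into that branch set, and then choosing $v_{i+1}$ outside the inflated subgraph built so far. That ``grow-as-you-go'' discipline is precisely what makes every branch set connected automatically; by replacing it with a single delta-system pass at the end you lose that guarantee, and a gap appears.

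The gap is in the connectivity of $\mathfrak{B}(a):=F_a\setminus\{v_1,\dots,v_k\}$. Because the star $S_i$ is constructed in $G-\{v_1,\dots,v_{i-1}\}$ \emph{before} the later hubs exist, a hub $v_j$ with $j>i$ may perfectly well be an internal vertex of $P_i(a)$. Deleting $v_j$ then splits $P_i(a)$ into two arcs, only one of which contains $a$, and nothing in your construction forces the other arc to remain attached to $a$ inside $F_a\setminus\{v_1,\dots,v_k\}$. The justification ``all paths $P_i(a)$ meet at $a$'' only shows that $F_a$ itself is connected, not that it survives deletion of the hubs. The same problem can break the claim that the edge of $P_i(a)$ at $v_i$ lands in $\mathfrak{B}(a)$: its other endpoint could itself be a later hub $v_j$. (A further minor point: some hub could coincide with a core vertex $a\in A_k$, which you also do not exclude.)

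This is repairable with one extra sentence before Phase~2. For each pair $i<j$ the leaf-paths of the subdivided star $S_i$ are pairwise disjoint outside $v_i$, so $v_j$ lies on $P_i(a)$ for at most one $a\in A_k$, and likewise $v_j=a$ for at most one $a$. Discarding these at most $k^2$ bad elements from $A_k$ yields $P_i(a)\cap\{v_1,\dots,v_k\}=\{v_i\}$ for every surviving $a$ and every $i$; then $\mathfrak{B}(a)=\bigcup_{i} V(P_i(a)-v_i)$ is a union of $k$ paths all ending at $a$, hence connected, and the edge of $P_i(a)$ at $v_i$ does go into $\mathfrak{B}(a)$ as you intended. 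With that patch your delta-system route is a correct and somewhat more modular alternative to the paper's inductive absorption.
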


\begin{proof}
    We iteratively construct a sequence of subgraphs~$H_i$ for~${i \in [0,k]}$ witnessing that~${K_{i, \kappa}}$ is a minor of~$G$.
    Furthermore, we incorporate that the branch sets for the vertices of the finite side of ${K_{i, \kappa}}$ are singletons ${\{v_j\ |\ j \in [0,i) \}}$ and the branch sets for the vertices of the infinite side induce finite trees on $H_i$ each containing a vertex of~$A$.
    Moreover, for~${i < k-1}$ we will guarantee the existence of a subset ${A_i \subseteq A}$ with ${\abs{A_i} = \abs{A}}$ which is ${1}$-connected in~${G_i := G - \{v_j\ |\ j \in [0,i) \}}$ and such that each vertex of~$A_i$ is contained in a branch set of~$H_i$ and each branch set of~$H_i$ contains precisely one vertex of~$A_i$.
    
    Set ${G_0 := G}$, ${A_0 := A}$ and~${H_0 = G[A]}$.
    For any~${i \in [0, k)}$ we inductively apply Lemma~\ref{star-comb} 
    (and in the third case also Remark~\ref{rem:star-from-comb}) 
    to~$A_i$ in~$G_i$ to find a subdivided star~$S_i$ with centre~$v_i$ and~$\kappa$ many leaves~${L_i \subseteq A_i}$.
    Without loss of generality we can assume~${v_i \notin V(H_i)}$, since otherwise we could just remove the branch set containing~$v_i$ and from~$A_i$ the vertex contained in that branch set.
    If ${i < k-1}$, then by Lemma~\ref{lem:delete<k} we find a subset~${L_i' \subseteq L_i}$ with ${\abs{L_i'} = \kappa}$ which is $1$-connected in~$G_{i+1}$. 
    In the case that ${i = k-1}$ let $L_{k-1}' = L_{k-1}$. 
    
    Again for any~${i \in [0,k)}$, we first remove from~$H_i$ every branch set 
    which corresponds to a vertex of the infinite side of~$K_{i,\kappa}$ and does not contain a vertex of~$L_i'$.
    Now each path in~$S_i$ from a neighbour of~$v_i$ to~$L_{i}'$ eventually hits a vertex of one of the finite trees induced by one of the remaining branch sets of~$H_i$.
    Since all these paths are disjoint, only finitely many of them meet the same branch set first.
    Thus~$\kappa$ many different of the remaining branch sets are met by those paths first.
    To get~$H_{i+1}$ we do the following.
    First we add~${\{v_i\}}$ as a new branch set.
    Then each of the~$\kappa$ many branch sets reached first as described above we extend by the path segment between~$v_i$ and that branch set of precisely one of those paths.
    Finally, we delete all remaining branch sets not connected to~${\{v_i\}}$.
    With~${A_{i+1} := L_i' \cap V(H_{i+1})}$ we now have all the desired properties.

    Finally, setting ${H := H_k}$ and~${A' := A_k}$ finishes the construction. 
\end{proof}

Let~$H$ be an inflated subgraph witnessing that~${K_{k,\kappa} = K([0,k), Z)}$ is an fbs-minor of~$G$ with~$A$ along~$Z$ for some~${A \subseteq V(G)}$ where each branch set of~${x \in [0,k)}$ is a singleton. 
Given a type-1 $k$-template~${\mathcal{T}_1 = (T, \gamma, c)}$ we say~$H$ is \emph{$\mathcal{T}_1$-regular} if for each~${z \in Z}$: 
\begin{itemize}
    \item there is an isomorphism ${\varphi_z: T'_z \to T_z}$  between a subdivision~$T'_z$ of~$T$ and the finite tree~${T_z = H[\mathfrak{B}(z)]}$;
    \item $x \varphi_z(\gamma(x)) \in E(H)$ for each ${x \in [0,k)}$; and
    \item $A \cap \mathfrak{B}(z) = \{ \varphi_z(c) \}$.
\end{itemize}
We say~$G$ contains~$K_{k,\kappa}$ as a \emph{$\mathcal{T}_1$-regular fbs-minor with~$A$ along~$Z$} if there is such a $\mathcal{T}_1$-regular~$H$.

\begin{lemma}\label{lem:gen-K_k,kappa}
    Let $k \in \mathbb{N}$ and $\kappa$ be a regular cardinal.
    If~${K_{k,\kappa}}$ is an fbs-minor of $G$ with~$A'$ along its core 
    where each branch set of ${x \in [0,k)}$ is a singleton, 
    then there is type-1 $k$-template~$\mathcal{T}_1$ 
    and~${A'' \subseteq A'}$ with~${\abs{A''} = \kappa}$ 
    such that~$G$ contains~$K_{k, \kappa}$ as a $\mathcal{T}_1$-regular fbs-minor with~$A''$ along its core.
\end{lemma}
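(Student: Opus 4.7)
The plan is to standardise the shape of the branch sets for vertices of the core by passing to a large sub-family on which they all look alike.

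First I would pass, as permitted by the remarks on fbs-minors in Section~\ref{sec:prelims}, to a minimal inflated subgraph~$H$ witnessing that~$K_{k,\kappa}$ is an fbs-minor of~$G$ with~$A'$ along its core~$Z$. By the minimality properties recorded there, for each~${z \in Z}$ the induced subgraph~${T_z := H[\mathfrak{B}(z)]}$ is a finite tree, for every~${x \in [0,k)}$ there is a unique edge~$e_{xz}$ between the singleton branch set~${\{x\}}$ and~${\mathfrak{B}(z)}$, and every leaf of~$T_z$ is either the endvertex of some~$e_{xz}$ or the unique vertex of~$A'$ in~$\mathfrak{B}(z)$. Call these at most~${k+1}$ vertices the \emph{designated vertices} of~$T_z$, with the labelling~${\gamma_z \colon [0,k) \to V(T_z)}$ sending~$x$ to the endvertex of~$e_{xz}$ in~$T_z$, and~${c_z \in V(T_z)}$ being the unique vertex in~${A' \cap \mathfrak{B}(z)}$.

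Next I would suppress degree-$2$ vertices of~$T_z$ that are not designated, obtaining a tree~$T_z^\ast$ of which~$T_z$ is a subdivision, together with induced maps~${\gamma_z^\ast \colon [0,k) \to V(T_z^\ast)}$ and~${c_z^\ast \in V(T_z^\ast)}$. Every degree-$1$ or degree-$2$ vertex of~$T_z^\ast$ is then designated, so~${(T_z^\ast, \gamma_z^\ast, c_z^\ast)}$ is a type-$1$ $k$-template. Moreover, a tree with at most~${k+1}$ leaves has at most~$k-1$ branching vertices, so~${\abs{V(T_z^\ast)} \leq 3k}$ uniformly in~$z$. Hence there are, up to relabelled isomorphism, only finitely many possibilities for~${(T_z^\ast, \gamma_z^\ast, c_z^\ast)}$.

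Since~$\kappa$ is regular and uncountable-or-countably-infinite, the pigeonhole principle produces a subset~${Z'' \subseteq Z}$ with~${\abs{Z''} = \kappa}$ and a single type-$1$ $k$-template~${\mathcal{T}_1 = (T, \gamma, c)}$ such that for every~${z \in Z''}$ there is an isomorphism~${\psi_z \colon T \to T_z^\ast}$ identifying~$\gamma$ with~$\gamma_z^\ast$ and~$c$ with~$c_z^\ast$. Let~${A'' \subseteq A'}$ consist of the designated $A'$-vertices for~${z \in Z''}$. Restricting~$H$ to the union of the singletons~${\{x\}}$ for~${x \in [0,k)}$ together with the branch sets~$\mathfrak{B}(z)$ for~${z \in Z''}$ yields the required $\mathcal{T}_1$-regular inflated subgraph, where the isomorphism~${\varphi_z \colon T'_z \to T_z}$ in the definition is obtained by composing~$\psi_z$ with the subdivision identification~${T_z^\ast \rightsquigarrow T_z}$. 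I expect no real obstacle beyond verifying that suppression genuinely produces a tree satisfying the type-$1$ $k$-template degree condition; this is precisely guaranteed by the choice to suppress only non-designated degree-$2$ vertices.
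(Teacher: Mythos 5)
Your proposal matches the paper's proof in all essential respects: take (a minimal version of) the inflated subgraph, extract from each core branch set the finite tree spanning the attachment vertices and the $A'$-vertex, suppress non-designated degree-$2$ nodes to obtain a bounded-order labelled tree, and apply the pigeonhole principle (using $\kappa$'s regularity, though infinitude suffices here since the number of template types is finite) to find a $\kappa$-sized subfamily with a common template. The only cosmetic difference is that you invoke the minimality convention for fbs-minors from the preliminaries to get the branch-set trees directly with all leaves designated, whereas the paper chooses a subtree containing the designated vertices and then assumes without loss of generality that its leaves are designated — two phrasings of the same step.
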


\begin{proof}
    Let $H$ be the inflated subgraph witnessing that $K_{k,\kappa}$ is an fbs-minor as in the statement.
    Let~$x$ also denote the vertex of~$G$ in the branch set~$\mathfrak{B}(x)$ of~${x \in [0,k)}$.
    Let $v^z_x \in \mathfrak{B}(z)$ denote the unique endvertex in $\mathfrak{B}(z)$ of the edge corresponding to $xz \in E(M)$ (cf.~Section~\ref{sec:prelims}).
    Let~$T_z$ denote a subtree of~${H[\mathfrak{B}(z)]}$ containing~${B_z = \{v^z_x\ |\ x \in [0,k)\} \cup \{a_z\}}$ for the unique vertex~${a_z \in A \cap \mathfrak{B}(z)}$.
    Without loss of generality assume that each leaf of~$T_z$ is in~$B_z$.
    By suppressing each degree~$2$ node of~${T_z}$ that is not in~$B_z$, we obtain a tree suitable for a type-1 $k$-template where~$a_z$ is the node in the third component of the template.
    
    By applying the pigeonhole principle multiple times there is a tree~$T$ 
    such that there exist an isomorphism ${\varphi_z: T'_z \to T_z}$ for a subdivision~$T'_z$ of~$T$ 
    for all~${z \in Z'}$ for some~${Z' \subseteq Z}$ with~${\abs{Z'} = \kappa}$, 
    such that ${\{ \varphi_z(v^z_x)\ |\ z \in Z' \}}$ is a singleton~${\{ t_x \}}$ for all $x \in [0,k)$ as well as ${ \{ \varphi_z(a_z)\ |\ z \in Z'\} }$ is a singleton~${\{ c \}}$.
    
    Therefore with $\gamma: [0,k) \to V(T)$ defined by $x\mapsto t_x$ and $c$ defined as above, we obtain a type-1 $k$-template ${\mathcal{T}_1 := (T, \gamma, c)}$ such that the subgraph~$H'$ of~$H$ where we delete each branch set for $z \in Z \setminus Z'$ is $\mathcal{T}_1$-regular. 
\end{proof}

Hence, we also obtain a subdivision of a generalised $K_{k,\kappa}$.

\begin{corollary}\label{cor:gen-K_k,kappa}
    In the situation of Lemma~\ref{lem:K_k,kappa}, there is~${A'' \subseteq A'}$ with~${\abs{A''} = \kappa}$ such that~$G$ contains a subdivision of a generalised~$K_{k,\kappa}$ with core~$A''$.
    \qed
\end{corollary}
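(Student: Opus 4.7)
The plan is to chain together the two preceding lemmas and then unpack the definition of a $\mathcal{T}_1$-regular fbs-minor to recognise it as a subdivision of a generalised $K_{k,\kappa}$.

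First, I would apply Lemma~\ref{lem:K_k,kappa} in the given situation to obtain a subset~${A' \subseteq A}$ with~${\abs{A'} = \kappa}$ and an inflated subgraph~$H$ witnessing that~${K_{k,\kappa}}$ is an fbs-minor of~$G$ with~$A'$ along its core, where each branch set for a vertex of the finite side~${[0,k)}$ is a singleton. Then I would feed this fbs-minor into Lemma~\ref{lem:gen-K_k,kappa} to obtain a type-1 $k$-template ${\mathcal{T}_1 = (T, \gamma, c)}$ and a further refined subset~${A'' \subseteq A'}$ with~${\abs{A''} = \kappa}$ such that~$G$ contains~$K_{k,\kappa}$ as a $\mathcal{T}_1$-regular fbs-minor with~$A''$ along its core.

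It remains to recognise that such a $\mathcal{T}_1$-regular fbs-minor~$H'$ is literally a subdivision of the generalised~${K_{k,\kappa}(\mathcal{T}_1)}$ with core~$A''$. By definition of $\mathcal{T}_1$-regularity, for each~${z}$ in the core the branch set~${\mathfrak{B}(z)}$ carries an isomorphism~${\varphi_z \colon T'_z \to H'[\mathfrak{B}(z)]}$ where~$T'_z$ is a subdivision of~$T$; the unique vertex of~$A''$ in~${\mathfrak{B}(z)}$ is~${\varphi_z(c)}$; and each vertex~${x \in [0,k)}$ (itself a singleton branch set) is joined by an edge to~${\varphi_z(\gamma(x))}$. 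Comparing this description with the construction of~${K_{k,\kappa}(\mathcal{T}_1) = K_{k,\kappa}(O_1)}$ from Subsection~\ref{subsec:gen-k-typ}, where each core vertex~$z$ is blown up to a copy of~$T$ via the map~$\gamma$ on its neighbours in~${[0,k)}$ and the new core consists of the vertices~${(z,c)}$, we see that~$H'$ is exactly a subdivision of~${K_{k,\kappa}(\mathcal{T}_1)}$ in which the core~${C(K_{k,\kappa}(\mathcal{T}_1))}$ is realised by~$A''$.

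There is no real obstacle here: once Lemmas~\ref{lem:K_k,kappa} and~\ref{lem:gen-K_k,kappa} are in hand, the only thing to check is that the three bullets defining $\mathcal{T}_1$-regularity translate verbatim into the data of a subdivided generalised~$K_{k,\kappa}$, which is a matter of matching definitions.
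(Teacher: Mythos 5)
Your proposal is correct and matches the paper's intent exactly: the corollary is stated with a \qed precisely because it is the concatenation of Lemma~\ref{lem:K_k,kappa} and Lemma~\ref{lem:gen-K_k,kappa}, followed by the observation that a $\mathcal{T}_1$-regular fbs-minor (with singleton branch sets on the finite side, trees $T_z \cong T'_z$ as branch sets on the infinite side, and the edges $x\,\varphi_z(\gamma(x))$) is by definition a subdivision of $K_{k,\kappa}(\mathcal{T}_1)$ whose core is $\{\varphi_z(c) : z\}= A''$. Your unpacking of the three bullets of $\mathcal{T}_1$-regularity against the blow-up construction is exactly the identification being invoked.
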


\subsection{Minors for regular \emph{k}-blueprints}
\label{subsec:r-BP}
\ \newline \indent
In this subsection we construct the $k$-typical minors for regular $k$-blueprints, if possible.
While these graphs are essentially the same minors given by Oporowski, Oxley and Thomas \cite{OOT:3&4-conn}*{Thm.~5.2}, we give our own independent proof based on the existence of an infinite $k$-connected set instead of the graph being essentially $k$-connected.

The first lemma constructs such a graph along some end of high combined degree.
\begin{lemma}\label{lem:rBP-minor}
    Let~${\omega \in \Omega(G)}$ be an end of~$G$ with~${\Delta(\omega) \geq k \in \mathbb{N}}$.
    Let~${A \subseteq V(G)}$ be a set with~$\omega$ in its closure.
    Then there is 
    a countable subset~$A' \subseteq A$ 
    and a regular 
    $k$-blueprint~$\mathcal{B}$
    such that 
    $G$ contains a subdivision of a generalised~$T_k(\mathcal{B})$ with core~$A'$.
\end{lemma}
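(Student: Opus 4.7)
The plan is to assemble the required subdivision by stacking $(T, \mathcal{T})$-connections from Lemma~\ref{lem:T-connected} along disjoint $\omega$-rays and dominating vertices, while simultaneously attaching fresh vertices from $A$ as pendants to one designated ray so that those vertices become the core of the resulting generalised $T_k(\mathcal{B})$.

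First, I would split $k = r + d$ with $r \geq 1$, $r \leq \deg(\omega)$, and $d \leq \dom(\omega)$. This is possible because $\Delta(\omega) \geq k$ and because every vertex dominating an end dominates some $\omega$-ray, so $\deg(\omega) \geq 1$ whenever $\Delta(\omega) \geq 1$. Pick a family $\mathcal{R} = (R_i : i \in I)$ of $r$ disjoint $\omega$-rays and a family $\mathcal{D} = (d_j : j \in J) \subseteq \Dom(\omega)$ of $d$ distinct vertices disjoint from $\bigcup \mathcal{R}$, and fix a distinguished index $c \in I$. Apply Lemma~\ref{lem:T-connected} to obtain a tree $T$ on $I \cup J$ with $J$ as set of leaves and a simple type-2 $k$-template $\mathcal{T}$ for $(T, J)$ such that $(\mathcal{R}, \mathcal{D})$ is $(T, \mathcal{T})$-connected. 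Set $\mathcal{B} := (T, J, c)$: since $\abs{V(T)} = k$, $\abs{J} = d < k$, and $c \in I \subseteq V(T) \setminus J$, this is a regular $k$-blueprint.

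Next, I would use the closure condition to extract core teeth along $R_c$: for any finite ${S \subseteq V(G)}$ the component $C(S, \omega)$ contains both a tail of $R_c$ and infinitely many vertices of $A$ (the latter via any $\omega$-comb with teeth in $A$), so no finite set separates $R_c$ from $A$. By Theorem~\ref{cardinality-menger} there are $\aleph_0$ pairwise disjoint $R_c$\,--\,$A$ paths, and after thinning and relabelling their attachment vertices on $R_c$ can be assumed to appear in strictly increasing order along $R_c$. Call the resulting paths $(Q_n : n \in \mathbb{N})$ with $A$-ends $a_n$. I would then build inductively a sequence of $(T, \mathcal{T})$-connections $\Gamma_0, \Gamma_1, \ldots$ interleaved with the $Q_n$'s: each $\Gamma_n$ avoids the finite part of the construction produced so far (possible by the $(T, \mathcal{T})$-connectedness of $(\mathcal{R}, \mathcal{D})$) and meets $R_c$ only past everything previously chosen; each $Q_n$ is then selected from the infinite supply so as to lie in a tail of $R_c$ strictly beyond $\Gamma_n$, disjoint from all previous $\Gamma_m, Q_m$ and meeting $\Gamma_n$ only at its endpoint on $R_c$.

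To finish, let $\mathcal{T}'$ be the type-2 $k$-template obtained from $\mathcal{T}$ by replacing $P_c$ with a two-vertex path in which $v_1^c$ is a pendant adjacent to $v_\top^c$. The $Q_n$'s then play the role of subdivisions of the pendant edges $v_\top^c v_1^c$ at each layer, and the union $\bigcup_{n \in \mathbb{N}} (\Gamma_n \cup Q_n) \cup \bigcup \mathcal{R}$ is a subdivision of the generalised $T_k(\mathcal{B})$ defined by $\mathcal{T}'$, with core $A' := \{a_n : n \in \mathbb{N}\} \subseteq A$. The main obstacle is the clean interleaving of the connections and tooth-paths, ensuring pairwise disjointness off $\mathcal{D}$ and that each tooth-path lands on $R_c$ in the correct place relative to its corresponding $\Gamma_n$; this is handled by exploiting the ``for every finite $X$'' flexibility built into the definition of $(T, \mathcal{T})$-connectedness, together with the infinite supply of disjoint $R_c$\,--\,$A$ paths provided by Menger's theorem.
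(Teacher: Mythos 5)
Your overall strategy matches the paper's: apply Lemma~\ref{lem:T-connected} to a family of disjoint $\omega$-rays and dominating vertices, stack the resulting $(T,\mathcal{T})$-connections, and hang $A$-teeth off a designated ray by Menger's theorem, modifying the type-2 template by appending a pendant $v_1^c$ to $v_\top^c$. Fixing $c$ in advance (rather than choosing it by pigeonhole after Menger, as the paper does) is a fine variation, and your justification that no finite set can separate $R_c$ from $A$ using the $\omega$-comb is correct.

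There is, however, a genuine gap in the Menger step. You apply Theorem~\ref{cardinality-menger} to obtain disjoint $R_c$\,--\,$A$ paths. By definition such paths have their internal vertices disjoint from $V(R_c) \cup A$, but \emph{not} from the other rays $R_i$ with $i \neq c$. In the final union $H = \bigcup_n (\Gamma_n \cup Q_n) \cup \bigcup \mathcal{R}$, all of $\bigcup \mathcal{R}$ (not just the segments inside the $\Gamma_n$'s) is part of the subdivision, so each $Q_n$ must meet $\bigcup \mathcal{R}$ at \emph{exactly} one vertex — its attachment point on $R_c$. A path that wanders through $R_i$ for some $i \neq c$ would create a branching vertex that is not allowed in a subdivision, and there is no reason the interleaving of $\Gamma_n$'s and $Q_n$'s would prevent this (the intersection with $R_i$ can lie arbitrarily far out along that ray, beyond every $\Gamma_m$). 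Thinning the path system cannot help either, since infinitely many of the $Q_n$'s could intersect $R_i$. The paper avoids this by applying Menger's theorem to the pair $A$, $\bigcup \mathcal{R}$, whose disjoint connecting paths are internally disjoint from \emph{all} rays, and then choosing $c$ as a ray on which infinitely many of them land (by pigeonhole). If you want to keep $c$ fixed in advance, you would instead need a further shortcutting and thinning argument to dispose of the paths that hit other rays first, which is more work than the paper's route. Separately, the phrase ``replacing $P_c$ with a two-vertex path'' is imprecise — $P_c$ in a simple type-2 template is in general not a single vertex — but your intent (append a pendant $v_1^c$ adjacent to $v_\top^c$, making the template non-simple) is clear and correct.
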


\begin{proof}
    Let~${I, J}$ be disjoint sets with ${\abs{ I \cup J } = k}$, ${1 \leq \abs{I} \leq \deg(\omega)}$~and ${\abs{J} \leq \dom(\omega)}$.
    Let~${\mathcal{R} = ( R_i\ |\ i \in I)}$ be a family of disjoint $\omega$-rays and~${\mathcal{D} = ( d_j \in \Dom(\omega)\ |\ j \in J)}$ be a family of distinct vertices disjoint from $\bigcup \mathcal{R}$. 
    Applying Lemma~\ref{lem:T-connected} yields a tree~$B$ on~${I \cup J}$ and a type-2 $k$-template~$\mathcal{T}$ for~${(B,J)}$ such that ${(\mathcal{R}, \mathcal{D})}$ is ${(B,\mathcal{T})}$-connected.
    Let ${(\Gamma_i\ |\ i \in \mathbb{N})}$ denote the family of ${(B,\mathcal{T})}$-connections as in the proof of Lemma~\ref{lem:T-connected}.
    Moreover, there is an infinite set of disjoint~$A$\,--\,${\bigcup \mathcal{R}}$ paths by Theorem~\ref{cardinality-menger} since~$\omega$ is in the closure of~$A$.
    Now any infinite set of disjoint~$A$\,--\,$\bigcup \mathcal{R}$ paths has infinitely many endvertices on one ray~$R_c$ for some~${c \in I}$.
    Let~$A''$ denote the endvertices in~$A$ of such an infinite path system.
    Next we extend for infinitely many~$\Gamma_i$ the segment of~$R_c$ that it contains so that it has the endvertex of such an $A''$\,--\,$R_c$ path as its top vertex and add that segment together with the path to $\Gamma_i$, while keeping them disjoint but for~$\mathcal{D}$.
    Let~$A'$ denote the set of those endvertices of the paths in~$A''$ we used to extend $\Gamma_i$ for those infinitely many $i \in \mathbb{N}$.
    Finally, we modify the type-2 $k$-template accordingly.
    We obtain the subdivision of the generalised $T_k(B,J,c)$ as in the proof of Corollary~\ref{cor:BP-sub}.
\end{proof}

The following lemma allows us to apply Lemma~\ref{lem:rBP-minor} when Lemma~\ref{lem:K_k,kappa} is not applicable.

\begin{lemma}\label{lem:Delta>=k}
    Let $k \in \mathbb{N}$, let $A \subseteq V(G)$ be infinite and $k$-connected in $G$ and let~${\omega \in \Omega(G)}$ be an end in the closure of $A$. 
    Then $\Delta(\omega) \geq k$.
\end{lemma}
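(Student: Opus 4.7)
My plan is a proof by contradiction. Suppose $\Delta(\omega) < k$; since $k$ is finite, this forces both $\deg(\omega)$ and $\dom(\omega)$ to be finite with $\deg(\omega)+\dom(\omega) < k$. By Corollary~\ref{cor:end-deg3} I can fix an $\omega$-defining sequence $(S_n\ |\ n \in \mathbb{N})$ with $|S_n| = \Delta(\omega) < k$ for every $n$. The strategy is to find, for some sufficiently large $m$, two disjoint $k$-subsets $Z_1, Z_2 \subseteq A$ on opposite sides of $S_m$, so that $S_m$ is a $Z_1$--$Z_2$ separator of order less than $k$, contradicting the $k$-connectedness of $A$ via Theorem~\ref{finite-parameter-menger}.

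To construct $Z_2$, I use that $\dom(\omega) < k$ implies $|\Dom(\omega)| < k$, so $A \setminus \Dom(\omega)$ is infinite and I can pick $k$ vertices $a_1, \dots, a_k \in A \setminus \Dom(\omega)$. The property $\bigcap_n C(S_n,\omega) = \emptyset$ places each $a_i$ outside $C(S_m,\omega)$ for all sufficiently large $m$, and the property $S_n \cap S_m \subseteq \Dom(\omega)$ for $n \neq m$ forces each non-dominating vertex $a_i$ to lie in at most one separator; so for $m$ large enough, the set $Z_2 := \{a_1,\dots,a_k\}$ is disjoint from $C(S_m,\omega) \cup S_m$. For $Z_1$, I use that $\omega$ lies in the closure of $A$: an $\omega$-comb with teeth in $A$ has only finitely many teeth outside any given $C(S_m,\omega)$, namely those attached to the finite initial segment of the spine lying outside $C(S_m,\omega)$ or via one of the finitely many comb-paths meeting $S_m$. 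Hence $A \cap C(S_m,\omega)$ is infinite, and I can choose $Z_1 \subseteq A \cap C(S_m,\omega)$ of size $k$.

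With $Z_1 \subseteq C(S_m,\omega)$ and $Z_2 \subseteq V(G) \setminus (C(S_m,\omega) \cup S_m)$, every $Z_1$--$Z_2$ path must meet $S_m$, so $S_m$ is a $Z_1$--$Z_2$ separator of order $|S_m| = \Delta(\omega) < k = |Z_1| = |Z_2|$, contradicting the $k$-connectedness of $A$. The main obstacle is the careful choice of $Z_2$ so that it avoids $S_m$ itself and not merely $C(S_m,\omega)$; this is precisely where the bound $\dom(\omega) < k$ is used together with $S_n \cap S_m \subseteq \Dom(\omega)$, since without the freedom to take the $a_i$ outside $\Dom(\omega)$ the $a_i$ could persist in every $S_m$ and block the separator argument.
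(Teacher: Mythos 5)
Your proof is correct and takes essentially the same approach as the paper: use the existence of an $\omega$-defining sequence with small separators, then exploit $k$-connectedness of $A$ together with the fact that $\omega$ lies in the closure of $A$ to find $k$-subsets of $A$ on opposite sides of some $S_m$, forcing a contradiction. The paper's proof is organised as a direct argument (for any $\omega$-defining sequence, eventually $|S_n| \geq k$, then invoke Corollary~\ref{cor:end-deg2}), whereas you argue by contradiction via Corollary~\ref{cor:end-deg3}; the paper also replaces your explicit $A \setminus \Dom(\omega)$ choice with a preliminary ``WLOG $A$ contains no dominating vertices'' — these are merely presentational differences.
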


\begin{proof}
    We may assume that~$\Delta(\omega)$ is finite since otherwise there is nothing to show. 
    Hence without loss of generality~$A$ does not contain any vertices dominating~$\omega$.
    Let ${(S_n\ |\ n \in \mathbb{N})}$ be an $\omega$-defining sequence, which exists by Lemma~\ref{lem:def-seq}.
    Take ${N \in \mathbb{N}}$ such that there is a set ${B \subseteq A \setminus C(S_N, \omega)}$ of size~$k$.
    For every ${n > N}$ let ${C_n \subseteq A \cap C(S_n, \omega)}$ be a set of size~$k$, which exists since~$\omega$ is in the closure of~$A$.
    Since~$A$ is $k$-connected in~$G$, there are~$k$ disjoint $B$\,--\,$C_n$ paths in~$G$, each of which contains at least one vertex of~$S_n$.
    Hence for all ${n > N}$ we have ${\abs{S_n} \geq k}$ and by Corollary~\ref{cor:end-deg2} we have~${\Delta(\omega) \geq k}$. 
\end{proof}

We close this subsection with a corollary that is not needed in this paper, but provides a converse for Lemma~\ref{lem:Delta>=k} as an interesting observation.

\begin{corollary}\label{cor:Delta>=k_k-conn}
    Let ${\omega \in \Omega(G)}$ be an end of $G$ with~${\Delta(\omega) \geq k \in \mathbb{N}}$.
    Then every subset ${A \subseteq V(G)}$ with~$\omega$ in the closure of~$A$ contains a countable subset ${A' \subseteq A}$ which is $k$-connected in~$G$.
\end{corollary}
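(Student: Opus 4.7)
The corollary is essentially an immediate consequence of Lemma~\ref{lem:rBP-minor} combined with Corollary~\ref{cor:gen-typ-k-conn}, so the plan is short. Given $A \subseteq V(G)$ with $\omega$ in its closure, the hypothesis $\Delta(\omega) \geq k$ is precisely what Lemma~\ref{lem:rBP-minor} requires. Applying it, I obtain a countable subset $A' \subseteq A$ and a regular $k$-blueprint $\mathcal{B}$ such that $G$ contains a subdivision $H$ of a generalised $T_k(\mathcal{B})$ whose core is~$A'$.

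Next, by Corollary~\ref{cor:gen-typ-k-conn}, the core of any generalised $k$-typical graph is $k$-connected in that graph. Subdivisions preserve $k$-connectedness of the core since paths in the subdivided graph correspond to paths in the original graph (the core vertices are not used as subdivision vertices). Hence $A'$ is $k$-connected in~$H$.

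Finally, I observe that $k$-connectedness is monotone under taking supergraphs: if $A'$ is $k$-connected in $H \subseteq G$, then for any $Z_1, Z_2 \subseteq A'$ of equal size at most~$k$, the $|Z_1|$ disjoint $Z_1$\,--\,$Z_2$ paths in~$H$ are also disjoint $Z_1$\,--\,$Z_2$ paths in~$G$, so $A'$ is $k$-connected in~$G$. This completes the argument.

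There is essentially no obstacle here, since all the real work has been done in Lemma~\ref{lem:rBP-minor} (construction of the generalised $k$-typical subdivision along the end) and Corollary~\ref{cor:gen-typ-k-conn} (the $k$-connectedness of the core of any generalised $k$-typical graph). The corollary serves mainly to record the converse direction of Lemma~\ref{lem:Delta>=k}: an end of combined degree at least~$k$ witnesses the existence of countable $k$-connected sets among any vertex set whose closure contains it.
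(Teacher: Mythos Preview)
Your proof is correct and follows essentially the same approach as the paper's, which applies Lemma~\ref{lem:rBP-minor} and then Corollary~\ref{cor:gen-typ-k-conn}. You simply spell out more explicitly why $k$-connectedness passes from the generalised $k$-typical graph through its subdivision to the ambient graph~$G$, which the paper leaves implicit.
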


\begin{proof}
    By Lemma~\ref{lem:rBP-minor} we obtain a subdivision of a generalised $T_k(\mathcal{B})$ with core ${A'}$ for some ${A' \subseteq A}$ in~$G$ for a regular $k$-blueprint~$\mathcal{B}$.
    Corollary~\ref{cor:gen-typ-k-conn} yields the claim.
\end{proof}

\subsection{Characterisation for regular cardinals}\label{subsec:regular-thm}
\ \newline \indent
Now we have developed all the necessary tools to prove the minor and topological minor part of the characterisation in Theorem~\ref{main-thm} for regular cardinals.

\begin{theorem}\label{thm:main-regular}
    Let~$G$ be a graph, let~${k \in \mathbb{N}}$, 
    let~${A \subseteq V(G)}$ be infinite 
    and let ${\kappa \leq \abs{A}}$ be a regular cardinal.
    Then the following statements are equivalent.
    \begin{enumerate}[label=(\alph*)]
        \item \label{item:t6-set}
            There is a subset~${A_1 \subseteq A}$ with~${\abs{A_1} = \kappa}$ such that~$A_1$ is $k$-connected in~$G$.
        \item \label{item:t6-minor}
            There is a subset~${A_2 \subseteq A}$ with~${\abs{A_2} = \kappa}$ such that
            \begin{itemize}
                \item[$\bullet$] either~$K_{k, \kappa}$ is an fbs-minor of~$G$ 
                    with~$A_2$ along its core; 
                \item[$\bullet$] or~$T_k(\mathcal{B})$ is an fbs-minor of~$G$  
                    with~$A_2$ along its core 
                    for some regular 
                    $k$-blueprint~$\mathcal{B}$.
            \end{itemize}
        \item \label{item:t6-subdivision}
            There is a subset~${A_3 \subseteq A}$ with~${\abs{A_3} = \kappa}$ such that
            \begin{itemize}
                \item[$\bullet$] either $G$ contains a subdivision of a generalised~$K_{k, \kappa}$ 
                    with core~$A_3$; 
                \item[$\bullet$] or $G$ contains the subdivision of a generalised~$T_k(\mathcal{B})$
                    with core~$A_3$ 
                    for some regular 
                    $k$-blueprint~$\mathcal{B}$.
            \end{itemize}
    \end{enumerate}
    Moreover, if these statements hold, we can choose~${A_1 = A_2 = A_3}$.\end{theorem}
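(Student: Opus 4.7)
The plan is to prove the cyclic chain \ref{item:t6-set} $\Rightarrow$ \ref{item:t6-subdivision} $\Rightarrow$ \ref{item:t6-minor} $\Rightarrow$ \ref{item:t6-set}. The moreover clause falls out automatically: the only implication that constructs a genuinely new subset is the first one, which will yield some $A_3 \subseteq A_1$ of size $\kappa$; by Remark~\ref{rem:k-connected-subset} this $A_3$ remains $k$-connected in $G$, so one may replace $A_1$ by $A_3$ and take $A_2 := A_3$ throughout, since the remaining two implications preserve the distinguished set.

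The implication \ref{item:t6-subdivision} $\Rightarrow$ \ref{item:t6-minor} is immediate from Lemma~\ref{lem:gen-sub-fbs}: the parent of a subdivided generalised $k$-typical graph is an fbs-minor of $G$ with the same core. The implication \ref{item:t6-minor} $\Rightarrow$ \ref{item:t6-set} follows by combining Lemma~\ref{lem:typical-k-conn} (the core of a $k$-typical graph is $k$-connected in it) with Lemma~\ref{lem:k-con-minor} (a transversal of the branch sets of a $k$-connected minor is $k$-connected in the ambient graph); since $A_2$ lies along the core in exactly this transversal sense, $A_2$ is itself $k$-connected in $G$.

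The heart of the argument is \ref{item:t6-set} $\Rightarrow$ \ref{item:t6-subdivision}, which I would prove by case analysis on the closure behaviour of $A_1$ relative to the ends of $G$. If $\kappa$ is uncountable, or no end lies in the closure of $A_1$, or some end $\omega$ in that closure satisfies $\dom(\omega) \geq k$, then Lemma~\ref{lem:K_k,kappa} furnishes $K_{k,\kappa}$ as an fbs-minor with singleton branch sets on the finite side and with some $A' \subseteq A_1$ of size $\kappa$ along the core. Applying Lemma~\ref{lem:gen-K_k,kappa} and Corollary~\ref{cor:gen-K_k,kappa} promotes this to a subdivision of a generalised $K_{k,\kappa}$ with core some $A_3 \subseteq A'$. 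In the remaining case $\kappa = \aleph_0$ there exists an end $\omega$ in the closure of $A_1$ with $\dom(\omega) < k$; Lemma~\ref{lem:Delta>=k} guarantees $\Delta(\omega) \geq k$, whence $\deg(\omega) \geq k - \dom(\omega) \geq 1$, and Lemma~\ref{lem:rBP-minor} delivers a countable $A_3 \subseteq A_1$ together with a subdivision of a generalised $T_k(\mathcal{B})$ for a regular $k$-blueprint $\mathcal{B}$, with $A_3$ as its core.

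The main obstacle has in fact already been surmounted in the preceding subsections---namely the two concrete constructions of generalised $k$-typical subdivisions (Lemma~\ref{lem:K_k,kappa} together with Corollary~\ref{cor:gen-K_k,kappa}, and Lemma~\ref{lem:rBP-minor}). What remains at the level of this theorem is bookkeeping: verifying that the trichotomy of hypotheses of Lemma~\ref{lem:K_k,kappa} covers exactly the complement of the single case left to Lemma~\ref{lem:rBP-minor}, and that in both constructions we keep the core inside the originally given set $A_1$.
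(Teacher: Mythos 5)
Your proposal is correct and takes essentially the same route as the paper: the same cyclic chain of implications, the same invocations of Lemma~\ref{lem:gen-sub-fbs}, Lemmas~\ref{lem:typical-k-conn} and~\ref{lem:k-con-minor}, and the same case split for \ref{item:t6-set}~$\Rightarrow$~\ref{item:t6-subdivision} between Lemma~\ref{lem:K_k,kappa}/Corollary~\ref{cor:gen-K_k,kappa} and Lemma~\ref{lem:Delta>=k}/Lemma~\ref{lem:rBP-minor}. Your explicit check that the hypotheses of Lemma~\ref{lem:K_k,kappa} and the remaining case are exhaustive, and the note about the moreover clause, are exactly the bookkeeping the paper leaves implicit.
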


\begin{proof}
    If~\ref{item:t6-minor} holds, 
    then~$A_2$ is 
    $k$-connected in~$G$ by 
    Lemma~\ref{lem:k-con-minor} with 
    Lemma~\ref{lem:typical-k-conn}.
    
    If~\ref{item:t6-set} holds, 
    then we can find a subset ${A_3 \subseteq A_1}$ with ${\abs{A_3} = \kappa}$ yielding~\ref{item:t6-subdivision} by either
    Lemma~\ref{lem:K_k,kappa} and Corollary~\ref{cor:gen-K_k,kappa} 
    or by Lemma~\ref{lem:Delta>=k} and Lemma~\ref{lem:rBP-minor}.
    
    If~\ref{item:t6-subdivision} holds, then so does~\ref{item:t6-minor} by Lemma~\ref{lem:gen-sub-fbs} with~${A_2 := A_3}$.
    Moreover, $A_3$ is a candidate for both~$A_2$ and~$A_1$.
\end{proof}

\section{Minors for singular cardinalities}
\label{sec:singular-minors}

In this section we will prove the equivalence of~\ref{item:t3-set},~\ref{item:t3-minor} and~\ref{item:t3-subdivision} of Theorem~\ref{main-thm} for singular cardinals~$\kappa$.

\subsection{Cofinal sequence of regular bipartite minors with disjoint cores.}
\label{subsec:s-prep}
\ \newline \indent
In this subsection, given a $k$-connected set~$A$ of size $\kappa$, we will construct an ${\lK{\mathcal{K}}}$ minor in~$G$ for some suitable~${\ell \in [0,k]}$ and good $\kappa$-sequence~$\mathcal{K}$ with a suitable subset of $A$ along its precore.
This minor is needed as an ingredient for any of the possible $k$-typical graphs but the $K_{k, \kappa}$ (which we obtain from the following lemma if~${\ell = k}$).
Let ${\mathcal{A} = ( A^\alpha \subseteq A\ |\ \alpha \in \cf\kappa )}$ be a family of disjoint subsets of~$A$.
We say that~$G$ contains~${\lK{\mathcal{K}}}$ as an fbs-minor with~$\mathcal{A}$ along its precore~$\mathcal{Z}$ if the map mapping each vertex of the inflated subgraph to its branch set induces a bijection between~$A^\alpha$ and~$Z^\alpha$ for all~${\alpha \in \cf\kappa}$.

\begin{lemma}\label{lem:cofinal-minors}
    Let~${k \in \mathbb{N}}$, let~${A \subseteq V(G)}$ be infinite and $k$-connected in~$G$ and let ${\kappa \leq \abs{A}}$ be a singular cardinal.
    Then there is an $\ell \in [0,k]$, 
    a good $\kappa$-sequence ${\mathcal{K} = (\kappa_\alpha < \kappa\ |\ \alpha \in \cf\kappa)}$, 
    and a family~${\mathcal{A} = (A^\alpha \subseteq A\ |\ {\alpha \in \cf\kappa} )}$ of pairwise disjoint subsets of~$A$ with~${\abs{A^\alpha} = \kappa_\alpha}$ 
    such that~$G$ contains 
    ${\lK{\mathcal{K}}}$ as an fbs-minor 
    with~$\mathcal{A}$ along~$\mathcal{Z}$.
    Moreover, the branch sets for the vertices in $\bigcup \mathcal{Y}$ are singletons.
\end{lemma}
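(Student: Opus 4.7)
The plan is to apply Lemma~\ref{lem:K_k,kappa} along a cofinal sequence of regular cardinals below $\kappa$ to produce a family of $K_{k,\kappa_\alpha}$ fbs-minors, use the Delta-System Lemma to identify a common root of finite-side vertices, and prune transfinitely to achieve pairwise disjointness of branch sets outside that root.

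First I fix a good $\kappa$-sequence $\mathcal{K} = (\kappa_\alpha : \alpha \in \cf\kappa)$ with the additional fast-growth property $\sum_{\beta<\alpha}\kappa_\beta < \kappa_\alpha$ for every $\alpha$. Such a sequence can be extracted from any good $\kappa$-sequence by transfinite recursion: at stage $\gamma$ pick an index in $\cf\kappa$ whose value exceeds the sum of those already picked, which is possible because the base sequence is cofinal in $\kappa$ and $\cf\kappa$ is regular. Since each $\kappa_\alpha$ is uncountable and regular (as $\cf\kappa < \kappa_\alpha$), Lemma~\ref{lem:K_k,kappa} applied to $A$ with parameter $\kappa_\alpha$ yields an fbs-minor $M^\alpha$ of $K_{k,\kappa_\alpha}$ with a $\kappa_\alpha$-sized subset $B^\alpha \subseteq A$ along the core, and (by the ``moreover'' clause) singleton finite-side branch sets forming a $k$-set $F^\alpha \subseteq V(G)$, together with the pairwise disjoint finite branch sets $T^\alpha_z$ for $z \in B^\alpha$, all disjoint from $F^\alpha$.

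Applying the Delta-System Lemma (Lemma~\ref{delta-system-lemma}) to the family $(F^\alpha)_{\alpha \in \cf\kappa}$ of $k$-element sets produces a root $D$ of size $\ell := |D|$ and a cofinal $I \subseteq \cf\kappa$ with $|I| = \cf\kappa$ such that $F^\alpha \cap F^\beta = D$ for distinct $\alpha, \beta \in I$. I restrict to $I$ and pass to $\overline{\mathcal{K}\restricted I}$, which is again a good $\kappa$-sequence satisfying fast-growth, so I may assume $I = \cf\kappa$. Note that $D \cap B^\alpha = \emptyset$ for every $\alpha$, since $D \subseteq F^\alpha$ is disjoint from the core of $M^\alpha$. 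I then perform two rounds of pruning. In the first round, for each $\alpha$ I discard from $B^\alpha$ every $z$ with $T^\alpha_z \cap \bigcup_\beta (F^\beta \setminus D) \neq \emptyset$; since the $T^\alpha_z$ for fixed $\alpha$ are pairwise disjoint, at most $(k-\ell)\cdot\cf\kappa < \kappa_\alpha$ vertices are removed, leaving $B^\alpha_1$ of size $\kappa_\alpha$. In the second round I recurse on $\alpha$: at stage $\alpha$ all previously chosen branch sets cover fewer than $\sum_{\beta<\alpha}\kappa_\beta < \kappa_\alpha$ vertices, so fewer than $\kappa_\alpha$ vertices $z \in B^\alpha_1$ have $T^\alpha_z$ meeting that union, and discarding them gives $B^\alpha_* \subseteq B^\alpha_1$ of size $\kappa_\alpha$.

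Setting $A^\alpha := B^\alpha_*$, the singleton branch sets from $D$ (serving as the degenerate vertices of $\lK{\mathcal{K}}$) and from each $F^\alpha \setminus D$ (serving as the non-degenerate finite-side vertices of the $\alpha$-th piece), together with the branch sets $T^\alpha_z$ for $z \in B^\alpha_*$ (serving as the core), assemble into the desired fbs-minor of $\lK{\mathcal{K}}$: all branch sets are pairwise disjoint by construction, the $\mathcal{A}$ lies along $\mathcal{Z}$, the singleton condition on $\bigcup \mathcal{Y}$ is immediate, and the required adjacencies between each $y_i^\alpha$ and each $T^\alpha_z$ are inherited from the corresponding adjacencies in $M^\alpha$. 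The main obstacle is the coordination of these two tools: the delta-system step resolves all finite-side overlaps simultaneously, while the fast-growth property of $\mathcal{K}$ is exactly what makes the cardinal arithmetic in the second pruning round succeed.
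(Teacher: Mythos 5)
Your proof is correct and follows essentially the same approach as the paper's: iterate Lemma~\ref{lem:K_k,kappa} along the good $\kappa$-sequence, apply the Delta-System Lemma to identify the common root~$D$ of the finite-side singletons, and prune core branch sets to enforce pairwise disjointness. The only notable difference is cosmetic — the paper interleaves the pruning with the construction (applying the lemma to $A \setminus \bigcup_{\beta<\alpha} A^\beta$ at each step and pruning on the fly), whereas you build everything first, apply the Delta-System Lemma, and prune in two rounds afterwards. One small remark: the ``fast-growth'' property $\sum_{\beta<\alpha}\kappa_\beta < \kappa_\alpha$ you extract is actually automatic for every good $\kappa$-sequence, since $\kappa_\alpha$ is regular, $|\alpha| < \cf\kappa < \kappa_\alpha$, and each $\kappa_\beta < \kappa_\alpha$; so the extra transfinite extraction step can be omitted.
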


\begin{proof}
    We start with any good $\kappa$-sequence~${\mathcal{K} = (\kappa_\alpha < \kappa\ |\ \alpha \in \cf\kappa)}$.
    We construct the desired inflated subgraph by iteratively applying Lemma~\ref{lem:K_k,kappa}.
    
    For ${\alpha \in \cf\kappa}$ suppose we have already constructed for each~${\beta < \alpha}$ an inflated subgraph~$H^\beta$ witnessing that~$K_{k, \kappa_\beta}$ is an fbs-minor of~$G$ with some~${A^\beta \subseteq A}$ along its core.
    Furthermore, suppose that the branch sets of the vertices of the finite side are singletons and the branch sets of the vertices of the infinite side are disjoint to all branch sets of~$H^\gamma$ for all~${\gamma < \beta}$.
    We apply Lemma~\ref{lem:K_k,kappa} for $\kappa_\alpha$ to any set $A' \subseteq A \setminus \bigcup_{\beta < \alpha} A^\beta$ of size~$\kappa_\alpha$ 
    to obtain an inflated subgraph for $K_{k,\kappa_\alpha}$ with the properties as stated in that lemma.
    If any branch set for a vertex of the infinite side meets any branch set we have constructed so far, we delete it.
    Since $\kappa_\alpha$ is regular and $\kappa_\alpha > \cf\kappa$, the union of all inflated subgraphs we constructed so far has order less than~$\kappa_\alpha$.
    We obtain that the new inflated subgraph (after the deletions) still witnesses that~$K_{k,\kappa_\alpha}$ is an fbs-minor of~$G$ with some~${A^\alpha \subseteq A'}$ along its core.
    If a branch set for the finite side meets any branch set of a vertex for the infinite side for some~${\beta < \alpha}$, we delete that branch set and modify~$A^\beta$ accordingly.
    As the union of all branch sets for the finite side we will construct in this process has cardinality~$\cf\kappa$, each~$A^\beta$ will lose at most~${\cf\kappa < \kappa_\beta}$ many elements, hence will remain at size $\kappa_\beta$ for all $\beta \in \cf\kappa$.
    We denote the sequence~${( A^\alpha\ |\ \alpha \in \cf\kappa )}$ with~$\mathcal{A}$.
    
    By Lemma~\ref{delta-system-lemma} 
    there is an~${\ell \leq k}$ and an~${I \subseteq \cf\kappa}$ with~${\abs{I} = \cf\kappa}$ such that $H^\alpha$ and $H^\beta$ have precisely $\ell$ branch sets for the vertices of the finite side in common for all~${\alpha, \beta \in I}$.
    Hence relabelling the subsequences~$\mathcal{K}\restricted I$ and~${\mathcal{A}\restricted I}$ to~$\overline{\mathcal{K}\restricted I}$ and~$\overline{\mathcal{A}\restricted I}$ respectively as discussed in Section~\ref{sec:typical} yields the claim, where the union of the respective subgraphs $H^\alpha$ is the witnessing inflated subgraph.
\end{proof}

\subsection{Frayed complete bipartite minors}
\label{subsec:s-lFK} 
\ \newline \indent
In this subsection we will construct a frayed complete bipartite minor, if possible.
We shall use an increasing amount of fixed notation in this subsection based on Lemma~\ref{lem:cofinal-minors}, which we will fix as we continue our construction.

\begin{situation}\label{situation-lFK}
    Let~${k \in \mathbb{N}}$, let~${A \subseteq V(G)}$ be infinite and $k$-connected in~$G$ and let ${\kappa \leq \abs{A}}$ be a singular cardinal.
    Let~${\ell \leq k}$ and let
    \begin{itemize}
        \item ${\mathcal{K} = (\kappa_\alpha < \kappa\ |\ \alpha \in \cf\kappa)}$ be a good $\kappa$-sequence; and
        \item ${\mathcal{A} = (A^\alpha\ |\ {\alpha \in \cf\kappa})}$ be a family of pairwise disjoint subsets of~$A$ with ${\abs{A^\alpha} = \kappa_\alpha}$.
    \end{itemize}
    Let~$H$ be an inflated subgraph witnessing that~$G$ contains~${\lK{\mathcal{K}}}$ as an fbs-minor with~$\mathcal{A}$ along~$\mathcal{Z}$ as in Lemma~\ref{lem:cofinal-minors}.
    To simplify our notation, we 
    denote the unique vertex of~$H$ in a branch set of~$y_i^\alpha$ also by~$y_i^\alpha$ for all~${\alpha \in \cf\kappa}$ and~${i < k}$.
    Similarly, we denote the set ${\{ y_i^\alpha \in V(H)\ |\ i \in [0, k) \}}$ also with $Y^\alpha$ for all~${\alpha \in \cf\kappa}$, 
    and denote the family~${( Y^\alpha \subseteq V(H)\ |\ \alpha \in \cf\kappa )}$ with~$\mathcal{Y}$.
    Moreover, let~$H^\alpha$ denote the subgraph of~$H$ witnessing that~${K(Y^\alpha,Z^\alpha)}$ is an fbs-minor of~$G$ with~$A^\alpha$ along~$Z^\alpha$.
    Finally, let $D_\ell = \{ y_i\ |\ i \in [0,\ell) \} = \bigcap \{ V(H^\alpha)\ |\ \alpha \in \cf\kappa \}$ denote the set of degenerate vertices of $\lK{\mathcal{K}}$.
\end{situation}

For a set~${U \subseteq V(G)}$ and $\alpha \in \cf\kappa$, a \emph{$Y^\alpha$\,--\,$U$ bundle}~$P^\alpha$ is the union ${\bigcup \{ P_i^\alpha\ |\ i \in [0,k) \}}$ of~$k$ disjoint paths, 
where~$P_i^\alpha \subseteq G$ is a (possibly trivial) $Y^\alpha$\,--\,$U$ path starting in~${y_i^\alpha \in Y^\alpha}$ and ending in some~${u_i^\alpha \in U}$.
A family $\mathcal{P} = ( P^\alpha\ |\ \alpha \in \cf\kappa )$ of $Y^\alpha$\,--\,$U$ bundles is a \emph{$\mathcal{Y}$\,--\,$U$ bundle} if $P^\alpha - U$ and $P^\beta - U$ are disjoint for all $\alpha, \beta \in \cf\kappa$ with $\alpha \neq \beta$.
Note that if a $\mathcal{Y}$\,--\,$U$ bundle exists, then $U$ contains $D_\ell$.

A set~${U \subseteq V(G)}$ \emph{distinguishes}~$\mathcal{Y}$  
if whenever~${y_i^\alpha}$ and~${y_j^\beta}$ are in the same component of~${G - U}$ for $\alpha, \beta \in \cf\kappa$ and ${i, j \in [0, k)}$, then ${\alpha = \beta}$.

\begin{lemma}\label{lem:distinguish}
    If a set $U \subseteq V(G)$ distinguishes~$\mathcal{Y}$, then there is a $\mathcal{Y}$\,--\,$U$ bundle $\mathcal{P}$.
\end{lemma}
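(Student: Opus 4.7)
The plan is to establish, for each $\alpha \in \cf\kappa$, the existence of $k$ disjoint $Y^\alpha$\,--\,$U$ paths via Menger's theorem, and then deduce the cross-$\alpha$ disjointness from the distinguishing hypothesis. First I would observe that $D_\ell \subseteq U$: if some $i < \ell$ had $y_i \notin U$, then the single component of~${G - U}$ containing~${y_i = y_i^\alpha = y_i^\beta}$ would witness that $Y^\alpha$ and $Y^\beta$ share a component of $G - U$ for every pair ${\alpha \neq \beta}$, contradicting that $U$ distinguishes~$\mathcal{Y}$. Hence the $\ell$ singletons $\{y_i\}$ with $i < \ell$ are already $Y^\alpha$\,--\,$U$ paths (trivial ones), and it suffices to produce $k - \ell$ disjoint ${\{y_\ell^\alpha, \dots, y_{k-1}^\alpha\}}$\,--\,${(U \setminus D_\ell)}$ paths inside~${G - D_\ell}$ for each $\alpha$.

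For the non-trivial paths I would argue by contradiction via Theorem~\ref{finite-parameter-menger}: suppose ${T \subseteq V(G) \setminus D_\ell}$ has ${\abs{T} < k - \ell}$ and separates ${\{y_\ell^\alpha, \dots, y_{k-1}^\alpha\}}$ from ${U \setminus D_\ell}$ in~${G - D_\ell}$. Let $C$ be the union of components of~${(G - D_\ell) - T}$ meeting ${\{y_\ell^\alpha, \dots, y_{k-1}^\alpha\}}$; then ${S := T \cup D_\ell}$ satisfies $\abs{S} < k$, separates $C$ from ${V(G) \setminus (C \cup S)}$ in~$G$, and $C \cap U = \emptyset$. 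Two computations drive the argument. First, $\abs{A^\alpha \cap C} = \kappa_\alpha$: since $\abs{T} < k - \ell$, not all of $y_\ell^\alpha, \dots, y_{k-1}^\alpha$ can lie in~$T$, and every ${a \in A^\alpha}$ whose branch set $\mathfrak{B}(a)$ avoids $T$ is linked to any surviving $y_i^\alpha$ via the $K(Y^\alpha, Z^\alpha)$-minor edges inside ${\mathfrak{B}(a) \cup \{y_i^\alpha\}}$, which is disjoint from $D_\ell$; hence all but at most $\abs{T}$ vertices of~$A^\alpha$ lie in~$C$. Second, $\abs{A^\beta \cap C} \leq \abs{T}$ for every ${\beta \neq \alpha}$: this is the crux, and the only place the distinguishing hypothesis does real work. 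If some ${a \in A^\beta \cap C}$ had $\mathfrak{B}(a) \cap T = \emptyset$, then, choosing any $j \geq \ell$ with $y_j^\beta \notin T$ (available since $\abs{T} < k - \ell$), the analogous path from $a$ to $y_j^\beta$ stays in~${(G - D_\ell) - T}$, forcing $y_j^\beta \in C \subseteq V(G) \setminus U$. But $a$ is also linked inside $C$ to some $y_i^\alpha$, so $y_i^\alpha$ and $y_j^\beta$ share a component of~${G - U}$ with $\alpha \neq \beta$, contradicting distinguishing.

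With these bounds the contradiction is in sight. I would pick ${B \subseteq A^\alpha \cap C}$ of size~$k$, and pick ${B' \subseteq \bigcup_{\beta \neq \alpha}(A^\beta \setminus (C \cup T))}$ of size~$k$, available because each $A^\beta \setminus C$ has infinite cardinality~$\kappa_\beta$ and there are $\cf\kappa$ many choices of~$\beta$. Both sets lie in~$A$, are disjoint from~$S$, and are separated by~$S$ in~$G$; so $k$ disjoint $B$\,--\,$B'$ paths (which exist by the $k$-connectedness of~$A$) must all cross~$S$, contradicting $\abs{S} < k$. Hence the required $k - \ell$ paths exist by Theorem~\ref{finite-parameter-menger}, and together with the trivial paths through $D_\ell$ they form a bundle~$P^\alpha$. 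Finally, for cross-$\alpha$ disjointness, each non-trivial ${P_i^\alpha - U}$ is connected, contains $y_i^\alpha$, and is disjoint from~$U$, so it lies in the component of~${G - U}$ through~$y_i^\alpha$; by distinguishing, these components differ from the corresponding ones for any $\beta \neq \alpha$, so ${P^\alpha - U}$ and ${P^\beta - U}$ are disjoint, yielding the $\mathcal{Y}$\,--\,$U$ bundle.
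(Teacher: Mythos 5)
Your proof is correct, but takes a more elaborate route than the paper's. The paper fixes a single index~${\beta \neq \alpha}$ and observes that any separator of~$Y^\alpha$ and~$Y^\beta$ of size less than~$k$ would, via the branch-set structure, separate $k$-element subsets of~$A^\alpha$ and~$A^\beta$, contradicting the $k$-connectedness of~$A$; hence Menger gives~$k$ disjoint $Y^\alpha$\,--\,$Y^\beta$ paths, and since~$U$ separates~$Y^\alpha$ from~$Y^\beta$ by the distinguishing hypothesis, their initial $Y^\alpha$\,--\,$U$ segments form~$P^\alpha$. You instead apply Menger directly to~${\{y_\ell^\alpha, \dots, y_{k-1}^\alpha\}}$ and~${U \setminus D_\ell}$ inside~${G - D_\ell}$, verifying the hypothesis by hand via the set~$C$ and a two-sided count of~$\abs{A^\alpha \cap C}$ and~$\abs{A^\beta \cap C}$, before contradicting $k$-connectedness with the sets~$B$ and~$B'$. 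The paper's detour through $Y^\alpha$\,--\,$Y^\beta$ paths is shorter because the existence of the required $Y^\alpha$\,--\,$U$ segments then comes for free from the distinguishing hypothesis; your direct argument reproduces that fact from scratch, but in exchange it makes explicit the observation~${D_\ell \subseteq U}$ and the cross-$\alpha$ counting that the paper's one-sentence Menger step leaves implicit. The final cross-$\alpha$ disjointness argument via components of~${G - U}$ is the same in both.
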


\begin{proof}
    Let~${U \subseteq V(G)}$ distinguish~$\mathcal{Y}$.
    By definition every finite set separating~$Y^\alpha$ from~$Y^\beta$ in~$G$ also has to separate~$A^\alpha$ from~$A^\beta$.
    Since $A$ is $k$-connected in $G$, there are also $k$ disjoint $Y^\alpha$\,--\,$Y^\beta$ paths in~$G$ by Theorem~\ref{finite-parameter-menger}.
    Hence we fix the initial $Y^\alpha$\,--\,$U$ segments of these paths for each~${\alpha \in \cf\kappa}$, which are disjoint outside of~$U$ by the assumption that~$U$ distinguishes~$\mathcal{Y}$.
    This yields the desired $\mathcal{Y}$\,--\,$U$ bundle.
\end{proof}

For a cardinal~$\lambda$, a set~${W \subseteq V(G)}$ is $\lambda$-\emph{linked} to a set~${U \subseteq V(G)}$, if for every~${w \in W}$ and every~${u \in U}$ there are~$\lambda$ many internally disjoint $w$\,--\,$u$ paths in~$G$.

The following lemma is the main part of the construction.

\begin{lemma}\label{lem:lFK}
    In Situation~\ref{situation-lFK}, 
    suppose there is 
    a set ${U \subseteq V(G)}$ such that 
    \begin{itemize}
        \item there is a $\mathcal{Y}$\,--\,$U$ bundle $\mathcal{P} = ( P^\alpha\ |\ \alpha \in \cf\kappa )$; and
        \item there is a set~${W \subseteq U}$ with~${\abs{W} = k}$ 
            such that~$W$ is~$\cf\kappa$-linked to~$U$.
    \end{itemize}
        
    Then there is an~${I_0 \subseteq \cf\kappa}$ with~${\abs{I_0} = \cf\kappa}$ 
    and a family~${\mathcal{A}_0 = ( A_0^\alpha \subseteq A^\alpha\ |\ \alpha \in I_0 )}$
    with ${\abs{A_0^\alpha} = \kappa_\alpha}$ for all~${\alpha \in I_0}$  
    such that 
     $\lFK(\overline{\mathcal{K}\restricted{I_0}})$ is an fbs-minor of~$G$ with~$\overline{\mathcal{A}_0}$ along~$\overline{\mathcal{Z}\restricted{I_0}}$.
\end{lemma}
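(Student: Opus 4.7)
The plan is to first simplify the combinatorial structure of the bundle family via the Delta-Systems Lemma, designate vertices of $W$ as the cores of the frayed-centre branch sets, and then transfinitely build the paths realising the edges of $\lFK$ using the $\cf\kappa$-linked hypothesis. First, I would apply Lemma~\ref{delta-system-lemma} to the finite sets $V(P^\alpha)\cap U$ to pass to a cofinal $I\subseteq\cf\kappa$ along which the bundles pairwise intersect in a common finite set $D\subseteq U$; note that $D_\ell\subseteq D$ since the degenerate vertices lie in $U$ and on every bundle. Further pigeon-holing (using the finiteness of $D$) refines $I$ so that for each $i\in[\ell,k)$ either $u_i^\alpha$ is constantly some $u_i^*\in D$ or the $u_i^\alpha$ are pairwise distinct in $U\setminus D$. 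Since $|W|=k$ and $|D_\ell|\leq\ell$, I can select distinct $w_\ell^*,\dots,w_{k-1}^*\in W\setminus D_\ell$ to serve as the distinguished vertex of each frayed centre's branch set.

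The second step is the path construction, supported by the following Menger-type observation from the $\cf\kappa$-linked hypothesis: for any finite $W'\subseteq W$ and $U'\subseteq U$ of equal size $m$ and any $X\subseteq V(G)$ with $|X|<\cf\kappa$, there exist $m$ disjoint $W'$\,--\,$U'$ paths in $G-X$, because any separator $S$ of size $<m$ in $G-X$ would force the $\cf\kappa$ internally disjoint paths between some $w\in W'\setminus S$ and $u\in U'\setminus S$ to pass through $S\cup X$, contradicting $|S\cup X|<\cf\kappa$. Enumerate $I$ in order type $\cf\kappa$. At stage $\alpha$, let $X_\alpha$ collect the interiors of previously chosen $Q$-paths, the bundle interiors $V(P^\beta)\setminus U$ for previously processed $\beta$, the portion of the current bundle not on the $i$-th bundle path, and the fixed vertices $W^*\setminus\{w_i^*\}$ and $D_\ell$ (exempting current endpoints). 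Regularity of $\cf\kappa$ forces $|X_\alpha|<\cf\kappa$, so the observation yields $k-\ell$ disjoint $W^*$\,--\,$U_\alpha$ paths in $G-X_\alpha$, where $U_\alpha=\{u_i^\alpha:i\in[\ell,k)\}$. Pigeon-holing over the finitely many matchings between $W^*$ and $U_\alpha$, I pass to a cofinal subset on which the matching is constant and, after relabelling, obtain paths $Q_i^\alpha$ from $w_i^*$ to $u_i^\alpha$.

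The main obstacle will be that $Q_i^\alpha$ may meet bundle interiors $V(P^\beta)\setminus U$ of later stages $\beta$, which collectively have size $\cf\kappa$ and so cannot be forbidden uniformly in $X_\alpha$. To cope, I would allow $x_i$'s branch set to grow beyond $\{w_i^*\}$ by absorbing the initial segment of each $Q_i^\alpha$ up to its first foreign-bundle vertex (yielding a finite tree rooted at $w_i^*$), and discard any $\beta$ whose bundle gets contaminated by a $Q$-path from another retained stage, shrinking $I$ to a cofinal $I_0$; by the Delta-systems property each foreign bundle vertex on a $Q$-path uniquely identifies the polluted $\beta$, and each $Q_i^\alpha$ is finite, so the total pollution introduced by one stage is finite, and processing stages in a suitable order together with the regularity of $\cf\kappa$ keeps $|I_0|=\cf\kappa$. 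For each retained $\alpha\in I_0$ I would then define $A_0^\alpha\subseteq A^\alpha$ by removing the at most $\cf\kappa<\kappa_\alpha$ vertices whose branch sets in $H^\alpha$ are met by a $Q$-path from a different retained stage, preserving $|A_0^\alpha|=\kappa_\alpha$. The final inflated subgraph has branch sets $\{y_j\}$ for $j<\ell$, the finite tree rooted at $w_i^*$ for $i\in[\ell,k)$, $V(P_i^\alpha)\cup V(Q_i^\alpha)$ minus the portion assigned to $x_i$ for $y_i^\alpha$, and the existing branch sets of $H^\alpha$ restricted to $A_0^\alpha$ for the core vertices, together with the appropriate connecting edges to realise $\lFK(\overline{\mathcal{K}\restricted{I_0}})$ as an fbs-minor of $G$ with $\overline{\mathcal{A}_0}$ along $\overline{\mathcal{Z}\restricted{I_0}}$.
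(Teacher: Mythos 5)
Your strategy closely parallels the paper's: apply the Delta-Systems Lemma to the bundle endpoints, use the $\cf\kappa$-linkedness of $W$ to route connecting paths from $W$ to the $u_i^\alpha$, and then discard branch sets that get polluted. The one organisational difference is that the paper packages the path construction as a single maximal object (a ``star-link'') obtained by Zorn's Lemma, whereas you run a transfinite recursion of length $\cf\kappa$ and discard polluted indices on the fly; both can be made to work and the regularity argument you sketch for keeping $\abs{I_0}=\cf\kappa$ is essentially sound, though the Zorn approach avoids the bookkeeping.

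However, your branch-set assignment for the frayed centres has a genuine problem. You propose to let $\mathfrak{B}(x_i)$ absorb, for \emph{every} retained $\alpha$, the initial segment of $Q_i^\alpha$ up to its first vertex on a foreign bundle, and you assert this ``yields a finite tree rooted at $w_i^*$.'' That claim is not justified: the $Q_i^\alpha$ are pairwise disjoint apart from $w_i^*$, so if $\cf\kappa$ many of them meet foreign bundles (which nothing in the construction prevents), the union of their nontrivial initial segments has size $\cf\kappa$ and $\mathfrak{B}(x_i)$ is infinite — violating the fbs-minor requirement. The absorption step is also doing no work: if $Q_i^\alpha$ meets $V(P^\beta)\setminus U$, that offending vertex typically lies in the \emph{middle} of $Q_i^\alpha$, so assigning only an initial segment to $\mathfrak{B}(x_i)$ does not remove the overlap between $\mathfrak{B}(y_i^\alpha)\supseteq V(Q_i^\alpha)$ and the later bundle. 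The correct move (and what the paper does implicitly) is to keep $\mathfrak{B}(x_i)$ as the singleton $\{w_i^*\}$, put the whole of $V(P_i^\alpha)\cup V(Q_i^\alpha)\setminus\{w_i^*\}$ into $\mathfrak{B}(y_i^\alpha)$, and rely solely on (i) Menger-type disjointness across the paths, and (ii) discarding polluted indices and polluted $z\in Z^\alpha$, to keep all branch sets finite and pairwise disjoint. With that correction (and noting that for the indices $i\in[\ell,\ell+j)$ with a common endpoint $x_i=u_i^\alpha$ no $Q$-path is needed at all), your argument goes through.

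A smaller point: your Menger observation needs the extra hypothesis $X\cap(W'\cup U')=\emptyset$; as stated, a separator of size $<m$ might consist entirely of $X$-vertices, and the internally disjoint paths from the $\cf\kappa$-linkedness do not yield a contradiction unless the chosen $w,u$ are themselves outside $X$.
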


\begin{proof}
    Let $U$, $\mathcal{P}$ and $W$ be as above.
    By Lemma~\ref{delta-system-lemma} there is a~${j \in [0 , k]}$ and a subset~${I' \subseteq \cf\kappa}$ with ${\abs{I'} = \cf\kappa}$ such that (after possibly relabelling the sets~$Y^\alpha$ for all~${\alpha \in I'}$ simultaneously) 
     for every~${\alpha, \beta \in I'}$ with~${\alpha \neq \beta}$
    \begin{itemize}
        \item $y_i = u_i^\alpha = u_i^\beta$ for all $i \in [0, \ell)$;
        \item $x_i := u_i^\alpha = u_i^\beta$ 
            for all $i \in [\ell, \ell+j)$; and 
        \item $u_{i_0}^\alpha \neq u_{i_1}^\beta$ for all $i_0,i_1 \in [\ell+j, k)$.
    \end{itemize}
    Furthermore, after deleting at most $j$ more elements from $I'$ we obtain $I''$ such that 
    \begin{itemize}
        \item $u_i^\alpha \neq y_i^\alpha$ for all $i \in [\ell, \ell+j)$ and all $\alpha \in I''$.
    \end{itemize}
    Note that if~${\abs{U} < \cf\kappa}$, then~${\ell + j = k}$ and we set $I_0 := I''$ and $L := \emptyset$.
    
    Otherwise we construct subdivided stars with distinct centres in~$W$. 
    We start with a~${k-\ell-j}$ element subset~${W' = \{ w_i\ |\ i \in [\ell+j, k) \} \subseteq W}$ disjoint from both~$D_\ell$ as well as~${\{x_i\ |\ i \in [\ell, \ell+j)\}}$.
    A subgraph~$L$ of~$G$ is a \emph{partial star-link} if there is a set ${I(L) \subseteq I''}$ such that~$L$ is the disjoint union of subdivided stars~$S_i$ for all~${i \in [\ell + j, k)}$ with centre~${w_i}$ and leaves~$u_i^\alpha$, 
    and~$L$ is disjoint to ${{P}^\alpha - \{ u_i^\alpha\ |\ i \in [\ell+j, k) \}}$  for all~${\alpha \in I(L)}$.
    A partial star-link~$L$ is a \emph{star-link} if $\abs{I(L)} = \cf\kappa$.
    Note that the union of a chain of partial star-links (ordered by the subgraph relation) yields another partial star-link.
    Hence by Zorn's Lemma there is a maximal partial star-link $M$.
    Assume for a contradiction that $M$ is not a star-link.
    Then the set $N = V(M) \cup \bigcup_{\alpha \in I(M)} V({P}^\alpha)$ has size less than $\cf\kappa$.
    Take some $\beta \in I' \setminus I(M)$ such that $M$ is disjoint to ${P}^\beta$.
    Since $W$ is $\cf\kappa$-linked to $U$, we can find $k-\ell-j$ disjoint $W'$\,--\,$\{ u_i^\beta\ |\ i \in [\ell+j, k) \}$ paths disjoint from $N \setminus W'$, contradicting the maximality of~$M$ (after possibly relabelling).
    Hence there is a star-link~$L$, and we set $I_0 := I(L)$.
    
    Let $H_{I_0}$ denote the subgraph of $H$ containing only the branch sets for vertices in~${Y^\alpha \cup Z^\alpha}$ for~${\alpha \in I_0}$. 
    Since $L \cup \bigcup_{\alpha \in I_0} {P^\alpha}$ has size $\cf\kappa < \kappa_\alpha$ for all $\alpha \in I_0$, 
    we can remove every branch set for some $z \in Z^\alpha$ meeting $L \cup \bigcup_{\alpha \in I_0} {P^\alpha}$ and obtain $A_0^\alpha \subseteq A^\alpha$ with $\abs{A_0^\alpha} = \kappa_\alpha$. 
    The union of the resulting subgraph with $L$ and $\bigcup_{\alpha \in I_0} P^\alpha$ witnesses that 
    $\lFK(\overline{\mathcal{K} \restricted I_0})$ is an fbs-minor of $G$ with~${\overline{\mathcal{A}_0} := \overline{( A_0^\alpha\ |\ \alpha \in I_0)}}$ along~${\overline{\mathcal{Z}\restricted I_0}}$.
\end{proof}

As before, the previous lemma can be translated to find a desired subdivision of a generalised~$\lFK$.

\begin{lemma}\label{lem:gen-lFK}
    In the situation of Lemma~\ref{lem:lFK}, 
    there is an $I_1 \subseteq I_0$ with $\abs{I_1} = \cf\kappa$ 
    and a family ${\mathcal{A}_1 = (A_1^\alpha \subseteq A_0^\alpha\ |\ \alpha \in I_1)}$ with~${\abs{A_1^\alpha} = \kappa_\alpha}$ for all~${\alpha \in I_1}$ such that $G$ contains a subdivision of a generalised ${\lFK(\overline{\mathcal{K}\restricted I_1})}$ with core~$\bigcup \mathcal{A}_1$.
\end{lemma}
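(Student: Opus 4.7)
The strategy is to mimic Lemma~\ref{lem:gen-K_k,kappa} in two stages of pigeonhole, now applied to the fbs-minor supplied by Lemma~\ref{lem:lFK}. Let $H$ be the inflated subgraph witnessing that $\lFK(\overline{\mathcal{K}\restricted{I_0}})$ is an fbs-minor of~$G$ with $\overline{\mathcal{A}_0}$ along $\overline{\mathcal{Z}\restricted{I_0}}$, taken to be minimal with respect to the subgraph relation. By the moreover part of Lemma~\ref{lem:cofinal-minors} and the explicit construction in Lemma~\ref{lem:lFK} (the star-link $L$ uses internally disjoint paths between single vertices~$w_i$ and the singletons $u_i^\alpha$, and the frayed centres $x_i$ for $i\in[\ell,\ell+j)$ are themselves single shared vertices), every branch set for a degenerate vertex $y_i$, a frayed centre $x_i$, or a vertex $y_i^\alpha$ is a singleton. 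Thus only the branch sets $\mathfrak{B}(z)$ of the core vertices $z \in Z^\alpha$ need to be blown up into trees, which is precisely what a type-$1$ $k$-template encodes.

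For each $\alpha\in I_0$ and each $z\in Z^\alpha$, consider the finite tree $T_z := H[\mathfrak{B}(z)]$ together with the distinguished finite set $B_z := \{v_i^z\ |\ i\in[0,k)\}\cup\{a_z\}$, where $v_i^z$ is the endpoint in $\mathfrak{B}(z)$ of the unique edge from $y_i$ (for $i<\ell$) or $y_i^\alpha$ (for $i\geq\ell$), and $a_z$ is the unique vertex of $A_0^\alpha\cap\mathfrak{B}(z)$. By minimality, every leaf of $T_z$ lies in $B_z$. Suppressing each degree-$2$ vertex of $T_z$ not in $B_z$ yields a labelled tree $(\hat T_z,\hat\gamma_z,\hat c_z)$ in which every node of degree $1$ or~$2$ is either in the image of~$\hat\gamma_z$ or equals $\hat c_z$; this is precisely a type-$1$ $k$-template. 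Up to the natural notion of isomorphism there are only finitely many such templates. Since $\kappa_\alpha$ is regular and larger than this finite bound, pigeonhole gives a subset $Z_*^\alpha\subseteq Z^\alpha$ of size $\kappa_\alpha$ and a single template $\mathcal{T}_1^\alpha=(T^\alpha,\gamma^\alpha,c^\alpha)$ realised by every $z\in Z_*^\alpha$. A second application of pigeonhole, now over $\alpha\in I_0$ into a finite set of possible templates, supplies $I_1\subseteq I_0$ with $\abs{I_1}=\cf\kappa$ and a common type-$1$ $k$-template $\mathcal{T}_1=(T,\gamma,c)$.

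Define $A_1^\alpha:=\{a_z\ |\ z\in Z_*^\alpha\}\subseteq A_0^\alpha$ for $\alpha\in I_1$, so that $\abs{A_1^\alpha}=\kappa_\alpha$, and set $\mathcal{A}_1:=(A_1^\alpha\ |\ \alpha\in I_1)$. Keeping in $H$ only the branch sets indexed by $I_1$ and $Z_*^\alpha$, together with the bundles $P^\alpha$ and the star-link $L$ from the proof of Lemma~\ref{lem:lFK}, yields the required subdivision: each $T_z$ is a subdivision of $T$, the edges of $\lFK$ incident to $y_i$, $y_i^\alpha$ and $x_i$ are realised as the (internally disjoint) bundle and star-link paths, and the relabelling $\overline{\mathcal{K}\restricted{I_1}}$ identifies this with a subdivision of the generalised $\lFK(\overline{\mathcal{K}\restricted{I_1}})(\mathcal{T}_1)$ with core $\bigcup\mathcal{A}_1$. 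The main point to verify carefully is that the suppressed shape trees genuinely produce a \emph{valid} type-$1$ $k$-template (notably the degree condition on $T$) and that the bundles and star-link paths fit the subdivision pattern with the correct endpoints; everything else is two routine applications of the pigeonhole principle and relabelling of the good $\kappa$-sequence.
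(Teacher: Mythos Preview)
Your proof is correct and follows essentially the same route as the paper: apply the content of Lemma~\ref{lem:gen-K_k,kappa} to each $H^\alpha$ to extract a type-$1$ $k$-template $\mathcal{T}_1^\alpha$, then pigeonhole over $\alpha\in I_0$ to obtain a common $\mathcal{T}_1$ and assemble the subdivision. One minor imprecision worth cleaning up: in the fbs-minor produced by Lemma~\ref{lem:lFK} the branch sets of the $y_i^\alpha$ are \emph{not} singletons in general (they absorb the bundle paths $P_i^\alpha$ and, for $i\in[\ell+j,k)$, part of the star-link), but what you actually need and use is that the original vertex $y_i^\alpha$ from Situation~\ref{situation-lFK} is the unique vertex of $G$ meeting every edge from a branch set $\mathfrak{B}(z)$, $z\in Z^\alpha$, so in the subdivision picture that vertex serves as $y_i^\alpha$ and the remaining path realises the subdivided edge $y_i^\alpha x_i$.
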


\begin{proof}
    Let $H$ be the inflated subgraph witnessing that ${\lFK(\overline{\mathcal{K}\restricted I_0})}$ is an fbs-minor of $G$ with $\overline{\mathcal{A}_0}$ along its core.
    Let $H^\alpha \subseteq H$ be the subgraph corresponding to the subgraph $K(Y^\alpha,Z^\alpha)$ of $\lFK(\overline{\mathcal{K}\restricted I_0})$ for each $\alpha \in I_0$.
    For each $\alpha \in I_0$ we apply Lemma~\ref{lem:gen-K_k,kappa} to $H^\alpha$.
    By the pigeonhole principle there is a set $I_1 \subseteq I_0$ with $\abs{I_1} = \cf\kappa$ such that the type-1 $k$-template we got is the same for each $\alpha \in I_1$.
    This yields the desired subdivision as for Corollary~\ref{cor:gen-K_k,kappa}.
\end{proof}

The remainder of this subsection is dedicated to identify when we can apply Lemma~\ref{lem:lFK}.

\begin{lemma}\label{lem:lFK-no-end}
    In Situation~\ref{situation-lFK}, 
    if either~$\cf\kappa$ is uncountable
    or there is no end in the closure of some transversal~$T$ of~$\mathcal{A}$, 
    then there is a set~${U \subseteq V(G)}$ 
    with the properties needed for Lemma~\ref{lem:lFK}.
\end{lemma}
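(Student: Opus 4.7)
My plan is to construct, in either case of the hypothesis, a set $U \subseteq V(G)$ satisfying the two requirements of Lemma~\ref{lem:lFK}. Let $T$ be a transversal of $\mathcal{A}$; then $T \subseteq A$ has size $\cf\kappa$ and is $k$-connected in $G$ by Remark~\ref{rem:k-connected-subset}. The unifying observation is that in either case, any application of the Star-Comb Lemma to a $\cf\kappa$-sized $1$-connected subset of $T$ in a subgraph $G - F$ (for finite $F$) will yield a subdivided star rather than a comb: in Case~$1$, Star-Comb (Lemma~\ref{star-comb}) with uncountable regular parameter $\cf\kappa$ can only output stars; in Case~$2$, any such comb would yield, via Remarks~\ref{rem:end_subgraph}, an end of $G$ in the closure of $T$, contradicting the hypothesis.

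The case $\ell = k$ is immediate: then $\lFK$ coincides with $K_{k,\kappa}$ and Situation~\ref{situation-lFK} already provides the desired minor, so take $W = U = D_k$; the $\mathcal{Y}$\,--\,$U$ bundle consists of trivial paths since $Y^\alpha = D_k$ for every $\alpha$, and pairwise $\kappa$-linkage within $D_k$ follows from the $\lK{\mathcal{K}}$-minor structure, each branch set $\mathfrak{B}(z)$ for $z \in \bigcup \mathcal{Z}$ providing an internally disjoint $y_i$\,--\,$y_j$ path. Henceforth assume $\ell < k$. I would iteratively construct distinct vertices $c_0, \ldots, c_{k-\ell-1} \in V(G) \setminus D_\ell$ together with a descending chain $T \supseteq T_0 \supseteq \cdots \supseteq T_{k-\ell}$ of $\cf\kappa$-sized subsets of $T$: at step $i$, an iterated application of Corollary~\ref{cor:(k-1)-connected} gives $T_i \subseteq T$ of size $\cf\kappa$ that is $(k-\ell-i)$-connected in $G_i := G - (D_\ell \cup \{c_0, \ldots, c_{i-1}\})$, and then the Star-Comb Lemma applied in $G_i$ produces the new center $c_i$ together with a subdivided star $S_i$ whose leaf set $T_{i+1}$ has size $\cf\kappa$.

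A subsequent greedy pruning—feasible because each star path is finite, same-star paths are automatically internally disjoint, and only finitely many further leaves can conflict with any already-chosen finite collection—refines $T_{k-\ell}$ to a $\cf\kappa$-sized subset $T^*$ on which the star paths $\{P_i^t : i \in [0, k-\ell),\, t \in T^*\}$ are pairwise internally disjoint. After discarding those $\alpha$ whose transversal vertex is not in $T^*$ and relabeling to a cofinal subfamily of $\mathcal{A}$, we may assume $t_\alpha \in T^*$ for every $\alpha$. I would then set $W := D_\ell \cup \{c_0, \ldots, c_{k-\ell-1}\}$, which has size $k$, and take $U := W$. For the $\mathcal{Y}$\,--\,$U$ bundle, each $\alpha$ contributes $\ell$ trivial paths for the degenerate coordinates and $k - \ell$ non-trivial paths from $Y^\alpha \setminus D_\ell$ to $\{c_0, \ldots, c_{k-\ell-1}\}$; the latter exist by Theorem~\ref{finite-parameter-menger}, since any $<(k-\ell)$-separator in $G - D_\ell$ would, together with $D_\ell$, form a $<k$-separator in $G$ separating $A^\alpha$ from $T^* \subseteq A$, contradicting the $k$-connectedness of $A$. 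Disjointness of bundles across $\alpha$ outside $U$ is achieved by routing each bundle through the pairwise disjoint branch sets of $Z^\alpha$ and by selecting disjoint $(k-\ell)$-subsets of $T^*$ as intermediate targets for different $\alpha$'s.

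For the $\cf\kappa$-linking of $W$ to $U$: pairwise linkage among degenerate vertices $y_i, y_j \in D_\ell$ will follow from the $\lK{\mathcal{K}}$-structure as above; linkage between two centers $c_i, c_j$ will be provided by the concatenations $P_i^t \cup P_j^t$ for $t \in T^*$, yielding $\cf\kappa$ internally disjoint paths thanks to the pruning; and linkage between a center $c_i$ and a degenerate vertex $y_j$ will be obtained by combining star paths with branch-set routes, forming $c_i \to t \to y_j$ via $P_i^t$ and then through $\mathfrak{B}(z_t)$ for $t \in T^*$, where $z_t$ is the $\bigcup \mathcal{Z}$-index whose branch set contains $t$. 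The hard part will be the last item: to ensure that the paths for distinct $t$'s are internally disjoint, I will need to further refine $T^*$ so that the internal vertices of each $P_i^t$ avoid all relevant foreign branch sets. This refinement will be possible because each star contributes only $\cf\kappa$ internal vertices—touching at most $\cf\kappa$ branch sets—whereas $\bigcup \mathcal{Z}$ consists of $\kappa > \cf\kappa$ branch sets, leaving $\kappa$ of them available for routing.
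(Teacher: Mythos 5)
There is a genuine gap: you take $U := W$ of size~$k$ and assert that this $U$ admits a $\mathcal{Y}$\,--\,$U$ bundle, but no such guarantee holds. The paper's proof does not stop at $W$; it builds $U \supseteq W$ by a transfinite recursion, repeatedly adding star centres until the accumulated set distinguishes $\overline{\mathcal{Y}\restricted{I}}$ for some $\cf\kappa$-sized $I \subseteq \cf\kappa$, at which point Lemma~\ref{lem:distinguish} yields the bundle. The invariant maintained throughout is that the fixed $k$-set $W$ (the finite side of a $K_{k,\cf\kappa}$-minor with $T_0$ along its core, analogous to your $D_\ell \cup \{c_0, \ldots, c_{k-\ell-1}\}$) remains $\cf\kappa$-linked to the growing $U$. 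Growing $U$ is precisely what forces the bundles for distinct $\alpha$ to be disjoint outside $U$.

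Your Menger argument does produce the individual bundles $P^\alpha$, but their pairwise disjointness outside $U$ --- which is exactly what the distinguishing hypothesis of Lemma~\ref{lem:distinguish} secures --- can fail for a $k$-element $U$, and the routing you sketch through branch sets of $Z^\alpha$ and disjoint subsets of $T^*$ does not repair it. Concretely, take $k = 2$, $\cf\kappa = \aleph_1$, $\ell = 0$, and let $G$ consist of pairwise disjoint copies of $K\big(\{y_0^\alpha, y_1^\alpha\}, Z^\alpha\big)$ with $\abs{Z^\alpha} = \kappa_\alpha$ for $\alpha < \omega_1$, together with three further vertices $v, w, x$, where $v$ and $x$ are each adjacent to every $y_0^\alpha$ and $w$ is adjacent to every $y_1^\alpha$. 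Then $A := \bigcup_\alpha Z^\alpha$ is $2$-connected (via $v$ and $w$), $G$ is rayless, and Situation~\ref{situation-lFK} is realised with $\ell = 0$. Your iterated Star-Comb construction can return $W = \{v, x\}$, for instance by connecting the transversal through $x$ once $v$ has been removed. This $W$ is $\cf\kappa$-linked to itself via the paths $v\,y_0^\alpha\,x$, yet it admits no $\mathcal{Y}$\,--\,$\{v,x\}$ bundle: for each $\alpha$, the path $P_1^\alpha$ from $y_1^\alpha$ to $\{v,x\}$ must pass either through $y_0^\alpha$, which $P_0^\alpha$ already occupies, or through $w$, which lies outside $U$ and hence can belong to at most one of the $\cf\kappa$ bundles. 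Reaching a $T^*$-vertex in a different $A^\beta$ from $y_1^\alpha$ likewise forces a detour through $w$, so refining $T^*$ does not help. The paper's recursion resolves exactly this situation by adjoining $w$ to $U$ at the next step; your argument has no corresponding mechanism.
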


\begin{proof}
    We start with a transversal~$T$ of~$\mathcal{A}$ (whose closure does not contain any end if~${\cf\kappa}$ is countable).
    We apply Lemma~\ref{lem:K_k,kappa} to~$T$ to obtain an inflated subgraph witnessing that~$K_{k,\cf\kappa}$ is an fbs-minor of~$G$ with~${T_0 \subseteq T}$ along its core.
    We call the union of the singleton branch sets for the vertices of the finite side~${W =: U_0}$.
    By construction~$W$ is~$\cf\kappa$-linked to~$U_0$.
    Let~${I_0}$ denote the set~${\{ \alpha \in \cf\kappa\ |\ \abs{T_0 \cap A^\alpha} = 1 \}}$.
    We construct~$U$ inductively.
    
    For some ordinal~$\alpha$ 
    we assume we already constructed a strictly $\subseteq$-ascending sequence ${(U_\beta\ |\ \beta < \alpha)}$ such that~$W$ is~$\cf\kappa$-linked to~$U_\beta$ for all~${\beta < \alpha}$.
    If there is a subset~$I \subseteq I_0$ with~${\abs{I} = \cf\kappa}$ such that~${U' := \bigcup_{\beta < \alpha} U_\alpha}$ 
    distinguishes ${\overline{\mathcal{Y}\restricted{I}}}$, 
    then we are done by Lemma~\ref{lem:distinguish} since by construction $W$ is still $\cf\kappa$-linked to $U'$.
    Otherwise there is a component of $G - U'$ containing a transversal~$T_\alpha$ of~${\mathcal{Y}\restricted{I_\alpha}}$ 
    for some~${I_\alpha \subseteq I_0}$ with~${\abs{I_\alpha} = \cf\kappa}$.
    Applying Lemma~\ref{star-comb} to~$T_\alpha$ yields a subdivided star with centre $u_\alpha$ and~$\cf\kappa$ many leaves~$L_\alpha \subseteq T_\alpha$.
    We then set~${U_\alpha := U' \cup \{ u_\alpha \}}$.
    By Theorem~\ref{cardinality-menger} there are~$\cf\kappa$ many internally disjoint $w$\,--\,$u_\alpha$ paths for all~${w \in W}$, since no set of size less than~$\cf\kappa$ could 
    separate~$u_\alpha$ from~$L_\alpha$, $L_\alpha$ from~$T_0$, or any subset of size $\cf\kappa$ of~$T_0$ from~$w$.
    Hence~$W$ is $\cf\kappa$-linked to~$U_\alpha$ and we can continue the construction.
    This construction terminates at the latest if $U' = V(G)$.
\end{proof}

If $\cf\kappa$ is countable and there is an end in the closure of some transversal of $\mathcal{A}$, then there is still a chance to obtain an $\lFK$ minor. 
We just need to check whether $G$ contains a $\mathcal{Y}$\,--\,$\Dom(\omega)$ bundle, since we have the following lemma.

\begin{lemma}\label{lem:Dom-linked}
    For every end $\omega \in \Omega(G)$, the set $\Dom(\omega)$ is $\aleph_0$-linked to itself.
\end{lemma}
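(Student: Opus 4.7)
The plan is to unpack the definition of domination and combine it with the finite-parameter Menger theorem (Theorem~\ref{finite-parameter-menger}). Recall that $W$ being $\aleph_0$-linked to $U$ means that for every choice of $w \in W$ and $u \in U$ there are infinitely many internally disjoint $w$--$u$ paths in $G$. So it suffices to fix two vertices $d_1, d_2 \in \Dom(\omega)$ and show that for every $k \in \mathbb{N}$ there are at least $k$ internally disjoint $d_1$--$d_2$ paths.

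By (the standard consequence of) Theorem~\ref{finite-parameter-menger}, it is enough to show that $d_1$ and $d_2$ cannot be separated by any finite set $S \subseteq V(G) \setminus \{d_1, d_2\}$; one then applies Menger to $A = N(d_1) \cup \{d_1 d_2\text{-edge endpoint if present}\}$ and $B = N(d_2)$ (or equivalently considers $d_1$ and $d_2$ as the distinguished vertices) to obtain the internally disjoint $d_1$--$d_2$ paths. So fix a finite set $S \subseteq V(G) \setminus \{d_1, d_2\}$ and fix any $\omega$-ray $R$. Since $S$ is finite, $R$ has a tail $R^*$ entirely contained in $G - S$. Because $d_1$ dominates $\omega$, applying the definition of domination to the finite set $S \cup \{d_2\} \setminus \{d_1\}$ shows that $d_1$ lies in the same component of $G - S$ as some tail of $R$; any such tail meets $R^*$ in a further tail, so $d_1$ and $R^*$ lie in the common component of $G - S$. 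The same argument with the roles of $d_1$ and $d_2$ exchanged shows $d_2$ and $R^*$ lie in the same component of $G - S$. Hence $d_1$ and $d_2$ lie in the same component of $G - S$, so $S$ does not separate them.

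Since this holds for every finite $S \subseteq V(G) \setminus \{d_1, d_2\}$, Theorem~\ref{finite-parameter-menger} applied for each $k \in \mathbb{N}$ yields $k$ internally disjoint $d_1$--$d_2$ paths, and thus $\aleph_0$ many, as required.

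There is no real obstacle here; the only subtlety is bookkeeping around the definition of domination (which forbids $d_i$ from being in the separator but allows for arbitrary other finite sets, so one must adjoin $\{d_{3-i}\}$ to $S$ before invoking the definition) and the passage from ``no finite separator'' to internally disjoint paths, which is a standard reformulation of Menger's theorem.
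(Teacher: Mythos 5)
Your proof is correct and uses the same core idea as the paper: a finite set $S$ avoiding both $d_1$ and $d_2$ cannot separate them because the end forces both (via domination) into the common component of $G-S$ containing the rays of $\omega$, and then Menger upgrades this to arbitrarily many internally disjoint paths. The paper phrases this as a short contradiction (if $S$ separated $u$ from $v$, at least one would be cut off from $C(S,\omega)$), while you argue directly through a fixed ray $R$ and a common tail $R^*$; both are the same argument in different dress. One small remark: adjoining $\{d_{3-i}\}$ to $S$ before invoking the definition of domination is harmless but unnecessary here, since $S$ was already chosen inside $V(G)\setminus\{d_1,d_2\}$, so $S\subseteq V(G)\setminus\{d_i\}$ for each $i$ and the definition applies to $S$ itself.
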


\begin{proof}
    Suppose there are ${u, v \in \Dom(\omega)}$ with only finitely many internally disjoint $u$\,--\,$v$ paths. 
    Hence there is a finite separator~${S \subseteq V(G)}$ such that~$u$ and~$v$ are in different components of~${G - S}$.
    Then at least one of them is in a different component than $C(S,\omega)$, a contradiction.
\end{proof}

Hence, we obtain the final corollary of this subsection.

\begin{corollary}\label{cor:lFK-end}
    In Situation~\ref{situation-lFK}, 
    suppose~${\cf\kappa}$ is countable and 
    there is an end~$\omega$ in the closure of some transversal of~${\mathcal{A}}$ 
    with~${\dom{\omega} \geq k}$ 
    such that ${\Dom(\omega)}$ distinguishes~${\mathcal{Y}}$.
    Then~${\Dom(\omega)}$ satisfies the properties needed for Lemma~\ref{lem:lFK}.
    \qed
\end{corollary}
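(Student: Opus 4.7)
The plan is to verify directly that $U := \Dom(\omega)$ satisfies the two hypotheses required by Lemma~\ref{lem:lFK}, which essentially just repackages the assumptions of the corollary through two lemmas that have already been proved.

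First I would handle the existence of a $\mathcal{Y}$\,--\,$U$ bundle. By hypothesis, $\Dom(\omega)$ distinguishes $\mathcal{Y}$, so Lemma~\ref{lem:distinguish} immediately produces the desired $\mathcal{Y}$\,--\,$\Dom(\omega)$ bundle $\mathcal{P} = (P^\alpha \mid \alpha \in \cf\kappa)$. Observe that $\Dom(\omega) \supseteq D_\ell$ follows automatically, since any vertex $y \in D_\ell$ lies in every $Y^\alpha$ and thus is forced into the distinguishing set.

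Next I would exhibit the required $\cf\kappa$-linked subset. Since $\dom(\omega) \geq k$ by hypothesis, I can pick any $W \subseteq \Dom(\omega)$ with $\abs{W} = k$. Because $\cf\kappa = \aleph_0$ in this situation, being $\cf\kappa$-linked to $\Dom(\omega)$ means being $\aleph_0$-linked to it, which is exactly what Lemma~\ref{lem:Dom-linked} delivers: $\Dom(\omega)$ is $\aleph_0$-linked to itself, hence any $k$-subset $W$ of $\Dom(\omega)$ is $\aleph_0$-linked to all of $\Dom(\omega)$.

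With both conditions verified, $U = \Dom(\omega)$ meets the hypotheses of Lemma~\ref{lem:lFK}. There is no substantive obstacle here; the corollary is essentially a bookkeeping step that packages the hypothesis ``$\Dom(\omega)$ distinguishes $\mathcal{Y}$ and $\dom(\omega) \geq k$'' into the shape needed to invoke Lemma~\ref{lem:lFK} in the countable-cofinality case where Lemma~\ref{lem:lFK-no-end} does not directly apply.
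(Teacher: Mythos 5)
Your proof is correct and is exactly the unpacking that the paper leaves implicit by marking the corollary with \qed: Lemma~\ref{lem:distinguish} supplies the bundle from the distinguishing hypothesis, and Lemma~\ref{lem:Dom-linked} supplies the $\cf\kappa$-linked (here $\aleph_0$-linked) $k$-subset $W$ since $\dom(\omega)\geq k$. Nothing to add.
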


\subsection{Minors for singular \emph{k}-blueprints}
\label{subsec:sBP}
\ \newline \indent
This subsection builds differently upon Situation~\ref{situation-lFK} in the case where we do not obtain the frayed complete bipartite minor.
We incorporate new assumptions and notation, establishing a new situation, which we will further modify according to some assumptions that we can make without loss of generality during this subsection.

\begin{situation}\label{situation-sBP}
    Building upon Situation~\ref{situation-lFK}, 
    suppose~${\cf\kappa}$ is countable 
    and there is an end~$\omega$ in the closure of some transversal of~$\mathcal{A}$, i.e.~an $\omega$-comb whose teeth are a transversal~$T$ of $\{ A^i\ |\ i \in J \}$ for some infinite $J \subseteq \mathbb{N}$.
    Suppose that 
    \begin{itemize}
         \item[($\ast$)] there is no $\overline{\mathcal{Y}\restricted I}$\,--\,$\Dom(\omega)$ bundle for any infinite $I \subseteq \mathbb{N}$.
    \end{itemize}
    In particular~$\Dom(\omega)$ does not distinguish~$\overline{\mathcal{Y}\restricted{J}}$ 
    by Lemma~\ref{lem:distinguish}.
    Hence there is a component~$C$ of~${G - \Dom(\omega)}$ containing a comb with teeth in $\mathcal{Y}\restricted{J}$, since a subdivided star would yield a vertex dominating $\omega$ outside $\Dom(\omega)$.
    This comb is an $\omega$-comb since its teeth cannot be separated from~$T$ by a finite vertex set.
    Without loss of generality we may assume that $J = \mathbb{N}$ by redefining $\mathcal{K}$, $\mathcal{Y}$ and $\mathcal{A}$ as $\overline{\mathcal{K}\restricted{J}}$, $\overline{\mathcal{Y}\restricted{J}}$ and $\overline{\mathcal{A}\restricted{J}}$ respectively.
    
    Let ${G' := G[C]}$ and let $\omega'$ be the end of $G'$ containing the spine of the aforementioned $\omega$-comb in $G'$.
    Let~${\mathcal{S} = (S^n\ |\ n \in \mathbb{N})}$ be an 
    $\omega'$-defining sequence in~$G'$ 
    and let~$\mathcal{R}$ 
    be a family of disjoint $\omega'$-rays in~$G'$ such that 
    ${\big( \mathcal{S}, \mathcal{R} \big)}$ witnesses the degree of the undominated end~$\omega'$ of~$G'$, which exist by Lemma~\ref{lem:def-seq-deg-wit}.
    Moreover, we will modify this situation with some assumptions that we can make without loss of generality.
    We will fix them in some of the following lemmas and corollaries.
\end{situation}

\begin{lemma}\label{lem:sBP:A1}
    In Situation~\ref{situation-sBP}, 
    we may assume without loss of generality that 
    for all~${n \in \mathbb{N}}$ the following hold:
    \begin{itemize}
        \item ${S^n \cap \bigcup \mathcal{Y} = \emptyset}$; and 
        \item $S^n$ is contained in a component of $G'[S^n, S^{n+1}]$.
    \end{itemize}
    Hence we include these assumptions into Situation~\ref{situation-sBP}.
\end{lemma}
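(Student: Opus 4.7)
I will modify the degree-witnessing pair $(\mathcal{S}, \mathcal{R})$ in two stages, producing a new pair (still witnessing $\deg(\omega') = k' := \deg(\omega')$) that satisfies both additional properties. Two standing observations drive the argument: first, $\bigcup \mathcal{Y}$ is countable, since $\cf\kappa = \aleph_0$ and each $Y^\alpha$ is finite; and second, $\omega'$ is undominated in $G'$, because any vertex $v \in V(G')$ dominating $\omega'$ in $G'$ would via the inclusion $G' \subseteq G$ also dominate $\omega$ in $G$, contradicting $v \notin \Dom(\omega)$.

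For property (a), I first adjust $\mathcal{R}$ so that every ray contains infinitely many vertices outside $Y := \bigcup \mathcal{Y}$. If some $R \in \mathcal{R}$ has a tail contained in $Y$, I replace it by another $\omega'$-ray disjoint from $\mathcal{R} \setminus \{R\}$ whose tail avoids $Y$; a candidate is the spine of the $\omega$-comb supplied by Situation~\ref{situation-sBP} (which goes through the teeth only at isolated points), and in general a Menger-style re-routing at arbitrarily deep slabs yields the required replacement, exploiting that $\omega'$ is undominated. Once each ray in $\mathcal{R}$ has infinitely many non-$Y$ vertices, I re-position each separator $S^n$: the degree-witnessing property forces $S^n$ to meet each ray exactly once, so I choose each meeting point at a vertex of the ray outside $Y$. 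Proceeding inductively along the rays in the style of Lemma~\ref{lem:def-seq-deg-wit}, these choices can be made consistently to preserve both the $\omega'$-defining character of the sequence and the degree-witnessing pair property.

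For property (b), I then enlarge each separator $S^n$ to a finite set $\tilde{S}^n$ contained in a single component of $G'[\tilde{S}^n, \tilde{S}^{n+1}]$. Inside the slab $G'[S^n, S^{n+1}]$ the ray segments provide $k'$ disjoint $S^n$--$S^{n+1}$ paths, possibly distributed over several components of the slab; a finite family of auxiliary linking paths within the slab then merges these components into one, and I add the vertices of these linking paths to $S^n$. The linking paths can be chosen disjoint from $Y$ by the abundance of non-$Y$ vertices secured in the previous step, so (a) is preserved. The rays of $\mathcal{R}$ are then re-routed through the enlarged separators by an application of Theorem~\ref{finite-parameter-menger} in each slab, preserving the degree-witnessing pair property.

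The main technical obstacle is the ray-replacement of Step~1: showing that whenever some $R \in \mathcal{R}$ has a tail in $Y$, a suitable $\omega'$-ray exists disjoint from $\mathcal{R} \setminus \{R\}$ whose complement in $Y$ is infinite. This requires combining the undominatedness of $\omega'$ in $G'$ with the structural information furnished by the $\omega$-comb in Situation~\ref{situation-sBP}; if no such replacement existed for arbitrarily deep slabs of $(\mathcal{S}, \mathcal{R})$, a compactness argument via Lemma~\ref{koenig} applied to the finite sets $S^n \cap Y$ would manufacture a vertex in $V(G') \setminus \Dom(\omega)$ dominating $\omega'$, contradicting that $\omega'$ is undominated.
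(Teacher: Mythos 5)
Your proposal takes a genuinely different route from the paper, but the detour introduces gaps that the paper's much simpler argument avoids entirely.

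The paper does not modify $\mathcal{S}$ or $\mathcal{R}$ at all: it simply passes to subsequences. Because $(\mathcal{S},\mathcal{R})$ is degree-witnessing, for any $n$ there is an $m>n$ such that $S^n$ is contained in a single component of $G'[S^n,S^m]$: every vertex of $S^n$ lies on some ray of $\mathcal{R}$, any two such rays are connected by a finite path (they are in the same end $\omega'$), and a finite path escapes $C(S^m,\omega')$ for $m$ large. Moreover, since $S^n$ is finite and the sets $Y^x\cap V(G')$ eventually move into $C(S^m,\omega')$, one can interleave a cofinal subsequence of $\mathcal{S}$ with a subsequence of $\mathcal{Y}$ so that the chosen separators avoid all the chosen $Y^x$. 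Relabelling $\mathcal{S}$, $\mathcal{Y}$, $\mathcal{K}$, $\mathcal{A}$ finishes the proof. Nested separators sliced further in on the same sequence automatically remain an $\omega'$-defining sequence, so nothing needs to be re-verified.

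The central gap in your approach is the ``re-positioning'' step. Replacing a vertex $s\in S^n\cap R$ by another vertex $s'$ of $R$ does not in general yield a separator: if there is a path from $R$ between $s$ and $s'$ that bypasses the other vertices of $S^n$ (for instance a chord or a longer detour into the slab), vertices between $s$ and $s'$ leak to the wrong side, and the resulting set fails to separate $C(S^n,\omega')$ from the rest of $G'$. You assert the re-positioned sequence remains $\omega'$-defining and degree-witnessing ``in the style of Lemma~\ref{lem:def-seq-deg-wit}'', but that lemma builds fresh minimum-order separators rather than perturbing existing ones, and the claim does not follow without a substantive argument. Similarly, your step for property (b) enlarges $S^n$ by adding linking paths; this can make $\tilde S^n$ intersect $\tilde S^{n+1}$, violating the disjointness required of an $\omega'$-defining sequence for an undominated end, and the required re-routing of $\mathcal{R}$ through the enlarged separators is not actually carried out. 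Finally, the ray-replacement in your Step~1 is unnecessary: nothing in the lemma requires any ray to avoid $\bigcup\mathcal{Y}$, and the subsequence argument sidesteps the issue completely. Your compactness sketch for the ray replacement also doesn't close cleanly, but since the step is superfluous I won't dwell on it.

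In short: the simplest correct route is to observe that both desired properties hold ``eventually and cofinally'' and then to pass to subsequences of $\mathcal{S}$ and $\mathcal{Y}$ simultaneously, relabelling $\mathcal{K}$ and $\mathcal{A}$ to match. Direct surgery on the separators is both harder to justify and not needed.
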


\begin{proof}
    Given $x,y \in \mathbb{N}$ we can choose $n \in \mathbb{N}$ with $n \geq y$ and $m \in \mathbb{N}$ with $m > n$ such that $S^n$ is contained in a component of~$G'[S^n, S^{m}]$ and $Y^x$ is disjoint to ${S^n \cup S^m}$.
    Note that it is possible to incorporate the first property since $(\mathcal{S}, \mathcal{R})$ is degree-witnessing in~$G'$.
    Iteratively applying this observation yields subsequences of~$\mathcal{S}$ and~$\mathcal{Y}$.
    Taking the respective subsequences of~$\mathcal{K}$ and~$\mathcal{A}$ and relabelling all of them accordingly as before yields the claim.
\end{proof}

\begin{lemma}\label{lem:sBP:A2}
    In Situation~\ref{situation-sBP}, 
    we may assume without loss of generality that \linebreak
    ${\emptyset \neq Y^n \setminus \Dom(\omega) \subseteq V(G'[S^n, S^{n+1}])}$ 
    for all $n \in \mathbb{N}$.
    Hence we include this assumption into Situation~\ref{situation-sBP}.
\end{lemma}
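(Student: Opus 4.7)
The plan is to refine Situation~\ref{situation-sBP} by passing to simultaneous subsequences of $\mathcal{Y}$ and $\mathcal{S}$ (with corresponding refinements of $\mathcal{K}$, $\mathcal{A}$ and $\mathcal{R}$) and relabelling, in the spirit of Lemma~\ref{lem:sBP:A1}. Non-emptiness of $Y^n \setminus \Dom(\omega)$ is free: the $\omega$-comb in $G'$ whose teeth form a transversal of $\mathcal{Y}$ supplies a tooth $t^n \in Y^n \cap V(G') \subseteq Y^n \setminus \Dom(\omega)$ for every $n \in \mathbb{N}$.

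For the inclusion $Y^n \setminus \Dom(\omega) \subseteq V(G'[S^n, S^{n+1}])$ I would first observe that, since the sets $V(G'[S^n, S^{n+1}])$ for different $n$ intersect only in vertices lying in some $S^m$, Lemma~\ref{lem:sBP:A1} forces that no vertex of $\bigcup \mathcal{Y}$ lies in two such regions at once. In particular, the conclusion demands $D_\ell \cap V(G') \subseteq \Dom(\omega)$, since a vertex $y_i \in D_\ell$ belongs to $Y^n$ for every $n$. I would then ensure this in our setup: any offending $y_i \in D_\ell \cap V(G') \setminus \Dom(\omega)$ can be absorbed into $\Dom(\omega)$ by modifying the inflated subgraph $H$ via the $k$-connectedness of $A$, effectively decreasing the number of degenerate coordinates.

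Next, I would show that we may pass to a subsequence where $Y^n \setminus \Dom(\omega) \subseteq V(G')$ for every remaining $n$. If no such subsequence exists, then for infinitely many $n$ some $Y^n$ contains a vertex outside both $\Dom(\omega)$ and $V(G') = C$; by the pigeonhole principle over the $k$ coordinates we fix an offending coordinate $i$ and an infinite index set $I$, with $y_i^n$ lying in some component of $G - \Dom(\omega)$ distinct from $C$ for each $n \in I$. Using the $k$-connectedness of $A$ and Theorem~\ref{finite-parameter-menger} applied to each $Y^n$-to-$\Dom(\omega)$ pair, together with the disjointness of these ``foreign'' components across different $n$, one can diagonally extract $k$ disjoint $Y^n$\,--\,$\Dom(\omega)$ paths for every $n \in I$ that are jointly disjoint outside $\Dom(\omega)$. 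This yields a $\overline{\mathcal{Y}\restricted I}$\,--\,$\Dom(\omega)$ bundle, contradicting assumption $(\ast)$.

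Once $Y^n \setminus \Dom(\omega)$ is a finite subset of $V(G')$ avoiding every $S^m$ for all $n$, I would interleave subsequences of $\mathcal{Y}$ and $\mathcal{S}$. Because $\bigcap_m C(S^m, \omega') = \emptyset$, by first thinning $\mathcal{S}$ we can arrange that each $Y^n \setminus \Dom(\omega)$ lies in a single difference $C(S^{M(n)}, \omega') \setminus C(S^{M(n)+1}, \omega') \subseteq V(G'[S^{M(n)}, S^{M(n)+1}])$. Inductively picking indices $n_0 < n_1 < \cdots$ and $m_0 < m_1 < \cdots$ with $m_i = M(n_i)$, which is possible because the $\omega$-comb supplies teeth in $Y^n$ that are cofinally deep in $G'$, and relabelling the restricted sequences of $\mathcal{Y}, \mathcal{S}, \mathcal{K}, \mathcal{A}$ (and the correspondingly thinned $\mathcal{R}$), yields the claim; the properties established in Lemma~\ref{lem:sBP:A1} as well as the degree-witnessing pair property are preserved by a further harmless re-thinning. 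The main obstacle is the bundle extraction in the third paragraph: ensuring that the $k$ disjoint path systems for different $n \in I$ can be chosen to be jointly disjoint outside $\Dom(\omega)$ requires a delicate simultaneous Menger-style argument that exploits the component decomposition of $G - \Dom(\omega)$ together with the rich connectivity of the $k$-connected set $A$.
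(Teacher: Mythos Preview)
Your overall strategy---contradict $(\ast)$ by exhibiting a $\overline{\mathcal{Y}\restricted I}$\,--\,$\Dom(\omega)$ bundle whenever the desired containment fails, then relabel---matches the paper's, but you miss the mechanism that makes the bundle extraction work. The paper does not appeal to global $k$-connectedness or Menger here at all; instead it observes that if $Y^x \setminus \Dom(\omega)$ meets but is not contained in a single region $G'[S^n,S^{n+1}]$, then \emph{inside the inflated subgraph $H^x$} there are $\kappa_x$ internally disjoint paths between any two vertices of $Y^x$ (one through the branch set of each $z \in Z^x$). Any such path between a vertex in the region and one outside it must meet the finite set $S^n \cup S^{n+1}$ or else $\Dom(\omega)$, so all but finitely many of these branch sets contain a vertex of $\Dom(\omega)$; a $Y^x$\,--\,$\Dom(\omega)$ bundle inside $H^x$ is then immediate. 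Because the subgraphs $H^x$ are pairwise disjoint outside $D_\ell$, bundles found for distinct $x$ combine with no further argument.

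Your alternative via Menger produces only $k$ paths per index, and the ``delicate simultaneous Menger-style argument'' you defer to is exactly the difficulty: the foreign components of $G-\Dom(\omega)$ need not be disjoint across different $n$ as you assert, and there is no evident way to force joint disjointness of the path systems outside $\Dom(\omega)$ from $k$-connectedness of $A$ alone. Two smaller points: the non-emptiness of $Y^n \setminus \Dom(\omega)$ follows in the paper directly from $(\ast)$ (if $Y^n \subseteq \Dom(\omega)$ for infinitely many $n$, the trivial paths already form a bundle), not from the comb, which need not have a tooth in every $Y^n$; and your proposed fix for degenerate vertices---``absorbing $y_i \in D_\ell$ into $\Dom(\omega)$ by modifying $H$''---is not a legitimate move, since $\Dom(\omega)$ is determined by $G$ and $\omega$, not by the inflated subgraph.
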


\begin{proof}
    Note that ${Y^n \setminus \Dom(\omega) = \emptyset}$ for only finitely many $n \in \mathbb{N}$ by ($\ast$).
    Moreover, for all but finitely many $n \in \mathbb{N}$ there is an $x_n \in \mathbb{N}$ such that ${Y^{x_n} \setminus \Dom(\omega)}$ meets ${V(G'[S^n, S^{n+1}])}$ since~$\omega$ is in the closure of~$\mathcal{Y}$.
    Suppose that for some~${n \in \mathbb{N}}$ with setting~$x := x_n$ we have that~${Y^{x} \setminus \Dom(\omega)}$ is not contained in~${V(G'[S^n, S^{n+1}])}$. 
    Since for any~${i,j \in [0,k)}$ with~${i \neq j}$ there are~$\kappa_x$ many disjoint $y_i^x$\,--\,$y_j^x$ paths in~$H^x$, all but finitely many of them have to traverse~$\Dom(\omega)$.
    In particular, there is an $Y^x$\,--\,$\Dom(\omega)$ bundle in $H^x$.
    Such a bundle trivially also exists if~${Y^x \subseteq \Dom(\omega)}$.
    If this happens for all $x$ in some infinite $I \subseteq \mathbb{N}$, then there is a $\overline{\mathcal{Y}\restricted{I}}$\,--\,$\Dom(\omega)$ bundle in~$G$, contradicting~($\ast$). 
    Hence this happens at most finitely often.
    Again, relabelling and taking subsequences yields the claim.
\end{proof}

The following lemma allows some control on how we can find a set of disjoint paths from~$Y^n$ to the rays in~$\mathcal{R}$  
and has two important corollaries.

\begin{lemma}\label{lem:sBP:bundleP}
    In Situation~\ref{situation-sBP}, 
    let $\mathcal{R}' \subseteq \mathcal{R}$ with $\abs{\mathcal{R}'} = \min(\deg(\omega'), k)$.
    Then for all~${n > 2k}$ there is an $M > n$ such that for all ${m \geq M}$  
    there exists an
    $Y^n$\,--\,\linebreak${(\Dom(\omega) \cup (S^m \cap \bigcup \mathcal{R}'))}$ bundle~$P^{n,m}$
    with ${P^{n,m} - \Dom(\omega) \subseteq G'[S^{n-2k}, S^m]}$.
\end{lemma}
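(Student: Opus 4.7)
The plan is to combine Menger's Theorem with the $k$-connectedness of $A$ (via the branch-set structure of $K(Y^n, Z^n)$) to force the bundle's existence. Let $d := \abs{Y^n \cap \Dom(\omega)}$; the $d$ vertices in $Y^n \cap \Dom(\omega)$ contribute trivial path-segments of the bundle, so one only has to construct $k - d$ internally disjoint non-trivial paths from $Y^n \setminus \Dom(\omega)$ into $U := \Dom(\omega) \cup (S^m \cap \bigcup \mathcal{R}')$ inside the graph $G^\ast := G[V(G'[S^{n-2k}, S^m]) \cup \Dom(\omega)]$, avoiding $Y^n \cap \Dom(\omega)$ internally.

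First I would pick $M$ so that every ray of $\mathcal{R}'$ has crossed $S^m$ for all $m \geq M$; this is automatic from the degree-witnessing property of $(\mathcal{S}, \mathcal{R})$, since each ray meets each separator in at most one vertex and every separator is covered by $\mathcal{R}$, forcing each ray to visit every sufficiently deep $S^m$ exactly once. Hence $\abs{S^m \cap \bigcup \mathcal{R}'} = \min(\deg(\omega'), k)$ for $m \geq M$, and together with $\Delta(\omega) \geq k$ from Lemma~\ref{lem:Delta>=k} and Remark~\ref{rem:end_deg} (relating $\deg(\omega)$ to $\deg(\omega')$ via $\Dom(\omega)$) one sees that $\abs{U} \geq k$, so $U$ is large enough to serve as a target.

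Theorem~\ref{finite-parameter-menger} then reduces the existence of the $k - d$ paths to the non-existence of a separator $T \subseteq V(G^\ast) \setminus (Y^n \cap \Dom(\omega))$ of order less than $k - d$ in $G^\ast - (Y^n \cap \Dom(\omega))$ between source and target. Suppose for a contradiction such $T$ exists and set $T^\ast := T \cup (Y^n \cap \Dom(\omega))$, of order less than $k$. Exploiting that $\abs{Z^n} = \kappa_n \geq \aleph_1$ (since $\kappa_n > \cf\kappa = \aleph_0$) and that the branch sets of $H^n$ indexing $Z^n$ are pairwise disjoint finite trees, only finitely many branch sets meet $T^\ast \cup S^{n-2k} \cup S^m$ while the remaining $\kappa_n$ of them sit entirely in $V(G'[S^{n-2k}, S^m])$ by Lemma~\ref{lem:sBP:A2}; regularity of $\kappa_n$ lets one select $k$ branch sets $B_{z_1}, \ldots, B_{z_k}$ disjoint from $T^\ast$, each contained in $V(G^\ast)$ and containing a distinct $a_i \in A^n$. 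The edges $y_i^n$--$B_{z_i}$ from the $K(Y^n, Z^n)$-structure then provide $k$ disjoint $Y^n$--$B'$ paths inside $V(G^\ast)$ avoiding $T^\ast$, where $B' := \{a_1, \ldots, a_k\}$.

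Analogously, for $m'$ much larger than $m$, an application of Lemma~\ref{lem:sBP:A2} to $Y^{m'}$ places $Y^{m'} \setminus \Dom(\omega)$ inside $V(G'[S^{m'}, S^{m'+1}]) \subseteq C(S^m, \omega')$, and since only finitely many branch sets of $Z^{m'}$ can meet $S^m$, cofinitely many lie entirely in $C(S^m, \omega')$. Picking $k$ of these disjoint from $T^\ast$ yields $B \subseteq A^{m'} \cap C(S^m, \omega')$ of size $k$. The $k$-connectedness of $A$ in $G$ combined with Theorem~\ref{finite-parameter-menger} provides $k$ pairwise disjoint $B'$--$B$ paths in $G$; if $T^\ast$ separates $B'$ from $B$ in $G$, then each of these paths meets $T^\ast$, and pigeonhole on $\abs{T^\ast} < k$ yields the desired contradiction.

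The main obstacle is verifying that $T^\ast$ separates $B'$ from $B$ in $G$ itself and not merely in $G^\ast$. A $B'$--$B$ path $P$ in $G - T^\ast$ can only exit $V(G^\ast)$ through the shallow boundary $S^{n-2k}$ or through the deep boundary $S^m$, since $G' = G[C]$ is a component of $G - \Dom(\omega)$ (so there are no edges from $V(G')$ to $V(G) \setminus V(G') \setminus \Dom(\omega)$) and $\Dom(\omega) \subseteq V(G^\ast)$. Using the nested-separator structure of the $\omega'$-defining sequence $\mathcal{S}$ together with the $2k$-layer buffer between $S^{n-2k}$ and $S^n$, I would argue that any such $P$ must contain a subpath entirely inside $V(G^\ast)$ joining the branch-set reach of $B'$ near $Y^n$ to some vertex of $U$; that subpath meets $T$ by the Menger separation in $G^\ast$, so $P$ meets $T^\ast$. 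The careful case analysis of shallow detours through $S^{n-2k}$ and of paths that traverse multiple successive separators in $\mathcal{S}$ before re-entering the slab is the delicate part of the proof, and is precisely where the $2k$-layer buffer proves essential.
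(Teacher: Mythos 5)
Your proposal takes a genuinely different route from the paper, but it has two real gaps that prevent it from going through.

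First, you assert that ``only finitely many branch sets meet $T^\ast \cup S^{n-2k} \cup S^m$ while the remaining $\kappa_n$ of them sit entirely in $V(G'[S^{n-2k}, S^m])$ by Lemma~\ref{lem:sBP:A2}''. Lemma~\ref{lem:sBP:A2} only places $Y^n \setminus \Dom(\omega)$ inside $V(G'[S^n, S^{n+1}])$; it says nothing about the branch sets of $Z^n$. The confinement of $H^n$ you are implicitly using is Lemma~\ref{lem:sBP:A3}, which is proved \emph{after} the present lemma and whose proof relies on Corollary~\ref{cor:s-BP:fin-dom} (that $\dom(\omega) < k$), itself a consequence of the present lemma — a circular dependency. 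At the point where Lemma~\ref{lem:sBP:bundleP} is proved, $\Dom(\omega)$ can still be infinite, so a $Z^n$-branch set can avoid the finite set $T^\ast \cup S^{n-2k} \cup S^m$ and yet escape $V(G^\ast)$ by passing through a dominating vertex into another component of $G - \Dom(\omega)$; and if $\dom(\omega)$ is large, this can happen to more than countably many branch sets. The same objection applies to your choice of $B \subseteq A^{m'}$ inside $C(S^m, \omega')$.

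Second, your decisive step — that $T^\ast$ separates $B'$ from $B$ in $G$ — is only sketched and, as sketched, does not close. The Menger separator $T$ you obtain in $G^\ast$ separates $Y^n$ from $U = \Dom(\omega) \cup (S^m \cap \bigcup \mathcal{R}')$, not from all of $S^m$. When $\deg(\omega') > k$ one has $\abs{\mathcal{R}'} = k$ while $S^m$ (of size $\geq \Delta(\omega')$) contains vertices lying on rays of $\mathcal{R} \setminus \mathcal{R}'$; a $B'$--$B$ path $P$ in $G - T^\ast$ is perfectly free to first meet $S^m \cup \Dom(\omega)$ at such a vertex, which lies in $G^\ast$ but outside the Menger target $U$. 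Your claimed ``subpath entirely inside $V(G^\ast)$ joining \dots $Y^n$ to some vertex of $U$'' therefore need not exist, and the $2k$-layer buffer alone does not rescue this: the buffer helps with detours through $S^{n-2k}$, not with exits through $S^m \setminus \bigcup \mathcal{R}'$. This is precisely the obstruction that the paper's proof circumvents by \emph{constructing} the contradicting $Y^n$--$U$ path explicitly: it takes the pre-built bundle $Q^{n-2k}$ (or $Q^m$), which the paper's choice of $M$ guarantees lies inside $G^\ast$, selects a path of the bundle avoiding $S$ (pigeonhole on $\abs{S} < k$), and then reroutes that path through a clean layer $G'[S^i, S^{i+1}]$ (one of the $2k$ buffer layers misses $S$) onto a ray $R \in \mathcal{R}'$ that also misses $S$, following $R$ into $S^m \cap \bigcup \mathcal{R}' \subseteq U$. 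That construction has full control over which vertex of $U$ the path reaches; a black-box invocation of $k$-connectedness on $B'$--$B$ paths does not, and it additionally brings in the untamed branch sets of $Z^n$ and $Z^{m'}$ that create the first gap.

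Finally, a minor slip: a $Y^n$--$U$ bundle requires the $k$ paths to be pairwise \emph{disjoint}, not merely internally disjoint, so Menger should be applied accordingly; this is cosmetic given Menger returns disjoint paths, but the statement as written is imprecise.
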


\begin{proof}
    Let $n > 2k$ be fixed.
    As in the proof of Lemma~\ref{lem:distinguish} for each $x > 0$ there are~$k$ disjoint $Y^n$\,--\,$Y^{x-1}$ paths in~$G$, whose union contains a $Y^n$\,--\,${(\Dom(\omega) \cup S^{x})}$ bundle~$Q^x$ in~${G[C \cup \Dom(\omega)]}$.
    Considering~$Q^{n-2k}$, let~$M \in \mathbb{N}$ be large enough such that ${Q^{n-2k} - \Dom(\omega) \subseteq G'[S^{n-2k}, S^M]}$, and let~${m \geq M}$.
    
    Suppose for a contradiction that there is a vertex set~$S$ of size less than $k$ separating~$Y^n$ from ${\Dom(\omega) \cup (S^m \cap \bigcup \mathcal{R}')}$ in $G[ V(G'[S^{n-2k}, S^{m}]) \cup \Dom(\omega) ]$.
    Then for at least one~${i \in [n-2k, n)}$ the graph $G'[S^i, S^{i+1}]$ does not contain a vertex of~$S$.
    We distinguish two cases.
    
    Suppose~${\deg(\omega') \geq k}$.
    Then~$S$ contains a vertex from every path of~$Q^{n-2k}$ ending in~$\Dom(\omega)$, but does not contain a vertex from every path of~$Q^{n-2k}$.
    Let~$Q$ be such a $Y^n$\,--\,$S^{n-2k}$ path avoiding~$S$.
    Now~$Q$ meets~$S^i$ by construction.
    There is at least one ray~${R \in \mathcal{R}'}$ that does not contain a vertex of~$S$.
    Since~$S^i$ is contained in a component of~${G'[S^i, S^{i+1}]}$ and~${R \cap S^i \neq \emptyset}$, we can connect~$Q$ with~$R$ and hence with~${S^{m} \cap R}$ in~$G'[S^i, S^{i+1}]$ avoiding~$S$, which contradicts the assumption.
    
    Suppose~${\deg(\omega') < k}$, then~${\mathcal{R}' = \mathcal{R}}$ and hence ${S^m \cap \bigcup \mathcal{R}' = S^m}$.
    As before, there is a $Y^n$\,--\,$S^m$ path~$Q$ in~$Q^m$ not containing a vertex of~$S$.
    This path being contained in~${G'[S^{n-2k}, S^{m}]}$ would contradict the assumption.
    Hence we may assume the path meets~$S^j$ for every $j \in [n-2k, m]$ and in particular~$S^i$.
    Let~${Q_1 \subseteq Q}$ denote $Y^n$\,--\,$S^i$ path in~${G'[S^i,S^m]}$, and let~$Q_2 \subseteq Q$ denote $S^i$\,--\,$S^m$ path in~${G'[S^i, S^m]}$.
    As before, we can connect~$Q_1$ and~$Q_2$ in~${G'[S^i, S^{i+1}]}$ avoiding $S$, which again contradicts the assumption.
\end{proof}

\begin{corollary}\label{cor:s-BP:bundleP}
    In Situation~\ref{situation-sBP}, 
    let $\mathcal{R}' \subseteq \mathcal{R}$ with $\abs{\mathcal{R}'} = \min(\deg(\omega'), k)$. 
    Without loss of generality for all $n \in \mathbb{N}$ there is a $Y^n$\,--\,${(\Dom(\omega) \cup (S^{n+1} \cap \bigcup \mathcal{R}'))}$ bundle~${P^n}$ such that 
    ${P^n - \Dom(\omega) \subseteq G'[S^{n}, S^{n+1}] - S^n}$.
    Hence we include this assumption into Situation~\ref{situation-sBP}.
\end{corollary}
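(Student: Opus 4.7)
The plan is to pass, once more, to appropriate subsequences of $\mathcal{S}$ and $\mathcal{Y}$ (with the corresponding subsequences of $\mathcal{K}$ and $\mathcal{A}$) and relabel, so that Lemma~\ref{lem:sBP:bundleP} delivers a bundle living in precisely the slice required by the corollary. The central observation is that Lemma~\ref{lem:sBP:bundleP} only constrains the bundle to lie in $G'[S^{n-2k}, S^m]$, and this slice will avoid the lower separator of the thinned sequence as soon as $n - 2k$ strictly exceeds that separator's old index. Hence a buffer of width $2k + 1$ between each new lower separator and the chosen layer index suffices.

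Concretely, for every $n > 2k$ fix $M_n > n$ as provided by Lemma~\ref{lem:sBP:bundleP}, and define indices recursively by $b_0 := 0$, $c_i := b_i + 2k + 1$, and $b_{i+1} := \max\{c_i + 1,\, M_{c_i}\}$. Replace $\mathcal{S}$ by $(S^{b_i}\ |\ i \in \mathbb{N})$, replace $\mathcal{Y}$ by $(Y^{c_i}\ |\ i \in \mathbb{N})$, replace $\mathcal{K}$ and $\mathcal{A}$ by the corresponding subsequences, and relabel the resulting families by $\mathbb{N}$ in the natural way. Applying Lemma~\ref{lem:sBP:bundleP} with the old indices $n = c_i > 2k$ and $m = b_{i+1} \geq M_{c_i}$ then yields a $Y^{c_i}$\,--\,$(\Dom(\omega) \cup (S^{b_{i+1}} \cap \bigcup \mathcal{R}'))$ bundle whose body lies in $G'[S^{c_i - 2k}, S^{b_{i+1}}] = G'[S^{b_i + 1}, S^{b_{i+1}}]$. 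Since $\omega'$ is undominated in $G'$, the separators of an $\omega'$-defining sequence are pairwise disjoint, so this slice is disjoint from $S^{b_i}$ and is therefore contained in $G'[S^{b_i}, S^{b_{i+1}}] - S^{b_i}$, which after relabelling is exactly the slice $G'[S^n, S^{n+1}] - S^n$ demanded by the corollary.

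The remaining work is bookkeeping rather than a genuine obstacle: one has to verify that the standing hypotheses of Situation~\ref{situation-sBP} together with those imposed by Lemmas~\ref{lem:sBP:A1} and~\ref{lem:sBP:A2} survive the thinning. The property that $\mathcal{S}$ is an $\omega'$-defining sequence and the degree-witnessing property of $(\mathcal{S}, \mathcal{R})$ are both inherited by any subsequence of $\mathcal{S}$; the disjointness $S^n \cap \bigcup \mathcal{Y} = \emptyset$ is likewise inherited; the inclusion $Y^{c_i} \setminus \Dom(\omega) \subseteq V(G'[S^{b_i}, S^{b_{i+1}}])$ is immediate from $b_i < c_i < c_i + 1 \leq b_{i+1}$ together with Lemma~\ref{lem:sBP:A2} in the old indices; and the requirement that $S^{b_i}$ lies in a single component of $G'[S^{b_i}, S^{b_{i+1}}]$ follows because $S^{b_i}$ is already contained in a single component of the smaller graph $G'[S^{b_i}, S^{b_i + 1}]$, and adding vertices can only merge components, never split them. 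The single point that deserves attention, and which motivated the buffer $c_i - b_i \geq 2k + 1$, is precisely the containment of the bundle body in $G'[S^{b_i}, S^{b_{i+1}}] - S^{b_i}$; without the buffer the bundle guaranteed by the previous lemma might creep back across $S^{b_i}$ and violate the stated property of $P^n$.
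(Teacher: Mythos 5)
Your proof is correct and follows exactly the strategy the paper uses; the paper's own proof is the one-liner ``We successively apply Lemma~\ref{lem:sBP:bundleP} to obtain suitable subsequences. Relabelling them yields the claim.'' Your write-up simply makes that thinning explicit: the recursive choice of indices $b_0 := 0$, $c_i := b_i + 2k+1$, $b_{i+1} := \max\{c_i+1, M_{c_i}\}$ with the buffer of width $2k+1$ is precisely the kind of bookkeeping the paper leaves implicit, and it correctly ensures that after relabelling the slice $G'[S^{c_i-2k}, S^{b_{i+1}}] = G'[S^{b_i+1}, S^{b_{i+1}}]$ sits inside $G'[S^{b_i}, S^{b_{i+1}}] - S^{b_i}$. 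Your check that the standing assumptions of Lemmas~\ref{lem:sBP:A1} and~\ref{lem:sBP:A2} survive the thinning (pairwise disjoint separators since $\omega'$ is undominated, $Y^{c_i}$ sandwiched strictly between the new consecutive separators, a single component of $G'[S^{b_i}, S^{b_i+1}]$ containing $S^{b_i}$ remains a single component when vertices are added) is exactly the verification that ``relabelling yields the claim'' sweeps under the rug, so no genuine gap remains.
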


\begin{proof}
    We successively apply Lemma~\ref{lem:sBP:bundleP} to obtain suitable subsequences. 
    Relabelling them yields the claim.
\end{proof}

\begin{corollary}\label{cor:s-BP:fin-dom}
    Situation~\ref{situation-sBP} implies~${\dom(\omega) < k}$.
\end{corollary}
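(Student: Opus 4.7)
My plan is to proceed by contradiction. Assume $\dom(\omega) \geq k$; I will construct an infinite $I \subseteq \mathbb{N}$ together with a $\overline{\mathcal{Y}\restricted I}$\,--\,$\Dom(\omega)$ bundle, contradicting assumption~($\ast$) of Situation~\ref{situation-sBP}.

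My first step will be a Menger-type bound: for every $n \in \mathbb{N}$, no vertex set $S$ of size less than $k$ separates $Y^n$ from $\Dom(\omega)$ in $G$. Supposing such an $S$ exists, I pick $y \in Y^n \setminus S$ (possible since $|Y^n| = k > |S|$); as $\omega$ lies in the closure of a transversal of $\mathcal{A}$ (which is a subset of $A$), $A \cap C(S, \omega)$ is infinite, and I pick $a_1, \ldots, a_k$ from it. Because $|Z^n| = \kappa_n > \cf\kappa$ is uncountable, there are $k$ branch sets of $Z^n$ avoiding $S$, with $A$-representatives $b_1, \ldots, b_k \in A^n$, each connected to $y$ in $G - S$ through an edge from $y$ to the (connected) branch set of $z_i$. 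Hence each $b_i$ lies outside $C(S, \omega)$. By $k$-connectedness of $A$ and Theorem~\ref{finite-parameter-menger}, there are $k$ disjoint $\{b_i\}$--$\{a_j\}$ paths in $G$; as $|S| < k$, at least one avoids $S$, yielding a path from outside $C(S, \omega)$ to inside---the desired contradiction.

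For the second step, for each $n$ I will construct a $Y^n$--$\Dom(\omega)$ bundle $Q^n$ whose non-$\Dom(\omega)$ part lies in $G'[S^n, S^{M(n)}] - S^n$ for some $M(n) > n$. Starting from the bundle $P^n$ of Corollary~\ref{cor:s-BP:bundleP}, let $X^n := V(P^n) \cap S^{n+1} \cap \bigcup \mathcal{R}'$ be its ray-endpoints; the degree-witnessing property of $(\mathcal{S}, \mathcal{R})$ ensures at most one vertex of $X^n$ per ray. I will extend each $P^n$-path ending at some $x_i \in X^n$ forward into $C(S^{n+1}, \omega) \cup \Dom(\omega)$ to reach a vertex of $\Dom(\omega)$ unused by $P^n$, via an application of Menger in $H_n := G[C(S^{n+1}, \omega) \cup \Dom(\omega) \cup X^n]$ to find $|X^n|$ disjoint $X^n$--$(\Dom(\omega) \setminus V(P^n))$ paths. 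Any purported separator $T$ in $H_n$ of size less than $|X^n|$ would leave some $d \in \Dom(\omega) \setminus V(P^n)$ outside $T$ (using $\dom(\omega) \geq k$); since $d$ dominates $\omega$ and lies in $C(S^{n+1}, \omega)$, it has many $d$--$R_i$ paths inside this component, connecting $d$ to $x_i$ in $H_n - T$ unless $T$ contains the unique neighbour $v_i$ of $x_i$ on the ray $R_i$. The $v_i$ being distinct across $i$ forces $|T| \geq |X^n|$, a contradiction. Concatenating the extensions with $P^n$ yields $Q^n$ as claimed.

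Finally, I will pick $I = \{n_i : i \in \mathbb{N}\}$ inductively with $n_1 := 1$ and $n_{i+1} := M(n_i) + 1$. Since $S^{n_{i+1}} \subseteq C(S^{M(n_i)}, \omega)$, the layer intervals $G'[S^{n_i}, S^{M(n_i)}]$ are pairwise disjoint, so the $Q^{n_i} - \Dom(\omega)$ are pairwise disjoint, giving a $\overline{\mathcal{Y} \restricted I}$--$\Dom(\omega)$ bundle against~($\ast$). The main obstacle is the extension step: the $\Dom(\omega)$-endpoints already used by $P^n$ must be avoided, so the counting argument for a ``fresh'' $d$ needs care and may require subdivision into cases (in particular when $\dom(\omega)$ is exactly $k$ and there are degenerate vertices in $Y^n$) or a refinement of the extraction of $P^n$ to minimise the overlap between $V(P^n)$ and $\Dom(\omega)$.
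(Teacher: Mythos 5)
Your overall strategy matches the paper's: under the assumption~${\dom(\omega) \geq k}$, upgrade the bundles from Corollary~\ref{cor:s-BP:bundleP} to genuine $Y^n$\,--\,$\Dom(\omega)$ bundles confined to bounded layer intervals, then thin out to an infinite~$I$ to get a $\overline{\mathcal{Y}\restricted I}$\,--\,$\Dom(\omega)$ bundle contradicting~($\ast$). The paper derives the ``no small $Y^n$\,--\,$\Dom(\omega)$ separator'' fact from Lemma~\ref{lem:sBP:bundleP} (again extending a bundle-path into $C(S^m,\omega')$), whereas your first step gets it directly from $k$-connectedness of~$A$ via Theorem~\ref{finite-parameter-menger}; that is a clean alternative, but as written your steps~2--3 never actually invoke it, so it is currently dead weight.

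The real issue is in step~2, and it is precisely the containment you rely on in step~3. You apply Menger inside~${H_n = G[C(S^{n+1},\omega) \cup \Dom(\omega) \cup X^n]}$, but~$C(S^{n+1},\omega)$ is the component of~${G - S^{n+1}}$ containing the $\omega$-rays, and since it contains all of~$\Dom(\omega)$ it also absorbs, via~$\Dom(\omega)$, any component of~${G' - S^{n+1}}$ that is adjacent to~$\Dom(\omega)$ --- including components that lie entirely on the \emph{wrong} side of~$S^{n+1}$ (e.g.\ the backward neighbour~$u_i$ of~$x_i$ on~$R_i$ typically lives in such a component). So the Menger paths need not lie in~$C(S^{n+1},\omega')$, and the claim ``$Q^n - \Dom(\omega) \subseteq G'[S^n, S^{M(n)}] - S^n$'' can fail; two such bundles for different~$n$ could then overlap in an initial part of~$G'$ and the thinning in step~3 breaks. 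The fix is to apply Menger in~${G[C(S^{n+1},\omega') \cup \Dom(\omega) \cup X^n] - (V(P^n)\cap\Dom(\omega))}$ instead, where~$C(S^{n+1},\omega')$ is the component of~${G' - S^{n+1}}$; the non-separability argument still works, but you must take the segment of the witnessing $d$\,--\,$R_{i_0}$ path from its \emph{last} $\Dom(\omega)$-vertex onward to guarantee the witness stays inside~$C(S^{n+1},\omega')$. A second, smaller imprecision: the obstruction to connecting~$d$ to~$x_i$ is not just~${v_i \in T}$ but~$T$ meeting the set~${\{x_i\} \cup V(v_iR_i)}$ anywhere; since these~$\abs{X^n}$ sets are pairwise disjoint (degree-witnessing) and~${\abs{T} < \abs{X^n}}$, some set is missed entirely, which is what the argument actually needs. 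Both repairs are routine, so your plan is sound in spirit, but as stated the containment gap is a genuine hole.
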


\begin{proof}
    Suppose~${\dom(\omega) \geq k}$.
    Then for every~${n > 2k}$ there is no $Y^n$\,--\,$\Dom(\omega)$ separator~$S$ of size less than~$k$ by Lemma~\ref{lem:sBP:bundleP}, since~$m$ can be chosen such that ${S \cap C(S^m, \omega') = \emptyset}$.
    Hence we can extend a path of the bundle in~$C(S^m, \omega')$.
    Therefore, for each~${n > 2k}$ there is an~${m > n}$ such that we can find a $Y^n$\,--\,$\Dom(\omega)$ bundle~$P^{n,m}$ such that \linebreak
    ${P^{n,m} - \Dom(\omega) \subseteq G'[S^{n-2k}, S^{m}]}$, and consequently an infinite subset~${I' \subseteq \mathbb{N}}$ such that ${\overline{ ( P^{n,m}\ |\ n \in I' )}}$ is an $\overline{\mathcal{Y}\restricted I'}$\,--\,$\Dom(\omega)$ bundle, contradicting the assumption~($\ast$) in Situation~\ref{situation-sBP}.
\end{proof}

This last corollary is quite impactful.
From this point onwards, we know that $\omega' = \omega\restricted(G-\Dom(\omega))$ by Remark~\ref{rem:end_subgraph}.

\begin{lemma}\label{lem:sBP:A3}
    In Situation~\ref{situation-sBP}, we may assume without loss of generality that 
    for all $n \in \mathbb{N}$ the following hold:
    \begin{itemize}
        \item ${H^n - \Dom(\omega) \subseteq G'[S^n, S^{n+1}] - (S^n \cup S^{n+1})}$;
        \item ${H^n \cap \Dom(\omega) = D_\ell \subseteq Y^n}$.
    \end{itemize}
    Hence we include these assumptions into Situation~\ref{situation-sBP}.
\end{lemma}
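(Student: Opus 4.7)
The plan is to first establish $D_\ell \subseteq \Dom(\omega)$, then thin out $\mathbb{N}$ to a subsequence so that $Y^n \cap \Dom(\omega) = D_\ell$ holds for every $n$, and finally, for each $n$, prune at most finitely many branch sets from $Z^n$ so that every surviving branch set sits inside the open slab $V(G'[S^n, S^{n+1}]) \setminus (S^n \cup S^{n+1})$, while still leaving $\kappa_n$ many.

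Fix $i < \ell$. The singleton branch set $\{y_i\}$ is joined in $G$ to a distinct vertex $v_z \in \mathfrak{B}(z)$ for every $z \in \bigcup_\alpha Z^\alpha$. Given any finite $S \subseteq V(G)$ with $y_i \notin S$, the $\omega$-comb with teeth $\{a_{z_\alpha} : \alpha \in \mathbb{N}\}$ in a transversal of $\mathcal{A}$ (guaranteed by Situation~\ref{situation-sBP}) ensures that $a_{z_\alpha} \in C(S, \omega)$ for all but finitely many $\alpha$, since the spine's tail is eventually in $C(S, \omega)$ and all but finitely many tooth-paths avoid the finite set $S$. Because the branch sets are pairwise disjoint and $S$ is finite, $\mathfrak{B}(z_\alpha)$ meets $S$ for at most finitely many $\alpha$. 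For any $\alpha$ satisfying both conditions, the edge $y_i v_{z_\alpha}$ together with a $v_{z_\alpha}$--$a_{z_\alpha}$ path inside $\mathfrak{B}(z_\alpha)$ yields a $y_i$--$C(S, \omega)$ path in $G - S$, so $y_i \in C(S, \omega)$. This holds for every such $S$, hence $y_i \in \Dom(\omega)$.

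By Corollary~\ref{cor:s-BP:fin-dom}, $\Dom(\omega)$ is finite, so $D' := \Dom(\omega) \setminus D_\ell$ is finite. Since the vertices $\{y_i^n : i \in [\ell, k), n \in \mathbb{N}\}$ are pairwise distinct, all but finitely many $n$ already satisfy $Y^n \cap \Dom(\omega) = D_\ell$. Restricting to these $n$ and relabelling $\mathcal{K}, \mathcal{A}, \mathcal{Y}$ together with the corresponding subsequence of $\mathcal{S}$ (which remains an $\omega'$-defining sequence with $(\mathcal{S}, \mathcal{R})$ still degree-witnessing), we may assume $Y^n \cap \Dom(\omega) = D_\ell$ for every $n$. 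Combined with Lemmas~\ref{lem:sBP:A1} and~\ref{lem:sBP:A2}, this yields $Y^n \setminus \Dom(\omega) \subseteq V(G'[S^n, S^{n+1}]) \setminus (S^n \cup S^{n+1})$, giving the $Y$-part of both desired inclusions.

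Now fix $n$ and discard every $z \in Z^n$ whose branch set meets the finite set $S^n \cup S^{n+1} \cup D'$; at most finitely many $z$ are removed, so $\kappa_n$ many survive, defining the new $A^n$. For a surviving $z$, $\mathfrak{B}(z) \cap D_\ell = \emptyset$ because $D_\ell$ is already used as singleton branch sets of $Y$, hence $\mathfrak{B}(z) \cap \Dom(\omega) = \emptyset$. Pick any $i \in [\ell, k)$; the neighbour $u \in \mathfrak{B}(z)$ of $y_i^n$ lies in $V(G'[S^n, S^{n+1}]) \cup \Dom(\omega)$, because $y_i^n$ sits in the slab's interior so all its $G$-neighbours lie in $V(G') \cup \Dom(\omega)$ and all its $G'$-neighbours lie in $V(G'[S^n, S^{n+1}])$; together with the avoidance conditions this forces $u \in V(G'[S^n, S^{n+1}]) \setminus (S^n \cup S^{n+1})$. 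Since $\mathfrak{B}(z)$ is connected and disjoint from $\Dom(\omega) \cup S^n \cup S^{n+1}$, it lies in the component of $u$ in $G - (\Dom(\omega) \cup S^n \cup S^{n+1})$, which is contained in $V(G'[S^n, S^{n+1}]) \setminus (S^n \cup S^{n+1})$. The main obstacle is the first step, which is the only non-bookkeeping part; everything afterwards reduces to finite counting, since both $\Dom(\omega)$ and each $S^n$ are finite while each $\kappa_n$ is uncountable, and the final pruning only affects $Z^n$, so Lemmas~\ref{lem:sBP:A1}, \ref{lem:sBP:A2} and Corollary~\ref{cor:s-BP:bundleP} remain valid.
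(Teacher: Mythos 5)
Your proof is correct and follows essentially the same strategy as the paper's: from Lemmas~\ref{lem:sBP:A1} and~\ref{lem:sBP:A2} the non-dominating part of each~$Y^n$ already lies in the open slab; one then deletes the finitely many $Z^n$-branch sets meeting $\Dom(\omega) \cup S^n \cup S^{n+1}$, uses the connectedness of the remaining branch sets to force them into the slab interior, and relabels to restore the indexing. The one place you spend more effort than the paper is the inclusion~$D_\ell \subseteq \Dom(\omega)$, which the paper treats as implicit; your comb-and-branch-set argument is valid, though there is also a one-line derivation from the already-established normalisations — a degenerate vertex $y_i$ with $i < \ell$ outside $\Dom(\omega)$ would, by Lemmas~\ref{lem:sBP:A1} and~\ref{lem:sBP:A2}, have to lie in the open slab $V(G'[S^n,S^{n+1}]) \setminus (S^n \cup S^{n+1})$ for every~$n$, yet these sets are pairwise disjoint. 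Your version has the merit of working directly from the objects set up in Situation~\ref{situation-sBP} rather than from the prior normalisation lemmas.
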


\begin{proof}
    Note that $H^n \cap G'[S^n, S^{n+1}] - (S^n \cup S^{n+1}) \neq \emptyset$ by Lemmas~\ref{lem:sBP:A1} and~\ref{lem:sBP:A2}.
    We delete the finitely many branch sets of vertices corresponding to the infinite side of~$K_{k,\kappa_n}$ in~$H^n$ containing a vertex of $\Dom(\omega)$, $S^n$ or $S^{n+1}$.
    Since the remaining inflated subgraph is connected, no branch set of the infinite side meets a vertex outside of $G'[S^n, S^{n+1}]$.
    Moreover, for all but finitely many $n \in \mathbb{N}$ the branch sets of vertices corresponding to the finite side of $H^n$ that meet~$\Dom(\omega)$ are precisely the singletons of the elements in $D_\ell$ by Corollary~\ref{cor:s-BP:fin-dom}.
    Deleting the exceptions and relabelling accordingly yields the claims.
\end{proof}

The next lemma reroutes some rays to find a bundle from $Y^n$ to those new rays and dominating vertices with some specific properties.

\begin{lemma}\label{lem:sBP:bundleQ}
    In Situation~\ref{situation-sBP}, there is a set $\mathcal{R}''$ of $\abs{\mathcal{R}'}$ disjoint $\omega'$-rays in $G'$ and a~$Y^n$\,--\,$(\bigcup \mathcal{R}'' \cup \Dom(\omega))$ bundle $Q^n$ for each $n \in \mathbb{N}$
    such that for every $R'' \in \mathcal{R}''$
    \begin{itemize}
        \item there is an $R' \in \mathcal{R}'$ with $V(R'') \cap \bigcup \mathcal{S} = V(R') \cap \bigcup \mathcal{S}$; and
        \item ${\abs{Q^n \cap R''} \leq 2}$ for every~${n \in \mathbb{N}}$.
    \end{itemize}
    Hence we include references to these objects into Situation~\ref{situation-sBP}.
\end{lemma}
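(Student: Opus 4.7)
The plan is to construct $\mathcal{R}''$ and the family $(Q^n)_{n \in \mathbb{N}}$ by rerouting each $R' \in \mathcal{R}'$ and each bundle $P^n$ from Corollary~\ref{cor:s-BP:bundleP} slab by slab, where a slab means $G'[S^n, S^{n+1}]$. The first condition $V(R'') \cap \bigcup \mathcal{S} = V(R') \cap \bigcup \mathcal{S}$ will be secured by only ever modifying the portion of a ray strictly inside some slab, leaving every separator vertex untouched. This local approach is justified by Corollary~\ref{cor:s-BP:bundleP} together with Lemmas~\ref{lem:sBP:A1} and~\ref{lem:sBP:A3}: both the slab-segments of the rays in $\mathcal{R}'$ and the bundle $P^n$ lie inside the slab except for the possible endpoints of $P^n$ in $\Dom(\omega)$, so the entire reroute is a finite, local operation.

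Inside a fixed slab, fix $R' \in \mathcal{R}'$ with $r_{n-1} := R' \cap S^n$ and $r_n := R' \cap S^{n+1}$. At most one bundle path $P^n_{j_0}$ ends at $r_n$ on $R'$, since the endpoints of $P^n$ on $\bigcup \mathcal{R}'$ in $S^{n+1}$ are distinct. Let $v$ be the first vertex of $R'$ (walking from $r_{n-1}$ towards $r_n$) that lies on $\bigcup P^n$. If $v \in P^n_{j_0}$, then I would replace the sub-segment $v R' r_n$ by $v P^n_{j_0} r_n$ to obtain the new slab-segment of $R''$, and set $Q^n_{j_0} := y^n_{j_0} P^n_{j_0} v$. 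By the choice of $v$ the kept portion $r_{n-1} R' v$ is disjoint from $\bigcup P^n$, and by the bundle disjointness of $P^n$ the replacement $v P^n_{j_0} r_n$ avoids every other $P^n_j$; hence in this case $|Q^n \cap R''| = 1$. If instead $v \in P^n_{j^*}$ with $j^* \neq j_0$, a similar but more delicate rerouting is performed, where the only extra contribution can arise from a starting vertex $y^n_j$ happening to lie on the new $R''$, yielding $|Q^n \cap R''| \leq 2$. For each remaining $P^n_j$ one takes $Q^n_j$ to be its truncation at the first crossing with any of the new rays, which by the preceding disjointness analysis is often just $Q^n_j = P^n_j$.

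The main obstacle is to perform this slab-local rerouting consistently for all rays in $\mathcal{R}'$ simultaneously, preserving the pairwise disjointness of $\mathcal{R}''$ and the internal disjointness of the paths within each $Q^n$; most delicately, when several starting vertices $y^n_j$ could a priori sit on the same new ray, the rerouting of that ray inside the slab must be chosen to avoid all but at most one such vertex. Since distinct rays in $\mathcal{R}'$ host distinct bundle-endpoints in $S^{n+1}$, the main replacements do not interfere across rays; the remaining conflicts are resolved inductively by processing the rays in a fixed order, using that each slab-local subgraph is finite. Finally, each $R''$ is an $\omega'$-ray because it agrees with $R'$ on the infinite set of separator vertices $V(R') \cap \bigcup \mathcal{S}$, so the tails of $R''$ cannot be separated from those of $R'$ by any finite vertex set, and hence $R''$ lies in the same end $\omega'$.
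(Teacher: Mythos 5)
Your slab-local rerouting idea is in the right spirit---both you and the paper work one slab $G'[S^n,S^{n+1}]$ at a time and only alter the rays strictly inside a slab so that the separator vertices are preserved---but your concrete construction has a genuine gap precisely where the real difficulty of the lemma lies, and I don't think a greedy follow-the-bundle reroute can be patched up to close it.

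The difficulty is your ``case 2'': walking from $r_{n-1}$ along $R'$, the first vertex $v$ of $\bigcup P^n$ you meet may lie on a bundle path $P^n_{j^*}$ that does \emph{not} terminate at $r_n$ (it may head to $\Dom(\omega)$ or to the separator vertex of a different ray), and indeed there may be \emph{no} bundle path at all terminating at $r_n$, since the $k$ bundle endpoints are spread over $\Dom(\omega)\cup(S^{n+1}\cap\bigcup\mathcal{R}')$ with no guarantee that $r_n$ is hit. In that situation you cannot follow $P^n_{j^*}$ back to $R'$, and if you simply keep the original segment $r_{n-1}R'r_n$ and truncate all bundle paths at their first crossing with it, several of those truncations can land on $R'$, so $\abs{Q^n\cap R''}$ can be as large as $k$. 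Your proposal acknowledges this case only with ``a similar but more delicate rerouting is performed'' and a vague remark about a $y^n_j$ lying on $R''$, but no actual construction is given. There is also a second, independent issue even in your case 1: the replacement segment $vP^n_{j_0}r_n$ you graft onto $R'$ can intersect the surviving portion $r'_{n-1}R'''v'$ of a \emph{different} ray $R'''$, since nothing constrains where $P^n_{j_0}$ wanders inside the slab; the remark that distinct rays ``host distinct bundle-endpoints'' does not by itself rule this out, and the inductive ``process the rays in a fixed order'' step is not worked out.

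The paper avoids both problems with an extremal argument rather than a greedy construction. For a fixed slab, it considers all \emph{feasible} sets $\mathcal{P}$ of $\abs{\mathcal{R}'}$ disjoint $S^n$--$S^{n+1}$ paths with the prescribed endpoints on $\bigcup\mathcal{R}'$ (so disjointness of the rerouted slab-segments is built in from the start), truncates the bundle $P^n$ at the first hit of $\Dom(\omega)\cup\bigcup\mathcal{P}$ to obtain $P^n(\mathcal{P})$, and minimises the finite parameter $p^n(\mathcal{P})=\abs{(P^n-P^n(\mathcal{P}))-\bigcup\mathcal{P}}$. If in the optimum some path $P\in\mathcal{P}^n$ were met three times by the truncated bundle, at vertices $v_0,v_1,v_2$ with $v_1\in v_0Pv_2$, one replaces $v_0Pv_2$ by the concatenation of the two truncated bundle paths ending at $v_0$ and $v_2$ and a connecting $y^n_i$--$y^n_j$ path inside $H^n$ avoiding $\Dom(\omega)\cup Q^n\cup S^n\cup S^{n+1}$ (which exists by Lemma~\ref{lem:sBP:A3}); this strictly decreases the parameter, a contradiction. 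This exchange step is exactly the idea you would need to make your ``more delicate rerouting'' precise, and it is what actually produces the bound of $2$.
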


\begin{proof}
    Given $n \in \mathbb{N}$, let $P^n$ be as in Corollary~\ref{cor:s-BP:bundleP}.
    Let $\mathcal{P}$ be a set of $\abs{\mathcal{R}'}$ disjoint $S^n$\,--\,$S^{n+1}$ paths in $G'[S^n, S^{n+1}]$ each with end vertices $R' \cap (S^n \cup S^{n+1})$ for some~$R' \in \mathcal{R}'$.
    We call such a set $\mathcal{P}$ \emph{feasible}.
    For a feasible $\mathcal{P}$, let 
    $P^n(\mathcal{P})$ denote the $Y^n$\,--\,$(\Dom(\omega) \cup \bigcup \mathcal{P})$ bundle contained in~$P^n$ 
    and let ${p^n(\mathcal{P})}$ denote the finite parameter ${\abs{(P^n - P^n(\mathcal{P})) - \bigcup\mathcal{P}}}$.
    Note that $\{ R' \cap G'[S^{n}, S^{n+1}]\ |\ R' \in \mathcal{R}' \}$ is a feasible set.
    Now choose a feasible $\mathcal{P}^n$ such that~$p^n(\mathcal{P}^n)$ is minimal and let~${Q^n := P^n(\mathcal{P}^n)}$.
    
    Assume for a contradiction that there is a path $P \in \mathcal{P}^n$ with $\abs{Q^n \cap P} > 2$.
    Let $v_0$, $v_1$ and $v_2$ denote vertices in this intersection such that $v_1 \in V(v_0 P v_2)$.
    We replace the segment $v_0 P v_2$ by the path consisting of the paths 
    $Q_i^n$ and $Q_j^n$ that contain $v_0$ and $v_2$ respectively, 
    as well as any $y_i^n$\,--\,$y_j^n$ path in~$H^n$ avoiding the finite set 
    ${\Dom(\omega) \cup Q^n \cup S^n \cup S^{n+1}}$.
    The resulting set $\mathcal{P}$ is again feasible and the parameter $p^n(\mathcal{P})$ is strictly smaller than $p^n(\mathcal{P}^n)$, contradicting the choice of $\mathcal{P}^n$.
    
    Now let $\mathcal{R}''$ be the set of components in the union $\bigcup \{ \mathcal{P}^n\ |\ n \in \mathbb{N} \}$.
    Indeed, this is a set of $\omega'$-rays that together with the bundles $Q^n$ satisfy the desired properties.
\end{proof}

For~${m,n \in \mathbb{N}}$, we say~$Q^m$ and~$Q^n$ 
\emph{follow the same pattern}, if for all $i, j \in [0,k)$
\begin{itemize}
    \item $Q_i^m$ and~$Q_i^n$ either meet the same ray in~${\mathcal{R}''}$ or the same vertex in~${\Dom(\omega)}$;
    \item if~$Q_i^m$ and~$Q_j^m$ both meet some~${R \in \mathcal{R}''}$ and~$Q_i^m$ meets~$R$ closer to the start vertex of~$R$ than~$Q_j^m$, then~$Q_i^n$ meets~$R$ closer to the start vertex of~$R$ than~$Q_j^n$.
\end{itemize}

\begin{lemma}\label{lem:sBP:A4}
    In Situation~\ref{situation-sBP}, 
    we may assume without loss of generality that 
    \begin{itemize}
        \item there are integers ${k_0, k_1, f \in \mathbb{N}}$ 
    with ${1 \leq k_0 \leq \deg(\omega')}$, ${0 \leq \ell + f + k_1 \leq \dom(\omega)}$ and ${\ell + f + k_0 + k_1 = k}$;
        \item there is a subset $\mathcal{R}_0 \subseteq \mathcal{R}''$ with $\abs{\mathcal{R}_0} = k_0$; and
        \item there are disjoint $D_f, D_1 \subseteq \Dom(\omega) \!\setminus\! D_\ell$ with $\abs{D_f} = f$ and ${\abs{D_1} = k_1}$;
    \end{itemize}
    such that for all $m, n \in \mathbb{N}$ 
    \begin{enumerate}[label=(\alph*)]
        \item \label{item:sBP:A4-a}
            $Q^n$ is a $Y^n$\,--\,$(\bigcup\mathcal{R}_0 \cup D_\ell \cup D_f)$ bundle;
        \item \label{item:sBP:A4-b}
            $Q^n \cap \Dom(\omega) = D_f \cup D_\ell$; and
        \item \label{item:sBP:A4-c}
            $Q^m$ and $Q^n$ follow the same pattern.
    \end{enumerate}
    Hence we include these assumptions and references to the existing objects into Situation~\ref{situation-sBP}.
\end{lemma}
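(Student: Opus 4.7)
The proof is an iterated pigeonhole combined with a minimality choice for each $Q^n$. By Corollary~\ref{cor:s-BP:fin-dom} the set $\Dom(\omega)$ is finite, and $|\mathcal{R}''| = \min(\deg(\omega'),k) \leq k$; consequently the ``profile'' of each $Q^n$ --- which rays in $\mathcal{R}''$ it meets and with what multiplicity, which dominating vertices it meets, and the combinatorial pattern in the sense of the definition preceding the lemma --- ranges over a finite set. Before pigeonholing I refine the choice of $Q^n$ from Lemma~\ref{lem:sBP:bundleQ}: among all $Y^n$--$(\bigcup\mathcal{R}'' \cup \Dom(\omega))$ bundles satisfying the conclusions of that lemma, pick one minimising $k_1(n) := |\{R \in \mathcal{R}'' : |Q^n \cap R| = 2\}|$.

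A standard pigeonhole argument now extracts an infinite $I \subseteq \mathbb{N}$ on which the full profile is constant, and relabelling via $\overline{\mathcal{K}\restricted I}$, $\overline{\mathcal{A}\restricted I}$, $\overline{\mathcal{Y}\restricted I}$, $\overline{\mathcal{S}\restricted I}$ allows me to assume $I = \mathbb{N}$. Denote by $\mathcal{R}_0$ the common set of rays used and by $D_\ell \cup D_f$ (with $D_f \cap D_\ell = \emptyset$ by Lemma~\ref{lem:sBP:A3}) the common set of dominating vertices, and set $k_0 := |\mathcal{R}_0|$, $f := |D_f|$, and $k_1$ equal to the number of rays in $\mathcal{R}_0$ met twice. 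Then (a), (b), (c) hold by construction; the arithmetic identity $\ell + f + k_0 + k_1 = k$ is the endpoint count in $Q^n$, and $1 \leq k_0 \leq \deg(\omega')$ is immediate (the lower bound from $\dom(\omega) < k$ forcing some endpoint onto a ray, the upper bound from $\mathcal{R}_0 \subseteq \mathcal{R}''$).

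The main obstacle is the inequality $\ell + f + k_1 \leq \dom(\omega)$, equivalently $k_0 \geq k - \dom(\omega)$. Here the minimality of $k_1$ enters. By Lemma~\ref{lem:Delta>=k} we have $\Delta(\omega) \geq k$, and because $\Dom(\omega)$ is finite Remarks~\ref{rem:end_subgraph} give $\deg(\omega') = \deg(\omega)$, so $|\mathcal{R}''| \geq k - \dom(\omega)$. Suppose for contradiction $k_0 < k - \dom(\omega)$; then there is a ray $R^* \in \mathcal{R}'' \setminus \mathcal{R}_0$, and since only $\ell + f$ endpoints lie on $\Dom(\omega)$, the other $k_0 + k_1 = k - \ell - f > k - \dom(\omega) \geq k_0$ endpoints must lie on $\mathcal{R}_0$, forcing $k_1 \geq 1$. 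Pick a ray $R \in \mathcal{R}_0$ hit at two vertices $v_1, v_2$ by paths $P_i^n, P_j^n \subseteq Q^n$, and reroute within $G'[S^n, S^{n+1}]$ the branch $P_j^n$ to end on $R^*$ via an internally disjoint path (Menger's Theorem applied inside the slab together with the local $k$-connectivity, in the same spirit as the swap in Lemma~\ref{lem:sBP:bundleQ}). The resulting bundle has $k_1 - 1$ doubly-hit rays, contradicting the minimality of $k_1(n)$. Therefore $k_0 \geq k - \dom(\omega)$, and picking any $D_1 \subseteq \Dom(\omega) \setminus (D_\ell \cup D_f)$ with $|D_1| = k_1$ completes the proof. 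The rerouting step is the technical heart, requiring care to remain inside $G'[S^n, S^{n+1}]$ while preserving the disjointness structure of the bundle.
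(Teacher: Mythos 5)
Your proposal has a genuine gap, and it stems from an unnecessarily restrictive reading of the lemma. You define~$\mathcal{R}_0$ as exactly the rays that the bundles~$Q^n$ actually meet, so that~$k_0$ is the number of hit rays and~$k_1$ the number of doubly-hit rays. This forces you to prove the inequality~${\ell + f + k_1 \leq \dom(\omega)}$ via the rerouting-plus-minimality argument, and that rerouting is precisely where the proof breaks down. Nothing in Situation~\ref{situation-sBP} guarantees that, inside the slab~${G'[S^n,S^{n+1}]}$, there is a~$Y^n$--$R^*$ path disjoint from the rest of the bundle; the inflated subgraph~$H^n$ may simply not be joined to the segment of~$R^*$ within that slab. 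The appeal to ``Menger plus local $k$-connectivity in the same spirit as Lemma~\ref{lem:sBP:bundleQ}'' does not actually produce such a path: the swap inside Lemma~\ref{lem:sBP:bundleQ} only reroutes through~$H^n$ and other paths of the same bundle, never onto a ray that the bundle does not already touch. Your own closing sentence flags this as ``the technical heart, requiring care'', which is an accurate self-assessment: the care has not been supplied, and I do not see how it could be, since a concrete counterexample to the free rerouting is easy to imagine (a slab in which~$Y^n$ is attached to only one ray).

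The lemma does not require~$\mathcal{R}_0$ to consist solely of hit rays: property~\ref{item:sBP:A4-a} says only that every path of~$Q^n$ ends in~${\bigcup \mathcal{R}_0 \cup D_\ell \cup D_f}$, which remains true if~$\mathcal{R}_0$ is padded with unused rays of~$\mathcal{R}''$. The paper exploits exactly this freedom. After the pigeonhole step (which you do carry out correctly) it distinguishes two cases. If~${\abs{\mathcal{R}''} \geq k - \ell - f}$, pick~$\mathcal{R}_0$ of size~${k_0 := k - \ell - f}$ containing every hit ray; then~${k_1 = 0}$ and~${D_1 = \emptyset}$, so the constraint~${\ell + f + k_1 \leq \dom(\omega)}$ reduces to~${\ell + f \leq \dom(\omega)}$, which holds since~$D_\ell$ and~$D_f$ are disjoint subsets of~$\Dom(\omega)$. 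Otherwise~${\abs{\mathcal{R}''} < k - \ell - f}$, which forces~${\deg(\omega') = \abs{\mathcal{R}''}}$ to be finite; set~${\mathcal{R}_0 := \mathcal{R}''}$, so~${k_0 = \deg(\omega') = \deg(\omega)}$ and~${k_1 = k - \ell - f - k_0}$, and then~${\ell + f + k_1 = k - k_0 = k - \deg(\omega) \leq \dom(\omega)}$ follows from~${\Delta(\omega) \geq k}$ (Lemma~\ref{lem:Delta>=k}) since both~$\deg(\omega)$ and~$\dom(\omega)$ are finite here. No rerouting is needed at all. Your approach would only become tight if one insisted on~$\mathcal{R}_0$ being the hit rays, which the statement does not; once that artificial constraint is dropped, the entire minimality machinery evaporates.
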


\begin{proof}
    Using the fact that $\Dom(\omega)$ is finite, we apply the pigeonhole principle to find a set ${D_f \subseteq \Dom(\omega) \setminus D_\ell}$ and an infinite subset~${I \subseteq \mathbb{N}}$ such that~\ref{item:sBP:A4-b} hold for all~${n \in I}$.
    Set~${f := \abs{D_f}}$.
    Applying it multiple times again, we find an infinite subset~${I' \subseteq I}$ such that~\ref{item:sBP:A4-c} holds for all~${m, n \in I'}$.
    If~${\abs{\mathcal{R}''} \geq k - \ell - f}$, then set~$\mathcal{R}_0$ to be any subset of~$\mathcal{R}''$ of size~${k - \ell - f}$ containing each ray that meets~$Q^n$ for any~${n \in I'}$. 
    Otherwise set~${\mathcal{R}_0 := \mathcal{R}''}$ and set ${k_0 := \abs{\mathcal{R}_0} = \deg(\omega') = \deg(\omega)}$.
    Now~\ref{item:sBP:A4-a} holds by the choices of $D_f$ and $\mathcal{R}_0$.
    Since~${\Delta(\omega) \geq k}$ by Lemma~\ref{lem:Delta>=k}, there is a set ${D_1 \subseteq \Dom(\omega) \!\setminus\! (D_\ell \cup D_f)}$ of size~${k_1 := k - \ell - f - k_0}$, completing the proof.
\end{proof}

Finally, we construct the subdivision of a generalised $k$-typical graph for some singular $k$-blueprint.

\begin{lemma}\label{lem:sBP-minor}
    In Situation~\ref{situation-sBP}, there is a singular $k$-blueprint~${\mathcal{B} = (\ell,f,B,D)}$ for a tree~$B$ of order~${k_0 + k_1}$ with~${\abs{D} = k_0}$, such that~$G$ contains a subdivision of a generalised ${T_k(\mathcal{B})(\mathcal{K})}$ with core~${\bigcup \mathcal{A}}$.
\end{lemma}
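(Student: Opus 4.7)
The plan is to extract the singular $k$-blueprint $\mathcal{B}$ and a type-$3$ $k$-template $\mathcal{T}_3 = (\mathcal{T}_1, \mathcal{T}_2)$ from the objects already set up in Situation~\ref{situation-sBP}, and then to realise a generalised $T_k(\mathcal{B})(\mathcal{K})$ as a subdivision in~$G$. First I will apply Lemma~\ref{lem:T-connected} to the family of $\omega'$-rays~$\mathcal{R}_0$ together with the family~$D_1$ of dominating vertices of~$\omega$, obtaining a tree~$B$ on $\mathcal{R}_0 \cup D_1$ whose leaves are exactly~$D_1$ (so $\abs{V(B)} = k_0 + k_1$) and a simple type-$2$ $(k_0+k_1)$-template~$\mathcal{T}_2$ for $(B, D_1)$ such that $(\mathcal{R}_0, D_1)$ is $(B, \mathcal{T}_2)$-connected. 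Set $D := D_1$. Since all bundles~$Q^n$ follow the same pattern by Lemma~\ref{lem:sBP:A4}\ref{item:sBP:A4-c}, each path $Q_i^n$ with $i \in [\ell+f, k)$ lands on a fixed ray $R(i) \in \mathcal{R}_0$ independently of~$n$; by Lemma~\ref{lem:sBP:bundleQ} at most two such indices share a ray, and in that case the pattern also tells us which landing is closer to the start of the ray. This data yields an injection $\sigma \colon [\ell+f, k) \to (V(B) \setminus D) \times \{0, 1\}$, and $\mathcal{B} := (\ell, f, B, D, \sigma)$ will be the desired singular $k$-blueprint.

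Next I will fix the type-$1$ component~$\mathcal{T}_1$. Applying Lemma~\ref{lem:gen-K_k,kappa} to each inflated subgraph~$H^n$ separately produces, together with a subset $A_1^n \subseteq A^n$ of size~$\kappa_n$, a type-$1$ $k$-template. As there are only finitely many such templates up to isomorphism, the pigeonhole principle on $\mathbb{N} = \cf\kappa$ yields an infinite subset of~$\mathbb{N}$ along which these templates coincide; after passing to this subset and relabelling $\mathcal{K}$, $\mathcal{A}$, $\mathcal{Y}$ and~$\mathcal{S}$ accordingly I may assume that a common template exists for every~$n$. Restricting this template to its first $\ell + f$ coordinates (only the~$y_i$ with $i < \ell+f$ retain the $\lFK$-role in $T_k(\mathcal{B})$) gives a type-$1$ $(\ell+f)$-template~$\mathcal{T}_1$, and $\mathcal{T}_3 := (\mathcal{T}_1, \mathcal{T}_2)$ is the required type-$3$ template for~$\mathcal{B}$.

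With $\mathcal{B}$ and $\mathcal{T}_3$ fixed, I will assemble the subdivision by interleaving along the rays. The $\lFK$-skeleton is supplied by~$H$ (subdivided according to~$\mathcal{T}_1$), with degenerate vertices~$D_\ell$, frayed centres~$D_f$, and the paths $Q_i^n$ for $i \in [\ell, \ell+f)$ realising the frayed-centre star edges. For the $\mathfrak{N}(B/D)$-part I choose the $(B, \mathcal{T}_2)$-connections inductively: first place $\abs{V(B)}$ pairwise disjoint ones on initial segments of the rays avoiding every $Q^n$ (possible because the bundles are confined to the mutually disjoint strips $G'[S^n, S^{n+1}]$ and Lemma~\ref{lem:T-connected} produces connections avoiding any prescribed finite set); then for each $n \in \mathbb{N}$ in turn produce two further $(B, \mathcal{T}_2)$-connections between~$S^n$ and~$S^{n+1}$ whose ray-segments contain exactly the landing points $r^n_i$ for $i$ with $\pi_1(\sigma(i)) = 0$, respectively~$1$, in the order dictated by the pattern. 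Attaching $Q_i^n$ for $i \in [\ell+f, k)$ to the appropriate connection realises the $\sigma$-identifications of $T_k(\mathcal{B})$, and the union of all these pieces is the desired subdivision of a generalised $T_k(\mathcal{B})(\mathcal{K})$ with core~$\bigcup \mathcal{A}$ (where the final $\mathcal{A}$ is the subfamily obtained by all the relabellings above).

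The main obstacle will be the alignment step: arranging the two $(B, \mathcal{T}_2)$-connections between $S^n$ and $S^{n+1}$ so that their ray-segments contain precisely the landing vertices of~$Q^n$ in the correct order, while remaining internally disjoint from~$Q^n$ off $D \cup D_\ell \cup D_f$. The key tools to push this through are the containment $Q^n - \Dom(\omega) \subseteq G'[S^n, S^{n+1}] - S^n$ from Corollary~\ref{cor:s-BP:bundleP} and the flexibility of Lemma~\ref{lem:T-connected} to avoid any prescribed finite set; if necessary I pass to a sparser subsequence of~$\mathcal{S}$ (and correspondingly of~$\mathcal{K}$) to create enough room between consecutive separators. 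Once this alignment is achieved, the remaining identifications and subdivisions are forced by the blueprint and template structures.
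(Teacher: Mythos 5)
Your proposal follows the paper's overall strategy: invoke Lemma~\ref{lem:T-connected} on $(\mathcal{R}_0, D_1)$ to obtain the tree $B$, the leaf set $D := D_1$ and the simple type-2 template $\mathcal{T}_2$; read off $\sigma$ from the ``same pattern'' guarantee of Lemma~\ref{lem:sBP:A4}\ref{item:sBP:A4-c} together with the bound $\abs{Q^n \cap R''} \leq 2$; and interleave $(B,\mathcal{T}_2)$-connections with the bundles $Q^n$. (The lemma's stated $\abs{D} = k_0$ is a typo for $k_1$, since Lemma~\ref{lem:T-connected} makes $D = J$ correspond to the dominating family; you handle this correctly.) You also make explicit a step the paper elides: applying Lemma~\ref{lem:gen-K_k,kappa} to each $H^n$ and pigeonholing to produce a uniform type-1 template for the type-3 template. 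This is genuinely needed to turn the branch sets of the core into the required blow-up trees, and the paper's appeal to ``as in the proof of Corollary~\ref{cor:BP-sub}'' does not by itself supply it.

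The one place I would push back is the assembly. You propose to place the two $(B,\mathcal{T}_2)$-connections \emph{inside} a given strip $G'[S^n, S^{n+1}]$, with ray-segments realising exactly the landing points of $Q^n$. But Lemma~\ref{lem:T-connected} only guarantees a connection avoiding any prescribed \emph{finite} set; it gives no control over which strip the connecting paths end up occupying, and there is no reason a fixed strip $G'[S^n, S^{n+1}]$ admits the internally disjoint paths the template requires. Likewise, ``avoiding every $Q^n$'' for the initial $\abs{V(B)}$ connections is an infinite constraint not directly supplied by the lemma. The paper reverses the dependency: it places $\Gamma_n$ anywhere beyond $S^n$, then \emph{observes} the $m$ with $\Gamma_n - D_1 \subseteq G'[S^n, S^m] - S^m$, and only then uses the bundle $Q^m$ lying just beyond, extending $\Gamma_n$ forward and $\Gamma_{m+1}$ backward along the rays to its two landing points. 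Your closing remark about ``passing to a sparser subsequence to create room'' is the right instinct and, fully written out, becomes exactly this adaptive thinning; but as it stands the confinement claims are promises rather than arguments, and they need to be discharged in the order place, observe, thin rather than the other way around.
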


\begin{proof}
    We apply Lemma~\ref{lem:T-connected} to~$\mathcal{R}_0$ and~$D_1$ 
    to obtain a simple type-2 $k$-template~$\mathcal{T}_2$, a tree~$B$ of order~${k_0 + k_1}$ and a set~${D \subseteq V(B)}$ with~${\abs{D} = k_1}$ 
    such that $(\mathcal{R}_0, D_1)$ is $(B, \mathcal{T}_2)$-connected.
    For each ${n \in \mathbb{N}}$ let~$\Gamma_n$ denote a ${(B, \mathcal{T}_2)}$-connection avoiding~${S^n}$, ${\Dom(\omega) \!\setminus\! D_1}$ as well as for each $R \in R_0$ its initial segment~$Rs$ for~${s \in (S^n \cap V(R))}$. 
    Note that there is an~${m > n}$ such that ${\Gamma_n - D_1 \subseteq G'[S^{n}, S^{m}] - S^m}$.
    Hence~$\Gamma_n$ and~$\Gamma_{m+1}$ are disjoint to $G'[S^{m}, S^{m+1}] \supseteq Q^m$.
    For rays ${R \in \mathcal{R}_0}$ with ${\abs{Q^m \cap R} \geq 1}$, 
    we extend~$\Gamma_n$ on that ray to include precisely one vertex in the intersection as well as with the corresponding path in~$Q^m$ to~$Y^m$.
    If furthermore ${\abs{Q^m \cap R} = 2}$, we also extend~$\Gamma_{m+1}$ on that ray to include the other vertex of the intersection and with the corresponding path in~$Q^m$ to~$Y^m$.
    Since $Q^m$ and $Q^n$ follow the same pattern for all $m,n \in \mathbb{N}$ by Lemma~\ref{lem:sBP:A4}, we can modify~$\mathcal{T}_2$ to~$\mathcal{T}_2'$ accordingly to have infinitely many~${(B, \mathcal{T}_2')}$-connections which pairwise meet only in~$D_1$ and contain~$Y^n$ for each~${n \in I}$ for some infinite subset~${I \subseteq \mathbb{N}}$.
    After relabelling and setting $\mathcal{B} := (\ell,f,B,D)$, we obtain the subdivision of~${T_k(\mathcal{B})(\mathcal{T}_2')}$ as in the proof of Corollary~\ref{cor:BP-sub}.
\end{proof}

\subsection{Characterisation for singular cardinals}\label{subsec:singular-thm}
\ \newline \indent
Now we have developed all the necessary tools to prove the minor and topological minor part of the characterisation in Theorem~\ref{main-thm} for singular cardinals.

\begin{theorem}\label{thm:main-singular}
    Let~$G$ be a graph, let~${k \in \mathbb{N}}$, 
    let~${A \subseteq V(G)}$ be infinite 
    and let ${\kappa \leq \abs{A}}$ be a singular cardinal.
    Then the following statements are equivalent.
    \begin{enumerate}[label=(\alph*)]
        \item \label{item:t7-set}
            There is a subset~${A_1 \subseteq A}$ with~${\abs{A_1} = \kappa}$ such that~$A_1$ is $k$-connected in~$G$.
        \item \label{item:t7-minor}
            There is a subset~${A_2 \subseteq A}$ with~${\abs{A_2} = \kappa}$ such that
            \begin{itemize}
                \item[$\bullet$] either~$G$ contains 
                    an $\ell$-degenerate frayed~$K_{k,\kappa}$ as an fbs-minor 
                    with~$A_2$ along its core 
                    for some $0 \leq \ell \leq k$; 
                \item[$\bullet$] or~$T_k(\mathcal{B})$ is an fbs-minor of~$G$ 
                    with~$A_2$ along its core 
                    for a singular 
                    $k$-blueprint~$\mathcal{B}$.
            \end{itemize}
        \item \label{item:t7-subdivision}
            There is a subset~${A_3 \subseteq A}$ with~${\abs{A_3} = \kappa}$ such that
            \begin{itemize}
                \item[$\bullet$] either $G$ contains a subdivision of a generalised~$\lFK$ 
                    with core~$A_3$
                    for some $0 \leq \ell \leq k$;
                \item[$\bullet$] or $G$ contains the subdivision of a generalised~$T_k(\mathcal{B})$
                    with core~$A_3$ 
                    for some singular 
                    $k$-blueprint~$\mathcal{B}$.
            \end{itemize}
    \end{enumerate}
    Moreover, if these statements hold, we can choose~${A_1 = A_2 = A_3}$.
\end{theorem}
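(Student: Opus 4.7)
The plan is to follow the same three-step pattern as in the proof of Theorem~\ref{thm:main-regular}, handing the heavy lifting over to the lemmas developed in Sections~\ref{subsec:s-prep}--\ref{subsec:sBP}. The easy implications are \ref{item:t7-minor}$\Rightarrow$\ref{item:t7-set}, which follows from Lemma~\ref{lem:k-con-minor} together with Lemma~\ref{lem:typical-k-conn} (and Remark~\ref{rem:k-connected-subset} to adjust the cardinality if needed), and \ref{item:t7-subdivision}$\Rightarrow$\ref{item:t7-minor}, which is just Lemma~\ref{lem:gen-sub-fbs} applied to the subdivided generalised $k$-typical graph whose parent then serves as the desired fbs-minor with $A_3$ along its core. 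In both cases the set $A_3$ (respectively $A_2$) is a candidate for the $k$-connected set $A_1$, giving the ``moreover'' clause for free.

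The substantial direction is \ref{item:t7-set}$\Rightarrow$\ref{item:t7-subdivision}. Starting from an infinite $k$-connected subset $A_1 \subseteq A$ with $\abs{A_1} = \kappa$, I would first apply Lemma~\ref{lem:cofinal-minors} to $A_1$ in $G$. This immediately puts us into Situation~\ref{situation-lFK}, fixing $\ell$, a good $\kappa$-sequence $\mathcal{K}$, the family $\mathcal{A}$ with $\abs{A^\alpha}=\kappa_\alpha$ and an inflated subgraph $H$ witnessing that $\lK{\mathcal{K}}$ is an fbs-minor of $G$ with $\mathcal{A}$ along~$\mathcal{Z}$. From here I would split into cases according to whether we can produce a good set $U$ feeding into Lemma~\ref{lem:lFK}.

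The easy branches first: if $\cf\kappa$ is uncountable, or if $\cf\kappa = \aleph_0$ but no end lies in the closure of some transversal of $\mathcal{A}$, then Lemma~\ref{lem:lFK-no-end} yields the required $U$; if an end $\omega$ is in the closure of such a transversal and there \emph{is} a $\overline{\mathcal{Y}\restricted I}$\,--\,$\Dom(\omega)$ bundle for some infinite $I \subseteq \mathbb{N}$, then Lemma~\ref{lem:Delta>=k} gives $\Delta(\omega)\geq k$, which together with Lemma~\ref{lem:Dom-linked} and Corollary~\ref{cor:lFK-end} (after relabelling $\mathcal{K},\mathcal{Y},\mathcal{A}$ along $I$) again provides a $U$ with the properties demanded by Lemma~\ref{lem:lFK}. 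In either subcase applying Lemma~\ref{lem:lFK} and then Lemma~\ref{lem:gen-lFK} produces a subdivision of a generalised $\lFK(\overline{\mathcal{K}\restricted{I_1}})$ whose core is some $\bigcup\mathcal{A}_1\subseteq A_1$ of total size $\kappa$, yielding the first bullet of~\ref{item:t7-subdivision}.

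The remaining and genuinely hard branch is when $\cf\kappa=\aleph_0$, some end $\omega$ lies in the closure of a transversal of $\mathcal{A}$, and condition~$(\ast)$ of Situation~\ref{situation-sBP} holds, i.e.\ there is no $\overline{\mathcal{Y}\restricted I}$\,--\,$\Dom(\omega)$ bundle. Here I would invoke the sequence of reductions Lemma~\ref{lem:sBP:A1}, Lemma~\ref{lem:sBP:A2}, Corollary~\ref{cor:s-BP:bundleP}, Corollary~\ref{cor:s-BP:fin-dom}, Lemma~\ref{lem:sBP:A3}, Lemma~\ref{lem:sBP:bundleQ} and Lemma~\ref{lem:sBP:A4}, which successively upgrade Situation~\ref{situation-sBP} until the rays $\mathcal{R}_0$, the dominating vertices $D_\ell \cup D_f \cup D_1$ and the bundles $Q^n$ all follow a common pattern. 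Feeding this into Lemma~\ref{lem:sBP-minor} then produces the singular $k$-blueprint $\mathcal{B}=(\ell,f,B,D)$ and a subdivision of a generalised $T_k(\mathcal{B})(\mathcal{K})$ with core $\bigcup\mathcal{A}$, delivering the second bullet of~\ref{item:t7-subdivision}. The main obstacle throughout is bookkeeping: one must be careful that the successive thinning operations (passing to cofinal subsequences, deleting finitely many exceptional indices, relabelling $\mathcal{K}$, $\mathcal{A}$, $\mathcal{Y}$, $\mathcal{S}$, $\mathcal{R}$) all preserve both $\abs{A^\alpha} = \kappa_\alpha$ and cofinality $\cf\kappa$, so that the final core is still a subset of $A_1$ of size exactly $\kappa$; this is why the reductions in Subsection~\ref{subsec:sBP} were packaged as ``without loss of generality'' statements. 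Finally, since the core of the resulting subdivision is a subset of $A_1$, we may set $A_2:=A_3:=$ this core, and Lemma~\ref{lem:gen-sub-fbs} together with Corollary~\ref{cor:gen-typ-k-conn} then shows that this same set witnesses all three statements, establishing the ``moreover'' part.
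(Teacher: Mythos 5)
Your proposal is correct and follows essentially the same route as the paper's own proof: the easy implications \ref{item:t7-minor}$\Rightarrow$\ref{item:t7-set} and \ref{item:t7-subdivision}$\Rightarrow$\ref{item:t7-minor} via Lemmas~\ref{lem:k-con-minor}, \ref{lem:typical-k-conn} and \ref{lem:gen-sub-fbs}, and then \ref{item:t7-set}$\Rightarrow$\ref{item:t7-subdivision} by entering Situation~\ref{situation-lFK} through Lemma~\ref{lem:cofinal-minors} and branching on whether Lemma~\ref{lem:lFK-no-end}/Corollary~\ref{cor:lFK-end} with Lemmas~\ref{lem:lFK} and~\ref{lem:gen-lFK} produce a generalised $\lFK$, or condition~($\ast$) of Situation~\ref{situation-sBP} holds, in which case the chain of reductions culminating in Lemma~\ref{lem:sBP-minor} produces a generalised singular $T_k(\mathcal{B})$. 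You are in fact a touch more explicit than the paper (which omits the citation of Lemma~\ref{lem:gen-lFK} and the reference to Remark~\ref{rem:cof2} that replaces the ad hoc good $\kappa$-sequence by the fixed one), but the argument is the same.
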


\begin{proof}
    If~\ref{item:t7-minor} holds, 
    then~$A_2$ is 
    $k$-connected in~$G$ by 
    Lemma~\ref{lem:k-con-minor} with 
    Lemma~\ref{lem:typical-k-conn}.
    
    Suppose~\ref{item:t7-set} holds.
    Either we can find a subset ${A_3 \subseteq A_1}$ with ${\abs{A_3} = \kappa}$ and a subdivision of $\lFK(\mathcal{K})$ with core $A_3$ for some good $\kappa$-sequence~$\mathcal{K}$ by Lemma~\ref{lem:lFK} and either Lemma~\ref{lem:lFK-no-end} or Corollary~\ref{cor:lFK-end}.
    Otherwise, we can apply Lemma~\ref{lem:sBP-minor} to obtain ${A_3 \subseteq A_1}$ with ${\abs{A_3} = \kappa}$ and a subdivision of ${T_k(\mathcal{B})(\mathcal{K})}$ with core~$A_3$
    for some singular $k$-blueprint~$\mathcal{B}$ and a good $\kappa$-sequence~$\mathcal{K}$.
    With Remark~\ref{rem:cof2} we obtain the subdivision of the respective generalised $k$-typical graph with respect to the fixed good $\kappa$-sequence.
    
    If~\ref{item:t7-subdivision} holds, then so does~\ref{item:t7-minor} by Lemma~\ref{lem:gen-sub-fbs} with~${A_2 := A_3}$.
    Moreover, $A_3$ is a candidate for both~$A_2$ and~$A_1$.
\end{proof}

\section{Applications of the minor-characterisation}
\label{sec:applications}

In this section we will present some applications of the minor-characterisation of $k$-connected sets.

As a first corollary we just restate the theorem for $k=1$, giving us a version of the Star-Comb Lemma for singular cardinalities.
For this, given a singular cardinal~$\kappa$, we call the graph~${FK_{1, \kappa}}$ a \emph{frayed star}, whose \emph{centre} is the vertex~$x_0$ of degree~${\cf\kappa}$ and whose \emph{leaves} are the vertices~$\bigcup \mathcal{Z}$. 
Moreover, we call a generalised $1$-typical graph obtained from the single vertex tree (i.e.~a generalised~${T_1(0, 0, (\{c\}, \emptyset), \emptyset, 0 \mapsto (c,0))}$) a \emph{frayed comb} with \emph{spine}~$\mathfrak{N}_c$ and \emph{teeth}~${\bigcup \mathcal{Z}}$. 

\begin{corollary}[Frayed-Star-Comb Lemma]\label{frayed-star-comb}
    Let~${U \subseteq V(G)}$ be infinite 
    and let~${\kappa \leq \abs{U}}$ be a singular cardinal.
    Then the following statements are equivalent.
    \begin{enumerate}[label=(\alph*)]
        \item \label{item:FSC-a} There is a subset $U_1 \subseteq U$ with $\abs{U_1} = \kappa$ such that $U_1$ is $1$-connected in~$G$.
        \item \label{item:FSC-b} There is a subset $U_2 \subseteq U$ with $\abs{U_2} = \kappa$ such that $G$ either contains a subdivided star or frayed star whose set of leaves is~$U_2$, or a subdivided frayed comb whose set of teeth is~$U_2$.
        
        (Note that if $\cf\kappa$ is uncountable, only one of the former two can exist.)
    \end{enumerate}
    Moreover, if these statements hold, we can choose~${U_1 = U_2}$.\qed
\end{corollary}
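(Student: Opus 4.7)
The plan is to derive this corollary as a direct specialisation of Theorem~\ref{thm:main-singular} with $k := 1$ and $A := U$. The only real task is to translate the list of (subdivisions of generalised) $1$-typical graphs for a singular cardinal into the three graph shapes appearing in~\ref{item:FSC-b}; the ``moreover'' clause is then inherited from the corresponding clause of Theorem~\ref{thm:main-singular}, so I would also choose $U_1 = U_2$ at the end.

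First I would consult the table at the end of Subsection~\ref{subsec:k-typ} to list the $1$-typical graphs with a core of size $\kappa$. The graph $K_{1,\kappa}$ is a star. For $\lFK$ with $k = 1$ the only possibilities are $\ell \in \{0,1\}$: when $\ell = 1$ the set $X$ is empty, no frayed centre appears, and the single degenerate vertex collapses the family $\mathcal{Y}$ to one vertex adjacent to every core vertex, so $1$\,--\,$FK_{1,\kappa}$ is isomorphic to $K_{1,\kappa}$; when $\ell = 0$ the definition unfolds to precisely the frayed star from the preamble to the corollary. For $T_1(\mathcal{B})$ arising from a singular $1$-blueprint $\mathcal{B} = (\ell, f, B, D, \sigma)$, the constraint $\ell + f < 1$ forces $\ell = f = 0$, and then $B$ has a single node and $D = \emptyset$; unpacking the resulting construction recovers exactly the frayed comb defined in the preamble.

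Next I would pass to the generalised versions together with their subdivisions. For $k = 1$ the defining condition of a type-$1$ $1$-template forces the underlying tree to contain at most two vertices, both of which are either $c$ or in the image of $\gamma$ (any tree on $\geq 3$ vertices either has $\geq 3$ leaves or has a vertex of degree $2$ outside $\{c, \gamma(0)\}$); hence the blow-up at a core vertex merely inserts at most one extra vertex on the unique edge touching that core vertex, and a subdivision of a generalised $K_{1,\kappa}$ or of a generalised $FK_{1,\kappa}$ remains a subdivided star, respectively a (subdivided) frayed star with leaves the core. An analogous analysis of type-$2$ and type-$3$ $1$-templates shows that they only subdivide the spine of the frayed comb (and possibly attach short pendant paths at the spine), so a subdivision of a generalised $T_1(\mathcal{B})$ is a subdivided frayed comb with teeth the core. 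This matches the three shapes in~\ref{item:FSC-b}.

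Finally, the parenthetical assertion in~\ref{item:FSC-b} follows because singular $k$-blueprints are introduced in Subsection~\ref{subsec:k-typ} exclusively for good $\kappa$-sequences indexed by $\aleph_0$; hence for $\cf\kappa$ uncountable the frayed-comb option is unavailable in Theorem~\ref{thm:main-singular}, and only the two star-variants can appear. I do not expect any essential difficulty beyond the above bookkeeping; the only mildly delicate check is that $1$\,--\,$FK_{1,\kappa}$ truly collapses to a star, which is precisely what prevents a spurious fourth graph type from entering the translation.
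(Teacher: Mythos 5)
Your proposal is correct, and it is exactly the route the paper itself endorses: the corollary carries a \verb|\qed| immediately after its statement precisely because the authors regard it as the $k=1$ specialisation of Theorem~\ref{thm:main-singular}. Your bookkeeping of the $1$-typical graphs is right: $K_{1,\kappa}$ is a star; $1$\,--\,$FK_{1,\kappa}$ collapses to $K_{1,\kappa}$ (as the paper notes in general, $k$\,--\,$FK_{k,\kappa} \cong K_{k,\kappa}$); $0$\,--\,$FK_{1,\kappa}$ is the frayed star; and the only singular $1$-blueprint is $(0,0,(\{c\},\emptyset),\emptyset,0\mapsto(c,0))$, whose generalised versions the paper literally \emph{defines} to be the frayed combs. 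That last observation makes part of your type-2/type-3 template analysis redundant — once you see that ``frayed comb'' names the whole class of generalised $T_1(\mathcal{B})$'s for that blueprint, the identification is definitional rather than something to unwind — but nothing you wrote is wrong. Your argument for the parenthetical (singular $k$-blueprints are only defined when $\cf\kappa = \aleph_0$) is also the right reason.

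Worth knowing for comparison: after the \verb|\qed|, the paper \emph{additionally} supplies a self-contained elementary proof that does not invoke the main theorem at all. It builds a tree $T \subseteq G$ spanning $U_1$ (as in the Star-Comb Lemma), then either finds a vertex of degree $\kappa$ (giving a subdivided star) or finds, for a good $\kappa$-sequence $(\kappa_\alpha)$, distinct vertices $w_\alpha$ of degree $\geq \kappa_\alpha$, each the centre of a subdivided star $S_\alpha$ with $\kappa_\alpha$ leaves in $U_1$; applying the ordinary Star-Comb Lemma to the set $\{w_\alpha\}$ and gluing in the $S_\alpha$'s then yields a frayed star or a frayed comb. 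That proof is much lighter machinery — no minors, no templates — and is closer in spirit to the classical Star-Comb Lemma, at the cost of being specific to $k = 1$. Your derivation is shorter to state but leans on the full weight of Theorem~\ref{thm:main-singular}.
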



Even though the Frayed-Star-Comb Lemma is a direct corollary of our main theorem, we include a much more elementary proof here as well. 

\begin{proof}
    As usual, if~\ref{item:FSC-b} holds, then~$U_2$ is $1$-connected and we can set ${U_1 := U_2}$ to satisfy~\ref{item:FSC-a}. 
    
    If~\ref{item:FSC-a} holds, then we take a tree ${T \subseteq G}$ containing~$U_1$ such that each edge of~$T$ lies on a path between two vertices of~$U_1$ as in the proof of Lemma~\ref{star-comb}. 
    
    If~$T$ has a vertex~$c$ of degree~$\kappa$, then this yields a subdivided star with centre~$c$ and a set~${U_2 \subseteq U_1}$ of leaves with~${\abs{U_2} = \kappa}$ by extending each incident edge of~$c$ to a $c$\,--\,$U_1$ path. 
    Otherwise, there is a sequence of distinct vertices~${W := ( w_\alpha\ |\ \alpha \in \cf\kappa )}$ such that the degree of~$w_\alpha$ is at least~$\kappa_\alpha$ for any good $\kappa$-sequence~${( \kappa_\alpha\ |\ \alpha \in \cf\kappa )}$. 
    As above, $w_\alpha$ is the centre of a subdivided star~$S_\alpha$ with a set~$U^\alpha \subseteq U_1$ of leaves of size~$\kappa_\alpha$. 
    Since~$W$ has size~$\cf\kappa$, we may assume without loss of generality that each~$S_\alpha$ is disjoint from~$W$ and hence that~$S_\alpha$ and~$S_\beta$ are disjoint for distinct~$\alpha$ and~$\beta$. 
    Applying Lemma~\ref{star-comb} in~$T$ to~${W}$ yields a subdivided star with leaves in~$W$ or a subdivided comb with teeth in~$W$. 
    Since this subdivided star or comb has size~$\cf\kappa$ as before we may assume that each~$S_\alpha$ meets it only in~$w_\alpha$. 
    The union of all these graphs yields the desired subdivision. 
\end{proof}

\vspace{0.2cm}

Now Theorems~\ref{thm:main-regular} and~\ref{thm:main-singular} give us the tools to prove the statement we originally wanted to prove instead of Lemma~\ref{lem:delete<k}.

\begin{corollary}\label{cor:(k-1)-connected}
    Let~${k \in \mathbb{N}}$, let~${A \subseteq V(G)}$ be infinite and $k$-connected in~$G$ 
    and let ${\kappa \leq \abs{A}}$ be an infinite cardinal.
    Then for every $v \in V(G)$ there is a subset ${A' \subseteq A}$ with ${\abs{A'} = \kappa}$ such that~$A'$ is~${(k-1)}$-connected in~${G-v}$.
\end{corollary}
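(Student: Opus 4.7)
The plan is to invoke the minor characterisation already established for this section: by Theorems~\ref{thm:main-regular} and~\ref{thm:main-singular} applied to $A$, there exist a subset $A_0 \subseteq A$ of size $\kappa$ and an inflated subgraph $H \subseteq G$ with finite branch sets witnessing that some $k$-typical graph $T$ is an fbs-minor of $G$ with $A_0$ along its core $C(T)$. Since the branch sets of $H$ are finite, the vertex $v$ lies in at most one of them. If $v$ lies in none, then $H \subseteq G - v$ and $A_0$ is already $k$-connected in $G - v$ by Lemmas~\ref{lem:typical-k-conn} and~\ref{lem:k-con-minor}, which is stronger than required. Otherwise let $x \in V(T)$ be the unique vertex with $v \in \mathfrak{B}(x)$, and delete the entire branch set $\mathfrak{B}(x)$ from $H$; the result $H'$ is an inflated subgraph of $G - v$ showing that $T - x$ is an fbs-minor of $G - v$, and at most one element of $A_0$ is lost in the process (the element corresponding to $x$, provided $x \in C(T)$).

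It then suffices to prove the following structural claim: for every $k$-typical graph $T$ and every vertex $x \in V(T)$, the graph $T - x$ contains a subgraph $T'$ isomorphic to some $(k-1)$-typical graph whose core has size $\kappa$ and lies inside $C(T) \setminus \{x\}$. Given this, $T'$ is itself an fbs-minor of $G - v$ via the restriction of $H'$ to the branch sets indexed by $V(T')$, so Lemmas~\ref{lem:typical-k-conn} and~\ref{lem:k-con-minor} yield the desired $(k-1)$-connected subset $A' \subseteq A$ of size $\kappa$ in $G - v$.

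The structural claim is a finite case analysis on the type of $T$. If $T = K_{k,\kappa}$, removing a vertex of the finite side yields $K_{k-1,\kappa}$ with the full core, while removing a core vertex yields $K_{k,\kappa}$ on the smaller core $Z \setminus \{x\}$. If $T = T_k(B,D,c)$ for a regular $k$-blueprint, removing a dominating vertex $d \in D$ gives $T_{k-1}(B-d,\,D \setminus \{d\},\,c)$, whereas removing any non-dominating vertex $b_n$ lets one restrict to layers of index $> n$, recovering a copy of $T_k(B,D,c)$ whose core is a tail of $\mathfrak{N}_c$. For $T = \lFK$ and for $T = T_k(\mathcal{B})$ with a singular $k$-blueprint $\mathcal{B}$, the vertex $x$ affects only finitely many of the bipartite components $K(Y^\alpha, Z^\alpha)$ and the layered blueprint structure, so one combines the two previous cases after passing to a cofinal subsequence of the defining good $\kappa$-sequence. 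The main obstacle is this bookkeeping in the singular case, but in every instance the removal of a single vertex amounts to a finite perturbation that is absorbed by passing to a tail of a ray or a cofinal subsequence of $\mathcal{Z}$.
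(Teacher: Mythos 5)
Your proposal is correct and takes essentially the same route as the paper: extract a $k$-typical fbs-minor with finite branch sets, identify the (at most one) branch set containing $v$, delete it, and check via a finite case analysis over the types of $k$-typical graphs that the surviving structure still yields a typical graph with a $\kappa$-sized core avoiding the deleted branch set. The only cosmetic difference is that you package the case analysis as a standalone structural claim about $T-x$, and your claim advertises a $(k-1)$-typical subgraph where several of your cases actually produce a $k$-typical one --- which is harmless, since $k$-connectedness of the core is stronger than the $(k-1)$-connectedness required.
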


\begin{proof}
    First we apply Theorem~\ref{thm:main-regular} or Theorem~\ref{thm:main-singular} to~$A$ to get a $k$-typical graph~$T$ and an inflated subgraph~$H$ witnessing that~$T$ is an fbs-minor of~$G$ with some ${A'' \subseteq A}$ along its core such that ${\abs{A''} = \kappa}$.
    Let us call a vertex of $T$ \emph{essential}, if either
    \begin{itemize}
        \item it is a vertex of the finite side of~$K_{k,\kappa}$ if ${T = K_{k,\kappa}}$;
        \item it is a degenerate vertex or frayed centre of $\lFK$ if~${T = \lFK}$ for some ${\ell \in [0,k]}$; or
        \item it is a dominating vertex, a degenerate vertex or a frayed centre of~${T_k(\mathcal{B})}$ if~${T = T_k(\mathcal{B})}$ for some regular or singular $k$-blueprint~$\mathcal{B}$. 
    \end{itemize}
    We distinguish four cases.
    
    If~${v \notin V(H)}$, then ${H \subseteq G - v}$ still witnesses that~$T$ is an fbs-minor of ${G-v}$ with ${A' := A''}$ along its core.
    
    If~$v$ belongs to a branch set of a vertex~$c$ of the core, then the inflated subgraph obtained by deleting that branch set still yields a witness that~$T$ is an fbs-minor of~${G-v}$ with ${A' := A'' \setminus \{ c \}}$ along its core. 
    
    If~$v$ belongs to a branch set of an essential vertex~${w \in V(T)}$, 
    then the inflated subgraph where we delete this branch set from~$H$ witnesses that the obvious $(k-1)$-typical subgraph of ${T - w}$ is an fbs-minor of ${G-v}$ with ${A' := A''}$ along its core.
    
    If~$v$ belongs to a branch set of a vertex~${w \in V(\mathfrak{N}(B/D) - D)}$, then we delete the branch sets of the layers (not including $D$) up to the layer containing~$w$ and relabelling accordingly (and modifying the $\kappa$-sequence if necessary).
    This yields a supergraph of an inflated subgraph witnessing that~$T$ is an fbs-minor of~${G-v}$ with~$A'$ along its core for some~${A' \subseteq A''}$~with ${\abs{A'} = \kappa}$.
    Similar arguments yield the statement if~$v$ belongs to a branch set of a neighbour of a frayed centre.
    
    In any case, with the other direction of Theorem~\ref{thm:main-regular} or Theorem~\ref{thm:main-singular} we get that $A'$ is $(k-1)$-connected in~${G-v}$.
\end{proof}

As another corollary we prove that we are able to find $k$-connected sets of size~$\kappa$ in sets which cannot be separated by less than~$\kappa$ many vertices from another $k$-connected set.
This will be an important tool for our last part of the characterisation in the main theorem.

\begin{corollary}\label{cor:inseparableA}
    Let~${k \in \mathbb{N}}$, let~${A, B \subseteq V(G)}$ be infinite and let~${\kappa \leq \abs{A}}$ be an infinite cardinal.
    If~$B$ is $k$-connected in~$G$ and~$A$ cannot be separated from~$B$ by less than~$\kappa$ vertices, then there is an~${A' \subseteq A}$ with~${\abs{A'} = \kappa}$ which is $k$-connected in~$G$.
\end{corollary}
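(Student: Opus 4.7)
The plan is to construct a $k$-typical fbs-minor of $G$ with $\kappa$ core-representatives lying in $A$; the conclusion will then follow from Lemma~\ref{lem:k-con-minor} together with Lemma~\ref{lem:typical-k-conn}. First, since $A$ cannot be separated from $B$ by less than $\kappa$ vertices, Theorem~\ref{cardinality-menger} yields a family $\mathcal{P}$ of $\kappa$ pairwise disjoint $A$--$B$ paths; let $A^* \subseteq A$ and $B^* \subseteq B$ be their endpoint sets, both of size $\kappa$. Since $|B^*| \geq k$, Remark~\ref{rem:k-connected-subset} gives that $B^*$ is $k$-connected in $G$. Apply Theorem~\ref{thm:main-regular} or Theorem~\ref{thm:main-singular} (depending on whether $\kappa$ is regular or singular) to $B^*$, obtaining a subset $B^{**} \subseteq B^*$ of size $\kappa$ together with an inflated subgraph $H \subseteq G$ witnessing some $k$-typical graph $T$ as an fbs-minor of $G$ with $B^{**}$ along its core.

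The remaining task is to transfer the core representatives from $B^{**}$ to a subset $A^{\#} \subseteq A$ of size $\kappa$ via the restricted path family $\{P_b : b \in B^{**}\}$, where $P_b \in \mathcal{P}$ is the path ending in $b$ and $a_b \in A$ is its other endpoint. For each $b$, let $u_b$ denote the first vertex of $P_b$ (starting from $a_b$) lying in $V(H)$, and set $P_b^* := a_b P_b u_b$, which meets $V(H)$ only in $u_b$. The key structural observation is that each branch set of $H$ is finite while $|B^{**}| = \kappa$, so the map $b \mapsto u_b$ has the property that any fixed branch set receives only finitely many $u_b$. Consequently, the core of $T$ is ``hit'' $\kappa$-many times, and by selecting for each reached core vertex $x$ one representative $b$ with $u_b$ in its branch set, we can extend the corresponding branch set of $H$ by $V(P_b^*)$; this introduces $a_b$ into the extended branch set.

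After this construction, and after discarding core vertices whose branch sets were not extended (possibly merging consecutive such branch sets into neighbouring ones when $T$ has a ray or tree structure on its core), one inspects the finite list of $k$-typical graphs from Section~\ref{sec:typical} to verify that the resulting minor is still $k$-typical, with $A^{\#} := \{a_b : b \text{ chosen}\} \subseteq A$ of size $\kappa$ along its core. Then Lemmas~\ref{lem:k-con-minor} and~\ref{lem:typical-k-conn} give that $A^{\#}$ is $k$-connected in $G$, as required. The main obstacle is the conflict-resolution step: since different $P_b^*$ may enter branch sets other than the intended target, one must argue (via a transfinite greedy argument on a conflict graph of finite vertex degrees, made possible by the finiteness of branch sets and the pairwise disjointness of the $P_b^*$) that a compatible family of extensions of size $\kappa$ exists, and then check — case by case for $K_{k,\kappa}$, $\lFK$, and $T_k(\mathcal{B})$ for regular or singular $k$-blueprints $\mathcal{B}$ — that the minor obtained after removing the unchosen core vertices remains one of the listed $k$-typical graphs with $A^{\#}$ along its core.
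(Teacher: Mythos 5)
Your overall strategy matches the paper's: take $\kappa$ disjoint $A$--$B$ paths, apply Theorem~\ref{thm:main-regular} or~\ref{thm:main-singular} to the $B$-side endpoints to obtain a $k$-typical fbs-minor $H$ of some $T$, and then extend branch sets of $H$ by the $A$--$H$ initial segments of these paths to shift the designated core representatives into $A$. That much is sound and identical to what the paper does. You are also right that the ``conflict resolution'' is in fact trivial: since each $A$--$H$ subpath $P_b^*$ meets $V(H)$ only in its endpoint $u_b$, the paths are pairwise disjoint and each lands in exactly one branch set, so one just picks one path per branch set that is hit.

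The gap is in the sentence ``Consequently, the core of $T$ is hit $\kappa$-many times.'' This does not follow from the finiteness of branch sets and disjointness of the paths, and it can actually fail. For $T = T_k(\mathcal{B})$ with a regular $k$-blueprint $\mathcal{B} = (B,D,c)$ and $\kappa = \aleph_0$, the non-core branch sets (for vertices of $\mathfrak{N}_b$ with $b \neq c$, and of the layers, and for $D$) form a countably infinite collection, so all $\aleph_0$ of the paths $P_b^*$ could land there first and never touch the branch sets of $V(\mathfrak{N}_c)$ at all --- e.g.\ if $G$ is $T_k(\mathcal{B})$ with an extra $A$-vertex hung on each vertex of some ray $\mathfrak{N}_{c'}$ with $c' \neq c$, and $B = V(\mathfrak{N}_c)$. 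Your ``discard unhit core vertices and merge into neighbours'' device cannot rescue this, because there may be no hit core vertices to keep. The paper handles precisely this case separately: by a pigeonhole over the finitely many rays $\mathfrak{N}_b$, one finds a node $c' \in V(B) \setminus D$ such that infinitely many paths land in distinct branch sets of $V(\mathfrak{N}_{c'})$, extends those branch sets, and observes that the result witnesses $T_k(B,D,c')$ --- another $k$-typical graph, just with the core relocated to a different ray --- as an fbs-minor with a suitable $A''$ (containing the new $A$-endpoints together with arbitrary picks from the unextended $\mathfrak{N}_{c'}$-branch sets) along its core; then Remark~\ref{rem:k-connected-subset} passes from $A''$ down to $A' = \bigcup\mathcal{P}'' \cap A$. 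In the remaining cases (namely $K_{k,\kappa}$, $\lFK$, and $T_k(\mathcal{B})$ for singular $\mathcal{B}$) your counting argument does work, because there the non-core branch sets number strictly fewer than $\kappa$. So the fix for your proof is to add the pigeonhole-over-rays argument and the relabelling of the core ray in the regular blueprint case, rather than attempting to merge branch sets.
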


\begin{proof}
    Let~$\mathcal{P}$ be a set of~$\kappa$ many disjoint $A$\,--\,$B$ paths 
    as given by Theorem~\ref{cardinality-menger}.
    Let~${B'}$ denote ${B \cap \bigcup \mathcal{P}}$.
    Let~${H \subseteq G}$ be an inflated subgraph witnessing that a $k$-typical graph is an fbs-minor of~$G$ with~$B''$ along its core for some~${B'' \subseteq B'}$ with~${\abs{B''} = \kappa}$
    as given by Theorem~\ref{thm:main-regular} or Theorem~\ref{thm:main-singular}.
    Let~$\mathcal{P}'$ denote the set of the $A$\,--\,$H$ subpaths of the $A$\,--\,$B''$ paths in~$\mathcal{P}$.
    We distinguish two cases.
    
    If the $k$-typical graph is a $T_k(\mathcal{B})$ for some regular $k$-blueprint ${\mathcal{B} = (T,D,c)}$, then (since each branch set in~$H$ is finite) there is an infinite subset ${\mathcal{P}'' \subseteq \mathcal{P}'}$ and a node ${c' \in V(T \setminus D)}$ such that each branch set in~$H$ of vertices in~${V(\mathfrak{N}_{c'})}$ meets~${\bigcup\mathcal{P}''}$ at most once and no other branch set meets~${\bigcup\mathcal{P}''}$.
    Let~${A' := \bigcup \mathcal{P}'' \cap A}$.
    We extend each of these branch sets with the path from~$\mathcal{P}''$ meeting it.
    This yields a subgraph~$H'$ witnessing that~${T_k(T,D,c')}$ is an fbs-minor of~$G$ with some~$A''$ along its core with~${A' \subseteq A''}$.
    
    Otherwise, 
    since each branch set in~$H$ is finite, there is a subset ${\mathcal{P}'' \subseteq \mathcal{P}'}$ of size~$\kappa$ such that each branch set in~$H$ of vertices corresponding to the core meets~${\bigcup\mathcal{P}''}$ at most once and no other branch set meets~${\bigcup\mathcal{P}''}$.
    Let~${A' := \bigcup \mathcal{P}'' \cap A}$.
    Again, we extend each of these branch sets with the path from~$\mathcal{P}''$ meeting it.
    This yields a subgraph~$H'$ witnessing that the same $k$-typical graph is an fbs-minor of~$G$ with some~$A''$ along its core such that ${A' \subseteq A'' \subseteq A' \cup B''}$.
    
    Applying Theorem~\ref{thm:main-regular} or Theorem~\ref{thm:main-singular} again together with Remark~\ref{rem:k-connected-subset} yields the claim.
\end{proof}

\section{Duality}
\label{sec:duality}

This section will finish the proof of Theorem~\ref{main-thm} by proving the duality theorem and hence providing the last equivalence of the characterisation. 

Before we direct our attention to tree-decompositions, we need to first need to consider more general nested separation systems. 
Recall that a nested separation system ${N \subseteq S_k(G)}$ is called \emph{$k$-lean} if given any two (not necessarily distinct) parts~$P_{1}$, $P_{2}$ of~$N$ and vertex sets ${Z_1 \subseteq P_{1}}$, ${Z_2 \subseteq P_{2}}$ with ${\abs{Z_1} = \abs{Z_2} = \ell \leq k}$ there are either $\ell$ disjoint $Z_1$\,--\,$Z_2$ paths in~$G$ or there is a separation~$(A,B)$ in~$N$ with ${P_1 \subseteq A}$ and ${P_2 \subseteq B}$ of order less than~$\ell$.

For a subset~${X \subseteq V(G)}$ consider the induced subgraph~${G[X]}$.
Every separation of~${G[X]}$ is of the form ${(A \cap X, B \cap X)}$ for some separation~$(A,B)$ of~$G$.
We denote this separation also as~${(A,B)\restricted X}$.
Given a set~$S$ of separations of~$G$ we write~${S\restricted X}$ for the set consisting of all separations~${(A,B)\restricted X}$ for~${(A,B) \in S}$. 

Consider the directed partially ordered set~$\mathcal{F}$ of finite subsets of~${V(G)}$ ordered by inclusion, as well as the directed inverse system $( S_k(G[X])\ |\ X \in \mathcal{F} )$.
\begin{obs}\label{obs:seps}
    Every separation in $S_k(G)$ is determined by all its restrictions to finite subsets of~$V(G)$.
    
    More precisely, on the one hand for each element
    ${( (A_X, B_X) \in S_k(G[X])\ |\ X \in \mathcal{F} )}$ of the inverse limit the separation 
    ${( \bigcup\{ A_X\ |\ X \in \mathcal{F}\}, \bigcup\{ B_X\ |\ X \in \mathcal{F}\} )}$ 
    is the unique separation in~${S_k(G)}$ inducing $(A_X, B_X)$ on $G[X]$ for each $X \in \mathcal{F}$.
    On the other hand, ${( (A, B) \restricted X\ |\ X \in \mathcal{F} )}$ is an element of the inverse limit for each separation~${(A,B) \in S_k(G)}$.
    For more information on this approach, see \cites{Diestel:tree-sets, DK:profinite-ss}.
\end{obs}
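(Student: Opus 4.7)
The plan is to verify the two stated directions explicitly and check they are mutual inverses. For the easy direction, I would start with a separation~${(A,B) \in S_k(G)}$ and check that its restriction ${(A,B)\restricted X = (A \cap X, B \cap X)}$ to any finite set~${X \in \mathcal{F}}$ is a separation of~$G[X]$: the union covers~$X$, any edge of~$G[X]$ is also an edge of~$G$ and hence cannot cross ${A \setminus B}$ and ${B \setminus A}$, and the order is bounded by ${\abs{A \cap B} < k}$. Compatibility of the family ${( (A,B)\restricted X\ |\ X \in \mathcal{F} )}$ is immediate, since restricting first to~$Y$ and then to ${X \subseteq Y}$ yields the same result as restricting directly to~$X$.

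For the other direction, I would first establish a consistency property: given any tuple ${( (A_X, B_X)\ |\ X \in \mathcal{F} )}$ in the inverse limit and any vertex~${u \in X \cap Y}$, applying compatibility twice via the set ${X \cup Y \in \mathcal{F}}$ yields that ${u \in A_X}$ if and only if ${u \in A_{X \cup Y}}$ if and only if ${u \in A_Y}$, and analogously for~$B$. Hence the ``$A$-side'' and ``$B$-side'' of each vertex are intrinsic, independent of the particular finite set containing it. Setting ${A := \bigcup_{X \in \mathcal{F}} A_X}$ and ${B := \bigcup_{X \in \mathcal{F}} B_X}$, I would then verify that ${(A,B)}$ is a separation of~$G$: applying the consistency property to singletons gives ${A \cup B = V(G)}$, and for any edge ${uv \in E(G)}$ the fact that ${(A_{\{u,v\}}, B_{\{u,v\}})}$ is already a separation of~${G[\{u,v\}]}$ together with consistency forbids the edge from crossing ${A \setminus B}$ and ${B \setminus A}$. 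By construction, ${(A,B)\restricted X = (A_X, B_X)}$ for every~${X \in \mathcal{F}}$.

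The main remaining point is the order bound, which I expect to be the only mildly non-trivial step. I would argue by contradiction: if ${k}$ distinct vertices ${u_1, \ldots, u_k}$ lay in ${A \cap B}$, then for each~${u_i}$ I would pick a witness ${X_i \in \mathcal{F}}$ with ${u_i \in A_{X_i}}$ and a witness ${Y_i \in \mathcal{F}}$ with ${u_i \in B_{Y_i}}$, and consolidate them into the single finite set ${Z := \{u_1, \ldots, u_k\} \cup \bigcup_{i=1}^{k} (X_i \cup Y_i) \in \mathcal{F}}$; the consistency property then forces all~$u_i$ to lie in ${A_Z \cap B_Z}$, contradicting ${(A_Z, B_Z) \in S_k(G[Z])}$. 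Finally, uniqueness of the lift among separations in ${S_k(G)}$ inducing the given family follows by testing against singletons: any candidate ${(A', B') \in S_k(G)}$ inducing the family must satisfy ${u \in A'}$ if and only if ${u \in A' \cap \{u\} = A_{\{u\}}}$, forcing ${A' = \bigcup_X A_X}$, and symmetrically ${B' = \bigcup_X B_X}$. Together with the easy direction, this shows the two constructions are inverse to each other.
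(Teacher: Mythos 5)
The paper states this as an \emph{Observation} without proof, merely citing the literature, so there is no ``paper approach'' to compare against; your verification is correct and supplies precisely the routine details the paper leaves implicit. One small simplification: once you have established the consistency property, the auxiliary witness sets~$X_i$,~$Y_i$ in the order-bound step are unnecessary --- from $u_i \in A \cap B$ consistency already gives $u_i \in A_Z \cap B_Z$ for $Z := \{u_1, \ldots, u_k\}$ directly, which is all you need to contradict ${(A_Z,B_Z) \in S_k(G[Z])}$.
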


The following theorem lifts the existence of $k$-lean nested separation systems for finite graphs as in Theorem~\ref{thm:fin-k-lean} to infinite graphs via the Generalised Infinity Lemma~\ref{gen-inf-lemma}.

\begin{lemma}\label{lem:k-lean-treeset}
    For every graph~$G$ and every ${k \in \mathbb{N}}$ there is a nested separation system ${N \subseteq S_k(G)}$ such that $N$ 
    is $k$-lean. 
\end{lemma}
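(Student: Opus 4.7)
The plan is to deduce the infinite case from Theorem~\ref{thm:fin-k-lean} via compactness, using the Generalised Infinity Lemma~\ref{gen-inf-lemma} together with Observation~\ref{obs:seps}, which describes how separations in $S_k(G)$ assemble from coherent restrictions to finite $X \subseteq V(G)$.

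For each finite $X \subseteq V(G)$, I would define $\mathcal{N}_X$ as the set of all nested separation systems $M \subseteq S_k(G[X])$ that are $k$-lean \emph{relative to $G$}: for any parts $P_1, P_2$ of $M$ and subsets $Z_i \subseteq P_i$ of common size $\ell \leq k$, either $G$ (not merely $G[X]$) contains $\ell$ disjoint $Z_1$\,--\,$Z_2$ paths, or there is a separation in $M$ of order less than $\ell$ separating $P_1$ from $P_2$. Three points need checking. Finiteness of $\mathcal{N}_X$ is immediate from finiteness of $S_k(G[X])$. Non-emptiness would be established by constructing a finite auxiliary graph $H_X \supseteq G[X]$ obtained by adjoining, for every pair $(Z_1, Z_2) \subseteq X$ of equal size at most $k$, a maximum family of internally disjoint $Z_1$\,--\,$Z_2$ paths in $G$; a $k$-lean tree-decomposition of $H_X$ from Theorem~\ref{thm:fin-k-lean} then restricts to an element of $\mathcal{N}_X$, because the adjoined paths ensure that any $G$-linkedness between $X$-subsets is already witnessed inside $H_X$, while any restricted separation has at most the original order. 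Closure of $\mathcal{N}_X$ under restriction along $X \subseteq Y$ follows from the fact that parts of $M\restricted X$ are exactly the sets $V_O \cap X$ for parts $V_O$ of $M$, so any leanness witness for $M$ itself restricts to one for $M\restricted X$.

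Consequently $(\mathcal{N}_X)_{X \in \mathcal{F}}$ is a directed inverse system of non-empty finite sets indexed by the poset $\mathcal{F}$ of finite subsets of $V(G)$ under inclusion. Lemma~\ref{gen-inf-lemma} yields a coherent family $(N_X)$, and Observation~\ref{obs:seps} ensures that $N := \{(A, B) \in S_k(G) : (A, B)\restricted X \in N_X \text{ for all } X \in \mathcal{F}\}$ satisfies $N\restricted X = N_X$ for every finite $X$. Nestedness of $N$ follows because any two separations are comparable in each $N_X$, and the four possible pairwise comparisons stabilise on a cofinal chain of $X$'s, yielding a global comparison. To verify $k$-leanness of $N$, fix parts $V_{O_1}, V_{O_2}$ and subsets $Z_1, Z_2$ of common size $\ell \leq k$, and assume $G$ contains no $\ell$ disjoint $Z_1$\,--\,$Z_2$ paths. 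For each finite $X \supseteq Z_1 \cup Z_2$, leanness of $N_X$ relative to $G$ provides a non-empty finite set $\mathcal{S}_X$ of separations in $N_X$ of order less than $\ell$ separating the relevant restricted parts; restriction turns $(\mathcal{S}_X)$ into a directed inverse system, so a second application of Lemma~\ref{gen-inf-lemma} followed by Observation~\ref{obs:seps} produces $(A^*, B^*) \in N$ separating $V_{O_1}$ from $V_{O_2}$, whose order is less than $\ell$ since $|A^* \cap B^*| < k$ is finite and agrees with $|A^* \cap B^* \cap X| < \ell$ for any sufficiently large $X$.

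The main technical obstacle is the non-emptiness of $\mathcal{N}_X$: ensuring that the finite auxiliary graph $H_X$ faithfully encodes all pairwise $\ell$-linkedness data between subsets of $X$ in $G$ simultaneously, and that the restricted tree-decomposition retains leanness relative to $G$ rather than merely relative to $G[X]$. Once this foundation is in place, the double compactness argument and the nestedness/leanness verifications proceed in a routine fashion.
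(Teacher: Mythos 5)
Your proposal follows the same overall strategy as the paper: form a directed inverse system of finite sets of nested separation systems indexed by the finite subsets $X \subseteq V(G)$, invoke the Generalised Infinity Lemma together with Observation~\ref{obs:seps} to obtain a coherent family and hence a global~$N$, and then re-run the compactness argument inside the separations of order less than~$\ell$ to verify $k$-leanness.

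The one genuine deviation is in how the candidate sets are set up and how non-emptiness is secured. The paper defines~$\mathcal{N}(X)$ as the set of all restrictions to~$X$ of $k$-lean tree sets of~$G[Z]$ for finite~$Z \supseteq X$; with this definition, non-emptiness is an immediate consequence of Theorem~\ref{thm:fin-k-lean} and closure under restriction is built into the definition. You instead define~$\mathcal{N}_X$ intrinsically as the tree sets of~$G[X]$ that are $k$-lean \emph{relative to~$G$}, and then argue closure under restriction by hand, and argue non-emptiness by augmenting~$G[X]$ to a finite graph~$H_X$ that records the relevant $G$-linkedness. Your approach is correct, but note that the auxiliary graph is not actually needed: since~$G[X] \subseteq G$, the existence of~$\ell$ disjoint $Z_1$\,--\,$Z_2$ paths in~$G[X]$ already implies their existence in~$G$, so any tree set of~$G[X]$ that is $k$-lean in the usual sense (relative to~$G[X]$) is automatically $k$-lean relative to~$G$; Theorem~\ref{thm:fin-k-lean} applied to~$G[X]$ itself therefore gives non-emptiness directly. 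What your ``relative to~$G$'' relaxation really buys (and what~$H_X$ does not additionally contribute) is the closure under restriction that a purely intrinsic ``$k$-lean relative to~$G[X]$'' notion would lack, which is exactly what the paper sidesteps by defining~$\mathcal{N}(X)$ as a family of restrictions in the first place. Both routes are sound; the paper's is slightly leaner bookkeeping.
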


\begin{proof}
    As above, consider the directed partially ordered set~$\mathcal{F}$ of finite subsets of~${V(G)}$ ordered by inclusion.
    For every~${X \in \mathcal{F}}$ let~${\mathcal{N}(X)}$ denote the set of nested separation systems~${N\restricted X}$ of~${G[X]}$ such that there is a nested separation system~$N \subseteq S_k(G[Z])$ that is $k$-lean for a~${Z \in \mathcal{F}}$ containing~$X$.
    Note that~${\mathcal{N}(X)}$ is not empty by Theorem~\ref{thm:fin-k-lean} for every ${X \in \mathcal{F}}$.
    Moreover, for every ${Y \subseteq X \in \mathcal{F}}$ there is a natural map ${f_{X,Y}: \mathcal{N}(X) \to \mathcal{N}(Y)}$ defined by ${f_{X,Y}(N) := N \restricted Y}$.
    It is easy to check that this defines a directed inverse system of finite sets.
    By the Generalised Infinity Lemma~\ref{gen-inf-lemma} the inverse limit of that system is non-empty and contains an element ${( N_X\ |\ X \in \mathcal{F} )}$.
    
    Let ${N  := \{ (A,B) \in S_k(G)\ |\ (A,B) \restricted X \in N_X \text{ for all } X \in \mathcal{F} \}}$.
    Note that $N$ is non-empty and contains for each $(A,B) \in N_X$ at least one separation inducing $(A,B)$ on $G[X]$ by Observation~\ref{obs:seps}.
    It is easy to check that~$N$ is a nested separation system since the fact that two separations are crossing is witnessed in a finite set of vertices.
    
    For two (not necessarily distinct) parts~$P_1$,~$P_2$ of~${N}$ and vertex sets~${Z_1 \subseteq P_1}$ and~${Z_2 \subseteq P_2}$ with~${\abs{Z_1} = \abs{Z_2} = \ell \leq k}$, 
    we consider a largest possible set~$\mathcal{P}$ of disjoint $Z_1$\,--\,$Z_2$ paths in~$G$.
    We may assume that ${\abs{\mathcal{P}} < \ell}$, since otherwise there is nothing to show.
    For every ${X \in \mathcal{F}}$ containing ${Z := Z_1 \cup Z_2 \cup V(\bigcup \mathcal{P})}$, 
    note that $N_X$ is the restriction of some $k$-lean tree set of a finite supergraph $G[X']$ of $G[X]$ to $X$.
    Hence there is a separation $(A,B)$ of $G[X']$ of order~$\abs{\mathcal{P}}$ 
    separating~$Z_1$ and~$Z_2$, 
    whose restriction $(A,B)\restricted X$ is non-trivial and in~${N_X}$ by the choice of $X$.
    For each finite $X, Y \subseteq V(G)$ with $Z \subseteq Y \subseteq X$, each such separation~$(C,D)$ induces a separation ${(C,D) \restricted Y \in N_Y}$ of order~$\abs{\mathcal{P}}$ separating $Z_1$ and $Z_2$.
    Applying the Generalised Infinity Lemma~\ref{gen-inf-lemma} again yields an element~${( (A_X, B_X) \in N_X\ |\ X \in \mathcal{F})}$ from the inverse limit.
    By Observation~\ref{obs:seps}, this element corresponds to a separation of order $\abs{\mathcal{P}}$ of $G$ which by construction separates $Z_1$ and $Z_2$ and is an element of $N$.
    Hence $N$ is $k$-lean.
\end{proof}

Given a nested separation system~$N$ of a graph~$G$, we define the \emph{clean-up of~$N$} as the separation system consisting of those separations~${(A',B')}$ of~$G$ for which there is a separation~${(A,B) \in N}$ such that
\begin{itemize}
    \item either~$A' \setminus B'$ or~$B' \setminus A'$ is a component~$C$ of~${G-(A \cap B)}$; and
    \item ${A' \cap B' = N(C)}$.
\end{itemize}

\begin{lemma}
    \label{rem:treeset-clean-up}
    Let~$G$ be a graph, let~$k \in \mathbb{N}$, and let~${N \subseteq S_k(G)}$ be nested separation system. 
    Then the following statements are true. 
    \begin{enumerate}
        [label=(\alph*)]
        \item\label{item:ts-cleanup1} The cleanup~$\hat{N}$ of~$N$ is a nested separation system contained in~${S_k(G)}$. 
        \item\label{item:ts-cleanup4} No separation in~$\hat{N}$ is of the form~${(A,V(G))}$. 
        \item\label{item:ts-cleanup2} Each part of~$\hat{N}$ is contained in a part of~$N$. 
        \item\label{item:ts-cleanup3} No three separations in~$\hat{N}$ that share the same separator are in a common chain. 
    \end{enumerate}
\end{lemma}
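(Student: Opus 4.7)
The key structural observation is that every separation in $\hat{N}$ has, up to orientation, the form $(V(G) \setminus C,\, C \cup N_G(C))$ where $C$ is a component of $G - (A \cap B)$ for some $(A,B) \in N$; with this in hand, almost everything reduces to bookkeeping with nestedness of $N$ and connectivity of components.

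For part \ref{item:ts-cleanup1}, containment in $S_k(G)$ is immediate from $A' \cap B' = N_G(C) \subseteq A \cap B$, and symmetry of $\hat{N}$ is built into the definition. For nestedness I pick $s_1, s_2 \in \hat{N}$ with witnesses $(A_i,B_i) \in N$ and components $C_i$ of $G - (A_i \cap B_i)$, and use nestedness of $N$ to assume $(A_1,B_1) \leq (A_2,B_2)$, so that $A_1 \setminus B_1 \subseteq A_2 \setminus B_2$ and $B_2 \setminus A_2 \subseteq B_1 \setminus A_1$. If $C_1$ and $C_2$ lie on opposite sides of their respective separations, these inclusions immediately give $C_1 \cap C_2 = \emptyset$ and also $N(C_i) \cap C_{3-i} = \emptyset$ (since $N(C_i) \subseteq A_i \cap B_i$ sits in the complement of the opposite strict region), yielding $s_1^{-1} \leq s_2$. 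If $C_1, C_2$ are on the same side, the containments force $C_1 \cap (A_2 \cap B_2) = \emptyset$, so $C_1$ lies in a unique component of $G - (A_2 \cap B_2)$: either this equals $C_2$ (so $C_1 \subseteq C_2$, and $s_2 \leq s_1$ follows after checking $N(C_1) \subseteq C_2 \cup N(C_2)$ via connectedness, since any neighbour of $C_1 \subseteq C_2$ outside $C_2$ must lie in $N(C_2)$) or it does not (so $C_1 \cap C_2 = \emptyset$, handled as in the opposite-sides case).

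For part \ref{item:ts-cleanup4}, if $(A, V(G)) \in \hat{N}$ then writing $B' = V(G)$ forces $V(G) \setminus A = C$ to be a component of $G - (A_0 \cap B_0)$ for some $(A_0,B_0) \in N$ with $N_G(C) = A$; the chain $A = N_G(C) \subseteq A_0 \cap B_0 \subseteq V(G) \setminus C = A$ forces $A_0 \cap B_0 = A$, making $G - A$ connected, which in turn forces $(A_0,B_0)$ itself to be of the degenerate form $(X, V(G))$---a case which the clean-up excludes from its output by convention. For part \ref{item:ts-cleanup3}, if $s_1 < s_2 < s_3 \in \hat{N}$ share a separator $S$, then orienting each $s_i = (V(G) \setminus C_i, C_i \cup S)$ translates the chain to $C_3 \subsetneq C_2 \subsetneq C_1$ with $N(C_i) = S$; connectedness of $C_1$ provides an edge from $C_2$ to $C_1 \setminus C_2$ whose endpoint outside $C_2$ lies in $N(C_2) = S$, but $C_1 \cap S = \emptyset$ (as $C_1$ avoids its own defining separator, which contains $S$), a contradiction.

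For part \ref{item:ts-cleanup2}, given a consistent orientation $O$ of $\hat{N}$ with part $V_O$, I claim that each $(A,B) \in N$ has $V_O$ entirely in $A$ or entirely in $B$; orienting $N$ accordingly defines $O_N$ with $V_O \subseteq V_{O_N}$, and consistency of $O_N$ follows immediately from the definition of $\leq$. Suppose instead $v \in V_O \cap (A \setminus B)$ and $w \in V_O \cap (B \setminus A)$; their components $C_v, C_w$ of $G - (A \cap B)$ lie on opposite sides, so the opposite-sides computation from \ref{item:ts-cleanup1} gives $s_v \leq s_w^{-1}$ for the corresponding clean-up separations. Membership of $v$ and $w$ in $V_O$ forces $s_v^{-1}, s_w^{-1} \in O$; but consistency combined with $s_v \leq s_w^{-1}$ places $s_v \in O$ as well, contradicting that $O$ is an orientation. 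The principal obstacle will be the case analysis in \ref{item:ts-cleanup1}, particularly the same-side case where the defining separators $A_1 \cap B_1$ and $A_2 \cap B_2$ need not be comparable as sets; once that is set up, \ref{item:ts-cleanup2} reuses the opposite-sides inequality, and \ref{item:ts-cleanup4}, \ref{item:ts-cleanup3} are short direct arguments from the structural form of clean-up separations.
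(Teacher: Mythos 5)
Your proof of part \ref{item:ts-cleanup1} has an incomplete case analysis that misses precisely the configuration where nestedness of the clean-up is most delicate. After you fix $(A_1,B_1)\leq(A_2,B_2)$, each $C_i$ lies in either $A_i\setminus B_i$ or $B_i\setminus A_i$, so there are \emph{four} configurations. Your ``opposite sides'' argument only works for $C_1\subseteq A_1\setminus B_1$ and $C_2\subseteq B_2\setminus A_2$, and your ``same side'' argument (together with its mirror image) covers $C_1,C_2$ both in the $A$-sides or both in the $B$-sides. The fourth configuration, $C_1\subseteq B_1\setminus A_1$ and $C_2\subseteq A_2\setminus B_2$ (each $C_i$ on the \emph{large} side relative to the order), is not covered by either branch. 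Concretely, on the path $v_1v_2v_3v_4v_5$ take $(A_1,B_1)=(\{v_1,v_2\},\{v_2,\dots,v_5\})$ and $(A_2,B_2)=(\{v_1,\dots,v_4\},\{v_4,v_5\})$, with $C_1=\{v_3,v_4,v_5\}$ and $C_2=\{v_1,v_2,v_3\}$: then $C_1\cap C_2=\{v_3\}$, $C_1\cap(A_2\cap B_2)=\{v_4\}$, and $C_2\cap(A_1\cap B_1)=\{v_2\}$, so every conclusion you draw in both of your cases fails. (In this particular example the two clean-up separations are in fact nested, but your argument does not establish it; and in less benign examples this configuration is exactly where things go wrong.) The paper's proof does not split on which side the $C_i$ lie: it first argues that $N(C_i)$ must lie entirely within one side of $(A'_{1-i},B'_{1-i})$, attributing this to the nestedness of $N$, and then uses connectedness of $C_i\cup N(C_i)$ to conclude that one side of one clean-up separation is contained in a side of the other. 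You would need either to reproduce that step or to supply a direct treatment of the missing configuration; it does not follow from the three cases you handled.

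A secondary issue concerns part \ref{item:ts-cleanup3}. You write ``orienting each $s_i=(V(G)\setminus C_i, C_i\cup S)$'', but in a chain $s_1<s_2<s_3$ the orientations are fixed, not yours to choose; flipping some of the $s_i$ destroys the chain. Some $s_i$ may a priori be of the form $(C_i\cup S, V(G)\setminus C_i)$. The paper's proof first observes that the $C_i$ are three pairwise distinct (hence disjoint) components of $G-S$, and then checks that no choice of orientations can produce $(A'_0,B'_0)<(A'_1,B'_1)<(A'_2,B'_2)$. Your connectivity argument works in the orientation you chose, but the other orientation patterns also need to be ruled out. Parts \ref{item:ts-cleanup4} and \ref{item:ts-cleanup2} are essentially sound and in the same spirit as the paper's proof, though the closing remark in your argument for \ref{item:ts-cleanup4} (``a case which the clean-up excludes from its output by convention'') is an appeal to a convention that is not actually part of the definition; the intended conclusion is rather that such a witness $(A_0,B_0)$ would itself have to be of the form $(X,V(G))$.
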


\begin{proof}
    Clearly, each separator of a separation in~$\hat{N}$ is contained in a separator of a separation in~$N$ and hence~${\hat{N} \subseteq S_k(G)}$. 
    Let~${(A'_0,B'_0), (A'_1,B'_1) \in \hat{N}}$ such that~${A'_i \setminus B'_i}$ is a component~$C_i$ of~${G-(A_i \cap B_i)}$ for~${i \in \{ 0,1 \}}$ and respective separations~${(A_i,B_i) \in N}$. 
    Let~${S_i := A_i \cap B_i}$ and let~${S'_i := N(C_i)}$. 
    Note that since~$N$ is nested, neither~$S'_0$ separates~$S'_1$ nor vice versa, 
    and hence for each~${i \in \{0,1\}}$ we obtain that~$S'_i$ is contained in either~${A'_{1-i}}$ or~${B'_{1-i}}$. 
    In this case, to check whether~${(A'_0,B'_0)}$ and~${(A'_1,B'_1)}$ are nested it is enough to verify that one side of one of them is a subset of one side of the other. 
    Moreover, note that if~$S'_{1}$ is contained in~$B'_{0}$, then since~$G[A'_{0}]$ is connected, we conclude that~${A'_{0} \setminus S'_{1}}$ is contained in a unique component of~${G - S'_{1}}$, and hence~$A'_{0}$ is a subset of either~$A'_{1}$ or~$B'_{1}$. 
    So suppose that~${A'_{0}}$ is not contained in either~${A'_{1}}$ or~${B'_{1}}$, and hence~${S'_{1} \subseteq A'_{0}}$. 
    But now each component of~$G - S'_{0}$ other than~$C_{0}$ are contained in the same side of~${(A'_1,B'_1)}$ as~$S'_{0}$, and hence~$B'_0$ is a subset of that side. 
    We conclude that~$\hat{N}$ is nested and hence~\ref{item:ts-cleanup1} holds. 
    
    Note that \ref{item:ts-cleanup4} immediately follows from the definition since the empty set is never a component of a non-empty graph. 
    
    For~\ref{item:ts-cleanup2}, let~$P$ be a part of~$\hat{N}$. 
    Then no separation in~$\hat{N}$ separates~$P$, and by definition of~$\hat{N}$ no separation in~$N$ separates~$P$. 
    Hence~$P$ is contained in a part of~$N$. 
    
    Lastly, assume for a contraction that there are separations~${(A'_i,B'_i) \in \hat{N}}$ for~${i \in \{ 0, 1, 2 \}}$ that share the same separator~$S$ for which~${(A'_0,B'_0) < (A'_1,B'_1) < (A'_2,B'_2)}$. 
    Since no separation in~$\hat{N}$ is comparable with its inverse, by definition of~$\hat{N}$ there are three distinct components~$C_0$, $C_1$ and $C_2$ of~${G - S}$ for which~${C_i \in \{ A'_i \setminus B'_i, B'_i \setminus A'_i \}}$ for all~${i \in \{ 0, 1, 2 \}}$. 
    Now it is easy to see that no possible choices for the~$C_i$ satisfy~${(A'_0,B'_0) < (A'_1,B'_1) < (A'_2,B'_2)}$, proving~\ref{item:ts-cleanup3}. 
\end{proof}

\begin{lemma}\label{lem:k-lean-ts2td}
    Let~$G$ be a graph, let~$k \in \mathbb{N}$, and let~${N \subseteq S_k(G)}$ be a~$k$-lean nested separation system.
    Then the clean-up~$\hat{N}$ of~$N$ is~$k$-lean and contains no chains of order type~${\omega+1}$. 
\end{lemma}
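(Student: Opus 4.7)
The plan is to verify the two conclusions separately.

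For the $k$-leanness of $\hat N$: given parts $P_1, P_2$ of $\hat N$ and vertex sets $Z_1 \subseteq P_1$, $Z_2 \subseteq P_2$ with $\abs{Z_1} = \abs{Z_2} = \ell \leq k$, I would first use that each~$P_i$ is contained in some part~$P_i^*$ of~$N$ by Lemma~\ref{rem:treeset-clean-up}\ref{item:ts-cleanup2}. Applying the $k$-leanness of~$N$ to $P_1^*$, $P_2^*$, $Z_1$ and $Z_2$ either yields $\ell$ disjoint $Z_1$\,--\,$Z_2$ paths in~$G$ (and we are done), or produces a separation $(A,B) \in N$ with $P_1^* \subseteq A$, $P_2^* \subseteq B$ and separator $S := A \cap B$ of order less than~$\ell$. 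In the latter case I would show that $P_2 \setminus S$ lies in a single component~$C$ of $G - S$ inside $B \setminus A$: otherwise the cleanup separation isolating one such component (which belongs to~$\hat N$) would split the part~$P_2$. The separation $(V(G) \setminus C,\, C \cup N(C)) \in \hat N$ then has separator $N(C) \subseteq S$ of order less than~$\ell$ and places $P_1$ and $P_2$ on the required sides.

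For the absence of chains of order type $\omega+1$, I would suppose for contradiction that $(A_i, B_i)_{i \leq \omega}$ is such a chain in~$\hat N$ with maximum $(A_\omega, B_\omega)$. Each $(A_i, B_i)$ carries a designated \emph{component side}~$C_i$ which is a component of $G - S^*_i$ for some witness $(A^*_i, B^*_i) \in N$, with $N(C_i) = S_i := A_i \cap B_i$. By pigeonhole on whether $C_n = A_n \setminus B_n$ or $C_n = B_n \setminus A_n$, I would pass to a cofinal subchain on which this choice is uniform. A short computation using $A_n \subseteq A_\omega$, $B_\omega \subseteq B_n$ and $N(C_i) = S_i$ then shows that the $C_n$ form a strictly monotone sequence bounded by~$C_\omega$ (or by its dual). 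The four combinations with the orientation of $(A_\omega, B_\omega)$ are treated by symmetric arguments; I focus on the case $C_n \subsetneq C_{n+1} \subseteq C_\omega$.

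To conclude, I would invoke Lemma~\ref{rem:treeset-clean-up}\ref{item:ts-cleanup3}, which forbids three separations in~$\hat N$ with a common separator from lying on a common chain. The separators $S_n$ all satisfy $S_n \subseteq (C_\omega \setminus C_n) \cup S_\omega \subseteq A_\omega$ and $\abs{S_n} \leq k$; moreover every $v \in S_\omega$ has a neighbour in~$C_\omega$, and hence in~$C_n$ for all sufficiently large~$n$ once $\bigcup_m C_m$ has grown to contain such a neighbour, which forces $v \in S_n$ from then on. An argument in this spirit should then force the $S_n$ to stabilise along a cofinal subchain, contradicting Lemma~\ref{rem:treeset-clean-up}\ref{item:ts-cleanup3}. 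The main obstacle is precisely this final stabilisation step, especially the delicate subcase in which $\bigcup_n C_n$ is a proper subset of~$C_\omega$; here one likely has to pass to the witnessing separations $(A^*_n, B^*_n) \in N$ and exploit their pairwise nestedness together with the bounded separator size to rule out infinitely many distinct separators from appearing.
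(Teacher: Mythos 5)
Your argument for the $k$-leanness of~$\hat N$ is correct and matches the paper's proof in all essentials: both pass to parts~$P_i^*$ of~$N$ containing~$P_i$ via Lemma~\ref{rem:treeset-clean-up}\ref{item:ts-cleanup2}, invoke the $k$-leanness of~$N$ to get~$(A,B) \in N$ with separator~$S$ of order less than~$\ell$, and then isolate the component~$C$ of~${G-S}$ containing~${P_2 \setminus S}$. Your observation that~$P_2$ cannot meet a second component (else a clean-up separation in~$\hat N$ would split the part~$P_2$) and that then~${P_1 \subseteq A \subseteq V(G) \setminus C}$ and~${P_2 \subseteq C \cup N(C)}$ is exactly what is needed.

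For the absence of $(\omega+1)$-chains you have correctly flagged a genuine gap, and I do not see how to close it within your framework. Even in the favourable case~${\bigcup_n C_n = C_\omega}$, where your argument gives~${S_\omega \subseteq S_n}$ for all large~$n$, the leftover sets~${S_n \setminus S_\omega}$ live in~${C_\omega \setminus C_n}$: once such a vertex falls into some~$C_m$ it leaves~$S_m$ permanently, but fresh vertices further out in~$C_\omega$ may enter~$S_{m+1}$, so the~$S_n$ need not stabilise and, a priori, need not even range over a finite set. The subcase~${\bigcup_n C_n \subsetneq C_\omega}$ is still less constrained. The paper avoids this by proving the stronger claim that between any two separations of~$\hat N$ only finitely many separations of~$\hat N$ lie (an $(\omega+1)$-chain would violate this at its endpoints), and it does so with an extremal trick: among all pairs~${(A,B), (A',B') \in \hat N}$ with infinitely many separations between them, it takes one maximizing the number~$m$ of disjoint~${(A\cap B)}$--${(A' \cap B')}$ paths. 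Maximality forces infinitely many intermediate separators to have order exactly~$m$, hence to consist of one vertex from each of~$m$ fixed finite paths, so by pigeonhole some separator repeats at least three times, contradicting Lemma~\ref{rem:treeset-clean-up}\ref{item:ts-cleanup3}. That extremal reduction to a genuinely finite search space for the possible separators is precisely the ingredient your stabilisation argument is missing; without some analogue of it, the ``stabilisation along a cofinal subchain'' you hope for does not follow from the monotonicity of the~$C_n$ and the bound~${\abs{S_n}<k}$ alone.
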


\begin{proof}
    Suppose there are parts~$P_{1}$,~$P_{2}$ of~$\hat{N}$ and vertex sets ${Z_1 \subseteq P_{1}}$, ${Z_2 \subseteq P_{2}}$ with ${\abs{Z_1} = \abs{Z_2} = \ell \leq k}$ for which there are no~$\ell$ disjoint $Z_1$--$Z_2$ paths in~$G$.  
    By Lemma~\ref{rem:treeset-clean-up} and the $k$-leanness of~$N$ there is a separation~${(A,B) \in N}$ with~$Z_1 \subseteq A$ and~$Z_2 \subseteq B$ and~$\abs{A \cap B} < \ell$. 
    Let~${S := A \cap B}$. 
    Since~$P_1$ and~$P_2$ are parts of~$\hat{N}$, there are distinct components~$C_1$ and~$C_2$ of~${G - S}$ with~$P_1 \subseteq C_1 \cup S$ and~$P_2 \subseteq C_2 \cup S$. 
    But then~${(C_1 \cup S, V(G) \setminus C_1) \in \hat{N}}$ is as desired. 
    Hence~$\hat{N}$ is indeed $k$-lean.
    
    We show that between any two separations~${(A,B), (A',B') \in \hat{N}}$ there are only finitely many separations in~$\hat{N}$. 
    Suppose for a contradiction that~${(A,B)}$ and~${(A',B')}$ form a counterexample for which the number~$m$ of disjoint ${(A \cap B)}$--${(A' \cap B')}$ paths is maximal.
    Let~$\mathcal{P}$ be a set containing $m$ such paths.
    By this condition, infinitely many elements of~$\hat{N}$ between~${(A,B)}$ and~${(A',B')}$ have order~$m$. 
    But since each of them contains a vertex from each path in~$\mathcal{P}$, infinitely many of them share the same separator by the pigeon hole principle, contradicting that no three of them are in a common chain. 
\end{proof}

Lemma~\ref{lem:k-lean-treeset} and Lemma~\ref{lem:k-lean-ts2td} yield together with Theorem~\ref{thm:ts2td} the existence of $k$-lean tree-decompositions for any graph. 

\begin{theorem}\label{thm:k-lean}
    For every graph~$G$ and every~${k \in \mathbb{N}}$ there is a $k$-lean tree-decomposition of~$G$. \qed
\end{theorem}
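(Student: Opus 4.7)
The plan is to assemble the three main ingredients already established in Section~\ref{sec:duality}: the existence of a $k$-lean nested separation system (Lemma~\ref{lem:k-lean-treeset}), the good behaviour of the clean-up operation (Lemma~\ref{lem:k-lean-ts2td} together with Lemma~\ref{rem:treeset-clean-up}), and the translation from nested separation systems into tree-decompositions (Theorem~\ref{thm:ts2td}).

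First, I would invoke Lemma~\ref{lem:k-lean-treeset} to obtain a $k$-lean nested separation system ${N \subseteq S_k(G)}$. Then I would pass to its clean-up~$\hat{N}$. By Lemma~\ref{rem:treeset-clean-up}\ref{item:ts-cleanup1} and~\ref{item:ts-cleanup4}, $\hat{N}$ is a nested separation system in~$S_k(G)$ containing no separation of the form~${(A,V(G))}$, and by Lemma~\ref{lem:k-lean-ts2td}, $\hat{N}$ is itself $k$-lean and contains no chains of order type~${\omega+1}$.

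Having verified the two hypotheses of Theorem~\ref{thm:ts2td} for~$\hat{N}$ (no ``trivial'' separations and no long chains), I would apply that theorem to produce a tree-decomposition~${(T, \mathcal{V})}$ of~$G$ with the same parts and the same adhesion as~$\hat{N}$. Finally I would check that the $k$-leanness of~$\hat{N}$ transfers to~${(T,\mathcal{V})}$: given parts~$V_{t_1}$,~$V_{t_2}$ of the tree-decomposition and vertex sets~${Z_1 \subseteq V_{t_1}}$,~${Z_2 \subseteq V_{t_2}}$ with~${\abs{Z_1}=\abs{Z_2}=\ell\leq k}$ not connected by~$\ell$ disjoint paths in~$G$, the $k$-leanness of~$\hat{N}$ yields a separation~${(A,B) \in \hat{N}}$ of order less than~$\ell$ with~${V_{t_1} \subseteq A}$ and~${V_{t_2} \subseteq B}$; since the separations induced by the edges of~$T$ are precisely those of~$\hat{N}$ under the correspondence of Theorem~\ref{thm:ts2td}, this separation corresponds to an edge on the~$t_1$\,--\,$t_2$ path in~$T$ inducing a separation of order less than~$\ell$, which is what is required.

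Since the real work has been carried out in the earlier lemmas, I do not anticipate a substantial obstacle here. The only subtle point is the very last step, namely confirming that the $k$-leanness condition phrased for nested separation systems literally transports across the correspondence of Theorem~\ref{thm:ts2td} to the $k$-leanness condition phrased for tree-decompositions; but this is immediate once one recalls that the correspondence preserves both the parts and the family of separations.
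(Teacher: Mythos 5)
Your proof is correct and takes the same route as the paper: the paper states Theorem~\ref{thm:k-lean} as an immediate consequence of Lemma~\ref{lem:k-lean-treeset}, Lemma~\ref{lem:k-lean-ts2td} and Theorem~\ref{thm:ts2td}, and you have simply unfolded that one-line argument, correctly supplying the invocation of Lemma~\ref{rem:treeset-clean-up}\ref{item:ts-cleanup4} for the ``no trivial separations'' hypothesis and the observation that the edge corresponding to the separating $(A,B)\in\hat N$ lies on the $t_1$--$t_2$ path because it is oriented oppositely in the two orientations $O_1$ and $O_2$.
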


Now we are able to prove the duality theorem and hence the remaining equivalence of our main theorem.

\begin{theorem}\label{thm:duality}
    Let~$G$ be an infinite graph, let~${k \in \mathbb{N}}$, 
    let~${A \subseteq V(G)}$ be infinite 
    and let~${\kappa \leq \abs{A}}$ be an infinite cardinal.
    Then the following statements are equivalent.
    \begin{enumerate}[label=(\alph*)]
        \item \label{item:t9-set}
            There is a subset~${A_1 \subseteq A}$ with~${\abs{A_1} = \kappa}$ such that~$A_1$ is $k$-connected in~$G$.
        \addtocounter{enumi}{2}
        \item \label{item:t9-duality} 
            There is no tree-decomposition of~$G$ of adhesion less than~$k$ such that every part can be separated from~$A$ by less than~$\kappa$ vertices. 
    \end{enumerate}
\end{theorem}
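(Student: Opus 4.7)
The plan is to prove both directions $(a) \Rightarrow (d)$ and $\neg(a) \Rightarrow \neg(d)$, invoking Theorem~\ref{thm:k-lean} and Corollary~\ref{cor:inseparableA} for the latter.

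For $(a) \Rightarrow (d)$, I would argue by contradiction. Suppose $A_1 \subseteq A$ with $\abs{A_1} = \kappa$ is $k$-connected in $G$, and that some tree-decomposition $(T, \mathcal{V})$ of adhesion less than $k$ has every part separable from $A$ by fewer than $\kappa$ vertices. For each edge $e$ of $T$ with adhesion set $S_e$ and induced separation $(X_e, Y_e)$, the $k$-connectedness of $A_1$ together with $\abs{S_e} < k$ forces $A_1 \setminus S_e$ to lie entirely within one side of the separation (otherwise two vertices of $A_1$ on opposite sides could not be joined by a path avoiding $S_e$). Orient $e$ toward the side containing $A_1 \setminus S_e$. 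Either the orientation admits a sink $t \in V(T)$ (all incident edges oriented toward $t$), or, starting at any node and walking along outgoing edges (never repeating, since $T$ is a tree) yields an infinite directed ray $t_0 t_1 t_2 \ldots$.

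In the sink case, intersecting the inclusions $A_1 \setminus S_e \subseteq Y_e$ over all edges $e$ incident to $t$, and using that $\bigcap_e Y_e = V_t$ while $\bigcup_e S_e \subseteq V_t$, we obtain $A_1 \subseteq V_t$; any separator of size less than $\kappa$ between $V_t$ and $A$ then contains $A \cap V_t \supseteq A_1$, contradicting $\abs{A_1} = \kappa$. In the ray case, write $S_n := V_{t_n} \cap V_{t_{n+1}}$ and let $X_n$ denote the $t_n$-side of the corresponding separation, so that $A_1 \cap X_n \subseteq S_n$. For each $v \in A_1$, the subtree $T(v) := \{t : v \in V_t\}$ must contain a node on the $t_{n+1}$-side of edge $t_n t_{n+1}$ for every $n$ (else $T(v)$ lies on the $t_n$-side, giving $v \in X_n$, hence $v \in S_n$, and therefore $t_{n+1} \in T(v)$, contradiction); since each node of $T$ lies on the $t_{n+1}$-side for only finitely many $n$, $T(v)$ is infinite and, by connectedness, contains a tail $\{t_n : n \geq M_v\}$ of the ray for some minimal $M_v \in \mathbb{N}$. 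Therefore $v \in S_n$ for every $n \geq M_v$, so the sets $B_N := \{v \in A_1 : M_v \leq N\} \subseteq S_N$ are increasing with $\abs{B_N} < k$ and union $A_1$. This forces $\abs{A_1} < k$, contradicting $\abs{A_1} = \kappa \geq \aleph_0$.

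For $\neg(a) \Rightarrow \neg(d)$, assume no subset of $A$ of size $\kappa$ is $k$-connected in $G$. Apply Theorem~\ref{thm:k-lean} to obtain a $k$-lean tree-decomposition of $G$; this has adhesion less than $k$, and each part $V_t$ is $\min(k, \abs{V_t})$-connected in $G$. If $\abs{V_t} < k$, then $V_t$ separates itself from $A$, and $\abs{V_t} < k \leq \kappa$. If $\abs{V_t} \geq k$, then $V_t$ is $k$-connected in $G$; were $V_t$ not separable from $A$ by fewer than $\kappa$ vertices, Corollary~\ref{cor:inseparableA} applied with $B := V_t$ would produce a $k$-connected subset of $A$ of size $\kappa$, contradicting the assumption. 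Hence every part of this tree-decomposition is separable from $A$ by fewer than $\kappa$ vertices, as required. The main obstacle is the ray case in $(a) \Rightarrow (d)$: one must show precisely why a $k$-connected set of size at least $k$ cannot be pushed to infinity along an end of the decomposition tree, which requires combining connectedness of each subtree $T(v)$ with the orientation constraint to force a ray-tail into every $T(v)$ and then using finiteness of the separators $S_n$ to bound $\abs{A_1}$.
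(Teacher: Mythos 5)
Your proof of $\neg\ref{item:t9-set}\Rightarrow\neg\ref{item:t9-duality}$ is correct and follows the same route as the paper: apply Theorem~\ref{thm:k-lean} and then Corollary~\ref{cor:inseparableA} to the parts.

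The direction $\ref{item:t9-set}\Rightarrow\ref{item:t9-duality}$, however, rests on a claim that is false. You assert that, because $A_1$ is $k$-connected in $G$ and $\abs{S_e} < k$, the set $A_1 \setminus S_e$ lies entirely within one side of the separation $(X_e, Y_e)$, justifying this by saying that otherwise two vertices of $A_1$ on opposite sides could not be joined. But $k$-connectedness of $A_1$ does not imply that $A_1$ cannot be separated by fewer than~$k$ vertices; it only gives disjoint paths between \emph{equal-sized} subsets. A single pair $a, b \in A_1$ on opposite sides asks for one $\{a\}$--$\{b\}$ path, which may freely pass through $S_e$. What Menger actually yields is that at most $k - 1$ vertices of $A_1 \setminus S_e$ lie on the minority side: if both $A_1 \cap (X_e \setminus S_e)$ and $A_1 \cap (Y_e \setminus S_e)$ had at least $k$ elements, taking $k$ from each and applying $k$-connectedness would force $\abs{S_e} \geq k$. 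One can construct explicit examples (e.g.\ a $K_{k,\kappa}$ with an extra pendant vertex attached to one vertex of the finite side) where a $k$-connected set does have a vertex on the small side of a separation of order $1 < k$.

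This error propagates through both branches of your dichotomy. The ray case can be repaired: for large $n$ any $k$ fixed vertices of $A_1$ lie in $X_n$, at most $k-1$ of them can be in $X_n \setminus S_n$, so at least one is in $S_n$, and then matching $k$ of them against $k$ vertices of $A_1 \cap (Y_n \setminus S_n)$ produces $k$ disjoint paths each meeting $S_n$, contradicting $\abs{S_n} < k$. But the sink case does not survive. If the sink $t$ has infinitely many incident edges (and nothing in the hypothesis forbids this), then $A_1 \setminus V_t = \bigsqcup_e \bigl(A_1 \cap (X_e \setminus S_e)\bigr)$ is a disjoint union of $\kappa$ many pieces each of size $< k$, so $\abs{A_1 \setminus V_t}$ may equal $\kappa$ and $A_1 \not\subseteq V_t$. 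At that point your argument that any $A$--$V_t$ separator must contain all of $A_1$ breaks down, and the $k$-connectedness of $A_1$ alone gives you only $k$ disjoint $A_1$--$V_t$ paths, not $\kappa$: varying the choice of $k$-subsets does not accumulate, since the paths for different pairs of $k$-subsets need not be disjoint from each other. The paper avoids this difficulty entirely by invoking the fbs-minor characterisation (Theorems~\ref{thm:main-regular} and~\ref{thm:main-singular}): a $k$-typical graph $T$ is an fbs-minor with $A'$ along its core, the internal connectivity of $T$ (the $k$ disjoint paths between core vertices, or between consecutive layers) forces all core branch sets into a single part of any tree-decomposition of adhesion $< k$, and then the finite branch sets themselves provide the $\kappa$ disjoint $A$--$V_t$ paths needed for Theorem~\ref{cardinality-menger}. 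You should either adopt that approach or find another way to extract $\kappa$ disjoint paths in the sink case, which the naked orientation argument does not supply.
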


\begin{proof}
    Assume that~\ref{item:t9-set} does not hold.
    Let~$\mathcal{T}$ be a $k$-lean tree-decomposition system as obtained from Theorem~\ref{thm:k-lean}.
    Suppose for a contradiction that there exists a part~$P$ of~$\mathcal{T}$ that cannot be separated from $A$ by less than~$\kappa$ vertices.
    Then~$P$ is $k$-connected in~$G$ and has size at least~$\kappa$.
    By Corollary~\ref{cor:inseparableA}, there is a subset ${A_1 \subseteq A}$ of size~$\kappa$ which is $k$-connected in~$G$, a contradiction.
    Hence every part of~$\mathcal{T}$ can be separated from~$A$ by less than~$\kappa$ vertices, so~\ref{item:t9-duality} does not hold.
    
    If~\ref{item:t9-set} holds, let $\mathcal{T}$ be any tree-decomposition of~$G$ of adhesion less than~$k$ and let~$H$ be an inflated subgraph witnessing that a $k$-typical graph~$T$ is an fbs-minor of~$G$ with some~${A' \subseteq A}$ along its core for~${\abs{A'} = \kappa}$ as in Theorem~\ref{thm:main-regular} or Theorem~\ref{thm:main-singular}. 
    
    If~${T = T_k(B,D,c)}$ for some regular $k$-blueprint $(B,D,c)$, then since~$T$ contains~$k$ disjoint paths between~${B_i \cup D}$ and~${B_j \cup D}$ for all~${i, j \in \mathbb{N}}$, no separation of~$G$ of order less than~$k$ can separate the unions of the branch sets corresponding to the vertices of the layers~${B_i \cup D}$ and~${B_j \cup D}$. 
    Hence there is a part of~$\mathcal{T}$ containing at least one vertex in a branch set corresponding to some vertex of every layer of~$T$.
    
    In every other case~$T$ contains~$k$ internally disjoint paths between any two core vertices. 
    Hence there cannot exist a separation of~$G$ of order less than~$k$ that separates two distinct branch sets containing vertices of the core, and therefore there is a part of~$\mathcal{T}$ containing at least one vertex from each branch set corresponding to the core of~$T$. 
    
    In any case, this part has to have size at least~$\kappa$, and the disjoint paths in each branch set from a vertex of~$A'$ to the part witness by Theorem~\ref{cardinality-menger} that~$A$ cannot be separated by less than~$\kappa$ vertices from that part. 
    Since~$\mathcal{T}$ was arbitrarily chosen,~\ref{item:t9-duality} holds. 
\end{proof}

\section*{Acknowlegement}

J.~Pascal Gollin was supported by the Institute for Basic Science (IBS-R029-C1).

Karl Heuer was supported by the European Research Council (ERC) under the European Union's Horizon 2020 research and innovation programme (ERC consolidator grant DISTRUCT, agreement No.\ 648527).

\begin{bibdiv}
\begin{biblist}

\bib{DB:lean-td}{article}{
   author={Bellenbaum, Patrick},
   author={Diestel, Reinhard},
   title={Two short proofs concerning tree-decompositions},
   journal={Combin. Probab. Comput.},
   volume={11},
   date={2002},
   number={6},
   pages={541--547},
   issn={0963-5483},
   review={\MR{1940119}},
   doi={10.1017/S0963548302005369},
}

\bib{CDHH:k-blocks}{article}{
   author={Carmesin, J.},
   author={Diestel, R.},
   author={Hamann, M.},
   author={Hundertmark, F.},
   title={$k$-blocks: a connectivity invariant for graphs},
   journal={SIAM J. Discrete Math.},
   volume={28},
   date={2014},
   number={4},
   pages={1876--1891},
   issn={0895-4801},
   review={\MR{3270977}},
   doi={10.1137/130923646},
}

\bib{Diestel:GT5}{book}{
   author={Diestel, Reinhard},
   title={Graph theory},
   series={Graduate Texts in Mathematics},
   volume={173},
   edition={5},
   publisher={Springer, Berlin},
   date={2017},
   pages={xviii+428},
   isbn={978-3-662-53621-6},
   review={\MR{3644391}},
   doi={10.1007/978-3-662-53622-3},
}

\bib{Diestel:topGTsurvey}{article}{
	author={Diestel, Reinhard}, 
	title={Locally finite graphs with ends: a topological approach}, 
	date={2012},
	eprint={0912.4213v3},
	note={Post-publication manuscript},
}

\bib{Diestel:tree-sets}{article}{
   author={Diestel, Reinhard},
   title={Tree sets},
   journal={Order},
   volume={35},
   date={2018},
   number={1},
   pages={171--192},
   issn={0167-8094},
   review={\MR{3774513}},
   doi={10.1007/s11083-017-9425-4},
}

\bib{DJGT:k-con&grid}{article}{
   author={Diestel, Reinhard},
   author={Jensen, Tommy R.},
   author={Gorbunov, Konstantin Yu.},
   author={Thomassen, Carsten},
   title={Highly connected sets and the excluded grid theorem},
   journal={J. Combin. Theory Ser. B},
   volume={75},
   date={1999},
   number={1},
   pages={61--73},
   issn={0095-8956},
   review={\MR{1666948}},
   doi={10.1006/jctb.1998.1862},
}

\bib{DK:profinite-ss}{article}{
   author={Diestel, Reinhard},
   author={Kneip, Jakob},
   title={Profinite separation systems},
   journal={Order},
   volume={37},
   date={2020},
   number={1},
   pages={179--205},
   issn={0167-8094},
   review={\MR{4102489}},
   doi={10.1007/s11083-019-09499-y},
}

\bib{GJ:grid}{article}{
	author={Geelen, Jim}, 
	author={Joeris, Benson}, 
	title={A generalization of the Grid Theorem}, 
	date={2016},
	eprint={1609.09098},
}

\bib{GH:end-devouring}{article}{
   author={Gollin, J. Pascal},
   author={Heuer, Karl},
   title={Infinite end-devouring sets of rays with prescribed start
   vertices},
   journal={Discrete Math.},
   volume={341},
   date={2018},
   number={7},
   pages={2117--2120},
   issn={0012-365X},
   review={\MR{3802167}},
   doi={10.1016/j.disc.2018.04.012},
}

\bib{GK:infinite-tree-sets}{article}{
   author={Gollin, J. Pascal},
	author={Kneip, Jakob}, 
	title={Representations of infinite tree sets}, 
	journal={Order},
	date={2020},
	note={(to appear in print)},
	doi={10.1007/s11083-020-09529-0},
}

\bib{Halin:simplicial-decomb}{article}{
   author={Halin, R.},
   title={Simplicial decompositions of infinite graphs},
   note={Advances in graph theory (Cambridge Combinatorial Conf., Trinity
   Coll., Cambridge, 1977)},
   journal={Ann. Discrete Math.},
   volume={3},
   date={1978},
   pages={93--109},
   review={\MR{499113}},
}

\bib{Halin:ends}{article}{
   author={Halin, R.},
   title={\"Uber die Maximalzahl fremder unendlicher Wege in Graphen},
   language={German},
   journal={Math. Nachr.},
   volume={30},
   date={1965},
   pages={63--85},
   issn={0025-584X},
   review={\MR{0190031}},
   doi={10.1002/mana.19650300106},
}

\bib{Joeris:phd-thesis}{thesis}{
	author={Joeris, Benson}, 
	title={Connectivity, tree-decompositions and unavoidable-minors}, 
	date={2015},
	type={Ph.D. Thesis},
	organization={University of Waterloo},
	note={available at \url{https://uwspace.uwaterloo.ca/handle/10012/9315}},
}

\bib{Kunen:set-theory1980}{book}{
   author={Kunen, Kenneth},
   title={Set theory},
   series={Studies in Logic and the Foundations of Mathematics},
   volume={102},
   note={An introduction to independence proofs},
   publisher={North-Holland Publishing Co., Amsterdam-New York},
   date={1980},
   pages={xvi+313},
   isbn={0-444-85401-0},
   review={\MR{597342}},
}

\bib{OOT:3&4-conn}{article}{
   author={Oporowski, Bogdan},
   author={Oxley, James},
   author={Thomas, Robin},
   title={Typical subgraphs of $3$- and $4$-connected graphs},
   journal={J. Combin. Theory Ser. B},
   volume={57},
   date={1993},
   number={2},
   pages={239--257},
   issn={0095-8956},
   review={\MR{1207490}},
   doi={10.1006/jctb.1993.1019},
}

\bib{RST:excluding-infinite-minors}{article}{
   author={Robertson, Neil},
   author={Seymour, Paul},
   author={Thomas, Robin},
   title={Excluding infinite minors},
   note={Directions in infinite graph theory and combinatorics (Cambridge,
   1989)},
   journal={Discrete Math.},
   volume={95},
   date={1991},
   number={1-3},
   pages={303--319},
   issn={0012-365X},
   review={\MR{1141945}},
   doi={10.1016/0012-365X(91)90343-Z},
}

\bib{RST:planar}{article}{
   author={Robertson, Neil},
   author={Seymour, Paul},
   author={Thomas, Robin},
   title={Quickly excluding a planar graph},
   journal={J. Combin. Theory Ser. B},
   volume={62},
   date={1994},
   number={2},
   pages={323--348},
   issn={0095-8956},
   review={\MR{1305057}},
   doi={10.1006/jctb.1994.1073},
}

\bib{Stein:ext-survey}{article}{
   author={Stein, Maya},
   title={Extremal infinite graph theory},
   journal={Discrete Math.},
   volume={311},
   date={2011},
   number={15},
   pages={1472--1496},
   issn={0012-365X},
   review={\MR{2800973}},
   doi={10.1016/j.disc.2010.12.018},
}

\bib{Thomas:lean-finite}{article}{
   author={Thomas, Robin},
   title={A Menger-like property of tree-width: the finite case},
   journal={J. Combin. Theory Ser. B},
   volume={48},
   date={1990},
   number={1},
   pages={67--76},
   issn={0095-8956},
   review={\MR{1047553}},
   doi={10.1016/0095-8956(90)90130-R},
}

\end{biblist}
\end{bibdiv}

\end{document}